\theoremstyle{plain}
\renewenvironment{proof}[1][Proof.]{\begin{trivlist}
\item[\hskip \labelsep {\bfseries #1}]}{$\hfill \Box$
\end{trivlist}}
\newtheorem{thm}[subsection]{Theorem}
\newtheorem{lemma}[subsection]{Lemma}
\newtheorem{cor}[subsection]{Corollary}
\newtheorem{prop}[subsection]{Proposition}
\newtheorem{predefinitions}[subsection]{Definitions}
    {\begin{predefinitions}\upshape}{\end{predefinitions}}
\newtheorem{predefinition}[subsection]{Definition}
  \newenvironment{defin}%
    {\begin{predefinition}\upshape}{\end{predefinition}}
\newtheorem{preexample}[subsection]{Example}
  \newenvironment{example}%
    {\begin{preexample}\upshape}{\end{preexample}}
\newtheorem{preexamples}[subsection]{Examples}
  \newenvironment{examples}%
    {\begin{preexamples}\upshape}{\end{preexamples}}
\newtheorem{prenote}[subsection]{Note}
    {\begin{prenote}\upshape}{\end{prenote}}
\newtheorem{prequestion}[subsection]{Question}
  \newenvironment{ques}%
    {\begin{prequestion}\upshape}{\end{prequestion}}
\newtheorem{preremark}[subsection]{Remark}
  \newenvironment{remark}%
    {\begin{preremark}\upshape}{\end{preremark}}
\newcommand{\lmps}{\longmapsto}
\newcommand{\mps}{\mapsto}
\newcommand{\ra}{\rightarrow}
\newcommand{\Ra}{\Rightarrow}
\newcommand{\nono}{\nonumber}
\newcommand{\Lra}{\Leftrightarrow}
\newcommand{\mi}{\setminus}
\newcommand{\lra}{\longrightarrow}
\newcommand{\ov}{\overline}
\newcommand{\mal}{\mathcal{L}}
\newcommand{\mar}{\mathcal{R}}
\newcommand{\pr}{\mathcal{P}\mathrm{r}}
\newcommand{\Proj}{\mathcal{P}\mathrm{roj}}
\newcommand{\Mod}{\textrm{-}\mathcal{M}\mathfrak{od}}
\newcommand{\I}{\mathcal{I}}
\newcommand{\G}{\mathcal{G}}
\newcommand{\R}{\mathbb R}
\newcommand{\Z}{\mathbb Z}
\newcommand{\C}{\mathbb C}
\renewcommand{\H}{\mathbb H}
\newcommand{\+}{\oplus}
\newcommand{\conggr}{\cong_{\mathrm{gr}}}
\newcommand{\gr}{\mathrm{gr}}
\newcommand{\Pgr}{\mathcal P \mathrm{gr}}
\newcommand{\hygrmod}{\textrm{-}\mathfrak{gr}\textrm{-}\mathcal{M}\mathfrak{od}}
\mathchardef\mhyphen="2D
\newcommand{\va}{\varphi}
\newcommand{\Ga}{\Gamma}
\newcommand{\ga}{\gamma}
\newcommand{\al}{\alpha}
\newcommand{\de}{\delta}
\newcommand{\la}{\lambda}
\newcommand{\si}{\sigma}
\newcommand{\ep}{\varepsilon}
\newcommand{\GL}{\operatorname{GL}}
\newcommand{\Det}{\operatorname{Det}}
\newcommand\CK[1][1]{\operatorname{CK}_{#1}}
\newcommand\ZK[1][1]{\operatorname{ZK}_{#1}} 
\newcommand\SK[1][1]{\operatorname{SK}_{#1}} 
\newcommand{\Nrd}[1][{}]{{\operatorname{Nrd}_{#1}}} 
\newcommand{\Trd}[1][{}]{{\operatorname{Trd}_{#1}}}
\newcommand{\coker}[1][{}]{{\operatorname{coker}_{#1}}}
\newcommand{\St}{\operatorname{St}}
\newcommand{\ind}{\operatorname{ind}}
\newcommand{\End}{\operatorname{End}}
\newcommand{\im}{\operatorname{im}}
\newcommand{\Hom}{\operatorname{Hom}}
\newcommand{\Max}{\operatorname{Max}}
\newcommand{\id}{\operatorname{id}}
\newcommand{\Ann}{\operatorname{Ann}}
\newcommand{\op}{\operatorname{op}}
\newcommand{\HOM}{\operatorname{HOM}}
\newcommand{\END}{\operatorname{END}}
\newcommand{\Supp}{\operatorname{Supp}}
\newcommand{\chr}{\operatorname{char}}
\renewcommand{\Im}{\operatorname{Im}}
\newcommand{\rank}{\operatorname{rank}}
\newcommand{\Obj}{\operatorname{Obj}}
\newcommand{\Tr}{\operatorname{Tr}}
\newcommand{\m}{\mathrm{m}}
\begin{document}




\begin{titlepage}
\vspace*{2cm}
\centering{\Huge \textit{K}-Theory of Azumaya Algebras}\\
\vspace{6cm}

\centering{\it {\large A thesis for the degree of Doctor of Philosophy\\
\vspace*{3pt}
 submitted to\\
 \vspace*{3pt}
 the School of Mathematics and Physics\\
 \vspace*{3pt}
 of \\
 \vspace*{3pt}
 Queen's University Belfast.}}\\

\vspace{4cm}
\Large{Judith Ruth Millar MSci}\\
\large{September 2010} \thispagestyle{empty}
\end{titlepage}


\chapter*{Acknowledgements} \setcounter{page}{0} \markboth{}{}
\thispagestyle{empty}

I would like to express my gratitude to my supervisor Dr.\ Roozbeh
Hazrat for his advice, encouragement, patience and the many hours of
his time over the last four years. \\

\noindent I would also like to thank the members of the Department
of Pure Mathematics, and especially the postgraduate students I have
met here at Queen's.\\

\noindent I acknowledge the financial assistance I received from the
Department of Education and Learning. \\

\noindent Finally, I would like to thank my family for their support
throughout my university life.


\tableofcontents \markboth{Contents}{}


\chapter*{Notation\markboth{Notation}{}}



\thispagestyle{plain} \addcontentsline{toc}{chapter}{Notation}

Throughout all rings are assumed to be associative and any ring $R$
has a multiplicative identity element $1_R$. A subring $S$ of $R$
contains the identity element of $R$. We assume that a ring
homomorphism $R \ra R'$ takes the identity of $R$ to the identity of
$R'$.

Modules over a ring $R$ are assumed to be left $R$-modules unless
otherwise stated. For an $R$-module $M$, we assume that $1_R m = m$
for every $m \in M$. For a homomorphism between $R$-modules, we will
use the terms $R$-module homomorphism and $R$-linear homomorphism
interchangeably.

For a field $F$, a division algebra $D$ over $F$ is defined to be a
division ring with centre $F$ such that $[D:F] < \infty$.

\subsection*{Some symbols used}

\hspace{2.7ex} For a ring $R$,
\begin{basedescript}{\desclabelstyle{\pushlabel}\desclabelwidth{2.7cm}}
\item[$R^{*}$] the group of units of $R$; that is, the elements of
$R$ which have a multiplicative inverse;

\item[$R \Mod$] the category of $R$-modules with $R$-module
homomorphisms between them;

\item[$\pr (R)$] the subcategory of $R \Mod$ consisting of
finitely generated projective $R$-modules.
%
\end{basedescript}

%

For a multiplicative group $G$ and $x, y \in G$,
\begin{basedescript}{\desclabelstyle{\pushlabel}\desclabelwidth{2.7cm}}
\item[\mbox{$[\, x,y \,]$}] the commutator $xyx^{-1}y^{-1}$;

\item[\mbox{$[\hspace{.2ex} G,G \hspace{.2ex} ]$}$ =G'$] the
\emph{commutator subgroup of $G$}\index{commutator subgroup}; that
is, the (normal) subgroup of $G$ generated by the commutators;

\item[$G/G'$] the \emph{abelianisation\index{abelianisation}
of $G$}.
\end{basedescript}


\setcounter{chapter}{-1}
\chapter{Introduction}

\thispagestyle{plain}



For a division algebra $D$ finite dimensional over its centre $F$,
consider the multiplicative group $D^* = D \mi 0$. The structure of
subgroups of $D^*$ is not known in general. In 1953, Herstein
\cite{hersteinsubgroups} proved that if $D$ is of characteristic $p
\neq 0$, then every finite subgroup of $D^*$ is cyclic. This is an
easy result in the setting of fields (see
\cite[Thm.~V.5.3]{hungerford}), so the finite subgroups of such
division algebras behave in a similar way to those of fields. We
give an example here to show that this result doesn't necessarily
hold for division algebras of characteristic zero. Hamilton's
quaternions $\mathbb H$ form a division algebra of characteristic
zero, but the subgroup $\{\pm 1, \pm i , \pm j , \pm k \}$ is a
finite subgroup of $\mathbb H^*$ which is not abelian, so not
cyclic.

In 1955, Amitsur classified all finite subgroups of $D^*$ in his
influential paper \cite{amitsursubgroups}. Since then, subgroups of
the group $D^*$ have been studied by a number of people (see for
example \cite{faudree,greenfield,lam1,lichtman}). Herstein
\cite{hersteinconjugates} showed that a non-central element in a
division algebra has infinitely many conjugates. Since normal
subgroups are invariant under conjugation, this shows that
non-central normal subgroups are ``big'' in $D^*$. Note that if $D$
is a non-commutative division algebra, then non-trivial non-central
normal subgroups exist in $D^*$. For example $D'$, the subgroup of
$D^*$ generated by the multiplicative commutators, is non-trivial
and is a non-central normal subgroup of $D^*$.


So as for normal subgroups, we could ask: how large are maximal
subgroups of $D^*$? But it remains as an open question whether
maximal subgroups even exist in $D^*$. The existence of maximal
subgroups of $D^*$ is connected with the non-triviality of $K$-group
$\CK[1](D)$. For a division algebra $D$ with centre $F$, we note
that $\CK[1](D) \cong D^*/(F^* D')$. The group $\CK[1](D)$ is
related to algebraic $K$-theory; more specifically, to the functor
$K_1$. Before discussing the group, we indicate how algebraic
$K$-theory has developed.

Algebraic $K$-theory defines a sequence of functors $K_i$ from the
category of rings to the category of abelian groups. For the lower
$K$-groups, the functor $K_0$ was introduced in the mid-1950s by
Grothendieck, and the functor $K_1$ was developed in the 1960s by
Bass. Many attempts were made to extend these functors to cover all
$K_i$ for $i \geq 0$. Milnor defined the functor $K_2$ in the 1960s,
but it was not clear how to construct the higher $K$-functors. The
functor $K_2$ is defined in such a way that there is an exact
sequence linking it with $K_0$ and $K_1$ (see
\cite[Thm.~4.3.1]{rosenberg}).  The ``correct'' definition of the
higher $K$-functors was required not only to provide such an exact
sequence connecting the functors, but also to cover the given
definitions of $K_0$ and $K_1$. Then in 1974, Quillen gave two
different constructions of the higher $K$-functors,
which are equivalent for rings and which satisfy these expected
properties (see \cite[Ch.~5]{rosenberg}).

It is straightforward to describe the lower $K$-groups concretely.
The group $K_0$ can be considered as the group completion of the
monoid of isomorphism classes of finitely generated projective
modules, and $K_1$ is the abelianisation of the infinite general
linear group (see Chapter~\ref{chktheory} for the details). The
higher $K$-groups are considerably more difficult to compute. They
are, however, functorial in construction.

Returning to the setting of division algebras, consider a central
simple algebra $A$. Then by the Artin-Wedderburn Theorem, $A$ is
isomorphic to a matrix $M_n (D)$ over a division algebra $D$. Let
$F$ be the centre of $D$. Since each $K_i$, $i \geq 0$, is a functor
from the category of rings to the category of abelian groups, the
inclusion map $F \ra A$ induces a map $K_i (F) \ra K_i (A)$. Let
$\ZK[i] (A)$ denote the kernel of this map and $\CK[i] (A)$ denote
the cokernel. This gives an exact sequence
\begin{equation} \label{exactseqeqn}
1 \ra \ZK[i] (A) \ra K_i(F) \ra K_i(A) \ra \CK[i](A) \ra 1.
\end{equation}
So the group $\CK[1] (A)$ is defined to be $\coker \big(K_1 (F) \ra
K_1 (A) \big)$. Then it can be shown that $\CK[1](A)$ is isomorphic
to $D^*/ F^{*n} D'$, and it is a bounded torsion abelian group (see
Section~\ref{sectionlowerkgroupsofcsalgebras} for the details).

For a division algebra $D$, if $\CK[1] \hspace{-.5ex} \big(M_n
(D)\big)$ is not the trivial group for some $n \in \mathbb N$, then
$D^*$ has a normal maximal subgroup (see
\cite[Section~2]{hazwadmax}). It has been conjectured that if
$\CK[1](D)$ is trivial, then $D$ is a quaternion algebra (see
\cite[p.~408]{hazratmirzaii}). One of the most significant results
in this direction proves that if $D$ is a division algebra with
centre $F$ such that $D^*$ has no maximal subgroups, then $D$ and
$F$ satisfy a number of conditions (see \cite[Thm.~1]{hazwadmax}).
This result ensures that certain division algebras, for example
division algebras of degree $2^n$ or $3^n$ for $n \geq 1$, have
maximal subgroups.

We note that $\CK[1]$ can be considered as a functor from the
category of central simple algebras over a fixed field to the
category of abelian groups (see
Section~\ref{sectionlowerkgroupsofcsalgebras}). In fact, this can be
generalised to cover commutative rings. Central simple algebras over
fields are generalised by Azumaya algebras over commutative rings.
Azumaya algebras were originally defined as ``proper maximally
central algebras'' by Azumaya in his 1951 paper \cite{az}. We
outline in Section~\ref{sectiondevelopmentofazumayaalgebras} how the
definition has developed since then. An Azumaya algebra $A$ over a
commutative ring $R$ can be defined as an $R$-algebra $A$ such that
$A$ is finitely generated as an $R$-module and $A/ \m A$ is a
central simple $R/ \m $-algebra for all $\m \in \Max(R)$ (see
Theorem~\ref{azumayadefinthm} for some equivalent definitions).

Then $\CK[1]$ can also be considered as a functor from the category
of Azumaya algebras over a fixed commutative ring to the category of
abelian groups (see page~\pageref{ckifunctorazalgtoabgroups}).
Related to this, various $\CK[1]$-like functors on the categories of
central simple algebras and Azumaya algebras have been investigated
in \cite{sk12001, hazratreducedkthofazumaya, lewistignol}. In these
papers various abstract functors have been defined, which have
similar properties to the functor $\CK[1]$. For example, in
\cite{hazratreducedkthofazumaya}, the functor defined there is used
to show that the $K$-theory of an Azumaya algebra over a local ring
is almost the same as the $K$-theory of the base ring.

Along the same lines, in Chapter~\ref{chapterktheoryofazalg} we
define an abstract functor, called a $\mathcal D$-functor, which
also has similar properties to $\CK[1]$. We show that the range of a
$\mathcal D$-functor is a bounded torsion abelian group, and that
$\CK[i]$ and $\ZK[i]$, for $i \geq 0$, are $\mathcal D$-functors. By
combining these results with \eqref{exactseqeqn}, we show in
Theorem~\ref{azumayafreethm} that if $A$ is an Azumaya algebra free
over its centre $R$ of rank $n$, then
$$
K_i(A) \otimes \mathbb Z[1/n] \cong K_i(R) \otimes \mathbb Z[1/n]
$$
for any $i \geq 0$. This allows us to extend the results of
\cite{hazratreducedkthofazumaya} to cover Azumaya algebras over
semi-local rings (see Corollary~\ref{semilocal}).


Thus far, we have been considering division algebras. Recently
Tignol and Wadsworth \cite{tigwad, tigwadtotallyramified} have
studied division algebras equipped with a valuation. Valuations are
more common on fields than on division algebras. However they noted
that a number of division algebras are equipped with a valuation,
and the valuation structure on the division algebra contains a
significant amount of information about the division algebra.

A division algebra $D$ equipped with a valuation gives rise to an
associated graded division algebra $\gr (D)$. These graded division
algebras have been studied in
\cite{boulag,hazwadsk1,hwalg,hwcor,tigwad}. In these papers, as they
are considering graded division algebras associated to division
algebras with valuations, their grade groups are totally ordered
abelian groups. It was noted in \cite{hwcor} that it is relatively
easier to work with graded division algebras, and that not much
information is lost in passing between the graded and non-graded
settings.

We show in Theorem~\ref{gcsaazumayaalgebra} that a graded central
simple algebra (so, in particular, a graded division algebra) with
an abelian grade group is an Azumaya algebra, and therefore the
results of Chapter~\ref{chapterktheoryofazalg} also hold in this
setting. But in the graded setting, we can also consider graded
finitely generated projective modules over a given graded ring. We
define the graded $K$-theory of a graded ring $R$ to be $K_i^{\gr}
(R) = K_i (\Pgr (R))$, where $\mathcal P \mathrm{gr}(R)$ is the
category of graded finitely generated projective $R$-modules.

However, considering graded $K$-theory of graded Azumaya algebras,
Example~\ref{eggrktheory} gives a graded Azumaya algebra such that
its graded $K$-theory is not isomorphic to the graded $K$-theory of
its centre. In this example, we take the real quaternion algebra
$\mathbb H$. Then we show $\mathbb H$ is a graded Azumaya algebra
over $\mathbb R$, with $K_0^\gr (\mathbb H) \otimes \mathbb Z[1/n]
\cong \mathbb Z \otimes \mathbb Z [1/n]$ and $K_0^\gr \big(
Z(\mathbb H) \big) \otimes \mathbb Z[1/n] \cong ( \mathbb Z \oplus
\mathbb Z \oplus \mathbb Z\oplus \mathbb Z) \otimes \mathbb Z
[1/n]$, so they are not isomorphic. Thus the results of
Chapter~\ref{chapterktheoryofazalg} do not follow immediately in the
setting of graded $K$-theory.

But for a graded Azumaya algebra subject to certain conditions, we
show in Theorem~\ref{grazumayafreethm} that its graded $K$-theory is
almost the same as the graded $K$-theory of its centre. More
precisely, for a commutative graded ring $R$, we let
$\Ga^*_{M_{n}(R)}$ denote the elements $(d) \in \Ga^n$ such that
$\GL_{n}(R)[d] \neq \emptyset$, where $\GL_{n}(R)[d]$ are invertible
$n \times n$ matrices with ``shifting'' (see
page~\pageref{pagerefmatrixwithshifting}). We show that if $A$ is a
graded Azumaya algebra which is graded free over its centre $R$ of
rank $n$, such that $A$ has a homogeneous basis with degrees
$(\de_1, \ldots , \de_n)$ in $\Ga^*_{M_n(R)}$, then for any $i \geq
0$,
$$
K_i^{\gr}(A) \otimes \mathbb Z[1/n] \cong K_i^{\gr}(R) \otimes
\mathbb Z[1/n].
$$

Another $K$-group which has been studied in the setting of division
algebras is the reduced Whitehead group $\SK[1]$ (see for example
\cite{platonov}). For a division algebra $D$, the group $\SK[1](D)$
is defined to be $D^{(1)} / D'$ where $D^{(1)}$ is the kernel of the
reduced norm and $D'$ is the group generated by the multiplicative
commutators of $D$. For a graded division algebra $D$, it has been
shown that $\SK[1] (QD) \cong \SK[1] (D)$, where $QD$ is the
quotient division ring of $D$ (see \cite[Thm.~5.7]{hazwadsk1}).
Related to this, we study additive commutators in the setting of
graded division algebras in
Chapter~\ref{chapteradditivecommutators}. We show that for a graded
division algebra over its centre $F$, which is Noetherian as a ring,
then
$$
\frac{D}{[D,D]} \otimes_F QF \cong \frac{QD}{[QD,QD]}.
$$


\subsubsection*{Summary of the Thesis}

In Chapter~\ref{chazumayaalgebras}, we combine various results from
the literature to show some of the definitions of an Azumaya
algebra, their basic properties and the equivalence of some of these
definitions. We also outline how the definition has progressed since
the work of Azumaya. We note here that Grothendieck \cite[\S 5]{gro}
also defines an Azumaya algebra on a scheme $X$ with structure sheaf
$\mathcal O _X$, but we do not consider that point of view.

In Chapter~\ref{chktheory}, we begin by recalling the definitions of
the lower $K$-groups $K_0$, $K_1$ and $K_2$. We then look at the
lower $K$-groups of central simple algebras, including the functors
$\CK[0]$ and $\CK[1]$, and some of their properties. We finish the
chapter by recalling some properties of the higher $K$-groups.

In Chapter~\ref{chapterktheoryofazalg}, we define an abstract
functor, called a $\mathcal D$-functor, defined on the category of
Azumaya algebras over a fixed commutative ring.
This allows us to show that the $K$-theory of an Azumaya algebra
free over its centre is almost the same as the $K$-theory of its
centre. We also note that  Corti\~nas and Weibel
\cite{cortinasweibel} have shown a similar result for the Hochschild
homology of an Azumaya algebra, which we mention in this chapter.

Chapter~\ref{chgraded} introduces graded objects. Often in the
literature the grade groups are abelian and totally ordered, so
torsion-free. We begin this section by adopting in the graded
setting some theorems that we require from the non-graded setting.
Some of these results hold for grade groups which are neither
abelian nor totally ordered. Though in some cases we require
additional conditions on the grade group. We show that for a graded
division ring $D$ graded by an arbitrary group, a graded module over
$D$ is graded free and has a uniquely defined dimension. For a
graded field $R$ and a graded central simple $R$-algebra $A$ graded
by an abelian group, we show that $A$ is a graded Azumaya algebra
over $R$. We also prove a number of results for graded matrix rings
graded by arbitrary groups.

We begin Chapter~\ref{chgradedktheoryofazumayaalgebras} by defining
the group $K_0$ in the setting of graded rings. We show what this
group looks like for a trivially graded field and for a strongly
graded ring. For a specific example of a graded Azumaya algebra, we
show that its graded $K$-theory is not the same as its usual
$K$-theory (see Example~\ref{eggrktheorynotsameasktheory}). Then in
a similar way to Chapter~\ref{chapterktheoryofazalg}, we define an
abstract functor called a graded $\mathcal D$-functor. This allows
us to prove that the graded $K$-theory of a graded Azumaya algebra
(subject to some conditions) is almost the same as the graded
$K$-theory of its centre.

In Chapter~\ref{chapteradditivecommutators}, we study additive
commutators in the setting of graded division algebras. We observe
in Section~\ref{sectiongradedsplitting} that the reduced trace holds
in this setting. We then recall the definition of the quotient
division algebra, and show in Corollary~\ref{d/d,dcongqd/qd,qd} how
the subgroup generated by homogeneous additive commutators in a
graded division algebra relates to that of the quotient division
algebra.



\chapter{Azumaya Algebras} \label{chazumayaalgebras}



The concept of an Azumaya algebra over a commutative ring
generalises the concept of a central simple algebra over a field.
The term Azumaya algebra originates from the work done by Azumaya in
his 1951 paper ``On maximally central algebras'' \cite{az}. The
definition has developed since then, and we will outline in
Section~\ref{sectiondevelopmentofazumayaalgebras} how it has
progressed.  In Theorem~\ref{azumayadefinthm}, we state a number of
equivalent reformulations of this definition.


This chapter is organised as follows. We begin this chapter by
recalling the various definitions of the term ``faithfully
projective'', which are required for the definition of an Azumaya
algebra (see Definition~\ref{azumayadefinition}). In
Sections~\ref{sectionseparable} and~\ref{sectionseparabletwo} we
discuss separable algebras, which can also be used to define Azumaya
algebras. The definition of an Azumaya algebra in stated in
Section~\ref{sectionazumayaalgebras}, along with some examples and
properties, and in Section~\ref{sectionfurthercharacterisations} we
show some additional characterisations of Azumaya algebras. We
conclude this chapter by summarising some of the key progressions in
the development of the theory of Azumaya algebras.


\section{Faithfully projective modules}
\label{sectionfaithfullyprojective}

Let $R$ be a (possibly non-commutative) ring. Consider a covariant
additive functor $T$ from the category of (left or right)
$R$-modules to some category of modules. We say that $T$ is an
\emph{exact functor}\index{functor!exact}\index{exact functor} if,
whenever $L \ra M \ra N$ is an exact sequence of $R$-modules, $T(L)
\ra T(M) \ra T(N)$ is exact. Further $T$ is defined to be a
\emph{faithfully exact functor}\index{functor!faithfully
exact}\index{faithfully exact functor} if the sequence $T(L) \ra
T(M) \ra T(N)$ is exact if and only if the sequence $L \ra M \ra N$
is exact.

We recall that an $R$-module $M$ is called
\emph{faithful}\index{module!faithful}\index{faithful module} if
$rM=0$ implies $r=0$ or, equivalently, if its annihilator
$\mathrm{Ann}(M) = \{ x \in R : xm=0$ for all $m \in M \}$ is zero.
An $R$-module $M$ is called a \emph{flat
module}\index{module!flat}\index{flat module} if the functor $-
\otimes_R M$ is an exact functor from the category of right
$R$-modules to the category of abelian groups. An $R$-module $P$ is
called a {\em projective
module}\index{module!projective}\index{projective module} if the
functor $\Hom_R(P, -)$ is an exact functor from the category of left
$R$-modules to the category of abelian groups. This is equivalent to
saying that $P$ is a direct summand of a free $R$-module. If $P$ is
a projective $R$-module which is finitely generated by $n$ elements,
then $P$ is a direct summand of $R^n$. See Magurn
\cite[Ch.~2]{magurn} for results involving projective modules.


The following results on faithfully exact functors are from Ishikawa
\cite[p.~30--33]{ish}.

\begin{thm}\label{faithfullyexactthm}\index{functor!faithfully exact}
Let $T$ be an exact functor from the category of left (resp.\ right)
$R$-modules to some category of modules. Then the following are
equivalent:
\begin{enumerate}
\item $T$ is a faithfully exact functor.

\item $T(A) \neq 0$ for every non zero  left (resp.\ right) $R$-module $A$.

\item $T(\phi) \neq 0$ for every non zero $R$-linear homomorphism
$\phi$.

\item $T(R/I) \neq 0$ for every proper left (resp.\ right) ideal
$I$ of $R$.

\item $T(R/\m) \neq 0$ for every maximal left (resp.\ right)
ideal $\m$ of $R$.
\end{enumerate}
\end{thm}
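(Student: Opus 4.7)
The plan is to prove the five conditions equivalent by a cyclic chain of implications, namely $(1) \Ra (2) \Ra (3) \Ra (4) \Ra (5) \Ra (2) \Ra (1)$, where the key non-trivial steps are the image-factorisation argument, a Zorn's-lemma reduction to maximal ideals, and a homology argument using exactness of $T$.

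First, $(1) \Ra (2)$ follows by applying faithful exactness to the sequence $0 \ra A \ra 0$: if $A \neq 0$ this sequence is not exact, so its image $0 \ra T(A) \ra 0$ under $T$ is not exact, forcing $T(A) \neq 0$. For $(2) \Ra (3)$, factor a non-zero $\phi : A \ra B$ through its image as $A \twoheadrightarrow \im \phi \hookrightarrow B$; exactness of $T$ turns these into a surjection $T(A) \twoheadrightarrow T(\im \phi)$ and an injection $T(\im \phi) \hookrightarrow T(B)$. Since $\im \phi \neq 0$, hypothesis (2) gives $T(\im \phi) \neq 0$, so the composite $T(\phi)$ is non-zero. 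The implication $(3) \Ra (4)$ is immediate by applying (3) to the non-zero projection $R \ra R/I$, and $(4) \Ra (5)$ is trivial since maximal ideals are proper.

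The real content is $(5) \Ra (2)$. Given a non-zero $R$-module $A$, pick $0 \neq a \in A$ and let $I = \Ann(a)$, a proper left ideal because $1 \cdot a = a \neq 0$. By Zorn's lemma there is a maximal left ideal $\m$ with $I \subseteq \m$. The short exact sequence $0 \ra \m / I \ra R/I \ra R/\m \ra 0$ is carried by the exact functor $T$ to a short exact sequence, which together with hypothesis (5) gives $T(R/I) \neq 0$. Since $Ra \cong R/I$ and the inclusion $Ra \hookrightarrow A$ becomes an injection under $T$, we conclude $T(A) \neq 0$.

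Finally, for $(2) \Ra (1)$, suppose $L \xrightarrow{f} M \xrightarrow{g} N$ has exact image under $T$; I need to show the original sequence is exact. Let $H = \ker(g)/\im(f)$ and apply $T$ to the short exact sequence $0 \ra \im(f) \ra \ker(g) \ra H \ra 0$. Exactness of $T$ identifies $T(\im f) = \im T(f)$ and $T(\ker g) = \ker T(g)$ as subobjects of $T(M)$, and these coincide by the hypothesis $\im T(f) = \ker T(g)$. Hence $T(H) = 0$, and by (2) this forces $H = 0$, i.e.\ $\im(f) = \ker(g)$. The main obstacle I anticipate is keeping track, in $(2) \Ra (1)$ and $(5) \Ra (2)$, of the fact that $T$ being merely exact (not assumed to preserve short exact sequences a priori) actually suffices because any covariant additive exact functor does preserve them; once that bookkeeping is in place, both steps run smoothly.
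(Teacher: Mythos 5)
Your cyclic chain $(1) \Ra (2) \Ra (3) \Ra (4) \Ra (5) \Ra (2) \Ra (1)$ is a valid reorganisation, and most of the individual steps match the paper's in spirit: $(1)\Ra(2)$, $(2)\Ra(3)$ and $(5)\Ra(2)$ are essentially the paper's arguments repackaged (yours for $(5)\Ra(2)$ runs directly rather than by contradiction, and your $(3)\Ra(4)$ applies (3) to the projection $R \ra R/I$ where the paper instead notes $(2)\Ra(4)$ is trivial). The genuinely different step is closing the loop: the paper proves $(3)\Ra(1)$ by an involved diagram chase, whereas you prove $(2)\Ra(1)$ by forming $H = \ker(g)/\im(f)$, noting $T(H)=0$, and invoking (2). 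Your version is slicker and more conceptual.

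There is, however, one real gap in that final step. Before you may even write $H = \ker(g)/\im(f)$ you need $\im(f) \subseteq \ker(g)$, i.e.\ $g \circ f = 0$, and this does \emph{not} follow merely from $T(g\circ f) = T(g)\circ T(f) = 0$: a priori $T$ could annihilate a non-zero morphism, which is exactly the failure that condition (3) rules out. You must insert an argument here. The cleanest repair is to observe that, since $(2)\Ra(3)$ has already been established earlier in your chain, you may invoke (3) to get $g\circ f = 0$ from $T(g\circ f)=0$; equivalently, rerun your image-factorisation argument on $g\circ f$ using (2) alone. The paper handles this explicitly as the opening move of its $(3)\Ra(1)$ proof. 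Once that line is inserted, your proof of $(2)\Ra(1)$ is complete and noticeably shorter than the paper's.
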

We will consider the left version of this theorem in the proof
below. The right version follows analogously.

\begin{proof}
(1) $\Ra$ (2): Let $T(A) = 0$ for an $R$-module $A$. Then since
$T(0)$ is the zero module, $T(0) \ra T(A) \ra T(0)$ is exact. By
(1), this implies that $0 \ra A \ra 0$ is exact, proving $A=0$.

\vspace*{3pt}

(2) $\Ra$ (3): Let $\phi :X \ra Y$ be an $R$-linear homomorphism.
Then we have exact sequences $X \stackrel{\phi'}{\ra} \Im(\phi) \ra
0$ and $0 \ra \Im(\phi) \stackrel{i}{\ra} Y$, where $i$ is the
inclusion map. Since $T$ is an exact functor, we get the following
commutative diagram with its row and column exact:
\begin{displaymath}
\xymatrix{& 0 \ar[d] &    \\
T(X) \ar[r]^{T(\phi')\;\;\;} \ar[dr]_{T(\phi)} & T\big(\Im
(\phi)\big) \ar[d]^{T(i)}  \ar[r] & 0 \\
& T(Y)&  }
\end{displaymath}
If $T(\phi) = 0$, then $T(i) \circ T(\phi') = 0$. This implies that
$\Im \big(T(\phi')\big) \subseteq \ker \big(T(i)\big) =0$, so
$T(\phi')=0$ and, since $T(\phi')$  is surjective, $T\big(\Im
(\phi)\big)=0$. By condition (2), $\Im(\phi) =0$, so $\phi=0$.

\vspace*{3pt}

(3) $\Ra$ (1): Let $T(A) \stackrel{T(f)}{\lra} T(B)
\stackrel{T(g)}{\lra} T(C)$ be exact. Since $T(g \circ f) = T(g)
\circ T(f)=0$, by condition (3), $g \circ f=0$ and $\Im(f) \subseteq
\ker (g)$. We have exact sequences $0 \ra \ker (g) \stackrel{j}{\ra}
B \stackrel{g}{\ra} C$, $A \stackrel{f'}{\ra} \Im (f) \ra 0$ and $0
\ra \Im (f) \stackrel{i}{\ra} \ker(g) \stackrel{p}{\ra} \ker(g)/\Im
(f) \ra 0$, where $i$ and $j$ are inclusion maps. Since $T$ is an
exact functor, we obtain the following commutative diagram with
exact rows and columns:
\begin{displaymath}
\xymatrix{& 0 \ar[d] & &   \\
0  & T\big(\Im (f)\big) \ar[l] \ar[d]_{T(i)}  & T(A) \ar[d]^{T(f)}
\ar[l]_{\;\;\;\;\;\;\;T(f')}  &\\
0 \ar[r] & T\big(\ker (g)\big) \ar[r]_{\;\;\;\;\; T(j)}
\ar[d]_{T(p)} & T(B) \ar[r]_{\; T(g)} & T(C) \\
& T\big(\ker(g)/ \Im (f)\big) \ar[d] &&\\
&0&&}
\end{displaymath}
For $x \in T\big( \ker(g)\big)$, $\big(T(g) \circ T(j)\big) (x) =0$,
so $T(j)(x) \in \ker \big(T(g) \big)= \Im \big(T(f) \big)$. So there
is an element $y \in T(A)$ such that $T(f)(y)= T(j)(x)$. Hence
$T(j)(x)= T(f)(y) = T(j) \circ T(i) \circ T(f') (y)$, so that $x =
\big( T(i) \circ T(f') \big) (y)$, since $T(j)$ is injective. This
shows that $T(i)$ is surjective, and hence $T\big( \Im(f)\big) \cong
T\big( \ker(g)\big)$, which means that $T(p)=0$. By condition (3),
this implies $p=0$, so $\Im(f) = \ker (g)$, proving $A
\stackrel{f}{\ra} B \stackrel{g}{\ra} C$ is exact.

\vspace*{3pt}

(2) $\Ra$ (4) and (4) $\Ra$ (5) are trivial.

\vspace*{3pt}

(5) $\Ra$ (2): Let $T(A)=0$. Let $a \in A$ and let $Ra$ be the left
$R$-submodule of $A$ generated by $a$. Since $0 \ra Ra \ra A$ is
exact and $T$ is exact, $T(Ra)=0$. Let $\mal (a)= \{ r \in R: ra =0
\}$, which is a left ideal of $R$. If $\mal (a) \neq R$, then there
is a maximal ideal $\m$ of $R$ containing $\mal (a)$. We have an
exact sequence $R/ \mal (a) \ra R/\m \ra 0$. Since $R \ra Ra$ is
surjective, $Ra \cong R/ \mal (a)$ by the First Isomorphism Theorem,
and we have an exact sequence $0 = T(Ra) \cong T \big(R/\mal
(a)\big) \ra T(R/\m) \ra 0$. This implies $T(R/\m) = 0$,
contradicting (5). So $\mal (a) =R$, which implies $a=0$ and
therefore $A=0$.
\end{proof}

For fixed left $R$-modules $P$ and $M$, the functors $T(-) =
\Hom_R(P,-)$ and $U(-)= - \otimes_R M$ are covariant functors
defined on the category of left $R$-modules and right $R$-modules,
respectively.

\begin{defin} \label{faithfullyprojdefin}
\index{module!faithfully projective}\index{faithfully projective
module}\index{module!faithfully flat}\index{faithfully flat module}
An $R$-module $P$ is said to be \emph{faithfully projective} if
$T(-) = \Hom_R(P,-)$ is a faithfully exact functor and an $R$-module
$M$ is said to be \emph{faithfully flat} if $U(-)= - \otimes_R M$ is
a faithfully exact functor.
\end{defin}

By applying Theorem~\ref{faithfullyexactthm} to the functors $U$ and
$T$ respectively, we get the following theorems.

\begin{thm} \label{faithfullyflatthm}
Let $M$ be a flat left $R$-module. Then the following are
equivalent:
\begin{enumerate}
\item $M$ is faithfully flat.

\item $A \otimes_R M \neq 0$ for every non zero right $R$-module $A$.

\item $\phi \otimes_R \id_M \neq 0$ for every non zero right $R$-linear homomorphism
$\phi$.

\item $IM \neq M$ for every proper right ideal $I$ of $R$.

\item $\m M \neq M$ for every maximal right ideal $\m$ of $R$.
\end{enumerate}
\end{thm}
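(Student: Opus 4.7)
The plan is to apply Theorem~\ref{faithfullyexactthm} directly to the functor $U(-) = - \otimes_R M$ from right $R$-modules to abelian groups, invoking the right-handed version of that theorem mentioned immediately after its statement. By Definition~\ref{faithfullyprojdefin}, condition (1) of the present statement is precisely the assertion that $U$ is faithfully exact, so the five equivalent conditions supplied by Theorem~\ref{faithfullyexactthm} applied to $U$ will translate, once we substitute $U(A) = A \otimes_R M$ and $U(\phi) = \phi \otimes_R \id_M$, directly into the present conditions (1), (2), and (3).

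The remaining task is to reconcile conditions (4) and (5) of Theorem~\ref{faithfullyexactthm}, which for $U$ take the form $(R/I) \otimes_R M \neq 0$ and $(R/\m) \otimes_R M \neq 0$, with the present conditions (4) and (5) phrased in terms of $IM$ and $\m M$. To do this I would tensor the short exact sequence $0 \ra I \ra R \ra R/I \ra 0$ of right $R$-modules with $M$; since $-\otimes_R M$ is right exact, this yields the exact sequence
$$
I \otimes_R M \lra R \otimes_R M \lra (R/I) \otimes_R M \lra 0.
$$
Under the canonical identification $R \otimes_R M \cong M$ sending $r \otimes m$ to $rm$, the image of $I \otimes_R M$ is exactly the subgroup $IM$ of $M$, yielding an isomorphism $(R/I) \otimes_R M \cong M/IM$. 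In particular $(R/I) \otimes_R M \neq 0$ if and only if $IM \neq M$, and the same reasoning with a maximal right ideal $\m$ in place of $I$ handles (5).

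There is no substantive obstacle here: the content sits entirely in the earlier Theorem~\ref{faithfullyexactthm} together with the elementary cokernel computation $(R/I) \otimes_R M \cong M/IM$. The only point requiring a little care is the left/right bookkeeping, since $M$ being a left $R$-module forces $-\otimes_R M$ to act on right $R$-modules, which is what dictates that conditions (4) and (5) involve right ideals of $R$ rather than left ones.
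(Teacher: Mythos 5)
Your proposal matches the paper's proof exactly: the paper also derives this theorem by applying Theorem~\ref{faithfullyexactthm} to the functor $U(-) = -\otimes_R M$ and remarking that $(R/I)\otimes_R M \cong M/IM$, so that $M/IM = 0$ precisely when $IM = M$. Your only addition is to spell out the cokernel computation by tensoring the short exact sequence, which the paper leaves implicit.
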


\begin{proof}
Follows immediately from Theorem~\ref{faithfullyexactthm}. Note that
in part (4), $R/I \otimes_R M \cong M/IM$, and $M/IM = 0$ if and
only if $M=IM$.
\end{proof}

\begin{thm} \label{faithfullyprojthm}
Let $P$ be a projective left $R$-module. Then the following are
equivalent:
\begin{enumerate}
\item $P$ is faithfully projective.

\item $\Hom_R(P, A) \neq 0$ for every non zero left $R$-module $A$.

\item $\Hom_R(P, \phi) \neq 0$ for every non zero left $R$-linear homomorphism
$\phi$.

\item $\Hom_R(P,R/I) \neq 0$ for every proper left ideal $I$ of $R$.

\item $\Hom_R(P,R/\m) \neq 0$ for every maximal left ideal $\m$ of $R$.
\end{enumerate}
\end{thm}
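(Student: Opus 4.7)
The plan is to derive this as an immediate corollary of Theorem~\ref{faithfullyexactthm} applied to the specific functor $T(-) = \Hom_R(P,-)$. First I would observe that since $P$ is a projective left $R$-module, the defining property of projectivity (namely, that $\Hom_R(P,-)$ is an exact functor from left $R$-modules to abelian groups) guarantees that $T$ meets the hypothesis of Theorem~\ref{faithfullyexactthm}. This is the only ingredient needed to invoke that theorem.

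Next I would match the five conditions of Theorem~\ref{faithfullyexactthm} against the five conditions in the statement. Condition (1) of Theorem~\ref{faithfullyprojthm} is the definition of faithful projectivity (Definition~\ref{faithfullyprojdefin}), which says precisely that $T = \Hom_R(P,-)$ is a faithfully exact functor, i.e.\ condition (1) of Theorem~\ref{faithfullyexactthm}. Conditions (2)--(5) of Theorem~\ref{faithfullyprojthm} are obtained by substituting $T(-) = \Hom_R(P,-)$ into conditions (2)--(5) of Theorem~\ref{faithfullyexactthm}, with no translation required: $T(A) = \Hom_R(P,A)$, $T(\phi) = \Hom_R(P,\phi)$, $T(R/I) = \Hom_R(P,R/I)$ and $T(R/\m) = \Hom_R(P,R/\m)$.

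Since the equivalences $(1) \Leftrightarrow (2) \Leftrightarrow (3) \Leftrightarrow (4) \Leftrightarrow (5)$ of Theorem~\ref{faithfullyexactthm} hold for any exact functor on the category of left $R$-modules, they hold in particular for $T = \Hom_R(P,-)$, giving the required equivalences. There is no real obstacle here, as the work has already been done in the proof of Theorem~\ref{faithfullyexactthm}; the only thing one needs to verify is that $P$ being projective makes $\Hom_R(P,-)$ exact, which is essentially the definition of a projective module recalled at the start of Section~\ref{sectionfaithfullyprojective}. So the proof reduces to a single line citing Theorem~\ref{faithfullyexactthm}.
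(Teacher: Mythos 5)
Your proof is correct and takes essentially the same approach as the paper: both invoke Theorem~\ref{faithfullyexactthm} for the functor $T(-)=\Hom_R(P,-)$, noting that projectivity of $P$ supplies the required exactness; the paper's only additional content is an explicit description of $\Hom_R(P,\phi)$ in part (3), which you also cover.
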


\begin{proof}
Follows immediately from Theorem~\ref{faithfullyexactthm}. In part
(3), if $\phi \in \Hom_R(X,Y)$, then $T(\phi)= \Hom_R( P, \phi):
\Hom_R(P,X) \ra \Hom_R(P,Y);$ $\psi \mps \phi \circ \psi$.
\end{proof}

\begin{prop} \label{faithfullprojandflatprop}
If an $R$-module $P$ is faithfully projective, then $P$ is
projective and faithfully flat. Further, when the ring $R$ is
commutative the converse holds.
\end{prop}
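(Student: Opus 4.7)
The plan is to handle the forward direction in two stages---projectivity plus flatness, then faithful flatness---and then the converse. Projectivity is immediate from the hypothesis, since a faithfully exact functor is in particular exact, so $\Hom_R(P,-)$ being exact means $P$ is projective by definition. Flatness follows at once because $P$ is then a direct summand of a free module and free modules are flat.

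To upgrade flatness to faithful flatness, I would verify condition~(2) of Theorem~\ref{faithfullyflatthm}: for every non-zero right $R$-module $A$, $A \otimes_R P \neq 0$. Supposing $A \otimes_R P = 0$, the standard tensor-hom adjunction applied with the cogenerator $\Q/\Z$ yields
$$
0 \;=\; \Hom_\Z(A \otimes_R P,\, \Q/\Z) \;\cong\; \Hom_R\bigl(P,\, \Hom_\Z(A,\, \Q/\Z)\bigr),
$$
where $\Hom_\Z(A,\Q/\Z)$ is viewed as a left $R$-module by $(r\cdot f)(a) = f(ar)$. Since $A \neq 0$, the character module $\Hom_\Z(A,\Q/\Z)$ is a non-zero left $R$-module, so the vanishing of the displayed Hom contradicts faithful projectivity of $P$ via condition~(2) of Theorem~\ref{faithfullyprojthm}. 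Hence $A \otimes_R P \neq 0$, and $P$ is faithfully flat.

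For the converse, assume $R$ is commutative with $P$ projective and faithfully flat. I would verify condition~(5) of Theorem~\ref{faithfullyprojthm}: $\Hom_R(P, R/\m) \neq 0$ for every maximal ideal $\m$ of $R$. By condition~(5) of Theorem~\ref{faithfullyflatthm}, $\m P \neq P$, so $P/\m P$ is non-zero. Commutativity is essential here: $R/\m$ is a field, so $P/\m P \cong P \otimes_R R/\m$ is a non-zero $R/\m$-vector space and therefore admits a non-zero $R/\m$-linear functional $\phi \colon P/\m P \to R/\m$. Composing $\phi$ with the canonical quotient $P \to P/\m P$ produces a non-zero $R$-linear map $P \to R/\m$, as required. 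The main obstacle is the faithful flatness step of the forward direction, where one must bridge the left-module Hom functor with the right-module tensor functor; the character-module adjunction is the natural tool. The commutativity hypothesis in the converse is used precisely to convert the non-vanishing of $P/\m P$ into a non-zero functional, via the field structure on $R/\m$.
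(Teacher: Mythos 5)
Your proof is correct and follows essentially the same route as the paper: both directions verify conditions~(2) and~(5) of Theorems~\ref{faithfullyflatthm} and~\ref{faithfullyprojthm} respectively, and the forward direction hinges on the tensor-hom adjunction. The only divergence is cosmetic: you take the character module $\Hom_\Z(A,\Q/\Z)$ as the test object, which requires invoking the fact that $\Q/\Z$ is an injective cogenerator for abelian groups, whereas the paper applies the same adjunction with $A$ itself as the target, obtaining $\Hom_\Z(A,A)=0$ and concluding $A=0$ directly from $\id_A \in \Hom_\Z(A,A)$ --- a slightly more self-contained trick that dispenses with the cogenerator fact. The converse in your proposal is identical to the paper's.
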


\begin{proof}
If an $R$-module $P$ is faithfully projective, it is projective, and
therefore also flat (see \cite[Prop.~4.3]{lam}). Using
Theorem~\ref{faithfullyflatthm}(2), we assume $A \otimes_R P =0$ and
need to prove that $A=0$. Then by \cite[\S II.4.1,
Prop.~1]{bouralgebra},
$$
\Hom_R (P , \Hom_{\mathbb Z} (A,A)) \cong \Hom_{\mathbb Z} (A
\otimes_R P, A) = \Hom_{\mathbb Z}(0, A) =0.
$$
Since $P$ is faithfully projective, by
Theorem~\ref{faithfullyprojthm}(2), $\Hom_{\mathbb Z} (A,A)=0$, so
$A=0$.

Conversely, let $R$ be commutative and $P$ be faithfully flat and
projective. By Theorem~\ref{faithfullyflatthm}(5), for any maximal
ideal $\m$ of $R$, we have $P/ \m P \neq 0$. Since $R \ra R/\m$ is a
surjective ring homomorphism, $R/\m$-linear maps can be considered
as $R$-linear maps and we have $\Hom_R(P/\m P, R/ \m) =\Hom_{R/\m}
(P/ \m P , R/\m)$. Since $R/\m$ is a field and $P/\m P \neq 0$, its
dual module $\Hom_{R/\m} (P/\m P, R/\m)$ is also non-zero, using
$$
\dim_{R/\m} (\Hom_{R/\m} (P/\m P, R/\m)) \geq \dim_{R/\m}(P/\m P),
$$
from \cite[p.~204, Remarks]{hungerford}. We have an exact sequence
$$
0 \lra \Hom_R(P/\m P, R/ \m) \lra \Hom_R(P, R/ \m),
$$
so $\Hom_R(P, R/ \m) \neq 0$, since $\Hom_R(P/\m P, R/ \m) \neq 0$.
By Theorem~\ref{faithfullyprojthm}(5), $P$ is faithfully projective.
\end{proof}

In the following proposition, we show that the definition of a
faithfully projective $R$-module
(Definition~\ref{faithfullyprojdefin}) can be expressed in a number
of different ways, which are equivalent to the definition given
above provided $R$ is a commutative ring. The second definition is
from \cite[p.~39]{bass} or \cite[p.~186]{farbdennis}, and the third
from \cite[p.~52]{knus}.

\begin{prop} \label{faithfullyprojprop}
Let $R$ be a commutative ring, and let $P$ be an $R$-module. Then
the following are equivalent:
\begin{enumerate}
\item $P$ is faithfully projective;

\item $P$ is finitely generated, projective and faithful as an
$R$-module;

\item $P$ is projective over $R$ and $P \otimes_R N = 0$ implies
$N=0$ for any left $R$-module $N$.
\end{enumerate}
\end{prop}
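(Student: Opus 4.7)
The plan is to establish the cycle $(2)\Ra(1)\Ra(3)\Ra(2)$. The implication $(1)\Ra(3)$ is essentially immediate: Proposition~\ref{faithfullprojandflatprop} passes from ``faithfully projective'' to ``projective and faithfully flat'', and the tensor-vanishing condition in~(3) is the reformulation of faithful flatness provided by Theorem~\ref{faithfullyflatthm}(2). So this direction reduces to invoking the two earlier results.

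For $(2)\Ra(1)$, I would verify criterion~(5) of Theorem~\ref{faithfullyprojthm}, namely $\Hom_R(P,R/\m)\neq 0$ for every maximal ideal $\m$ of $R$. Since $\m$ annihilates $R/\m$, any such homomorphism factors through $P/\m P$, so it suffices to exhibit a non-zero $R/\m$-linear map $P/\m P\to R/\m$. Faithfulness of $P$ combined with Nakayama's lemma applied to the finitely generated module $P$ forces $\m P\neq P$: otherwise $(1-x)P=0$ for some $x\in\m$, and faithfulness gives $1-x=0$, contradicting $1\notin\m$. Then $P/\m P$ is a non-zero $R/\m$-vector space whose $R/\m$-linear dual is automatically non-zero.

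For $(3)\Ra(2)$, projectivity is given. Since $P$ is projective (hence flat), condition~(3) says exactly that $P$ is faithfully flat by Theorem~\ref{faithfullyflatthm}(2); faithfulness of $P$ then follows quickly, as $r\in\Ann(P)$ would imply $rR\otimes_R P = rP = 0$, forcing $rR=0$ and hence $r=0$. The delicate step is finite generation. Here I plan to use the trace ideal $\tau(P) = \sum_{f\in\Hom_R(P,R)} f(P)$, which for projective $P$ satisfies $\tau(P)\cdot P = P$ via a dual basis argument. If $\tau(P)$ were a proper ideal, it would lie in some maximal $\m$, giving $\m P\supseteq\tau(P)P=P$ and therefore $P\otimes_R R/\m = P/\m P = 0$; condition~(3) then forces $R/\m=0$, a contradiction. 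So $\tau(P)=R$, which produces a finite expression $1=\sum_{i=1}^{n} f_i(p_i)$ and a split surjection $P^{n}\twoheadrightarrow R$, realising $P$ as a projective generator.

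The main obstacle is the final passage from ``projective generator'' to ``finitely generated''. My plan is to adopt the standard progenerator argument (cf.\ \cite{bass} or \cite{farbdennis}): using the finite dual basis $\{p_i,f_i\}_{i=1}^{n}$ together with the splitting $R\oplus K\cong P^{n}$ to recover a finite generating set for $P$ itself. This is the step requiring the most care; the other implications are short and follow directly from the material already established.
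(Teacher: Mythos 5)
Your arguments for $(2)\Rightarrow(1)$ and $(1)\Rightarrow(3)$ are sound. The $(1)\Leftrightarrow(3)$ equivalence follows the paper's own route via Proposition~\ref{faithfullprojandflatprop} and Theorem~\ref{faithfullyflatthm}(2), and your Nakayama-plus-dual argument gives a self-contained proof of $(2)\Rightarrow(1)$ where the paper simply cites Bass. The genuine gap is exactly the step you flag: after establishing that the trace ideal $\mathcal T_R(P)$ equals $R$, you have a projective generator, but a projective generator over a commutative ring need not be finitely generated, and this is not a repairable technicality. Over any nonzero commutative $R$, take $P = R^{(\mathbb N)}$, a free module of countably infinite rank: it is projective, faithful and a generator, and $P\otimes_R N \cong N^{(\mathbb N)}$ vanishes only when $N=0$, so condition $(3)$ holds; yet $P$ is not finitely generated. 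The finite family $\{p_i, f_i\}_{i=1}^n$ with $\sum f_i(p_i)=1$ is not a dual basis for $P$; it yields a split surjection $P^n\twoheadrightarrow R$, which presents $R$ as a direct summand of $P^n$ and carries no information about a generating set for $P$.

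The same example in fact shows that, with the functor-theoretic Definition~\ref{faithfullyprojdefin} used in this paper, conditions $(1)$ and $(3)$ can hold while $(2)$ fails: $R^{(\mathbb N)}$ is projective and faithfully flat, hence faithfully projective by Proposition~\ref{faithfullprojandflatprop} (and directly, $\Hom_R(R^{(\mathbb N)},A)\cong A^{\mathbb N}$ is exact in $A$ and nonzero for $A\neq 0$). So $(3)\Rightarrow(2)$, and indeed $(1)\Rightarrow(2)$, cannot be recovered from the paper's definitions alone. The cited \cite[Cor.~II.5.10]{bass} is stated for Bass's own notion of faithfully projective, which builds finite generation into the definition from the outset; reconciling that with Definition~\ref{faithfullyprojdefin} requires adding a finiteness hypothesis somewhere, and your plan should flag this rather than try to force the last implication.
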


\begin{proof}
(1) $\Lra$ (3): This follows from
Proposition~\ref{faithfullprojandflatprop} and
Theorem~\ref{faithfullyflatthm}(2).

\vspace*{3pt}

(1) $\Lra$ (2): See Bass \cite[Cor.~II.5.10]{bass}.
\end{proof}

We show below how faithfully flat modules are related to modules
which are faithful and flat.

\begin{prop}
Let $R$ be a ring. A faithfully flat left $R$-module $M$ is both
faithful and flat.
\end{prop}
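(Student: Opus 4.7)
The statement splits into two claims: $M$ is flat, and $M$ is faithful. The first is immediate from the definitions, so the work is really in establishing faithfulness.

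\textbf{Flatness.} By Definition~\ref{faithfullyprojdefin}, ``$M$ is faithfully flat'' means the functor $U(-) = - \otimes_R M$ is faithfully exact. In particular it is exact, which is precisely the definition of $M$ being a flat left $R$-module. No further work is needed here.

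\textbf{Faithfulness.} I need to show that the annihilator $\Ann(M) = \{r \in R : rM = 0\}$ is zero. My plan is to take an element $r$ with $rM = 0$ and construct a right $R$-linear endomorphism of $R$ whose tensor product with $\id_M$ vanishes, then invoke Theorem~\ref{faithfullyflatthm}(3) to conclude the endomorphism itself is zero. Concretely, define $\lambda_r : R \to R$ by $\lambda_r(x) = rx$; this is right $R$-linear since $\lambda_r(xs) = rxs = \lambda_r(x)s$. Under the canonical isomorphism $R \otimes_R M \cong M$ sending $x \otimes m \mapsto xm$, the map $\lambda_r \otimes \id_M$ is identified with left multiplication by $r$ on $M$, which is the zero map by hypothesis $rM = 0$. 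Applying Theorem~\ref{faithfullyflatthm}(3) (in the version appropriate for a left $R$-module $M$, where $- \otimes_R M$ acts on right $R$-modules), we get $\lambda_r = 0$, and evaluating at $1$ gives $r = 0$.

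The only point that requires a little care is the left/right bookkeeping: because $M$ is a left module, the functor $- \otimes_R M$ takes right $R$-modules to abelian groups, so the homomorphism to which we apply Theorem~\ref{faithfullyflatthm}(3) must be a \emph{right} $R$-linear map, which is why I use $\lambda_r(x) = rx$ rather than $x \mapsto xr$. Aside from that, the argument is a direct application of the tensor-faithfulness criterion; no real obstacle arises.
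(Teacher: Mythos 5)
Your proof is correct. The paper itself simply refers to Lam (Prop.~4.73) without giving a self-contained argument, but your approach — observing that $rM = 0$ forces $\lambda_r \otimes \id_M$ to vanish under the identification $R \otimes_R M \cong M$, then invoking Theorem~\ref{faithfullyflatthm}(3) to conclude $\lambda_r = 0$ and hence $r = 0$ — is exactly the standard argument, and your care about choosing $\lambda_r(x) = rx$ (right $R$-linear) rather than $x \mapsto xr$ is the right bookkeeping for a left module $M$.
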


\begin{proof}
See Lam \cite[Prop.~4.73]{lam}, with minor alterations for left
modules.
\end{proof}

In general the converse does not hold. For example, if $R=
\mathbb{Z}$, then the module $\mathbb{Q}$ is faithful and flat,
since $\mathbb Q = (\mathbb Z \mi 0)^{-1} \mathbb Z$ is a
localisation of $\mathbb Z$, and we know localisations are flat (see
\cite[Prop.~6.56]{magurn}). But by Theorem~\ref{faithfullyflatthm},
$\mathbb Q$ not faithfully flat over $\mathbb Z$, since for the
ideal $2 \mathbb Z$ of $\mathbb Z$, $\mathbb{Q} \otimes_{\mathbb{Z}}
(\mathbb{Z}/2\mathbb{Z}) = 0$. Even a faithful and projective
$R$-module $M$ is not necessarily faithfully flat over $R$. For
example, let $R$ be the direct product $\mathbb Z \times \mathbb Z
\times \cdots$, and let $M$ be the ideal $\mathbb Z \oplus \mathbb Z
\oplus \cdots$ in $R$. Then $M$ is faithful as a left $R$-module,
and it is projective (see \cite[Eg.~2.12C]{lam}). But we have $M^2
=M$, so for any maximal ideal $\m$ of $R$ containing $M$, we have $M
\m = M$. So by Theorem~\ref{faithfullyflatthm}, $M$ is not
faithfully flat.


\section{Separable algebras over commutative rings}
\label{sectionseparable}

In this section, we let $R$ denote a commutative ring. Let $A$ be an
$R$-algebra, and let $A^e = A \otimes_R A^{\mathrm{op}}$ be the
enveloping algebra\index{enveloping
algebra}\index{algebra!enveloping} of $A$, where $A^{\mathrm{op}}$
denotes the opposite algebra of $A$. Then the $R$-algebra $A^e$ has
a left action on $A$ induced by:
$$
(a \otimes b)x   := axb \;\;\;\;\;\;\;\;\;\; \textrm{ for $a, x \in
A$, $b \in A^{\op}$,}
$$
which is denoted by $(a \otimes b) \ast x$. Any $A$-bimodule $M$ can
also be viewed as a left $A^e$-module. We set
$$
M^A = \{ m \in M : ma =am \textrm{ for all } a \in A \}.
$$
There is an $A^e$-linear map $\mu :A^e \ra A;$ $a \otimes b \mapsto
(a \otimes b)\ast 1 = ab$, extended linearly, and we let $J$ denote
the kernel of $\mu$.

\begin{defin} \label{separabledefin}
An $R$-algebra $A$ is said to be \emph{separable}\index{separable
algebra}\index{algebra!separable} over $R$ if $A$ is projective as a
left $A^e$-module.
\end{defin}

The following two theorems show some equivalent characterisations of
separability.

\begin{thm} \label{separablelemma}
Let $A$ be an $R$-algebra. The following are equivalent:
\begin{enumerate}
\item $A$ is separable.

\item The exact sequence of left $A^e$-modules
$$ 0 \lra J \lra A^e \stackrel{\mu}{\lra} A \lra 0 $$ splits.

\item The functor $(-)^A: A^e$-$\mathcal{M}\mathfrak{od} \ra R$-$
\mathcal{M}\mathfrak{od}$ is exact.

\item There is an element $e \in A^e$ such that $e *1 = 1$ and $Je
=0$.

\item There is an element $e \in A^e$ such that $e \ast 1 = 1$
and $(a \otimes 1) e = (1 \otimes a)e$ for all $a \in A$.
\end{enumerate}
\end{thm}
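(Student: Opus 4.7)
The plan is to establish $(1) \Lra (2)$, $(1) \Lra (3)$, $(2) \Lra (4)$, and $(4) \Lra (5)$. Conditions $(1)$, $(2)$ and $(3)$ express separability as an abstract projectivity/exactness statement, while $(4)$ and $(5)$ extract the concrete ``separability idempotent'' $e \in A^e$ that carries the working content of the theorem, so the argument naturally splits into an ``abstract'' half and a ``concrete'' half tied together through $e$.

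For the abstract half, the equivalence $(1) \Lra (2)$ is immediate: since $A^e$ is free over itself, $A$ is projective as a left $A^e$-module if and only if the surjection $\mu : A^e \ra A$ admits a section, which is exactly the condition that the short exact sequence in $(2)$ splits. For $(1) \Lra (3)$ I would first identify the functor $(-)^A$ with $\Hom_{A^e}(A, -)$: an $A^e$-linear map $f : A \ra M$ is determined by $f(1)$, and applying $f$ to $(a \otimes 1) \ast 1 = a = (1 \otimes a) \ast 1$ forces $a f(1) = f(1) = f(1) a$, so $f(1) \in M^A$; the converse correspondence is obvious. The equivalence then reduces to the standard fact that $\Hom_{A^e}(A, -)$ is an exact functor if and only if $A$ is projective over $A^e$.

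For the concrete half, $(2) \Ra (4)$ takes a section $\sigma$ of $\mu$ and sets $e = \sigma(1)$: then $e \ast 1 = \mu(e) = 1$, and $A^e$-linearity of $\sigma$ gives $je = j \sigma(1) = \sigma(j \ast 1) = \sigma(0) = 0$ for every $j \in J$. Conversely, given $e$ as in $(4)$, define $\sigma : A \ra A^e$ by $\sigma(a) = (a \otimes 1) e$; a direct computation yields $\mu \sigma(a) = (a \otimes 1) \ast (e \ast 1) = a$, and $Je = 0$ will be needed to secure $A^e$-linearity. For $(4) \Lra (5)$ the key observation is that the left ideal $J$ of $A^e$ is generated by the set $\{ a \otimes 1 - 1 \otimes a : a \in A \}$, since for any $\sum a_i \otimes b_i \in J$ we have $\sum a_i b_i = 0$ and
\begin{equation*}
\sum a_i \otimes b_i \; = \; -\sum (a_i \otimes 1)\bigl(b_i \otimes 1 - 1 \otimes b_i\bigr).
\end{equation*}
Hence $Je = 0$ if and only if $(a \otimes 1 - 1 \otimes a)e = 0$ for every $a \in A$, which is condition $(5)$.

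The main obstacle is the $A^e$-linearity check in $(4) \Ra (2)$: comparing $\sigma\bigl((c \otimes d) \ast a\bigr) = (cad \otimes 1)e$ with $(c \otimes d) \sigma(a) = (ca \otimes d)e$, the discrepancy $(ca \otimes d - cad \otimes 1) e$ has to vanish, and this is exactly where $Je = 0$ is invoked, after noting that $(ca \otimes d) - (cad \otimes 1) \in J$ since both elements have image $cad$ under $\mu$. Apart from this point, the proof reduces to standard projectivity arguments and direct unwinding of definitions.
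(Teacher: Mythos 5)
Your proof is correct, but you've organized the implication cycle differently from the paper, and the difference is worth noting. The paper proves a one-directional cycle $(2) \Ra (4) \Ra (5) \Ra (2)$, where the closing step $(5) \Ra (2)$ constructs a section $\gamma(a) = (1 \otimes a)e$ and invokes the identity $(a \otimes 1)e = (1 \otimes a)e$ from $(5)$ to verify $A^e$-linearity. You instead prove the bidirectional equivalences $(2) \Lra (4)$ and $(4) \Lra (5)$: for $(4) \Ra (2)$ you use the section $\sigma(a) = (a \otimes 1)e$ and verify $A^e$-linearity by pushing the discrepancy $(ca \otimes d) - (cad \otimes 1)$ into $J$, where $Je = 0$ kills it; and for $(4) \Lra (5)$ you make the observation — not in the paper — that $J$ is generated as a left ideal of $A^e$ by the elements $a \otimes 1 - 1 \otimes a$, so that $Je = 0$ is literally equivalent to condition $(5)$. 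This generator identification is a clean piece of concrete algebra that gives you $(4) \Lra (5)$ directly, whereas the paper only proves $(4) \Ra (5)$ and relies on the rest of its cycle for the converse; the trade-off is that your $(4) \Ra (2)$ linearity check is a step trickier than the paper's $(5) \Ra (2)$ check. Your treatment of the abstract half, $(1) \Lra (2)$ and $(1) \Lra (3)$ via the natural isomorphism $\Hom_{A^e}(A, M) \cong M^A$, matches the paper exactly. One tiny slip: in the $(1) \Lra (3)$ step the displayed chain should read $a f(1) = f(a) = f(1) a$ rather than $a f(1) = f(1) = f(1) a$; the intended conclusion $f(1) \in M^A$ is unaffected.
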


Such an element $e$ as in Theorem~\ref{separablelemma} is an
idempotent, called a \emph{separability
idempotent}\index{separability idempotent} for $A$, since $e^2 - e =
(e - 1 \otimes 1)e \in J e = 0$.

\begin{proof}

 \cite[Lemma~III.5.1.2]{knus}, \cite[Prop.~II.1.1]{demeyer}

\vspace*{2pt}

(1) $\Leftrightarrow$ (2): The forward direction follows immediately
from the definition of a projective module. For the converse, using
known results involving projective modules (see
\cite[Cor.~2.16]{magurn}), (2) implies $A^e \cong J \oplus A$, so
$A$ is projective.

\vspace*{3pt}

(1) $\Leftrightarrow$ (3): For all $A$-bimodules $M$, the natural
map
\begin{eqnarray} \rho_M :\Hom_{A^e} (A, M) & \lra & M^A \nono \\
f & \lmps & f(1) \nono
\end{eqnarray}
is an isomorphism of $R$-modules, with the inverse being
\begin{eqnarray} \rho^{-1}_M :  M^A & \lra & \Hom_{A^e} (A, M) \nono \\
x & \longmapsto & \mar_x :A \ra M \nono \\
&& \;\;\;\;\;\;\;\; a \mapsto ax. \nono
\end{eqnarray}
Since $A$ is separable if and only if $\Hom_{A^e}(A , -)$ is an
exact functor, this proves the equivalence of (1) and (3).

\vspace*{3pt}

(2) $\Rightarrow$ (4): Let $\ga : A \ra A^e$ be an $A^e$-module
homomorphism such that $\mu \circ \ga = \id_A$. Let $e = \ga (1)$,
so that $1 = \mu(e)=e*1$. To show that $J e =0$, let $a \in J$. Then
as $\ga$ is $A^e$-linear, $ae = a \ga(1) = \ga (a * 1)=0$, since we
have $a *1 = \mu (a) = 0$, proving $Je =0$, as required for (4).

\vspace*{3pt}

(4) $\Ra$ (5): From (4), we have an element $e \in A^e$ such that
$e*1 = 1$ and $Je =0$. Let $a \in A$ be arbitrary. Then $\mu(1
\otimes a - a \otimes 1 )= 0$, so $1 \otimes a - a \otimes 1 \in J$.
Hence $(1 \otimes a -a \otimes 1)e = 0$; that is, $(1 \otimes a) e =
(a \otimes 1)e$, proving (5).

\vspace*{3pt}

(5) $\Rightarrow$ (2): If $e$ is an element of $A^e$ satisfying the
conditions in (5), we can define a map $\ga$ by $\ga : A  \ra A^e;$
$a  \mapsto  (1 \otimes a)e$. Using the assumption that $(a \otimes
1) e = (1 \otimes a)e$ for all $a \in A$, we can show that $\ga$ is
an $A^e$-module homomorphism. It is a right inverse of $\mu$ since,
for $a \in A$, writing $e = \sum x_i \otimes y_i$ gives
\begin{align*}
\mu \circ \ga (a) &= \mu \big( (1 \otimes a) e \big) \\
&= \mu \left( \sum ( x_i \otimes y_i a) \right) \\
&= \left( \sum x_i y_i \right) a \\
&= 1.a \; = \;  \id_A (a),
\end{align*}
completing the proof.
\end{proof}

\begin{thm}\label{maxseparable}
Let $A$ be an $R$-algebra which is finitely generated as an
$R$-module. The following are equivalent:
\begin{enumerate}
\item $A$ is separable over $R$.

\item $A_{\m}$ is separable over $R_{\m}$ for all  $\m \in \Max(R)$.

\item $A/ \m \! A$ is separable over $R/ \m $ for all  $\m \in \Max(R)$.
\end{enumerate}

\end{thm}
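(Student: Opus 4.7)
The plan is to use the separability-idempotent criterion of Theorem~\ref{separablelemma}(5): an algebra $B$ over a commutative ring is separable if and only if its enveloping algebra contains an element $e$ with $e \ast 1 = 1$ and $(b \otimes 1)e = (1 \otimes b)e$ for every $b \in B$. With this in hand, the implications $(1) \Rightarrow (2) \Rightarrow (3)$ become formal, since separability is preserved under base change along any homomorphism $R \to S$ of commutative rings: under the canonical identification of $A^e \otimes_R S$ with the enveloping algebra of $A \otimes_R S$ over $S$, the image of a separability idempotent is again a separability idempotent. Since $A$ is finitely generated as an $R$-module, $A \otimes_R R_\m \cong A_\m$ and $A_\m \otimes_{R_\m} (R_\m/\m R_\m) \cong A/\m A$, so taking $S = R_\m$ gives $(1) \Rightarrow (2)$ and then $S = R/\m$ gives $(2) \Rightarrow (3)$.

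The substantive direction is $(3) \Rightarrow (1)$. The first observation is that finite generation of $A$ over $R$ makes $A$ finitely presented as a left $A^e$-module: if $a_1, \ldots, a_n$ generate $A$ as an $R$-module, then a short computation shows that $J = \ker \mu$ is generated as a left $A^e$-module by the finite set $\{a_i \otimes 1 - 1 \otimes a_i : 1 \leq i \leq n\}$, so $A = A^e/J$ is finitely presented. Since $R$ lies in the centre of $A^e$ and $A$ is finitely presented over $A^e$, projectivity of $A$ as an $A^e$-module is a local property at the maximal ideals of $R$: $A$ is $A^e$-projective if and only if each $A_\m$ is projective over $(A_\m) \otimes_{R_\m} (A_\m)^{\op}$. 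By Theorem~\ref{separablelemma}, this reduces the problem to the local statement that if $(R_\m, \m R_\m)$ is local and $A/\m A$ is separable over $R/\m$, then $A_\m$ is separable over $R_\m$.

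For this local case, I would use Nakayama's lemma. Writing $B = A_\m$ and $S = R_\m$ for brevity, finite generation of $B^e$ over the local ring $S$ makes $B^e/\m B^e$ a finite dimensional $S/\m$-algebra, hence Artinian, so $\m B^e \subseteq \mathrm{Jac}(B^e)$. The $S$-linear evaluation map
\[
\mu_\ast : T := \{x \in B^e : J x = 0\} \cong \Hom_{B^e}(B, B^e) \lra Z(B), \quad \phi \lmps \mu(\phi(1)),
\]
has the property, from Theorem~\ref{separablelemma}(5), that $B$ is separable over $S$ if and only if $1 \in \mu_\ast(T)$, equivalently the cokernel $C$ of $\mu_\ast$ vanishes. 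The hypothesis that $A/\m A$ is separable over $R/\m$ yields $C/\m C = 0$, and finite presentation of $B$ over $B^e$ makes $C$ a finitely generated $S$-module, so Nakayama forces $C = 0$.

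The main obstacle is precisely this last step: one must verify carefully that the formation of $T$ and of the map $\mu_\ast$ is compatible with reduction modulo $\m$, so that vanishing of the residue-field cokernel really does correspond to the existence of a separability idempotent over $S/\m S \cong R/\m$, and that the resulting $S$-module $C$ is genuinely finitely generated for Nakayama to apply. Handling this cleanly requires invoking that $\Hom_{B^e}(B, -)$ commutes with base change up to a map with finitely generated cokernel, which is where finite presentation of $B$ over $B^e$ is essential.
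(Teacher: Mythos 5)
The paper does not give a self-contained proof of Theorem~\ref{maxseparable}; it simply cites Knus, Lemma~III.5.1.10. Your proposal is therefore an attempt at a proof the paper itself omits, and much of its architecture is sound: the implications $(1)\Rightarrow(2)\Rightarrow(3)$ via base change of a separability idempotent using Theorem~\ref{separablelemma}(5) are correct; the observation that $J = \ker\mu$ is generated as a left $A^e$-ideal by the finitely many elements $a_i \otimes 1 - 1 \otimes a_i$, hence $A$ is finitely presented over $A^e$, is correct and is the right thing to notice; and the reduction of $(3)\Rightarrow(1)$ to the local case, using the fact that projectivity of a finitely presented module over an $R$-algebra can be tested locally at the maximal ideals of $R$ (because $\Hom$ over a finitely presented module commutes with flat localisation), is also correct.

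The gap you flag at the end is, however, genuine and not merely a verification detail; as written the Nakayama step does not go through, for two reasons. First, $\Hom$ does not commute with non-flat base change: the natural map $T \otimes_S (S/\m) \to \Hom_{\overline{B^e}}(\overline{B},\overline{B^e})$, with $T = \Hom_{B^e}(B,B^e)$, is in general neither surjective nor injective, because $S \to S/\m$ is not flat. Consequently, the existence of a separability idempotent for $\overline{B}$ over $S/\m$ produces an element of $\Hom_{\overline{B^e}}(\overline{B},\overline{B^e})$ hitting $1$, but this does not descend to an element of $T/\m T$ hitting $\overline{1}$ in $Z(B)/\m Z(B)$; so the crucial claim $C/\m C = 0$ is unsupported. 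Second, the finite generation of $C$ over $S$ is also not established: $C$ is a quotient of $Z(B)$, which is only a submodule of the finitely generated $S$-module $B$, and since $S$ is not assumed Noetherian this does not give finite generation. Finite presentation of $B$ over $B^e$ does not obviously repair either problem, because $T$ is defined by a $\Hom$ (a kernel-type construction), not a cokernel-type one. The standard treatments (Knus loc.\ cit.; Auslander--Goldman Thm.~4.7 together with Endo--Watanabe to remove Noetherian hypotheses, both discussed in Section~\ref{sectiondevelopmentofazumayaalgebras}) proceed differently in the local step — roughly, by exploiting that $\overline{B^e}$ is a finite-dimensional semisimple algebra when $\overline{B}$ is separable, so that $\overline{J}$ is a direct summand generated by an idempotent, and then lifting — and this replacement argument itself requires care about the interaction of $J$ with $\m B^e$ precisely because $A$ is assumed only finitely generated, not projective, over $R$. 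So the local step needs a genuinely different argument, not just a tidier verification of the one you sketched.
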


\begin{proof}
See \cite[Lemma~III.5.1.10]{knus}.
\end{proof}

For a free $R$-module $F$, we know that $F$ is isomorphic to a
direct sum of copies of $R$ as a left $R$-module; that is, $F \cong
\bigoplus_{i \in I} R_i$ for $R_i =R$ (see
\cite[Thm.~IV.2.1]{hungerford}). Let $f_i \in \Hom_R ( F , R)$ be
the projection of $R_i$ onto $R$, and let $e_i$ be the element of
$F$ with $1$ in the $i$-th position and zeros elsewhere. Then
clearly the following results hold:
\begin{enumerate}
\item for every $x \in F$, $f_i (x) = 0$ for all but a finite subset
of $i \in I$;

\item for every $x \in F$, $\sum_i f_i (x) e_i = x$.
\end{enumerate}
The following lemma shows that we have similar results when we
consider projective modules, rather than free modules. Moreover,
such properties are sufficient to characterise a projective module.

\begin{lemma}[Dual Basis Lemma] \label{dualbasislemma}
Let $M$ be an $R$-module. Then $M$ is projective if and only if
there exists $\{ m _i \}_{i \in I} \subseteq M$ and $\{ f_i \}_{i
\in I} \subseteq \Hom_R (M,R)$, for some indexing set $I$, such that
\begin{enumerate}
\item for every $m \in M$, $f_i (m) =0$ for all but a finite subset
of $i \in I$; and

\item for every $m \in M$, $\sum_{i \in I} f_i (m) m_i = m.$
\end{enumerate}
Moreover, $I$ can be chosen to be a finite set if and only if $M$ is
finitely generated.
\end{lemma}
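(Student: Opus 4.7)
The plan is to exploit the characterization of projective modules as direct summands of free modules. For the forward direction, suppose $M$ is projective. Then there is a free module $F$ with basis $\{e_i\}_{i \in I}$ and a surjection $\pi : F \to M$ that splits, say with section $s : M \to F$ satisfying $\pi \circ s = \id_M$. Let $\pi_i : F \to R$ be the coordinate projections onto the $i$-th summand, and define $m_i := \pi(e_i) \in M$ and $f_i := \pi_i \circ s \in \Hom_R(M, R)$. Since for each $m \in M$ the element $s(m) \in F$ has only finitely many nonzero coordinates, condition (1) holds; applying $\pi$ to the expansion $s(m) = \sum_i f_i(m) e_i$ gives $m = \pi(s(m)) = \sum_i f_i(m) m_i$, which is condition (2).

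For the converse, given the data $\{m_i\}_{i \in I}$ and $\{f_i\}_{i \in I}$ satisfying (1) and (2), take the free $R$-module $F = \bigoplus_{i \in I} R e_i$ on the set $I$, define $\pi : F \to M$ by $\pi(e_i) = m_i$ (extended $R$-linearly), and define $s : M \to F$ by $s(m) = \sum_{i \in I} f_i(m) e_i$. Condition (1) makes $s$ well-defined as a map into the direct sum, and $R$-linearity of $s$ follows from $R$-linearity of each $f_i$. Condition (2) gives $\pi(s(m)) = \sum_i f_i(m) m_i = m$, so $\pi \circ s = \id_M$; hence $M$ is a direct summand of $F$ and therefore projective.

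For the \emph{moreover} clause, if $M$ is finitely generated, choose generators $m_1, \dots, m_n$ and a surjection $R^n \twoheadrightarrow M$ sending the standard basis to these generators; since $M$ is projective, this surjection splits, and repeating the argument of the forward direction with $I = \{1, \dots, n\}$ finite yields a finite dual basis. Conversely, if $I$ is finite, then condition (2) immediately expresses every $m \in M$ as an $R$-linear combination of the finite set $\{m_i\}_{i \in I}$, so $M$ is finitely generated.

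None of the steps looks to be a serious obstacle; the only subtle point is ensuring that $s$ in the converse direction genuinely lands in the direct sum $\bigoplus_{i \in I} R e_i$ rather than the direct product, which is precisely what condition (1) guarantees. The rest is bookkeeping with the universal property of free modules and the splitting criterion for projectivity (cited earlier as \cite[Cor.~2.16]{magurn}).
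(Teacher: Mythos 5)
Your proof is correct and follows the standard argument — and indeed the one the paper delegates to DeMeyer--Ingraham \cite[Lemma~I.1.3]{demeyer} — realising $M$ as a direct summand of a free module and reading off the dual basis from the split surjection, or conversely constructing the splitting from the dual-basis data. The handling of the \emph{moreover} clause, including the observation that condition (1) is exactly what keeps the section $s$ in the direct sum rather than the direct product, is exactly right.
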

The collection $\{ f_i , m_i \}$ is called a {\em dual basis} for
$M$.

\begin{proof}
See \cite[Lemma~I.1.3]{demeyer}.
\end{proof}

The  following proposition, from Villamayor, Zelinsky
\cite[Prop.~1.1]{villamayorzelinsky} (see also
\cite[Prop.~II.2.1]{demeyer}), shows that an algebra which is
separable and projective is finitely generated. This is a somewhat
surprising result, as the requirement of being separable and
projective does not immediately appear to imply a finitely generated
condition. The proof of the proposition uses the Dual Basis Lemma.

\begin{prop} \label{separableprojectivefg}
Let $A$ be a separable $R$-algebra which is projective as an
$R$-module. Then $A$ is finitely generated as an $R$-module.

\end{prop}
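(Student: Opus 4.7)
The plan is to combine the separability idempotent provided by Theorem~\ref{separablelemma}(5) with a dual basis for $A$ over $R$ coming from Lemma~\ref{dualbasislemma}, and to use them together to produce an explicit finite generating set for $A$ as an $R$-module. First I would fix a separability idempotent $e = \sum_{k=1}^{n} x_k \otimes y_k \in A^e$, so that $\sum_{k} x_k y_k = \mu(e) = 1$ and, unpacking $(a \otimes 1)e = (1 \otimes a)e$ in $A \otimes_R A^{\op}$, one has the identity
\[
\sum_{k=1}^{n} a x_k \otimes y_k \;=\; \sum_{k=1}^{n} x_k \otimes y_k a \qquad \text{for every } a \in A.
\]
Since $A$ is $R$-projective, I would then pick a dual basis $\{m_i, f_i\}_{i \in I}$ with $f_i \in \Hom_R(A,R)$ as in Lemma~\ref{dualbasislemma}.

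The main construction is to define, for each $j \in I$, the element $\beta_j := \sum_{k=1}^{n} f_j(x_k)\, y_k \in A$. Because only finitely many $j \in I$ can satisfy $f_j(x_k) \ne 0$ for any of the finitely many $k$, the set $S := \{\, j \in I : \beta_j \ne 0 \,\}$ is finite. Applying the $R$-linear map $f_j \otimes \id_A : A \otimes_R A^{\op} \to A$ to the displayed identity yields
\[
\beta_j\, a \;=\; \sum_{k=1}^{n} f_j(a x_k)\, y_k,
\]
so each $\beta_j a$ lies in the finitely generated $R$-submodule $\sum_{k} R\, y_k$. A short separate computation, using that $f_j(x_k) \in R$ is central in the $R$-algebra $A$ and the dual basis identity $\sum_j f_j(x_k) m_j = x_k$, shows that $\sum_{j \in S} m_j\, \beta_j = \sum_{k} x_k y_k = 1$.

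Putting the two pieces together, for arbitrary $a \in A$ I would write
\[
a \;=\; 1 \cdot a \;=\; \sum_{j \in S} m_j (\beta_j\, a) \;=\; \sum_{j \in S}\sum_{k=1}^{n} f_j(a x_k)\, m_j y_k,
\]
which exhibits $a$ as an $R$-linear combination of the finite set $\{\, m_j y_k : j \in S,\ 1 \le k \le n \,\}$, proving that $A$ is finitely generated over $R$. The one delicate point I expect to be the main obstacle is the finiteness bookkeeping: one must verify that the sum $\sum_{j} m_j \beta_j$ is genuinely a finite sum before asserting it equals $1$, and this is exactly where projectivity of $A$ (giving the dual basis with the local finiteness property) interacts with the fact that the separability idempotent is a finite tensor.
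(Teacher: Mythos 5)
Your proof is correct, and it uses the same essential ingredients as the paper's proof --- the separability idempotent from Theorem~\ref{separablelemma}(5) and the Dual Basis Lemma~\ref{dualbasislemma} --- but arranges them more directly. The paper takes a dual basis $\{f_i, a_i\}$ for $A^{\op}$, upgrades it to a dual basis $\{\id_A \otimes f_i,\, 1 \otimes a_i\}$ for $A^e$ viewed as a left $A$-module, and applies that machinery to the element $u = (1 \otimes a)e$; you instead take a dual basis for $A$ itself and apply $f_j \otimes \id$ directly to the separability identity $(a \otimes 1)e = (1 \otimes a)e$, bypassing the construction of the $A$-module dual basis for $A^e$ entirely. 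Your finiteness argument --- that $S$ is contained in $\bigcup_{k}\{j : f_j(x_k) \neq 0\}$, a finite union of finite sets --- is also more elementary than the paper's route, which tracks the support of $(\id_A \otimes f_i)(e)$. The one place that deserves an explicit sentence in a polished write-up, and you rightly flag the finiteness bookkeeping as the delicate point, is the identity $\sum_{j \in S} m_j \beta_j = 1$: the dual basis relation $\sum_{j} f_j(x_k) m_j = x_k$ sums over all of $I$, not just $S$, so you should first note that $\sum_{j \in S} m_j \beta_j = \sum_{j \in I} m_j \beta_j$ (the omitted terms have $\beta_j = 0$), observe that the resulting double sum over pairs $(j,k)$ with $f_j(x_k) \neq 0$ is a finite sum, and only then interchange the order of summation to land on $\sum_k x_k y_k = 1$. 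With that step spelled out the argument is airtight, and it is arguably a cleaner route to the generating set than the one taken in the paper.
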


\begin{proof}

Since $A$ is projective as an $R$-module, $A^{\op}$ is also
projective as an $R$-module. Let $\{ f_i , a_i\}$ be a dual basis
for $A^{\op}$ over $R$, where $a_i \in A^{\op}$ and $f_i \in \Hom_R
( A^{\op} , R)$. Then for every $b \in A^{\op}$, $b = \sum_{i \in I}
f_i (b) a_i$ and $f_i(b) =0$ for all but finitely many $i \in I$
(using Lemma~\ref{dualbasislemma}). Since $A \otimes_R R \cong A$,
we can identify $A \otimes_R R$ with $A$ and can consider $\id_A
\otimes f_i$ as a map from $A^e$ to $A$. This map is $A$-linear, and
we claim that $\{ \id_A \otimes f_i , 1 \otimes a_i \}$ forms a dual
basis for $A^e$ as a projective left $A$-module. Let $a \otimes b
\in A^e$ be arbitrary. Since $f_i (b) = 0$ for all but a finite
number of subscripts $i$, we also have $(\id_A \otimes f_i)( a
\otimes b) = a \otimes f_i (b) = 0$ for all but a finite number of
$i$. Then
\begin{align*}
\sum_{i \in I} (\id_A \otimes f_i)( a \otimes b) (1 \otimes a_i) 
& = \sum_{i \in I} a \otimes f_i (b) a_i \\
&= a \otimes b.
\end{align*}
Extended linearly, this holds for all $u \in A^e$.  So $\{  \id_A
\otimes f_i , 1 \otimes a_i \}$ forms a dual basis for $A^e$ over
$A$.

Let $a \in A^{\op}$ be a fixed arbitrary element. We will show that
$a$ can be written as an $R$-linear combination of a finite subset
of $A^{\op}$, where this finite subset is independent of $a$. Let
$e= \sum_j x_j \otimes y_j$ be a
separability idempotent for $A$ over $R$ 
 and define $u= (1 \otimes a)e \in A^e$. We have $u \ast 1 = \sum_j
x_j y_j a  = a$ and, from Lemma~\ref{dualbasislemma}, $u= \sum_i
(\id_A \otimes f_i)\left((1 \otimes a) e \right) (1 \otimes a_i)$.
Then
\begin{align} \label{dualbasissum}
a \; = u \ast 1 \, 
&= \sum_i \Big(   \big((\id_A \otimes f_i)( (1 \otimes a)e)
\otimes a_i \big) \ast 1 \Big) \nono \\
&= \sum_i \big( (\id_A \otimes f_i)( (1 \otimes a)e)\big) \cdot a_i
.
\end{align}
Using Proposition~\ref{separablelemma}(5),
\begin{align*}
(\id_A \otimes f_i)( (1 \otimes a)e) &= (\id_A \otimes f_i)( (a
\otimes 1)e) \\
&= a(\id_A \otimes f_i)( e) \\
&= (a \otimes 1) * \big( (\id_A \otimes f_i)( e) \big).
\end{align*}
Since $\{  \id_A \otimes f_i , 1 \otimes a_i \}$ forms a dual basis
for $A^e$, $(\id_A \otimes f_i)(e) =0$ for all but a finite subset
of $i \in I$. So the set of subscripts $i$ for which $(\id_A \otimes
f_i)\big( (1 \otimes a)e \big)$ is non-zero is contained in the
finite set of subscripts for which $(\id_A \otimes f_i)(e)$ is
non-zero, which is independent of $a$. Then since $(\id_A \otimes
f_i)\big( (1 \otimes a)e \big)$ is non-zero for only finitely many
$i \in I$, the sum (\ref{dualbasissum}) may be taken over a finite
set. Again writing $e = \sum_j x_j \otimes y_j$,
(\ref{dualbasissum}) says:
\begin{align*}
a 
&= \sum_{i, j} \left( x_j \otimes f_i( y_j \;\! a) \right) \cdot a_i \\
&= \sum_{i, j}  x_j f_i( y_j \;\! a) \;\! a_i \\
&= \sum_{i, j} f_i( y_j \;\! a) \;\! x_j  \;\! a_i.
\end{align*}
So the finite set $\{x_j a_i\}$ generates $A^{\op}$ over $R$, and
therefore generates $A$ over $R$. This completes the proof that $A$
is finitely generated.
\end{proof}

\section{Other definitions of separability}
\label{sectionseparabletwo}

In Definition~\ref{separabledefin} above, we define separability
over a commutative ring $R$. If $R$ is a field, we also have the
classical definition of separability. For a field $R$, an
$R$-algebra $A$ is said to be \emph{classically
separable}\index{classically separable
algebra}\index{algebra!classically separable} if, for every field
extension $L$ of $R$, the Jacobson radical of $A\otimes_R L$ is
zero, where the Jacobson radical\index{Jacobson radical} of $A
\otimes_R L$ is the intersection of the maximal left ideals. The
following theorem shows the connection between the two definitions
of separability when $R$ is a field.

\begin{thm} \label{classicallyseparablethm}
Let $R$ be a field and $A$ be an $R$-algebra. Then $A$ is separable
over $R$ if and only if $A$ is classically separable over $R$ and
the dimension of $A$ as a vector space over $R$ is finite.
\end{thm}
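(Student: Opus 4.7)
The plan is to prove the two implications separately, using the reformulations of separability from Theorem~\ref{separablelemma} together with Proposition~\ref{separableprojectivefg}.

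For the forward direction, suppose $A$ is separable over the field $R$. Since every $R$-module is free and hence projective, $A$ is projective over $R$, so Proposition~\ref{separableprojectivefg} forces $[A:R] < \infty$. To verify classical separability I would first check that separability is stable under arbitrary base change: under the natural identification $(A \otimes_R L) \otimes_L (A \otimes_R L)^{\op} \cong A^e \otimes_R L$, if $e \in A^e$ is a separability idempotent for $A$ over $R$, then $e \otimes 1_L$ satisfies the conditions of Theorem~\ref{separablelemma}(5) for $A \otimes_R L$ over $L$, so $A \otimes_R L$ is separable over $L$. It therefore suffices to show that any separable algebra $B$ over a field $L$ has zero Jacobson radical, which I would obtain by proving $B$ is semisimple. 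Given a short exact sequence of left $B$-modules $0 \to M' \to M \to M'' \to 0$, the induced sequence $0 \to \Hom_L(M'', M') \to \Hom_L(M'', M) \to \Hom_L(M'', M'') \to 0$ of $B$-bimodules is exact (as $L$ is a field), and applying the exact functor $(-)^B$ supplied by Theorem~\ref{separablelemma}(3) lifts $\id_{M''}$ to a $B$-linear splitting.

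For the reverse direction, suppose $[A:R] < \infty$ and $A$ is classically separable, and let $\overline{R}$ denote an algebraic closure of $R$. Then $A \otimes_R \overline{R}$ is a finite-dimensional $\overline{R}$-algebra with trivial Jacobson radical, so by Artin--Wedderburn it decomposes as a finite direct product $A \otimes_R \overline{R} \cong \prod_i M_{n_i}(\overline{R})$. Each matrix factor is separable over $\overline{R}$ via the explicit separability idempotent $\sum_{j} e_{j1} \otimes e_{1j}$, and a finite product of separable $L$-algebras is separable (take the sum of idempotents sitting inside the diagonal summands of the enveloping algebra), so $A \otimes_R \overline{R}$ is separable over $\overline{R}$. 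To descend this conclusion to $R$, I would identify the set of separability idempotents for $A$ over $R$ with the affine $R$-subspace $\{e \in (A^e)^A : \mu(e) = 1\}$ of the finite-dimensional $R$-vector space $A^e$, and argue that both the formation of $(A^e)^A$ and the affine condition $\mu(e) = 1$ commute with base change along the flat extension $R \to \overline{R}$; faithful flatness then forces this subspace to be non-empty over $R$, since it is non-empty over $\overline{R}$, which by Theorem~\ref{separablelemma}(5) gives the separability of $A$ over $R$.

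The main obstacle is the descent step at the end of the reverse direction. The critical ingredient there is the finite-dimensionality of $A^e$ over $R$, which lets me write $(A^e)^A$ as the kernel of the $R$-linear commutator map $A^e \to \Hom_R(A, A^e)$, $x \mapsto \bigl(a \mapsto (a \otimes 1 - 1 \otimes a)x\bigr)$; this is a map between finite-dimensional $R$-vector spaces whose kernel commutes with tensoring by the flat $R$-module $\overline{R}$. The affine condition $\mu(e) = 1$ then descends from $\overline{R}$ back to $R$ by faithful flatness. Without finite-dimensionality this descent can fail, explaining the necessity of the $[A:R] < \infty$ hypothesis in the theorem statement.
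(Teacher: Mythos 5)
The paper does not prove this theorem itself; the proof reads only ``See \cite[Thm.~II.2.5]{demeyer}.'' So there is no in-paper argument to compare against, but your sketch is a correct, self-contained proof that follows essentially the standard route of that reference. In the forward direction: projectivity of $A$ over the field $R$ is automatic, so Proposition~\ref{separableprojectivefg} gives $[A:R]<\infty$; the stability of separability under field extension is a direct check on $e\otimes 1_L$ under the canonical identification $(A\otimes_R L)\otimes_L(A\otimes_R L)^{\op}\cong A^e\otimes_R L$; and the splitting argument is sound, since the exact functor $(-)^B$ of Theorem~\ref{separablelemma}(3), applied to the exact sequence of $B$-bimodules $0\to\Hom_L(M'',M')\to\Hom_L(M'',M)\to\Hom_L(M'',M'')\to 0$, recovers $\Hom_B(M'',-)$ and lets you lift $\id_{M''}$, so every short exact sequence of left $B$-modules splits, $B$ is semisimple, and $J(B)=0$. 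In the reverse direction, the Artin--Wedderburn decomposition over $\overline R$, the explicit idempotent $\sum_j e_{j1}\otimes e_{1j}$ for $M_n(\overline R)$, and the assembly of a separability idempotent for a finite product are all correct. Your descent step is the only delicate part, and it holds as you describe: $(A^e)^A$ is the kernel of an $R$-linear map between finite-dimensional spaces, so its formation commutes with the flat base change $-\otimes_R\overline R$, giving $\big((A\otimes_R\overline R)^e\big)^{A\otimes_R\overline R}=(A^e)^A\otimes_R\overline R$; then $\mu\big((A^e)^A\otimes_R\overline R\big)=\mu\big((A^e)^A\big)\otimes_R\overline R$ contains $1\otimes 1$, and faithful flatness of $R\to\overline R$ forces $1\in\mu\big((A^e)^A\big)$, so Theorem~\ref{separablelemma}(5) applies. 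You correctly locate the use of finite-dimensionality in the $\Hom$ base-change isomorphism $\Hom_R(A,A^e)\otimes_R\overline R\cong\Hom_{\overline R}(A\otimes_R\overline R,A^e\otimes_R\overline R)$, which requires $A$ to be finitely presented over $R$.
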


\begin{proof}
See \cite[Thm.~II.2.5]{demeyer}.
\end{proof}

There is a further definition of separability for fields. For a
field $R$, an irreducible polynomial $f(x) \in R[x]$ is separable
over $R$ if $f$ has no repeated roots in any splitting field. 
An algebraic field extension $A$ of $R$ is said to be a
\emph{separable field extension}\index{separable field
extension}\index{field extension!separable} of $R$ if, for every $a
\in A$, the minimal polynomial of $a$ over $R$ is separable. The
theorem below shows that for a finite field extension, this
definition agrees with the definition of classical separability
given above. The theorem also shows their connection with the
definition of a separable algebra given in
Definition~\ref{separabledefin}.

\begin{thm}
Let $R$ be a field, and let $A$ be a finite field extension of $R$.
Then the following are equivalent:
\begin{enumerate}
\item $A$ is separable as an $R$-algebra,

\item $A$ is classically separable over $R$,

\item $A$ is a separable field extension of $R$.
\end{enumerate}
\end{thm}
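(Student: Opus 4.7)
The plan is to prove this via the cycle (1) $\Leftrightarrow$ (2) and (2) $\Leftrightarrow$ (3), since the first equivalence is essentially already in hand from the preceding theorem, and the second is a classical fact from field theory.

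First I would establish (1) $\Leftrightarrow$ (2). Since $A$ is a finite field extension of $R$, we have $[A : R] < \infty$ automatically, so $A$ is finite-dimensional as an $R$-vector space. Thus the extra hypothesis required in Theorem~\ref{classicallyseparablethm} is free, and the equivalence (1) $\Leftrightarrow$ (2) follows immediately from that theorem: $A$ is separable as an $R$-algebra if and only if $A$ is classically separable over $R$ and $\dim_R A < \infty$, and the latter condition holds by assumption.

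For (2) $\Leftrightarrow$ (3), the strategy is to reduce to the behaviour of the algebra $A \otimes_R \overline{R}$, where $\overline{R}$ is an algebraic closure of $R$. The key idea is that for a finite field extension $A/R$, classical separability (vanishing of the Jacobson radical of $A \otimes_R L$ for every field extension $L/R$) is equivalent to $A \otimes_R \overline{R}$ being reduced, which in turn, for a finite-dimensional commutative algebra, is equivalent to being a product of copies of $\overline{R}$. Writing $A = R(\alpha_1, \ldots, \alpha_k)$ and analysing the minimal polynomials $m_{\alpha_i}(x)$, one sees that $A \otimes_R \overline{R}$ has no nilpotents precisely when each $m_{\alpha_i}$ has no repeated roots in $\overline{R}$, which is the condition that each $\alpha_i$ be separable over $R$. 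By the primitive element / tower arguments of standard field theory, this is equivalent to every element of $A$ having separable minimal polynomial, i.e.\ to (3).

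Since this equivalence between classical separability and separability of the field extension is entirely a statement about fields and does not involve the Azumaya machinery of the thesis, I would simply cite the standard reference for it, for instance \cite[Thm.~II.2.5]{demeyer} or a textbook such as Hungerford. The main obstacle, if one chose to give a self-contained argument, would be the careful analysis of $A \otimes_R \overline{R}$ via minimal polynomials and the reduction from arbitrary extensions $L$ to the algebraic closure; but given that the thesis consistently defers field-theoretic background to the literature, a short citation suffices here.
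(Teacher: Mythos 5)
Your proposal follows essentially the same route as the paper: (1) $\Leftrightarrow$ (2) is an immediate consequence of Theorem~\ref{classicallyseparablethm} since $[A:R]<\infty$ is given, and (2) $\Leftrightarrow$ (3) is a classical field-theoretic fact deferred to the literature (the paper cites \cite[Lemma~9.2.8]{weibelhomalg}). One small slip: you suggest citing \cite[Thm.~II.2.5]{demeyer} for (2) $\Leftrightarrow$ (3), but that is the reference backing Theorem~\ref{classicallyseparablethm} (the (1) $\Leftrightarrow$ (2) part); for the field-theoretic equivalence a source such as Weibel or Hungerford is the right place to point. The sketch you give of (2) $\Leftrightarrow$ (3) via reducedness of $A\otimes_R\overline{R}$ and squarefreeness of minimal polynomials is correct in outline and would fill in the cited step, but since the paper (and you) ultimately just cite it, the two proofs are structurally identical.
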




\begin{proof}
(1) $\Lra$ (2): Follows immediately from
Theorem~\ref{classicallyseparablethm}.

\vspace{3pt}

(2) $\Lra$ (3): See \cite[Lemma~9.2.8]{weibelhomalg}.
\end{proof}

\section{Azumaya algebras}
\label{sectionazumayaalgebras}



Let $R$ be a commutative ring and $A$ be an $R$-algebra. There is a
natural $R$-algebra homomorphism $\psi_A : A^e \ra \End_R(A)$
defined by $\psi_A (a \otimes b) (x) = axb$, extended linearly. If
the context is clear, we will drop the subscript $A$. We now are
ready to define an Azumaya algebra: this is the definition from
\cite[p.~186]{farbdennis} and \cite[p.~134]{knus}.

\begin{defin} \label{azumayadefinition}
An $R$-algebra $A$ is called an \emph{Azumaya algebra}\index{Azumaya
algebra}\index{algebra!Azumaya} if the following two conditions
hold:
\begin{enumerate}
\item $A$ is a faithfully projective $R$-module.

\item The map $\psi_A : A^{e} \ra \End_R(A)$ defined above is an
isomorphism.
\end{enumerate}
\end{defin}

\begin{example}
Any finite dimensional central simple algebra $A$ over a field $F$
is an Azumaya algebra. A central simple algebra is free, so it is
projective and faithful, and we know $A \otimes_F A^{\op} \cong
M_n(F) \cong \End_F(A)$ (see \cite[Thm.~8.3.4]{sch}).
\end{example}

We will see some further examples of Azumaya algebras on pages
\pageref{azumayaexamples} and \pageref{azumayaexample3}.

\begin{prop}\label{homisoprop}
Let $E_1$, $E_2$, $F_1$, $F_2$ be $R$-modules. When one of the
ordered pairs $(E_1, E_2)$, $(E_1, F_1)$, $(E_2, F_2)$ consists of
finitely generated projective $R$-modules, the canonical
homomorphism 
\begin{align*}
\Hom(E_1, F_1) \otimes \Hom(E_2, F_2) & \lra \Hom (E_1 \otimes E_2 ,
F_1 \otimes F_2)
\end{align*}
 is bijective.
\end{prop}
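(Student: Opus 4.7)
The canonical map $\Phi$ sends $f \otimes g$ to the homomorphism $x \otimes y \mapsto f(x) \otimes g(y)$, extended linearly. My plan is the standard reduction to the rank-one free case, exploiting that $\Phi$ is natural in all four arguments and that both the source and the target are additive bifunctors in every slot. Concretely, using $\Hom(A \oplus B, M) \cong \Hom(A,M) \oplus \Hom(B,M)$, $\Hom(M, A \oplus B) \cong \Hom(M,A) \oplus \Hom(M,B)$ and the bilinearity of $\otimes$, both sides decompose as direct sums whenever any one of the four modules is replaced by a finite direct sum, and $\Phi$ respects these decompositions.

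First I would verify the base case $E_1 = F_1 = E_2 = F_2 = R$: under the identifications $\Hom_R(R, M) \cong M$ and $R \otimes_R R \cong R$, both sides become $R \otimes_R R \cong R$, and one checks directly that $\Phi$ is the identity. By the additivity observation above applied iteratively in each slot, $\Phi$ is an isomorphism whenever all four modules are free of finite rank. Here it is useful to note that when we ``split'' a slot we do not need the other three slots to be restricted — additivity of each functor in a single variable is all that is required.

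Next I would carry out the descent from finite-rank free to finitely generated projective. Recall that a f.g.\ projective $R$-module $P$ is a direct summand of some $R^n$: write $R^n = P \oplus P'$. The natural transformation $\Phi$ then decomposes accordingly on both sides, so an isomorphism for $R^n$ in a given slot restricts to an isomorphism for $P$ (and for $P'$) in that slot. Applying this slot-by-slot for the two slots singled out by the hypothesis handles each of the three cases uniformly: in case $(E_1, E_2)$ one uses additivity in the two contravariant slots and $F_1, F_2$ are left arbitrary; in case $(E_1, F_1)$ (respectively $(E_2, F_2)$) one uses additivity in one contravariant and one covariant slot while the remaining two variables stay arbitrary. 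The only genuinely delicate point is the bookkeeping that additivity really does hold in a single slot with the other three unrestricted; once this is accepted the three cases are essentially the same argument with the slots relabelled.
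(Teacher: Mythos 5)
Your overall strategy — reduce to the free rank-one case using additivity of both sides in every slot, then descend to finitely generated projective modules via direct summands — is the standard one, and is what Bourbaki (whom the paper cites without reproducing the argument) actually does. The naturality and additivity remarks and the direct-summand descent step are all fine in themselves.

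The gap is in the base case, and it propagates. You verify the map for $E_1 = E_2 = F_1 = F_2 = R$ and then split slots. This only yields the isomorphism when \emph{all four} modules are free of finite rank, and the subsequent direct-summand descent in the two distinguished slots upgrades exactly those two to finitely generated projective — leaving the other two free of finite rank, not arbitrary. But the proposition allows the other two modules to be completely arbitrary: in case $(E_1, E_2)$ nothing at all is assumed about $F_1$ and $F_2$. Additivity lets you split a slot into a direct sum, but it cannot turn a slot from $R$ into an arbitrary module, so your asserted conclusion that ``$F_1, F_2$ are left arbitrary'' is not supported by the argument as written.

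The fix is to choose the correct base case for each of the three hypotheses. For case $(E_1, E_2)$, take $E_1 = E_2 = R$ with $F_1, F_2$ \emph{arbitrary}: then the source is $\Hom(R, F_1) \otimes \Hom(R, F_2) \cong F_1 \otimes F_2$, the target is $\Hom(R \otimes R, F_1 \otimes F_2) \cong F_1 \otimes F_2$, and the canonical map is the identity under these identifications. Now apply additivity and the direct-summand trick only in the $E_1$ and $E_2$ slots, keeping $F_1, F_2$ fixed throughout. Case $(E_1, F_1)$ uses the base case $E_1 = F_1 = R$ with $E_2, F_2$ arbitrary (both sides then collapse to $\Hom(E_2, F_2)$), and case $(E_2, F_2)$ is symmetric.
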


\begin{proof}
See \cite[\S II.4.4, Prop.~4]{bouralgebra}.
\end{proof}

We observe that if $E_1, \ldots, E_n, F_1, \ldots, F_m$ are any
$R$-modules and
$$
\phi:E_1 \oplus \cdots \oplus E_n \lra F_1 \oplus \cdots \oplus F_m
$$
is an $R$-module homomorphism, then $\phi$ can be represented by a
unique matrix
$$ \left( \begin{array}{l c l}
\phi_{11} & \cdots & \phi_{1n} \\
\;\; \vdots &&\;\; \vdots  \\
\phi_{m1} & \cdots & \phi_{mn}
\end{array} \right) $$
where $\phi_{ij} \in \Hom_R (E_j , F_i)$. In particular for
$R$-modules $M$ and $N$ there are $R$-module homomorphisms
$$
\begin{array}{rlcll} \label{ijhomomorphisms}
\End_R (M) & \stackrel{i}{\lra} & \End_R (M \oplus N) &
\stackrel{j}{\lra} & \End_R (M)  \\
f & \stackrel{i}{\longmapsto} & \left( \begin{array}{c c} f & 0 \\
0 & 0 \end{array} \right) && \\
&& \left( \begin{array}{c c} x & y \\ z & w \end{array} \right) &
\stackrel{j}{\longmapsto} & x
\end{array}
$$
Note that $j \circ i = \id$, so that $i$ is injective and $j$ is
surjective.

\vspace{3pt}

The proposition below shows how to construct some examples of
Azumaya algebras. The proof is from Farb, Dennis
\cite[Prop.~8.3]{farbdennis}.

\begin{prop} \label{endomorphismprop}
If $P$ is a faithfully projective $R$-module, then $\End_R(P)$ is an
Azumaya algebra over $R$.
\end{prop}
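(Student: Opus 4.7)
The plan is to verify the two defining conditions of an Azumaya algebra for $A = \End_R(P)$. Since $P$ is faithfully projective, by Proposition~\ref{faithfullyprojprop} it is finitely generated, projective, and faithful over $R$; in particular, there exist $n \in \mathbb{N}$ and an $R$-module $Q$ with $P \oplus Q \cong R^n$.

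For condition (1), I would first observe that this decomposition gives
$$
\End_R(R^n) \cong \End_R(P) \oplus \Hom_R(Q,P) \oplus \Hom_R(P,Q) \oplus \End_R(Q)
$$
as $R$-modules, so $\End_R(P)$ is a direct summand of the free module $\End_R(R^n) \cong R^{n^2}$, hence finitely generated and projective. Faithfulness is immediate from that of $P$: if $r \in R$ annihilates $\End_R(P)$ then $r \cdot \id_P = 0$, forcing $rP = 0$ and therefore $r = 0$. Proposition~\ref{faithfullyprojprop} then delivers that $\End_R(P)$ is faithfully projective.

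For condition (2), the goal is to show that the canonical map $\psi : \End_R(P) \otimes_R \End_R(P)^{\op} \to \End_R(\End_R(P))$, $a \otimes b \mapsto (x \mapsto axb)$, is an isomorphism. My plan is to assemble $\psi$ out of three standard canonical isomorphisms that each rely on $P$ being finitely generated projective. Setting $P^* = \Hom_R(P, R)$, these are: $\End_R(P)^{\op} \cong \End_R(P^*)$ via transpose $f \mapsto f^*$ (which reverses composition), $P \otimes_R P^* \cong \End_R(P)$ via evaluation $p \otimes \phi \mapsto (x \mapsto \phi(x)\, p)$ (an iso for finitely generated projective $P$, essentially the Dual Basis Lemma), and the isomorphism $\End_R(P) \otimes_R \End_R(P^*) \cong \End_R(P \otimes_R P^*)$ supplied by Proposition~\ref{homisoprop} applied with $(E_1,F_1) = (P,P)$ and $(E_2,F_2) = (P^*,P^*)$. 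Composing these yields
$$
\End_R(P) \otimes_R \End_R(P)^{\op} \;\cong\; \End_R(P) \otimes_R \End_R(P^*) \;\cong\; \End_R(P \otimes_R P^*) \;\cong\; \End_R(\End_R(P)).
$$

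The main obstacle will be checking that this composite of canonical isomorphisms really equals $\psi$, which is a somewhat delicate tensor-hom diagram chase tracking how the transpose and evaluation maps interact with the left and right multiplication actions. If the diagram chase proves too fiddly, I would fall back to the more direct route of verifying $\psi$ is an isomorphism for $P = R^n$ (where it becomes the standard iso $M_n(R) \otimes_R M_n(R)^{\op} \cong M_{n^2}(R)$) and then transferring via the idempotent $e \in M_n(R)$ with $e R^n = P$, using that $\End_R(P) \cong e M_n(R) e$ and that cutting by $e \otimes e^{\op}$ is compatible with $\psi$.
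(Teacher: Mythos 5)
Your proof is correct, and for condition (2) it takes a genuinely different route from the paper's. The paper reduces $\psi_P$ to $\psi_{P \oplus Q}$ where $P \oplus Q \cong R^n$ via the retraction pairs $i', j'$ and $i'', j''$ (with $j' \circ i' = \id$ and $j'' \circ i'' = \id$) acting on the source and target of $\psi$, and then verifies $\psi_{R^n}$ is an isomorphism by writing down an explicit inverse on the basis $\{E_{ij} \otimes E_{kl}\}$. Your primary approach instead factors $\psi$ through the standard Morita identifications $\End_R(P)^{\op} \cong \End_R(P^*)$, $P \otimes_R P^* \cong \End_R(P)$, and $\End_R(P) \otimes_R \End_R(P^*) \cong \End_R(P \otimes_R P^*)$, the last supplied by Proposition~\ref{homisoprop}. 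Your "delicate diagram chase" does in fact close: writing $\theta : P \otimes_R P^* \to \End_R(P)$, $p \otimes \phi \mapsto (x \mapsto \phi(x)p)$, and taking $f = \theta(\sum_i p_i \otimes \phi_i)$, the composite sends $a \otimes b$ to the endomorphism $f \mapsto \theta\bigl(\sum_i a(p_i) \otimes (\phi_i \circ b)\bigr)$, which is $x \mapsto \sum_i \phi_i(b(x)) a(p_i) = a(f(b(x))) = (afb)(x)$, i.e.\ exactly $\psi(a \otimes b)$. The paper's method buys concreteness and stays entirely within material already proved in the chapter; yours buys a cleaner conceptual picture (it is really the statement that $P$ induces a Morita self-equivalence of $R$, making $\End_R(P)$ a twisted form of $R$), at the cost of importing two canonical isomorphisms ($P \otimes P^* \cong \End(P)$ and the transpose anti-isomorphism $\End(P) \cong \End(P^*)$) that the paper does not explicitly state, though the first is essentially the Dual Basis Lemma (Lemma~\ref{dualbasislemma}). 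Your fallback via the idempotent $e$ with $P = eR^n$ and $\End_R(P) \cong eM_n(R)e$ is essentially the paper's reduction in different clothing. Your argument for condition (1) matches the paper's.
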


\begin{proof}
Using Proposition~\ref{faithfullyprojprop}, $P$ is a finitely
generated projective $R$-module, so we can choose an $R$-module $Q$
with $P \oplus Q \cong R^n$ for some $n$. Hence $\End_R (P \oplus Q)
\cong \End_R (R^n) \cong M_n (R) \cong R^{n^2}$ as $R$-modules, so
$\End_R (P \oplus Q)$ is a free module. We saw above that there are
homomorphisms $\End_R (P) \stackrel{i}{\ra} \End_R (P \oplus Q)
\stackrel{j}{\ra} \End_R (P)$ with $j \circ i= \id$.
Then $\End_R(P)$ is isomorphic to a direct summand of $\End_R(P
\oplus Q)$ and therefore $\End_R(P)$ is finitely generated and
projective as an $R$-module. Suppose $r \in R$ annihilates
$\End_R(P)$. In particular $r$ annihilates the identity map on $P$,
and so $rp= 0$ for all $p \in P$. But $P$ is a faithful module, so
$r=0$, which shows that $\End_R(P)$ is also a faithful $R$-module.
By Proposition~\ref{faithfullyprojprop}, this shows that $\End_R
(P)$ is faithfully projective.

It remains to show the second condition. 
Consider the following diagram:
\begin{displaymath}
\xymatrix{ \End_R(P) \otimes \End_R(P)^{\textrm{op}}
\ar@<-2ex>[d]_{i'}
\ar[rr]^{\;\;\;\;\;\;\;\; \psi_P} && \End_R(\End_R(P)) \ar@<-1ex>[d]_{i''} \\
\End_R(P \oplus Q) \otimes (\End_R(P \oplus Q))^{\textrm{op}}
\ar[u]_{j'} \ar[rr]_{\;\;\;\;\;\;\;\;\;\;\;\; \psi_{P \oplus Q }} &&
\End_R ( \End_R(P \oplus Q)) \ar@<-1ex>[u]_{j''} }
\end{displaymath}
where $\psi_P$ and $\psi_{P \oplus Q}$ are defined as in
Definition~\ref{azumayadefinition}, and the maps $i', j' , i''$ and
$j''$ come from the homomorphisms $i$ and $j$ on
page~\pageref{ijhomomorphisms}. For an element $f \otimes g$ in
$\End_R(P) \otimes \End_R(P)^{\textrm{op}}$, we have
\begin{eqnarray*}
\psi_{P \oplus Q} \circ i' (f \otimes g) = i'' \circ \psi_P (f
\otimes g) :& \End_R(P \oplus Q) \!\!\! & \lra \;\; \End_R(P
\oplus Q) \\
&  \begin{pmatrix}\al_{11} & \al_{12} \\ \al_{21} & \al_{22}
\end{pmatrix} \!\!\! & \lmps \;\; \begin{pmatrix} f \circ \al_{11} \circ g & 0 \\
0 & 0 \end{pmatrix}
\end{eqnarray*}
and for  $f \otimes g = \begin{pmatrix} f_{11}& f_{12} \\ f_{21} &
f_{22}
\end{pmatrix}  \otimes \begin{pmatrix} g_{11}& g_{12} \\ g_{21} & g_{22}
\end{pmatrix}  \in \End_R (P \oplus Q) \otimes (\End_R(P \oplus
Q))^{\textrm{op}} $, we have \vspace*{-14pt}
\begin{eqnarray*}
\psi_{P} \circ j' (f \otimes g) = j'' \circ \psi_{P \oplus Q} (f
\otimes g) :& \End_R(P) \!\!\! & \lra \;\; \End_R(P) \\
& \al  \!\!\! & \lmps \;\; f_{11} \circ \al \circ g_{11}.
\end{eqnarray*}
So the diagram above commutes and we can show that $j'' \circ i'' =
\id_{\End (\End (P))}$ and $j' \circ i' = \id_{\End (P) \otimes
{\End (P)}^{\op}}$. Therefore to show that $\psi_P$ is an
isomorphism it is sufficient to show that $\psi_{P \oplus Q}$ is an
isomorphism.

Let $\{e_1, \ldots , e_n \}$ be a basis for the free $R$-algebra $P
\oplus Q \cong R^n$. Let $E_{ij} \in \End_R(R^n)$ be defined by
$E_{ij}(e_k) = \delta_{jk}e_i$. We can consider $E_{ij}$ as the $n
\times n$ matrix with $1$ in the $i$-$j$ entry, and zeros elsewhere.
Then 
$\{E_{ij} : 1 \leq i,j \leq n \}$ is an $R$-module basis for
$\End_R(R^n)$ and $\{E_{ij} \otimes E_{kl} : 1 \leq i,j,k,l \leq n
\}$ is an $R$-module basis for $\End_R(R^n) \otimes
(\End_R(R^n))^{\textrm{op}}$. By definition of $\psi_{P \oplus Q}$,
we have
\begin{equation} \label{deltaeijeqn}
\psi_{P \oplus Q} (E_{ij} \otimes E_{kl}) (E_{st}) = E_{ij} \circ
E_{st} \circ E_{kl} = \delta_{js} \delta_{tk} E_{il}.
\end{equation}

Using this, we will show that $\psi_{P \oplus Q}$ is an isomorphism.
Let $h : \End_R (R^n) \ra \End_R (R^n)$ be an $R$-module
homomorphism. For an arbitrary basis element $E_{xy} \in \End_R
(R^n)$, suppose $h(E_{xy})= \sum_{i,j} r_{ij}^{(x,y)} E_{ij}$, where
$r_{ij}^{(x,y)}$ is an element of $R$ indexed by $i,j,x,y$. Then
define a map \vspace*{-3pt}
\begin{align*}
\phi : \End_R ( \End_R(R^n)) & \lra \End_R(R^n) \otimes
(\End_R(R^n))^{\textrm{op}} \\
h & \lmps \sum_{x,y} \sum_{i,j} r_{ij}^{(x,y)} E_{ix} \otimes
E_{yj}.
\end{align*}
Then we can show that $\phi$ is an $R$-algebra homomorphism inverse
to $\psi_{P \oplus Q}$, completing the proof.
\end{proof}

\begin{example}\label{azumayaexamples}
For any commutative ring $R$, $M_n (R)$ is an Azumaya algebra over
$R$. This follows by  applying Proposition~\ref{endomorphismprop}:
since $R^n$ is free and therefore faithfully projective over $R$,
$\End_R (R^n)$ is an Azumaya algebra over $R$, and we know that $M_n
(R) \cong \End_R (R^n)$ as $R$-algebras.
\end{example}

In Proposition~\ref{tensorproductprop} below we show that the tensor
product of two $R$-Azumaya algebras is again an $R$-Azumaya algebra.
The proof is from Farb, Dennis \cite[Prop.~8.4]{farbdennis}.

\begin{prop} \label{tensorproductprop}
If $A$ and $B$ are Azumaya algebras over $R$, then $A \otimes_R B$
is an Azumaya algebra over $R$.
\end{prop}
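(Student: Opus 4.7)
The plan is to verify the two conditions of Definition~\ref{azumayadefinition} for $A \otimes_R B$.

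For condition (1), I would show $A \otimes_R B$ is faithfully projective by using the equivalences in Proposition~\ref{faithfullyprojprop}. Since $A$ and $B$ are each finitely generated, projective, and faithful over $R$, the tensor product $A \otimes_R B$ is finitely generated and projective (direct summands of free modules tensor to direct summands of free modules). For faithfulness, I would invoke Proposition~\ref{faithfullprojandflatprop}: $A$ and $B$ are faithfully flat, and for any non-zero $R$-module $N$,
$$(A \otimes_R B) \otimes_R N \;\cong\; A \otimes_R (B \otimes_R N) \;\neq\; 0,$$
since $B \otimes_R N \neq 0$ by faithful flatness of $B$, and then applying faithful flatness of $A$. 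By Theorem~\ref{faithfullyflatthm}(2), $A \otimes_R B$ is faithfully flat; combined with projectivity, Proposition~\ref{faithfullprojandflatprop} yields faithful projectivity.

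For condition (2), I would use the canonical identifications
$$(A \otimes_R B)^e \;=\; (A \otimes_R B) \otimes_R (A \otimes_R B)^{\op} \;\cong\; A^e \otimes_R B^e,$$
where the isomorphism swaps the middle two factors and uses $(A \otimes_R B)^{\op} \cong A^{\op} \otimes_R B^{\op}$. Since $A$ and $B$ are finitely generated projective, Proposition~\ref{homisoprop} supplies a canonical isomorphism
$$\End_R(A) \otimes_R \End_R(B) \;\stackrel{\sim}{\lra}\; \End_R(A \otimes_R B).$$
The heart of the argument is to show that $\psi_{A \otimes_R B}$ factors as the composite
$$(A \otimes_R B)^e \;\stackrel{\sim}{\lra}\; A^e \otimes_R B^e \;\xrightarrow{\psi_A \otimes \psi_B}\; \End_R(A) \otimes_R \End_R(B) \;\stackrel{\sim}{\lra}\; \End_R(A \otimes_R B).$$
I would verify this on an elementary tensor $(a \otimes b) \otimes (a' \otimes b')^{\op}$ by evaluating at $x \otimes y$: both paths produce $axa' \otimes byb'$. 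Since $\psi_A$ and $\psi_B$ are isomorphisms by the Azumaya hypothesis, $\psi_A \otimes \psi_B$ is an isomorphism, and the outer two maps are isomorphisms from the preceding paragraph, so $\psi_{A \otimes_R B}$ is an isomorphism.

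The main obstacle is the bookkeeping in the second step: being careful that the identification of $(A \otimes_R B)^e$ with $A^e \otimes_R B^e$ respects the opposite-algebra structure, and confirming the commutativity of the diagram at the level of elementary tensors. The genuinely non-trivial input is Proposition~\ref{homisoprop}, which requires the finite generation and projectivity we have already extracted from faithful projectivity; everything else is a diagram chase.
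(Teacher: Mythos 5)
Your proposal is correct and follows essentially the same route as the paper: establish finite generation and projectivity by tensoring free complements, deduce faithful projectivity via Proposition~\ref{faithfullyprojprop}, and then show $\psi_{A\otimes_R B}$ is an isomorphism by factoring it through $\psi_A \otimes \psi_B$ and the canonical isomorphism of Proposition~\ref{homisoprop}. The only minor divergence is your argument for faithfulness: you use faithful flatness of both factors and associativity of $\otimes$ (checking $(A\otimes_R B)\otimes_R N \ne 0$), which is a touch cleaner and more symmetric than the paper's asymmetric argument that exploits flatness of $A$ together with injectivity of $R\to B$ to show directly that any $r$ annihilating $A\otimes_R B$ annihilates $A$.
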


\begin{proof} Since $A$ and $B$ are finitely generated projective
$R$-modules (by Proposition~\ref{faithfullyprojprop}), we can choose
$R$-modules $A'$ and $B'$ with $A \oplus A' \cong R^n$ and $B \oplus
B' \cong R^m$. Then $(A \oplus A') \otimes (B \oplus B') \cong R^n
\otimes R^m \cong R^{n m}$. So there is an $R$-module $Q$ with $(A
\otimes B) \oplus Q \cong R^{n m}$, proving $A \otimes B$ is
finitely generated and projective.

We know that any projective module is flat, so $A$ is flat over $R$.
Since $R \ra B; r \mapsto r. 1_B$ is injective, it follows that
\begin{eqnarray*}
f: A & \lra \;\;  A \otimes_R R & \lra \;\; A \otimes_R B \\  a &
\longmapsto \;\; a \otimes 1_R \;\, & \longmapsto \;\;  a \otimes
1_B
\end{eqnarray*}
is injective. Let $r \in R$ be such that $ r \in \Ann (A \otimes
B)$. Then $r(a \otimes 1)=0$ for all $a \in A$, so $0= r(f(a)) =
f(ra)$, and since $f$ is injective this implies that $ra=0$ for all
$a \in A$. Thus $r \in \Ann(A)$ and so $r = 0$, as $A$ is faithful,
proving $A \otimes B$ is also faithful. So by
Proposition~\ref{faithfullyprojprop}, $A \otimes B$ is faithfully
projective.

Let $\psi_{A \otimes B}$ be the homomorphism defined in the
definition of an Azumaya algebra. Then the following diagram is
commutative:
\begin{displaymath}
\xymatrix{ (A \otimes A^{\textrm{op}}) \otimes (B \otimes
B^{\textrm{op}}) \ar[d]_{\theta} \ar[rr]^{\;\;\;\; \psi_A \otimes
\psi_B} && \End_R(A) \otimes \End_R(B) \ar[d]^{w} \\
(A \otimes B) \otimes ( A \otimes B)^{\textrm{op}}
\ar[rr]_{\;\;\;\;\;\;\;\;\;\;\;\; \psi_{A \otimes B }} && \End_R (A
\otimes B) }
\end{displaymath}
Here $\psi_A$ and $\psi_B$ are isomorphisms since $A$ and $B$ are
Azumaya algebras, $w$ is the isomorphism given by
Proposition~\ref{homisoprop}, and the isomorphism $\theta$ comes
from the commutativity of the tensor product and the fact that $(A
\otimes B)^{\textrm{op}} \cong A^{\textrm{op}} \otimes
B^{\textrm{op}}$. This shows that $\psi_{A \otimes B}$ is an
isomorphism.
\end{proof}


\section{Further characterisations of Azumaya algebras}
\label{sectionfurthercharacterisations}

In this section, unless otherwise stated, $R$ denotes a commutative
ring and $A$ is an $R$-algebra. The definition of an Azumaya algebra
(Definition~\ref{azumayadefinition}) has a number of equivalent
reformulations which are shown in Theorem~\ref{azumayadefinthm}
below. We firstly require some additional definitions. 

We say that an $R$-algebra $A$ is \emph{central}\index{central
algebra}\index{algebra!central} over $R$ if $A$ is faithful as an
$R$-module and the centre of $A$ coincides with the image of $R$ in
$A$. Thus an $R$-algebra $A$ is central if and only if the ring
homomorphism $f :R \ra Z(A)$ is both injective and surjective.
%
%
%
A commutative $R$-algebra $S$ is said to be a \emph{finitely
presented algebra}\index{algebra!finitely presented}\index{finitely
presented algebra} if $S$ is isomorphic to the quotient ring $R [
x_1, \ldots , x_n ] / I$ of a polynomial ring $R [ x_1, \ldots , x_n
]$ by a finitely generated two-sided ideal $I$.
%
%
%
A commutative $R$-algebra $S$ is called \emph{\'{e}tale}\index{etale
algebra@\'{e}tale algebra}\index{algebra!etale@\'{e}tale} if $S$ is
flat, finitely presented and separable over $R$.

Let $R$ be a ring, which is not necessarily commutative. Consider a
covariant additive functor $T$ from the category of (left or right)
$R$-modules $\mathcal C$ to some category of modules $\mathcal D$.
The functor $T$ induces a function
$$
T_{X,Y} : \Hom_{\mathcal C} (X,Y) \lra \Hom_{\mathcal D} (T(X),T(Y))
$$
for every pair of objects $X$ and $Y$ in $\mathcal C$. The functor
$T$ is said to
be\index{functor!full}\index{functor!faithful}\index{functor!fully
faithful}\index{faithful functor}
\begin{itemize}
\item \emph{faithful} if $T_{X,Y}$ is injective;

\item \emph{full} if $T_{X,Y}$ is surjective;

\item \emph{fully faithful} if $T_{X,Y}$ is bijective;
\end{itemize}
for each $X$ and $Y$ in $\mathcal C$. An $R$-module $M$ is called a
\emph{generator}\index{module!generator}\index{generator module} if
the functor $\Hom_R(M,-)$ is a faithful functor from the category of
left $R$-modules to the category of abelian groups.


There is another definition of a generator module in
\cite[p.~5]{demeyer}, which is shown to be equivalent to the
definition above (see Proposition~\ref{generatorprop}). For a  ring
$R$ (not necessarily commutative) and any $R$-module $M$, consider
the subset $\mathcal{T}_R (M)$ of $R$ consisting of elements of the
form $\sum_i f_i (m_i)$ where the $f_i$ are from $\Hom_R (M,R)$ and
the $m_i$ are from $M$. Then $\mathcal T_R (M)$ is a two-sided ideal
of $R$, called the \emph{trace ideal}\index{trace ideal} of $M$.
Then \cite{demeyer} defines $M$ to be a generator module if
$\mathcal T_R(M) = R$.

\begin{prop} \label{generatorprop}
Let  $R$ be a (possibly non-commutative) ring. For any $R$-module
$M$, the functor $\Hom_R(M,-)$ is a faithful functor if and only if
$\mathcal T_R(M) = R$.
\end{prop}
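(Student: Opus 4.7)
The plan is to unpack what faithfulness of the additive functor $\Hom_R(M,-)$ means in concrete terms: since $\Hom_R(M,\phi)$ is an additive (abelian group) homomorphism depending linearly on $\phi$, the functor is faithful if and only if every nonzero $R$-linear map $\phi : X \to Y$ admits some $g : M \to X$ with $\phi \circ g \neq 0$. With this reformulation the equivalence with $\mathcal{T}_R(M) = R$ reduces to showing that elements of the trace ideal are exactly those that can be ``produced'' by composing maps out of $M$.

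For the direction ($\Ra$), I would argue by contraposition. If $I := \mathcal{T}_R(M)$ is a proper two-sided ideal of $R$, consider the canonical projection $\pi : R \to R/I$, which is a nonzero $R$-linear map. For any $g \in \Hom_R(M, R)$ and any $m \in M$, we have $g(m) \in I$ by definition of the trace ideal, so $\pi \circ g = 0$ for every such $g$. Thus $\Hom_R(M, \pi)$ is the zero map while $\pi$ itself is nonzero, contradicting the faithfulness of $\Hom_R(M,-)$.

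For the direction ($\Leftarrow$), assume $\mathcal{T}_R(M) = R$, so we may write $1 = \sum_{i=1}^{n} f_i(m_i)$ with $f_i \in \Hom_R(M, R)$ and $m_i \in M$. Let $\phi : X \to Y$ be a nonzero $R$-linear map and pick $x \in X$ with $\phi(x) \neq 0$. For each $i$, define $g_i : M \to X$ by $g_i(m) = f_i(m) \cdot x$, which is $R$-linear because $f_i$ is. Then
\[
\sum_{i=1}^{n} (\phi \circ g_i)(m_i) \;=\; \sum_{i=1}^{n} f_i(m_i)\, \phi(x) \;=\; 1 \cdot \phi(x) \;=\; \phi(x) \;\neq\; 0,
\]
so at least one $\phi \circ g_i$ is nonzero, showing $\Hom_R(M, \phi) \neq 0$.

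The main obstacle is essentially a bookkeeping one: since $R$ is not assumed commutative, one must verify that the map $g_i$ defined above is genuinely $R$-linear. For left $R$-modules this works because the scalar $f_i(m)$ in $g_i(m) = f_i(m)\, x$ enters from the left, matching the placement required by $g_i(rm) = f_i(rm)\, x = r f_i(m)\, x = r\, g_i(m)$; for right $R$-modules the analogous construction works symmetrically. Beyond this observation, the argument is the standard trace/dual basis trick, and no further subtlety is required.
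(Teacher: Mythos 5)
Your proof is correct. The paper itself establishes this proposition only by citing Lam \cite[Thm.~18.8]{lam}, so there is no in-text argument to compare against; you have supplied a self-contained proof in its place. Your reduction of faithfulness to the statement that every nonzero $\phi : X \ra Y$ admits some $g : M \ra X$ with $\phi \circ g \neq 0$ is exactly right: since $\Hom_R(M,-)$ is additive, each $T_{X,Y}$ is a group homomorphism of abelian groups, so injectivity is the same as having trivial kernel. The forward direction, using the projection $\pi : R \ra R/\mathcal T_R(M)$ (well-defined because $\mathcal T_R(M)$ is a two-sided ideal) as a witness that faithfulness fails once the trace ideal is proper, and the converse, built from a relation $1 = \sum_i f_i(m_i)$ and the auxiliary maps $g_i(m) = f_i(m)\,x$, are the standard trace/dual-basis arguments. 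You also verify the one genuinely delicate point in the non-commutative setting — that $g_i$ is $R$-linear because the scalar $f_i(m)$ enters from the side matching the module structure, i.e.\ $g_i(rm) = f_i(rm)x = r f_i(m) x = r g_i(m)$ — so there are no gaps.
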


\begin{proof}
See \cite[Thm.~18.8]{lam}.
\end{proof}

Let $R$ be a  ring, which is not necessarily commutative. An
$R$-module $M$ is called a \emph{projective
generator}\index{module!projective generator}\index{projective
generator module} (or \emph{progenerator}\index{progenerator
module}) if $M$ is finitely generated, projective and a generator.

\begin{thm}
Let $R$ be a commutative ring. Then an $R$-module $M$ is an
$R$-progenerator if and only if $M$ is finitely generated,
projective and faithful.
\end{thm}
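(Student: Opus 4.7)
The plan is to unwind the definitions so that the theorem reduces to the single assertion: for a finitely generated projective $R$-module $M$ over a commutative ring $R$, $M$ is faithful if and only if $\mathcal T_R(M) = R$. This reduction is immediate because a progenerator is by definition finitely generated, projective, and a generator, and by Proposition~\ref{generatorprop} being a generator is equivalent to $\mathcal T_R(M) = R$.

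For the ``progenerator implies faithful'' direction, assume $\mathcal T_R(M) = R$ and suppose $r \in R$ annihilates $M$. For every $f \in \Hom_R(M, R)$ and every $m \in M$ one has $r\,f(m) = f(rm) = 0$, using that $f$ is $R$-linear and $R$ is commutative. Hence $r \cdot \mathcal T_R(M) = 0$, i.e.\ $rR = 0$, so $r = 0$. Note that this half does not need projectivity.

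For the converse I would apply the Dual Basis Lemma (Lemma~\ref{dualbasislemma}) to obtain $m_1, \dots, m_n \in M$ and $f_1, \dots, f_n \in \Hom_R(M,R)$ with $m = \sum_i f_i(m)\,m_i$ for every $m \in M$. Reading this identity inside $\mathcal T_R(M)\, M$ yields $\mathcal T_R(M)\, M = M$. Suppose toward a contradiction that $\mathcal T_R(M) \neq R$, and pick a maximal ideal $\m$ containing $\mathcal T_R(M)$. Localising at $\m$ gives $M_\m = \bigl(\mathcal T_R(M)\bigr)_\m\, M_\m \subseteq \m R_\m\, M_\m$, and since $M_\m$ is finitely generated over the local ring $R_\m$, Nakayama's lemma forces $M_\m = 0$. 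On the other hand, because $M$ is finitely generated and faithful, $\Supp(M) = V(\Ann M) = \Spec(R)$, so $M_\m \neq 0$, a contradiction. Hence $\mathcal T_R(M) = R$ and $M$ is a generator, so a progenerator.

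The main obstacle is the converse direction: the real content is turning the dual-basis identity into the ideal-theoretic statement $\mathcal T_R(M)\, M = M$ and then combining Nakayama with faithfulness. The forward direction and the translation between the two equivalent notions of ``generator'' are essentially bookkeeping. The one subtle point I would spell out carefully is the claim that $M_\m \neq 0$ for every maximal $\m$ when $M$ is finitely generated and faithful: if $M_\m$ vanished, each of the finitely many generators $m_i$ would satisfy $s_i m_i = 0$ for some $s_i \notin \m$, so $s = \prod_i s_i \notin \m$ would be a nonzero element of $\Ann(M)$, contradicting faithfulness.
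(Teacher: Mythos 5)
Your proof is correct and complete. The paper itself does not supply an argument for this statement --- it simply cites DeMeyer--Ingraham, Cor.~I.1.10 --- so there is no in-paper proof to compare against; yours is a legitimate, self-contained substitute. The reduction via Proposition~\ref{generatorprop} is exactly right, the forward direction (generator $\Rightarrow$ faithful over a commutative ring) is correctly handled without needing projectivity, and in the converse you correctly extract $\mathcal T_R(M)\,M = M$ from the Dual Basis Lemma, then combine localisation, Nakayama, and the support computation $\Supp(M) = V(\Ann M) = \Spec(R)$ to force $\mathcal T_R(M) = R$. The final remark filling in why $M_\m \neq 0$ is the right level of care.

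One streamlining worth noting: once you have $\mathcal T_R(M)\,M = M$ with $M$ finitely generated, you can bypass localisation and supports entirely by invoking the determinant-trick form of Nakayama's lemma, which produces $r \in 1 + \mathcal T_R(M)$ with $rM = 0$. Faithfulness then gives $r = 0$, hence $1 \in \mathcal T_R(M)$ directly, so $\mathcal T_R(M) = R$. This avoids appealing to the structure of $\Spec(R)$ and makes the faithfulness hypothesis do its work in a single visible step, which is arguably closer in spirit to the DeMeyer--Ingraham treatment the paper cites.
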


\begin{proof}
See \cite[Cor.~I.1.10]{demeyer}.
\end{proof}


\begin{thm}
\label{azumayadefinthm}\index{Azumaya
algebra}\index{algebra!Azumaya} Let $R$ be a commutative ring and
$A$ be an $R$-algebra. The following are equivalent:
\begin{enumerate}
\item $A$ is an Azumaya algebra.

\item $A$ is central and separable as an $R$-module.

\item $A$ is central over $R$ and $A$ is a generator as an $A^e$-module.

\item The functors \begin{flalign*}& A \textrm{-} \mathcal{M}\mathfrak{od}
\textrm{-} A \lra \mathcal{M}\mathfrak{od} \textrm{-} R,\; M \longmapsto M^A \\
& \mathcal{M}\mathfrak{od} \textrm{-} R \lra A \textrm{-}
\mathcal{M}\mathfrak{od} \textrm{-} A,\; N \longmapsto N \otimes A
\end{flalign*}
are inverse equivalences of categories. Further projective modules
correspond to projective modules.

\item $A$ is a finitely generated $R$-module and $A/ \m A$ is a
central simple $R/ \m $-algebra for all $\m \in \Max(R)$.

\item There is a faithfully flat \'{e}tale $R$-algebra $S$ and a faithfully projective
$S$-module $P$ such that $A \otimes_R S \cong \End_S(P)$. If $R$ is
local, $S$ can be taken as finite \'{e}tale.

\end{enumerate}

\end{thm}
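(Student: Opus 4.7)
The plan is to establish the six conditions' equivalence by a web of implications centred on (1), rather than a single long cycle. I would group them into three clusters: the algebraic equivalences (1)~$\Leftrightarrow$~(2)~$\Leftrightarrow$~(3)~$\Leftrightarrow$~(4) handled together using Morita theory and the separability idempotent from Theorem~\ref{separablelemma}; the fibrewise characterisation (1)~$\Leftrightarrow$~(5) via the local-global principle for separability (Theorem~\ref{maxseparable}) together with faithful flatness; and the descent characterisation (1)~$\Leftrightarrow$~(6) via \'etale descent.

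The crux is (1)~$\Leftrightarrow$~(2). For (1)~$\Rightarrow$~(2), given faithful projectivity of $A$ together with the isomorphism $\psi_A : A^e \to \End_R(A)$, the pair $(R, A^e)$ sits in a Morita context with $A$ as progenerator, so $A$ is projective (hence separable) as an $A^e$-module, and $Z(A) = R$ follows by comparing centres under Morita equivalence using $Z(\End_R(A)) \cong R$. Conversely, assuming $A$ is central and separable, the separability idempotent $e = \sum x_i \otimes y_i$ from Theorem~\ref{separablelemma}(5) supplies an explicit two-sided inverse to $\psi_A$; centrality is used to verify injectivity, while surjectivity uses the relation $(a \otimes 1)e = (1 \otimes a)e$. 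Then Proposition~\ref{separableprojectivefg} yields finite generation, and faithful projectivity follows from Proposition~\ref{faithfullyprojprop} together with Theorem~\ref{faithfullyflatthm}.

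Next, (1)~$\Leftrightarrow$~(3)~$\Leftrightarrow$~(4) comes from the formalism of Morita equivalence: centrality together with the generator property makes $A$ an $A^e$-progenerator, and Morita's theorem identifies the bimodule category $A\textrm{-}\mathcal{M}\mathfrak{od}\textrm{-}A$ with $\mathcal{M}\mathfrak{od}\textrm{-}R$ via the inverse functors $(-)^A \cong \Hom_{A^e}(A,-)$ and $-\otimes_R A$; the preservation of projectivity is automatic for Morita equivalences. For (1)~$\Leftrightarrow$~(5) I would pass to residue fields: faithful projectivity combined with Nakayama's lemma reduces the global Azumaya condition to the condition that each fibre $A/\mathfrak{m}A$ is Azumaya over the field $R/\mathfrak{m}$; over a field, Azumaya means central simple, and Theorem~\ref{maxseparable} handles the separability side of the reduction, while the map $\psi_A$ being an isomorphism descends/ascends through the faithfully flat map $R \to \prod_{\mathfrak{m}} R/\mathfrak{m}$. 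Finally, for (1)~$\Leftrightarrow$~(6), I would use that an Azumaya algebra is \'etale-locally a matrix algebra: pick an \'etale cover $S/R$ acquiring matrix units for $A \otimes_R S$, so $A \otimes_R S \cong \End_S(P)$ for some faithfully projective $P$; conversely, both defining properties of Azumaya descend along the faithfully flat \'etale map $R \to S$.

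The main obstacle is (1)~$\Leftrightarrow$~(6), since producing a faithfully flat \'etale splitting algebra requires descent machinery beyond what is developed in this chapter: in the local case one uses Hensel's lemma to lift a splitting of the residue central simple algebra to an \'etale extension, and in the global case one patches such local splittings into a faithfully flat \'etale cover. The remaining implications, though lengthy in the literature, are essentially formal consequences of the separability idempotent, the Dual Basis Lemma (Lemma~\ref{dualbasislemma}), and Proposition~\ref{separableprojectivefg}, together with Propositions~\ref{faithfullyprojprop} and~\ref{homisoprop}, all already in hand.
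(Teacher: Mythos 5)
The paper does not give a self-contained proof of this theorem: it simply cites DeMeyer--Ingraham for $(1)\Leftrightarrow(2)$, Bass for $(1)\Leftrightarrow(3)$, and Knus for $(1)\Leftrightarrow(4)\Leftrightarrow(5)\Leftrightarrow(6)$. Your proposal therefore goes beyond the paper by attempting an actual argument, and the overall architecture (algebraic cluster via Morita, fibrewise characterisation, \'etale descent) matches how those references organise the material. But two steps as you have written them contain genuine errors.

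First, in the $(2)\Rightarrow(1)$ direction you invoke Proposition~\ref{separableprojectivefg} to obtain finite generation and then feed this into Proposition~\ref{faithfullyprojprop} to obtain faithful projectivity. However, Proposition~\ref{separableprojectivefg} has as hypothesis that $A$ is already projective as an $R$-module, and this is precisely the nontrivial content you need to extract from ``central and separable.'' Nothing in your outline establishes $R$-projectivity before it is used; the chain is circular. The usual route (as in DeMeyer--Ingraham) first builds the inverse $\nu : \End_R(A) \to A^e$ of $\psi_A$ from the separability idempotent, and only then, using $A^e \cong \End_R(A)$ together with $A$ being $A^e$-projective, deduces that $A$ is a finitely generated projective $R$-module via the trace-ideal/Morita argument. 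You would need to supply that step explicitly rather than lean on Proposition~\ref{separableprojectivefg}.

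Second, for $(1)\Leftrightarrow(5)$ you appeal to ``the faithfully flat map $R \to \prod_{\mathfrak m} R/\mathfrak m$.'' This map is almost never flat: already for a discrete valuation ring $R$ with maximal ideal $\mathfrak m$ the residue field $R/\mathfrak m$ has torsion over $R$ and is not flat, so the product is not flat either. The correct local--global mechanism is to localise at each maximal ideal (the maps $R \to R_{\mathfrak m}$ are flat, and an $R$-module map is an isomorphism if and only if it is after every such localisation) and then lift from $R_{\mathfrak m}/\mathfrak m R_{\mathfrak m}$ to $R_{\mathfrak m}$ using Nakayama's lemma for the finitely generated module $A$. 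The $(5)\Rightarrow(1)$ direction in particular needs care: one must first prove $A$ is $R$-projective (local freeness of constant rank from the residue data plus Nakayama) before $\psi_A$ can even be discussed, and your formulation skips that.

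The rest of the sketch -- the Morita identifications behind $(1)\Leftrightarrow(3)\Leftrightarrow(4)$ and the \'etale-splitting argument for $(1)\Leftrightarrow(6)$ -- is consistent with the cited sources and is fine as a plan, with the understanding that the \'etale splitting genuinely requires machinery (Henselisation, descent) not developed in this chapter, as you yourself note.
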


\begin{proof} The proof follows by combining various parts from
\cite[Thm.~II.3.4]{demeyer}, \cite[Thm.~III.4.1]{bass} and
\cite[Thm.~III.5.1.1]{knus}.

\vspace*{2pt}

(1) $\Lra$ (2):  See DeMeyer, Ingraham \cite[Thm.~II.3.4]{demeyer}.

\vspace*{2pt}

(1) $\Lra$ (3): See Bass \cite[Thm.~III.4.1]{bass}.

\vspace*{2pt}

(1) $\Lra$ (4) $\Lra$ (5) $\Lra$ (6): See Knus
\cite[Thm.~III.5.1.1]{knus}.
\end{proof}




We state another example of an Azumaya algebra, which follows from
the theorem above.

\begin{example} \label{azumayaexample3}
Let $R$ be a commutative ring in which $2$ is invertible. Define the
quaternion algebra $Q$  over $R$ to be the free $R$-module with
basis $\{ 1, i, j, k\}$ and with multiplication satisfying $i^2 =
j^2 = k^2 = -1$ and $ij = -ji = k.$ As for quaternion algebras over
fields (see \cite[Lemma~1.6]{pierce}), it follows that $Q$ is a
central $R$-algebra. Then \cite[Cor.~4]{szetoseparable} says that
$Q$ is separable over $R$. Their proof considers the following
element of $Q$:
$$
e = \frac{1}{4} ( 1 \otimes 1 - i  \otimes i - j \otimes j - k
\otimes k).
$$
It is routine to show that $e$ is a separability idempotent for $Q$,
so $Q$ is separable over $R$. It follows from
Theorem~\ref{azumayadefinthm}(2) that $Q$ is an Azumaya algebra over
$R$.
\end{example}

\begin{remark}\label{dimensionazumayasquare}
If $A$ is an Azumaya algebra over $R$, then by
Theorem~\ref{azumayadefinthm},  $A/ \m A$ is a central simple $R/ \m
$-algebra for all $\m \in \Max(R)$. If $A$ is free over $R$, then
$$
[A:R] = [R/\m \otimes_R A: R/ \m] = [ A/ \m A :R/ \m].
$$
Since we know the dimension of a central simple algebra is a square
number (see \cite[Cor.~8.4.9]{sch}), the same is true for $A$.
\end{remark}





We also remark that Azumaya algebras are closely related to
Polynomial Identity rings, thanks to the Artin-Procesi Theorem (see
\cite[\S1.8]{rowenpolynomialidentities},
\cite[\S6.1]{rowenringtheoryII}). While we do not consider
polynomial identity theory here, we mention a result of Braun in the
following theorem. In \cite{braunartinsthm}, Braun generalises the
Artin-Procesi Theorem and, as a consequence of this generalisation,
he gives another characterisation of Azumaya algebras in
\cite[Thm.~4.1]{braunartinsthm}, which we state in the theorem
below. The proof given below is a direct proof of this
characterisation, which is due to Dicks \cite{dicksazumaya}. The
notation used in the proof is defined in
Section~\ref{sectionseparable}.


\begin{thm} \label{braunscharacterisationthm}
Let $A$ be a central $R$-algebra. Then $A$ is an Azumaya algebra
over $R$ if and only if there is some $e \in A^e$ such that $e*1 =1$
and $e*A \subseteq R$.
\end{thm}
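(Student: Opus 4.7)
The plan is to use Theorems~\ref{azumayadefinthm} and~\ref{separablelemma} to reduce the biconditional to a question about the same element $e$, and check it in both directions.

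\emph{Forward direction.} Suppose $A$ is Azumaya. By Theorem~\ref{azumayadefinthm}(2), $A$ is central and separable over $R$, so Theorem~\ref{separablelemma}(5) furnishes $e \in A^e$ satisfying $e*1 = 1$ and $(a \otimes 1)e = (1 \otimes a)e$ for every $a \in A$. To verify $e*A \subseteq R$, I would compute, for $a, b \in A$,
\[
b(e*a) \;=\; \psi\bigl((b \otimes 1)e\bigr)(a) \;=\; \psi\bigl((1 \otimes b)e\bigr)(a) \;=\; (e*a)\,b,
\]
where the outer equalities come from $\psi$ being a ring homomorphism sending $b \otimes 1$ and $1 \otimes b$ to left and right multiplication by $b$, and the middle equality is condition (5). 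So $e*a$ commutes with every element of $A$, hence $e*a \in Z(A) = R$ by centrality.

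\emph{Backward direction.} Now suppose $A$ is central and an $e \in A^e$ with the stated properties is given. By Theorem~\ref{azumayadefinthm}(2), it suffices to show that $A$ is separable, and by Theorem~\ref{separablelemma}(5) it then suffices to show the given $e$ satisfies $(a \otimes 1)e = (1 \otimes a)e$ in $A^e$. Setting $\tau := \psi(e)$ and using $\tau(b) \in R \subseteq Z(A)$, the same computation (now reversed) yields
\[
\psi\bigl((a \otimes 1 - 1 \otimes a)e\bigr)(b) \;=\; a\,\tau(b) - \tau(b)\,a \;=\; 0
\]
for every $b \in A$; so $(a \otimes 1 - 1 \otimes a)e \in \ker \psi$.

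\emph{Closing the gap.} The main obstacle is to upgrade membership in $\ker \psi$ to equality in $A^e$ itself, i.e., to show $\psi$ is injective. My strategy is to leverage $e$ to build the Azumaya structure on $A$ directly, aiming to apply Theorem~\ref{azumayadefinthm}(5). Writing $e = \sum_{i=1}^n x_i \otimes y_i$, the $R$-linear functionals $\phi_i : A \to R$ defined (up to a suitable rescaling) by $\phi_i(a) = \tau(y_i a)$ are candidates for a dual basis paired with $\{x_i\}$ that would exhibit $A$ as a finitely generated projective $R$-module. Once $A$ is finitely generated, one can pass modulo each maximal ideal $\m \in \Max(R)$: the image $\bar e$ satisfies the same hypotheses in $\bar A = A/\m A$ over the field $\bar R = R/\m$, and any proper two-sided ideal $I \subseteq \bar A$ satisfies $\bar\tau(I) \subseteq I \cap \bar R = 0$ (otherwise $1 \in I$), from which the trace pairing $(a,b) \mapsto \bar\tau(ab)$ can be leveraged to force $I = 0$; so $\bar A$ is central simple. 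Theorem~\ref{azumayadefinthm}(5) then concludes that $A$ is Azumaya. The real technical heart is verifying the dual basis identity $a = \sum_i x_i \phi_i(a)$ (or the appropriate rescaling), where the interplay of centrality with the two properties of $e$ must combine nontrivially.
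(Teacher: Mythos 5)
Your forward direction is correct and essentially identical to the paper's: invoke Theorem~\ref{azumayadefinthm} to reduce ``Azumaya'' to ``separable'' (given centrality), take the $e$ of Theorem~\ref{separablelemma}(5), and compute that $e*a$ commutes with every $b \in A$, hence lies in $Z(A) = R$.

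The backward direction has a genuine gap. Your computation correctly shows $(a \otimes 1 - 1 \otimes a)e \in \ker \psi$, but upgrading this to $(a \otimes 1 - 1 \otimes a)e = 0$ in $A^e$ requires $\psi$ to be injective --- and that is precisely part of the Azumaya condition you are trying to establish, so the step is circular as stated. You then propose to sidestep this by proving Theorem~\ref{azumayadefinthm}(5) directly, but the two crucial steps are not carried out and both seem doubtful. First, the dual-basis identity $a = \sum_i x_i \phi_i(a)$ with $\phi_i(a) = c\,\tau(y_i a)$: note that in the proof of Proposition~\ref{separableprojectivefg}, the dual basis for $A$ is built from a \emph{pre-existing} dual basis (i.e., projectivity is an input, not a consequence), together with a genuine separability idempotent; here you have neither, and in fact even for $A = M_n(R)$ with the standard separability idempotent $e = \frac{1}{n}\sum e_{ij} \otimes e_{ji}$ one gets $\sum_i x_i \tau(y_i a) = \frac{1}{n^2} a$, so the ``rescaling'' is by the rank, which need not be invertible in $R$, and it is unclear how to extract it from an arbitrary $e$. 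Second, to conclude $A/\mathfrak{m}A$ is central simple you observe $\bar\tau$ vanishes on any proper two-sided ideal, but to then force that ideal to be zero you need nondegeneracy of the pairing $(a,b) \mapsto \bar\tau(ab)$, which you have not established and which does not follow from the two hypotheses on $e$ alone.

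The paper's proof (due to Dicks) is quite different and avoids both difficulties. It reduces, via Theorems~\ref{azumayadefinthm}(2) and~\ref{separablelemma}(4), to showing $Je = 0$. It introduces two auxiliary left $A^e$-module structures $*_1, *_2$ on $A^e$ itself, and its technical heart is the claim $A^e e A^e = A^e$. That claim is proved by contradiction: if $I = \{a \in A : a \otimes 1 \in A^e e A^e\}$ is proper, one passes to $\bar A = A/M$ for a maximal two-sided ideal $M$ of $A$ (not of $R$) --- so $\bar A$ is simple \emph{for free}, and centrality over $\bar R$ follows directly from $\bar e * \bar x = \bar x$ for $\bar x \in Z(\bar A)$ --- whence $\bar{A^e}$ is simple and $\bar{A^e}\bar e\bar{A^e} = \bar{A^e}$, contradicting the fact that $e *_2 (A^e e A^e)$ lands in $M \otimes 1$. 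Finally, $Je = 0$ follows from the inclusion $A^e e A^e *_2 u \subseteq (u *_1 A^e)A^e$ combined with $e * A \subseteq R$. If you want to salvage your approach, you would essentially have to reprove this content; as written, the proposal leaves the backward implication unproven.
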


\begin{proof}
Using Theorem~\ref{azumayadefinthm}, $A$ is an Azumaya algebra over
$R$ if and only if $A$ is separable over $R$ (since we assumed
central in the statement of the theorem). Assume $A$ is an Azumaya
algebra over $R$. By Theorem~\ref{separablelemma}(5), this implies
there exists an idempotent $e \in A^e$ such that $e \ast 1 = 1$ and
$(a \otimes 1) e = (1 \otimes a)e$ for all $a \in A$. Let $a \in A$
be arbitrary. Then $a (e*A) = (a \otimes 1)*(e*A) = ((a \otimes 1)
e)*A = ((1 \otimes a) e)*A = (1 \otimes a)*(e*A) = (e*A)a$, proving
$e*A \subseteq Z(A) =R$.

Conversely, suppose there is some $e \in A^e$ with $e \ast 1 = 1$
and $e \ast A \subseteq R$. We want to show that $A$ is separable
over $R$; that is, there is an $e \in A^e$ such that $e*1 = 1$ and
$Je = 0$ (using Theorem~\ref{separablelemma}). We will first show
that $A^e e A^e = A^e$, where $A^e e A^e $ is the ideal of $A^e$
generated by $e$. Let $I = \{ a \in A : a \otimes 1 \in A^e e A^e
\}$, which is a two-sided ideal of $A$. If $I=A$, then as $1 \in A$,
$1 \otimes 1 \in A^e e A^e$ and so $A^e e A^e =A^e$.

Assume $I \neq A$. Then there is a maximal ideal $M$ of $A$ such
that $I \subseteq M \subsetneqq A$. We can give $A^e$ two left
$A^e$-module structures as follows:
\begin{align*}
A^e \times A^e &\lra A^e \\
(u, a \otimes b) & \longmapsto u *_1 (a \otimes b) = (u *a) \otimes b\\
(u, a \otimes b) & \longmapsto u *_2 (a \otimes b) = a \otimes (u *
b)
\end{align*}
It is routine to check that these are well-defined. Then $e *_2 (a
\otimes b) = a \otimes (e*b) = a (e*b) \otimes 1$ and we can show
that $A^e *_2(A^e e A^e) \subseteq A^e e A^e$. It follows that every
element of $e*_2 (A^e e A^e)$ is of the form $a \otimes 1$ for some
$a \in I \subseteq M$.

Let $\ov A = A/M$ and $\ov R = R / (R \cap M)$. Then $\ov R
\hookrightarrow \ov A$ and $ \ov R \subseteq Z(\ov A)$. Let $\ov
{A^e} = \ov A \otimes_{\ov R} \ov A^{\op}$. Then $\ov e * \ov 1 =
\ov 1$ and $\ov e * \ov A \subseteq \ov R$. To show $ \ov R = Z(\ov
A)$, let $\ov x \in Z(\ov A)$. Then $\ov x = \ov e * \ov x$, which
shows $\ov x \in \ov R$ since we know $\ov e * \ov x \in \ov R$.

Since $M$ is a maximal ideal of $A$, $\ov A$ is simple, and
therefore central simple, over $\ov R$. Also $\ov A^{\op}$ is
simple, so $\ov {A^e}$ is simple. Since $\ov e * \ov 1 = \ov 1$,
$\ov e \neq 0$ and thus $\ov {A^e} \ov e \ov{A^e}= \ov {A^e }$, as
$\ov {A^e} \ov e \ov{A^e}$ is the two-sided ideal generated by $\ov
e$. Then $\ov 1 \otimes \ov 1 = \ov e
*_2 (\ov 1 \otimes \ov 1) \in \ov e *_2  \ov {A^e} \ov e \ov{A^e} =
\ov{e *_2 {A^e} e A^e}$. But we observed above that every element of
$e *_2 {A^e} e A^e$ is of the form $a \otimes 1$ for some $a \in M$,
so every element of $\ov{e *_2 {A^e} e A^e}$ is of the form $\ov {a
\otimes 1} = \ov a \otimes \ov 1 =0$, and therefore $\ov{e *_2 {A^e}
e A^e} =0$. Since $\ov {1 \otimes 1} \in \ov{e *_2 {A^e} e A^e}$,
this is a contradiction, so our assumption that $I \neq A$ was
incorrect, proving ${A^e} e A^e = A^e$.

It remains to prove $Je = 0$. Let $u \in A^e$. We claim that ${A^e}
e A^e *_2 u \subseteq (u *_1 A^e) A^e$. Let $v, w \in A^e$ and let
$u = \sum_i a_i \otimes b_i$. Then
\begin{align*}
(vew)*_2 (\sum_i a_i \otimes b_i) &= \sum_i (a_i \otimes (vew *b_i))\\
&= \sum_i \bigg( a_i \otimes \Big(v*\big( e*(w*b_i)\big) \!\!\:
\Big)\! \bigg)
\end{align*}
But $e*(w*b_i) \in R$ as $e*A \subseteq R$, so $v*\big( e*(w*b_i)
\big) = (v*1)(ew*b_i)$. Then letting $ew = \sum_j c_j \otimes d_j$,
we have
\begin{align*}
(vew)*_2 (\sum_i a_i \otimes b_i) &= \sum_i a_i (ew*b_i) \otimes
(v*1)\\
&= \sum_i \sum_j (a_i c_j b_i d_j) \otimes (v*1)\\
&=\sum_j \bigg( \!\!\!\; \Big( \sum_i a_i \otimes b_i \Big) *_1
\Big( c_j \otimes (v*1)\Big) \!\!\!\;  \bigg) (d_j \otimes 1),
\end{align*}
proving the claim.

Since $*$ defines a left module action on $A$ and since $e* A
\subseteq R$, it follows that $Je* A = J* (e*A) \subseteq J*R$. Then
it is easy to see that $J*R =0$, and so $Je*A =0$. But $Je \subseteq
A^e *_2 (Je)= A^e e A^e *_2 (Je) \subseteq (Je*_1 A^e) A^e = 0$,
proving $Je=0$ as required.
\end{proof}

We recall that for two rings $A$ and $A'$, an
\emph{anti-homomorphism}\index{anti-homomorphism} of $A$ into $A'$
is a map $\sigma : A \ra A'$ satisfying the following conditions:
\begin{enumerate}
\item $\sigma(a + b) = \sigma(a) + \sigma(b)$ for all $a, b \in A$;

\item $\sigma (1_A) = 1_{A'}$;

\item $\sigma(ab) = \sigma(b) \sigma(a)$ for all $a, b \in A$.

\suspend{enumerate}%
If $A = A'$ and $\sigma : A \ra A$ is a bijective anti-homomorphism,
then $\sigma$ is called an anti-automorphism. An
\emph{involution}\index{involution} is an anti-automorphism
satisfying the additional condition: %
\resume{enumerate}

\item $\sigma^2 (a) = a$ for all $a \in A$.
\end{enumerate}
If $A$ is a central $R$-algebra, then an involution $\sigma$ of $A$
is said to be \emph{of the first kind} if the restriction of
$\sigma$ to $R$ is the identity map. We note that an involution of
the first kind is an $R$-linear involution; that is, the map $\sigma
:A \ra A$ is an $R$-linear map.

For a central $R$-algebra $A$ admitting an involution of the first
kind, Braun \cite[Thm.~5]{braundefiningaxioms} gives a further
characterisation of when $A$ is an Azumaya algebra, which is stated
in Theorem~\ref{braunsthminvolutionoffirstkind}. The theorem shows
that, in this setting, the condition that $A$ is a finitely
generated projective $R$-module is not required.

\begin{thm} \label{braunsthminvolutionoffirstkind}
Let $A$ be a central $R$-algebra admitting an involution of the
first kind. Then $A$ is an Azumaya algebra if and only if $\psi_A: A
\otimes_R A^{\op} \ra \End_R (A)$ is an $R$-linear isomorphism.
\end{thm}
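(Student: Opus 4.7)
The forward direction is immediate from Definition~\ref{azumayadefinition}, since $\psi_A$ being an $R$-algebra (hence $R$-linear) isomorphism is part of the very definition of an Azumaya algebra. For the converse, assume that $\psi_A$ is an $R$-linear isomorphism. Since $A$ is central, Theorem~\ref{braunscharacterisationthm} tells us that it suffices to exhibit an element $e \in A^e$ with $e \ast 1 = 1$ and $e \ast A \subseteq R$. Under $\psi_A$, such an $e$ corresponds to the $R$-linear endomorphism $f := \psi_A(e) \in \End_R(A)$ acting by $f(x) = e \ast x$, and the two required conditions translate respectively into $f(1) = 1$ and $f(A) \subseteq Z(A) = R$. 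Thus the problem reduces to constructing an $R$-linear retraction $f : A \to R$ and setting $e := \psi_A^{-1}(f)$.

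To build such an $f$, the plan is to start from $e_0 := \psi_A^{-1}(\id_A) = \sum_{i=1}^{n} a_i \otimes b_i \in A^e$, which by definition satisfies $\sum_i a_i\, x\, b_i = x$ for every $x \in A$. Applying the anti-automorphism $\sigma$ to this identity and using $\sigma|_R = \id_R$ yields $\sum_i \sigma(b_i)\, y\, \sigma(a_i) = y$ for every $y \in A$, so that by the injectivity of $\psi_A$ we obtain the identity
$$
\sum_i a_i \otimes b_i \; = \; \sum_i \sigma(b_i) \otimes \sigma(a_i) \quad \text{in } A \otimes_R A^{\op}.
$$
Equivalently, $e_0$ is invariant under the $R$-linear involution $\tau$ of $A^e$ defined by $\tau(a \otimes b) = \sigma(b) \otimes \sigma(a)$; this hidden symmetry is the essential input provided by the assumption that $\sigma$ is of the first kind.

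The core of the argument is then to exploit this $\tau$-invariance to produce the retraction. The natural attempt is to form a symmetrized element $e \in A^e$---for instance by applying $\sigma$ to one component of each simple tensor appearing in $e_0$, or by taking a suitable product of $e_0$ with a $\sigma$-twisted version of itself---and to verify that the resulting endomorphism $\psi_A(e) : A \to A$ commutes with every element of $A$. The $\tau$-invariance of $e_0$, combined with the defining identity $\sum_i a_i x b_i = x$, should force the output to lie in $Z(A) = R$; after a normalization at $1$, Theorem~\ref{braunscharacterisationthm} then yields that $A$ is an Azumaya algebra.

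The principal obstacle is precisely this verification that $\psi_A(e)$ takes central values. The relation $\sum_i a_i x b_i = x$ by itself produces the arbitrary element $x \in A$ and nothing central, so the $\sigma$-symmetry must intervene in an essential way; pinning down which symmetrized combination works---and checking that it actually commutes with all of $A$---is the heart of the argument and the step that genuinely uses the involution hypothesis.
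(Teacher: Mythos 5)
The paper's own ``proof'' of this theorem is simply a citation to Braun \cite{braundefiningaxioms}; no argument is given internally, so you are attempting to supply a proof that the paper deliberately omits. Your forward direction is correct, and the reduction of the converse to Theorem~\ref{braunscharacterisationthm} is a reasonable strategy. The key observation---that $e_0 := \psi_A^{-1}(\id_A)$ is fixed under the $R$-linear involution $\tau(a \otimes b) = \sigma(b) \otimes \sigma(a)$ of $A^e$---is correct and does genuinely use the hypothesis that $\sigma$ is of the first kind: one needs $\sigma|_R = \id_R$ both to see that $\tau$ is well defined on the tensor product over $R$ and to derive $\tau(e_0) = e_0$ from the identity $\sum_i \sigma(b_i)\,y\,\sigma(a_i) = y$ together with the injectivity of $\psi_A$.

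However, the argument stops at exactly the point where the real work begins. You never actually produce an element $e \in A^e$ with $e*1 = 1$ and $e*A \subseteq R$. The candidates you gesture toward, $(\sigma \otimes \id)(e_0)$ and $(\id \otimes \sigma)(e_0)$, have no visible normalization at $1$ (there is no reason $\sum_i \sigma(a_i) b_i$ should equal $1$), and it is not apparent which symmetrized combination, if any simple one, has image in $R$. You acknowledge this yourself when you write that pinning down the right combination and checking the centrality condition ``is the heart of the argument and the step that genuinely uses the involution hypothesis''---but that heart is precisely the content of the theorem, so it cannot be left as an expectation. There is also a small imprecision worth flagging: you slide between the condition ``$\psi_A(e)(A) \subseteq Z(A)$'' and the phrase ``$\psi_A(e)$ commutes with every element of $A$,'' which are not the same statement, and this ambiguity would have to be resolved before any completion could be checked. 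In sum: the setup, the reduction via Theorem~\ref{braunscharacterisationthm}, and the $\tau$-invariance observation are all sound and plausibly useful ingredients, but the core construction is absent, so what you have is an outline rather than a proof.
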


\begin{proof}
See Braun \cite[Thm.~5]{braundefiningaxioms}.
\end{proof}

This theorem has been further generalised by Rowen
\cite[Cor.~1.7]{rowen}. For a central $R$-algebra $A$ admitting an
involution of the first kind, the result of Rowen proves that for
$A$ to be an Azumaya algebra, it is sufficient to assume that
$\psi_A$ is an epimorphism.


\section{The development of the theory of Azumaya algebras}
\label{sectiondevelopmentofazumayaalgebras}

The above results show some of the most general reformulations of
the definition of an Azumaya algebra to date. We will now consider
the historical development of these concepts, beginning with the
1951 paper of Azumaya \cite{az}.

In \cite{az}, Azumaya introduced the term ``proper maximally central
algebra'' (p.~128). An $R$-algebra $A$ which is free and finitely
generated as a module over $R$ is defined to be {\em proper
maximally central}\index{proper maximally central} over $R$ if $A
\otimes A^{\op}$ coincides with $\End_R (A)$. It is known that a
free $R$-module is both faithful and projective as a module over
$R$, so this definition implies that $A$ is faithfully projective as
an $R$-module, and $A \otimes A^{\op} \cong \End_R(A)$. This shows
that the definition of a proper maximally central algebra is
equivalent to Definition~\ref{azumayadefinition} under the
additional assumption that $A$ is free.

Assuming that $R$ is a Noetherian ring, Auslander and Goldman prove
Theorem~\ref{maxseparable} (see \cite{ausgold}, Cor.~4.5 and
Thm.~4.7). Further, Endo and Watanabe \cite[Prop.~1.1]{endo}
generalise this result by removing the Noetherian condition on $R$,
as stated above in Theorem~\ref{maxseparable}.

The equivalence of statements (1) and (2) of
Theorem~\ref{azumayadefinthm} was proven by Auslander and Goldman
\cite[Thm.~2.1]{ausgold} under the assumption that $A$ is a central
algebra over $R$, which we can see in Theorem~\ref{azumayadefinthm}
is not required. In Knus \cite[Thm.~III.5.1.1(2)]{knus},
Theorem~\ref{azumayadefinthm}(2) has the condition that $A$ is
finitely generated, which isn't required for the equivalence of the
statements. However, we note that given the equivalence of (1) and
(2), the fact that $A$ is finitely generated follows from
Proposition~\ref{separableprojectivefg}.


Part (3) of Theorem~\ref{azumayadefinthm} 
generalises a result of DeMeyer, Ingraham
\cite[Thm.~II.3.4(2)]{demeyer}. Their result says that $A$ is
central over $R$ and $A$ is a progenerator over $A^e$ if and only if
$A$ is an
Azumaya algebra over $R$. But we can see that 
$\Hom_{A^e}(A , -)$ being an exact functor is superfluous.

Azumaya \cite[Thm.~15]{az} proves the equivalence of
Theorem~\ref{azumayadefinthm} parts (1) and (5) with the extra
condition that $A$ is free over $R$. Further, with the assumption
that $A$ is projective, Bass \cite[Thm.~III.4.1]{bass} proves this
equivalence. Theorem~\ref{azumayadefinthm} shows that neither of
these extra conditions on $A$ are required for the equivalence of
these two statements. Bass \cite[Thm.~III.4.1]{bass} shows that the
definition of an Azumaya algebra is equivalent to the following:
\begin{quote}
There exists an $R$-algebra $S$ and a faithfully projective
$R$-module $P$ such that $A \otimes_R S \cong \End_R (P)$.
\end{quote}
Comparing this with Theorem~\ref{azumayadefinthm}(6), the result of
Knus \cite[Thm.~III.5.1.1(5)]{knus} refines this result by giving
that $S$ is a faithfully flat \'{e}tale $R$-algebra.

We prove our main theorem of Chapter~\ref{chapterktheoryofazalg},
Theorem~\ref{azumayafreethm}, for Azumaya algebras which are free
over their centres. So for an Azumaya algebra $A$ as originally
defined by Azumaya (with $A$ free over its centre $R$), our
Theorem~\ref{azumayafreethm} covers its $K$-theory.



\chapter{Algebraic {\itshape K}-Theory} \label{chktheory}



Algebraic $K$-theory defines a sequence of functors $K_i(R)$, for $i
\geq 0$, from the category of rings to the category of abelian
groups. The lower $K$-groups $K_0$, $K_1$ and $K_2$ were developed
in the 1950s and 60s by Grothendieck, Bass and Milnor respectively.
After much uncertainty, the ``correct'' definition of the higher
$K$-groups was given by Quillen in 1974. For an introduction to the
lower $K$-groups, see Magurn \cite{magurn} or Silvester
\cite{silvester}, and for an introduction to higher algebraic
$K$-theory, see Rosenberg \cite{rosenberg} or Weibel
\cite{weibelkbook}.

We begin this chapter by recalling the definitions of $K_0$, $K_1$
and $K_2$, and by observing some properties and examples of these
lower $K$-groups. We then specialise to the lower $K$-groups of
central simple algebras. Lastly, we look at the higher $K$-groups,
and note some of their properties.


\section{Lower {\itshape K}-groups} \label{sectionlowerkgroups}

\section*{{\itshape K}$\mathbf{_0}$}

Let $R$ be a ring. 
Let $\Proj (R)$ denote the monoid of isomorphism classes of finitely
generated projective $R$-mod\-ules, with direct sum as the binary
operation and the zero module as the identity element. Then $K_0
(R)$ is defined to be the free abelian group based on $ \mathcal{P}
\mathrm{roj}(R)$ modulo the subgroup generated by elements of the
form $[P] + [Q] - [P \oplus Q]$, for $P$, $Q \in \pr (R)$.
Alternatively, the group $K_0 (R)$ can be defined as the group
completion of the monoid $\Proj (R)$, which is shown in
\cite[Thm.~1.1.3]{rosenberg} to be equivalent to the definition
given here. By \cite[p.~5, Remarks]{rosenberg}, the group completion
construction forms a functor from the category of abelian semigroups
to the category of abelian groups.

\begin{thm} \label{k0functorgrphomomorphism}
For rings $R$ and $S$, let $T$ be an additive functor from the
category of $R$-modules to the category of $S$-modules, with $T(R)
\in \pr (S)$. Then $T$ restricts to an exact functor from $\pr (R)$
to $\pr (S)$, which induces a group homomorphism $f: K_0 (R) \ra K_0
(S)$ with $f([P]) = [T(P)]$ for each $P \in \pr (R)$.
\end{thm}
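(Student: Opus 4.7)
The plan is to verify three things in order: (i) that $T$ carries $\pr(R)$ into $\pr(S)$, (ii) that the restricted functor is exact in the relevant sense, and (iii) that the assignment $[P] \mapsto [T(P)]$ descends through the defining presentation of $K_0$.

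For (i), I would begin with the free module $R$. Since $T$ is additive, it preserves finite direct sums, so $T(R^n) \cong T(R)^n$ for every $n \geq 0$; as a finite direct sum of finitely generated projective $S$-modules, this lies in $\pr(S)$. Now for an arbitrary $P \in \pr(R)$, choose $Q \in \pr(R)$ with $P \oplus Q \cong R^n$ for some $n$. Applying $T$ and again using additivity gives $T(P) \oplus T(Q) \cong T(R)^n \in \pr(S)$, so $T(P)$ is a direct summand of a finitely generated projective $S$-module and hence belongs to $\pr(S)$.

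For (ii), the key observation is that every short exact sequence $0 \to A \to B \to C \to 0$ in $\pr(R)$ splits, because the right-hand term $C$ is projective. Thus $B \cong A \oplus C$, and applying the additive functor $T$ yields the split short exact sequence $0 \to T(A) \to T(B) \to T(C) \to 0$ in $\pr(S)$. This is precisely the exactness required for an additive functor on the category of finitely generated projective modules.

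For (iii), define $f$ on the free abelian group on $\Proj(R)$ by $[P] \mapsto [T(P)]$; this is well-defined because $T$ preserves isomorphisms. By step (i) we have $T(P \oplus Q) \cong T(P) \oplus T(Q)$, so the generator $[P] + [Q] - [P \oplus Q]$ of the defining subgroup is sent to zero in $K_0(S)$. Hence $f$ factors through $K_0(R)$ and gives the desired group homomorphism. I do not anticipate a serious obstacle here: the only point that needs care is the additivity of $T$ on direct sums, which is automatic for an additive functor and is what lets every other step go through.
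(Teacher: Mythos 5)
Your proof is correct and follows essentially the same route as the paper: steps (i) and (ii) match exactly, and your step (iii) simply unwinds the paper's citation that the group completion construction is functorial by checking the monoid map $[P]\mapsto[T(P)]$ respects the defining relations of $K_0$.
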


\begin{proof}
(See \cite[Prop.~6.3]{magurn}.) If $P \in \pr (R)$, then $P \oplus Q
\cong R^n$ for some $R$-module $Q$, and we have $T(P) \oplus T(Q)
\cong T(R)^n$. Since  $T(R)^n \in \pr (S)$, it follows that the
functor $T$ takes $\pr (R)$ to $\pr (S)$. All short exact sequences
in $\pr (R)$ are split, so the restriction of $T$ to $\pr (R) \ra
\pr (S)$ is an exact functor. Since functors preserve isomorphisms,
$T$ induces a monoid homomorphism
$$
\Proj (R) \lra \Proj (S); \, \big[P\big] \lmps \big[T(P)\big].
$$
Since the group completion is functorial, there is a group
homomorphism $K_0 (R) \ra K_0 (S);$ $\big[P\big] \mps
\big[T(P)\big].$
\end{proof}

For rings $R$ and $S$, if $\phi :R \ra S$ is a ring homomorphism,
then $S$ is a right $R$-module via $\phi$. There is an additive
functor
$$
S \otimes_R - : R \Mod \lra S \Mod
$$
which, by Theorem~\ref{k0functorgrphomomorphism}, induces a group
homomorphism $f:K_0 (R) \ra K_0 (S)$; $f([P]) = [S \otimes_R P]$ for
each $P \in \pr (R)$.

\begin{thm} \label{k0functorthm}
$K_0$ is a functor from the category of rings to the category of
abelian groups.
\end{thm}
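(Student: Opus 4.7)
The plan is to verify the two functoriality axioms for $K_0$, using Theorem~\ref{k0functorgrphomomorphism} to produce the induced group homomorphisms. We have already observed that any ring homomorphism $\phi: R \ra S$ gives rise to the additive functor $S \otimes_R - : R\Mod \ra S\Mod$, which satisfies $S \otimes_R R \cong S \in \pr(S)$, so Theorem~\ref{k0functorgrphomomorphism} produces a group homomorphism $K_0(\phi):K_0(R) \ra K_0(S)$ sending $[P] \mps [S \otimes_R P]$. It remains to show that $K_0(\id_R) = \id_{K_0(R)}$ and that $K_0(\psi \circ \phi) = K_0(\psi) \circ K_0(\phi)$ for any composable pair of ring homomorphisms $R \stackrel{\phi}{\ra} S \stackrel{\psi}{\ra} T$.

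For the identity axiom, I would use the natural isomorphism $R \otimes_R P \cong P$ of $R$-modules, which is an isomorphism in $\pr(R)$ for every $P \in \pr(R)$. Since isomorphic projectives give the same class in $K_0(R)$, the induced map sends $[P] \mps [R \otimes_R P] = [P]$, so $K_0(\id_R)$ agrees with the identity on all generators of $K_0(R)$, and hence is the identity homomorphism.

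For the composition axiom, the key is the canonical natural isomorphism $T \otimes_S (S \otimes_R P) \cong T \otimes_R P$ of $T$-modules, valid for any $P \in \pr(R)$, where $T$ is viewed as a right $S$-module via $\psi$ and as a right $R$-module via $\psi \circ \phi$. Applying the definitions, $K_0(\psi) \circ K_0(\phi)$ sends $[P]$ to $[T \otimes_S (S \otimes_R P)]$, while $K_0(\psi \circ \phi)$ sends $[P]$ to $[T \otimes_R P]$; the canonical isomorphism above shows these classes agree in $K_0(T)$. Since both homomorphisms coincide on the generators $[P]$, $P \in \pr(R)$, they are equal.

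There is no real obstacle here: once Theorem~\ref{k0functorgrphomomorphism} is in hand, the verification reduces to two standard natural isomorphisms of tensor products and the fact that $K_0$ identifies isomorphic projective modules. The only mild care needed is to check that the various tensor product identifications are natural in $P$, so that the induced equalities hold at the level of group homomorphisms rather than merely pointwise on representatives; this is immediate from naturality of the unit isomorphism $R \otimes_R - \cong \id$ and of the associativity isomorphism for tensor products.
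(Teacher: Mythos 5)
Your proof is correct and carries out the standard argument that the paper delegates entirely to the cited reference (\cite[Thm.~6.22]{magurn}): induce the map on $K_0$ via Theorem~\ref{k0functorgrphomomorphism}, then use the unit isomorphism $R \otimes_R P \cong P$ and the associativity isomorphism $T \otimes_S (S \otimes_R P) \cong T \otimes_R P$ to verify the two functoriality axioms on generators. One small remark: your closing caveat about needing naturality in $P$ is superfluous --- since $K_0(R)$ is generated by the classes $[P]$ and the class $[T(P)]$ is unchanged under $T$-module isomorphism, pointwise agreement on the generators already forces equality of the two group homomorphisms, with no naturality needed.
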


\begin{proof}
See \cite[Thm.~6.22]{magurn} for the details.
\end{proof}

\begin{thm} \label{k0rtimessthm}
For rings $R$ and $S$, there is a group isomorphism $K_0 (R \times
S) \cong K_0 (R) \times K_0 (S)$.
\end{thm}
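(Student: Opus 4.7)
The plan is to build mutually inverse group homomorphisms $\phi: K_0(R \times S) \to K_0(R) \times K_0(S)$ and $\psi: K_0(R) \times K_0(S) \to K_0(R \times S)$, and verify they compose to the identity on each side.

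For $\phi$, I would use the coordinate projections $\pi_R: R \times S \to R$ and $\pi_S: R \times S \to S$, which are genuine ring homomorphisms. By Theorem~\ref{k0functorthm} they induce group homomorphisms $(\pi_R)_*$ and $(\pi_S)_*$, which I pair to form $\phi([P]) = \big((\pi_R)_*[P],\, (\pi_S)_*[P]\big)$. Working with the central idempotent $e = (1_R, 0) \in R \times S$, this induced map becomes $[P] \mapsto ([eP], [(1-e)P])$, where $eP$ carries its natural $R$-module structure and $(1-e)P$ its natural $S$-module structure.

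For $\psi$ I cannot use ring-homomorphism functoriality directly, since the inclusions $R \hookrightarrow R \times S$ and $S \hookrightarrow R \times S$ fail to preserve identity elements. Instead, I would invoke Theorem~\ref{k0functorgrphomomorphism}: restriction of scalars along $\pi_R$ is an additive functor $T_R: R \Mod \to (R \times S) \Mod$ sending the module $R$ to $e(R \times S)$, which is a direct summand of $R \times S$ and hence finitely generated projective over $R \times S$. So $T_R$ induces a group homomorphism $K_0(R) \to K_0(R \times S)$, and analogously for $\pi_S$. Summing these yields $\psi([M], [N]) = [M \oplus N]$, where $(r,s)$ acts coordinatewise.

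The final step is to check both compositions are the identity. Given any finitely generated projective $R \times S$-module $P$, the decomposition $P = eP \oplus (1-e)P$ is an isomorphism of $R \times S$-modules, which shows $\psi \circ \phi = \id$. In the other direction, for projective modules $M$ over $R$ and $N$ over $S$, the $R \times S$-module $M \oplus N$ satisfies $e(M \oplus N) = M$ and $(1-e)(M \oplus N) = N$, yielding $\phi \circ \psi = \id$. I do not expect any serious obstacle here; the only point requiring real care is confirming that restriction of scalars along $\pi_R$ takes finitely generated projectives to finitely generated projectives, and this reduces to the single observation that $e(R \times S)$ is a direct summand of $R \times S$.
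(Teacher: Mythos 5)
Your proof is correct. The paper itself gives no argument for this theorem, deferring entirely to \cite[Thm.~6.6]{magurn}; your idempotent-decomposition argument is the standard one and is what that reference does. The one genuinely delicate point is the construction of $\psi$: as you observe, the coordinate inclusions $R \to R\times S$ and $S \to R\times S$ are not unital ring homomorphisms, so one cannot invoke Theorem~\ref{k0functorthm} directly. Routing instead through Theorem~\ref{k0functorgrphomomorphism} with the restriction-of-scalars functor along $\pi_R$ is exactly the right fix, and the only hypothesis that needs checking — that $T_R(R) = e(R\times S)$ lies in $\pr(R\times S)$ — is the easy observation you make. The identification $(\pi_R)_*[P] = [R\otimes_{R\times S}P] = [eP]$ and the module decomposition $P \cong eP \oplus (1-e)P$ are both standard facts about central idempotents and hold as you state; checking the two compositions on generators $[P]$ and $([M],[N])$ suffices since these generate the respective groups and both $\phi$ and $\psi$ are already known to be homomorphisms.
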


\begin{proof}
See \cite[Thm.~6.6]{magurn}.
\end{proof}

Recall that rings $R$ and $S$ are Morita equivalent\index{Morita
equivalence}, if $R \Mod$ and $S \Mod$ are equivalent as categories;
that is, there are functors
\begin{align*}
\xymatrix{ R \Mod \ar@<.7ex>[r]^{T} & S \Mod \ar@<1ex>[l]^{U} }
\end{align*}
with natural equivalences $T \circ U \cong \id_S$ and $U \circ T
\cong \id_R$. It follows from \cite[Prop.~II.10.2]{mitchell} that
$T$ and $U$ are additive functors. In particular, $M_n (R)$ is
Morita equivalent\label{moritaequivalenceofmnrandr} to $R$ since the
functors
\begin{align*}
R^n \otimes_{M_n(R)} - : M_n(R) \;\! \Mod & \lra  R \Mod \\
\textrm{ and \;\;\;\;\;\; }  R^n \otimes_R - : R \Mod & \lra  M_n(R) \;\! \Mod 
\end{align*}
form mutually inverse equivalences of categories.

\begin{thm} \label{k0moritathm}
For rings $R$ and $S$, if $R$ and $S$ are Morita equivalent then
$K_0(R) \cong K_0(S)$. In particular, 
$K_0 (R) \cong K_0 (M_n(R))$.
\end{thm}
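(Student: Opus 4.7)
The plan is to invoke Theorem~\ref{k0functorgrphomomorphism} for each of the two equivalence functors $T$ and $U$, and then use the natural isomorphisms $T \circ U \cong \id_S$ and $U \circ T \cong \id_R$ to show that the resulting maps on $K_0$ are mutually inverse.

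First, I would check that the additive functor $T \colon R\Mod \to S\Mod$ satisfies the hypothesis $T(R) \in \pr(S)$ of Theorem~\ref{k0functorgrphomomorphism}. This is the one step that requires a small argument: $R$ is a finitely generated projective object of $R\Mod$, and both projectivity and finite generation can be detected categorically (projectivity via exactness of $\Hom_R(R,-)$, finite generation via compactness in the categorical sense, i.e.\ commutation of $\Hom_R(R,-)$ with filtered colimits of monomorphisms). Since $T$ is an equivalence, it preserves these properties, so $T(R)$ is a finitely generated projective $S$-module. The analogous statement holds for $U(S) \in \pr(R)$.

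Second, by Theorem~\ref{k0functorgrphomomorphism}, $T$ and $U$ induce group homomorphisms
\[
f \colon K_0(R) \lra K_0(S), \quad [P] \lmps [T(P)],
\]
\[
g \colon K_0(S) \lra K_0(R), \quad [Q] \lmps [U(Q)].
\]
For $P \in \pr(R)$, the natural isomorphism $U \circ T \cong \id_R$ gives $U(T(P)) \cong P$ in $R\Mod$, so $[U(T(P))] = [P]$ in $K_0(R)$; hence $g \circ f = \id_{K_0(R)}$ on generators, and therefore on all of $K_0(R)$. Symmetrically $f \circ g = \id_{K_0(S)}$, so $f$ is a group isomorphism.

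Finally, for the ``in particular'' statement, I would simply appeal to the Morita equivalence between $R$ and $M_n(R)$ exhibited on page~\pageref{moritaequivalenceofmnrandr} by the functors $R^n \otimes_R -$ and $R^n \otimes_{M_n(R)} -$, and apply the first part. The only real obstacle is the categorical preservation of ``finitely generated projective'' under an equivalence; everything else is formal. In fact this step can be bypassed entirely by recalling that in any Morita equivalence $T(R)$ is automatically a progenerator in $S\Mod$ (a standard fact of Morita theory), which in particular lies in $\pr(S)$.
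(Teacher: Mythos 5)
Your proposal is correct and takes essentially the same approach as the paper, which likewise applies Theorem~\ref{k0functorgrphomomorphism} to the two equivalence functors and observes that they induce an equivalence $\pr(R) \cong \pr(S)$ and hence an isomorphism on $K_0$. You simply spell out the hypothesis check $T(R) \in \pr(S)$ (via categorical preservation of finitely generated projectives, or the progenerator fact) and the mutual inverse verification more explicitly than the paper does.
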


\begin{proof}
(See \cite[Cor.~II.2.7.1]{weibelkbook}.) This follows from
Theorem~\ref{k0functorgrphomomorphism} since the functors
\begin{align*}
\xymatrix{ R \Mod \ar@<.7ex>[r]^{T} & S \Mod \ar@<1ex>[l]^{U} }
\end{align*}
induce an equivalence between the categories $\pr (R)$ and $\pr
(S)$, which induces a group isomorphism on the level of $K_0$.
\end{proof}

\begin{examples} \label{k0examples}
\begin{enumerate}
\item If $R$ is a field or a division ring, then $K_0(R) \cong
\mathbb Z$. A finitely generated projective module over $R$ is free,
with a uniquely defined dimension. Since any two finite dimensional
free modules with the same dimension are isomorphic, we have $\Proj
(R) \cong \mathbb N$, so $K_0 (R) \cong \mathbb Z$.

\item Recall that a ring $R$ is a semi-simple ring\index{semi-simple
ring}\index{ring!semi-simple} if it is semi-simple as a left module
over itself; that is, $R$ is a direct sum of simple $R$-submodules.
By the Artin-Wedderburn theorem, $R$ is isomorphic to $M_{n_1}(D_1)
\times \cdots \times M_{n_r}(D_r)$, where each $D_i$ is a division
ring and each $n_i$ is a positive integer. Then using
Theorems~\ref{k0rtimessthm} and \ref{k0moritathm},
$$
K_0(R) \cong \bigoplus K_0\big(M_{n_i}(D_i)\big) \cong \bigoplus K_0
(D_i) \cong {\mathbb Z}^r.
$$


\item If $R$ is a principal ideal domain\index{principal ideal
domain} (a commutative integral domain in which every ideal can be
generated by a single element), or if $R$ is a local ring, then $K_0
(R) \cong \mathbb Z$. For a proof of these, see \cite{rosenberg},
Thm.~1.3.1 and Thm.~1.3.11 respectively.


\end{enumerate}
\end{examples}


\section*{{\itshape K}$\mathbf{_1}$}

Let $\GL_n (R)$, the general linear group\index{general linear
group} of $R$, denote the group of invertible $n \times n$ matrices
with entries in $R$ and matrix multiplication as the binary
operation. Then $\GL_n (R)$ is embedded into $\GL_{n+1} (R)$ via
\begin{align*}
\GL_n (R) & \lra \GL_{n+1} (R) \\
M_n & \lmps \left( \begin{array}{cc}
M_n & 0  \\
0 & 1 \end{array} \right).
\end{align*}
The union of the resulting sequence $\GL_1 (R) \subset \GL_2 (R)
\subset \cdots \subset \GL_n (R) \subset \cdots$ is called the
infinite general linear group $\GL(R) = \bigcup_{n=1}^{\infty} \GL_n
(R)$. Then $K_1 (R)$ is defined to be the abelianisation of $\GL
(R)$; that is,
\begin{align*}
K_1 (R) = \GL (R) / [\GL (R) , \GL (R)].
\end{align*}
By \cite[Prop.~9.3]{magurn}, the abelianisation construction forms a
functor from the category of groups to the category of abelian
groups.

The elementary matrix\index{elementary matrix} $e_{ij}(r)$, for $r
\in R$ and $i \neq j$, $1 \leq i,j \leq n$, is defined to be the
matrix in $\GL_n(R)$ which has $1$'s on the diagonal, $r$ in the
$i$-$j$ entry and zeros elsewhere. Then $E_n(R)$ denotes the
subgroup of $\GL_n(R)$ generated by all elementary matrices
$e_{ij}(r)$ with $1 \leq i , j \leq n$. We note that the inverse of
the matrix $e_{ij}(r)$ is $e_{ij}(-r)$ and the elementary matrices
satisfy the following relations:\label{elementarymatrices}
\begin{align*}
e_{ij}(r) \, e_{ij}(s) \, & = \; e_{ij}(r+s) \\
[e_{ij}(r), e_{kl}(s)]  &= \left\{
\begin{array}{l l}
1 & \quad \mbox{if $j \neq k$, $i \neq l$}\\
e_{il}(rs) & \quad \mbox{if $j=k$, $i \neq l$}\\
e_{kj}(-sr) & \quad \mbox{if $j \neq k$, $i = l$}.
\end{array} \right.
\end{align*} Then $E_n(R)$ embeds in $E_{n+1}(R)$, and $E(R)$ is the
infinite union of the $E_n(R)$. The Whitehead Lemma (see
\cite[Lemma~9.7]{magurn}) states that $[\GL (R) , \GL (R)] = E(R)$,
so it follows that
$$
K_1 (R) = \GL (R) / E (R).
$$


\begin{thm} \label{k1functor}
For rings $R$ and $S$ and a ring homomorphism $\phi :R \ra S$, there
is an induced group homomorphism $K_1 (R) \ra K_1 (S)$ taking
$(a_{ij} ) E(R)$ to $(\phi(a_{ij})) E(S)$. Then $K_1$ is a functor
from the category of rings to the category of abelian groups.
\end{thm}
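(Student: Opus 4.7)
The plan is to exhibit the induced map on $\GL$ at each finite level, pass to the direct limit, descend to the abelianisation, and then verify the two functor axioms.

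First I would start from the ring homomorphism $\phi:R\ra S$ and define, for each $n\geq 1$, the entrywise map $M_n(\phi):M_n(R)\ra M_n(S)$, $(a_{ij})\mps(\phi(a_{ij}))$. Because $\phi$ preserves addition, multiplication and the identity, $M_n(\phi)$ is a ring homomorphism, hence restricts to a group homomorphism $\GL_n(R)\ra\GL_n(S)$. These level-$n$ maps commute with the block stabilisation $M\mps\bigl(\begin{smallmatrix}M&0\\0&1\end{smallmatrix}\bigr)$, so they assemble into a group homomorphism $\GL(\phi):\GL(R)\ra\GL(S)$ on the infinite general linear groups.

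Next I would show that $\GL(\phi)$ sends $E(R)$ into $E(S)$. It suffices to check this on generators: for the elementary matrix $e_{ij}(r)$, $\GL(\phi)$ sends it to $e_{ij}(\phi(r))$, which lies in $E(S)$. Since $[\GL(R),\GL(R)]=E(R)$ by the Whitehead Lemma, composing $\GL(\phi)$ with the quotient map $\GL(S)\ra\GL(S)/E(S)=K_1(S)$ kills $E(R)$, and therefore factors through a well-defined group homomorphism
$$K_1(\phi):K_1(R)\lra K_1(S),\qquad (a_{ij})E(R)\lmps(\phi(a_{ij}))E(S),$$
which is exactly the formula in the statement.

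Finally, I would verify the functor axioms. For the identity map $\id_R:R\ra R$, the entrywise map $M_n(\id_R)$ is the identity on $M_n(R)$ for every $n$, hence $\GL(\id_R)=\id_{\GL(R)}$, and passing to the quotient gives $K_1(\id_R)=\id_{K_1(R)}$. For composability, given $\phi:R\ra S$ and $\psi:S\ra T$, the entrywise construction satisfies $M_n(\psi\circ\phi)=M_n(\psi)\circ M_n(\phi)$, which after stabilising and quotienting by the elementary subgroups yields $K_1(\psi\circ\phi)=K_1(\psi)\circ K_1(\phi)$.

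There is no genuine obstacle here; the only mild technical point is making sure the maps are compatible with the stabilisation $\GL_n\hookrightarrow\GL_{n+1}$ so as to give a well-defined map at the level of $\GL=\varinjlim\GL_n$, and that the image of $E(R)$ lands in $E(S)$ so the passage to the abelianisation $K_1=\GL/E$ is legitimate. Both are immediate from the fact that $\phi$ preserves $0$ and $1$ and from the explicit action of $\GL(\phi)$ on elementary generators.
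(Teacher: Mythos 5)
Your argument is correct and follows exactly the approach the paper sketches (and defers to Magurn for): the entrywise map on $\GL_n$, stabilisation to $\GL$, the observation that $E(R)$ lands in $E(S)$, and descent to the quotient $K_1 = \GL/E$, followed by the routine check of the functor axioms. You have simply filled in the details the paper leaves to the reference.
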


\begin{proof}
A ring homomorphism $\phi :R \ra S$ defines a group homomorphism
$\GL(R) \ra \GL(S)$; $(a_{ij}) \mps (\phi(a_{ij}))$. Then there is
an induced group homomorphism $K_1 (R) \ra K_1 (S)$ taking $(a_{ij}
) E(R)$ to $(\phi(a_{ij})) E(S)$. See \cite[Prop.~9.9]{magurn} for
the details.
\end{proof}



\begin{thm} \label{k1moritathm}
For a ring $R$ and a positive integer $n$, $K_1 (M_n (R) ) \cong K_1
(R)$.
\end{thm}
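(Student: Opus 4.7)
The plan is to reduce the claim to a statement about the stable linear group $\GL$. The basic observation is that there is a natural ring isomorphism $M_m(M_n(R)) \cong M_{mn}(R)$, obtained by viewing an $m \times m$ block matrix with $n \times n$ blocks as an $mn \times mn$ matrix. This isomorphism restricts to a group isomorphism
$$\Phi_m : \GL_m(M_n(R)) \stackrel{\sim}{\lra} \GL_{mn}(R),$$
so in particular we have a compatible family of isomorphisms on the finite linear groups.

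Next I would verify that these isomorphisms are compatible with the stabilisation maps. Under $\Phi_m$, the inclusion $\GL_m(M_n(R)) \hookrightarrow \GL_{m+1}(M_n(R))$, which appends $1_{M_n(R)} = I_n$ in the new diagonal slot, corresponds to the inclusion $\GL_{mn}(R) \hookrightarrow \GL_{(m+1)n}(R)$ sending $A$ to $\mathrm{diag}(A, I_n)$. Since this is the $n$-fold composite of the usual stabilisation maps of $\GL(R)$, and since the colimit $\GL(R) = \bigcup_N \GL_N(R)$ is cofinal with the subsequence $\bigcup_m \GL_{mn}(R)$, passing to the direct limit yields a group isomorphism
$$\Phi : \GL(M_n(R)) \stackrel{\sim}{\lra} \GL(R).$$

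Finally, to descend to $K_1$, invoke the Whitehead lemma (cited on page~\pageref{elementarymatrices}), which says that for any ring $S$ the elementary subgroup $E(S)$ coincides with the commutator subgroup $[\GL(S), \GL(S)]$. Since any group isomorphism carries commutator subgroups to commutator subgroups, $\Phi$ restricts to an isomorphism $E(M_n(R)) \cong E(R)$, and therefore induces an isomorphism on the quotients:
$$K_1(M_n(R)) = \GL(M_n(R))/E(M_n(R)) \cong \GL(R)/E(R) = K_1(R).$$

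The main obstacle is really just the bookkeeping in the second step: one has to be careful that the block-partition convention used to identify $M_m(M_n(R))$ with $M_{mn}(R)$ interacts correctly with the ``stabilise by $1$ in the lower right corner'' maps on both sides, so that the natural maps of the direct systems actually commute. Once that compatibility is checked, the use of the Whitehead lemma makes the descent to $K_1$ automatic and avoids any direct manipulation of elementary matrices.
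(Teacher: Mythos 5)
Your proof is correct and follows essentially the same route as the paper: both use the ring isomorphism $M_m(M_n(R)) \cong M_{mn}(R)$, check compatibility with the stabilisation maps to obtain $\GL(M_n(R)) \cong \GL(R)$ in the limit, and then descend to $K_1$. You spell out the descent step slightly more explicitly (via the Whitehead Lemma and preservation of commutator subgroups), but this is exactly the argument the paper's final sentence compresses into ``which induces the required isomorphism on the level of $K_1$,'' since $K_1$ is just the abelianisation of $\GL$.
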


\begin{proof}
(See the proof of \cite[Thm.~1.2.4]{rosenberg}.) Since $M_k (M_n
(R)) \cong M_{kn} (R)$, we have a commutative diagram:
\[
\xymatrix{ \GL_k (M_n (R)) \ar[rr]^{\cong} \ar[d] && \GL_{kn} (R) \ar[d] \\
\GL_{k+1}(M_n(R)) \ar[rr]_{\cong} && \GL_{(k+1)n} (R). }
\]
It follows that $\GL (M_n (R)) \cong \GL (R)$, which induces the
required isomorphism on the level of $K_1$.
\end{proof}

\begin{examples} \label{k1examples}
\begin{enumerate}
\item If $R$ is a field then  $K_1(R) \cong R^{\! \ast}$.  See
\cite[Eg.~III.1.1.2]{weibelkbook} for the details. This follows
since, for a commutative ring $R$, the determinant $\det : \GL (R)
\ra R^*$ induces a split surjective group homomorphism ${\det}: K_1
(R) \ra R^*$; $(a_{ij}) E(R) \mps \det(a_{ij})$, with right inverse
given by $R^* \ra K_1(R)$; $a \mps a E(R)$. When $R$ is a field, the
kernel of the map ${\det}$ is trivial, so $K_1 (R) \cong R^*$.

\item If $R$ is a division ring, then $K_1(R) \cong R^{\! \ast}/
[ R^{\! \ast}, R^{\! \ast}]$. See \cite[Eg.~III.1.3.5]{weibelkbook}
for the details. This follows from the Dieudonn\'{e}
determinant\index{Dieudonn\'{e} determinant} (see \cite[p.~124]{silvester}
). The Dieudonn\'{e} determinant defines a surjective group
homomorphism $\operatorname{Det}: \GL_n (R) \ra R^*/ [R^*, R^*]$
with kernel $E_n(R)$, such that
$$
\operatorname{Det} 
\begin{pmatrix}
x_1 &&0\\
& \ddots & \\
0&& x_n
\end{pmatrix}
= \prod_i x_{i} [R^*, R^*]
$$
and the following diagram commutes:
\begin{displaymath}
\xymatrix{ \GL_n (R) \ar[rr] \ar[dr]_{\operatorname{Det}} && \GL_{n+1} (R) \ar[dl]^{\operatorname{Det}} \\
& R^*/ R' &   }
\end{displaymath}
It follows that for a division ring $R$, $K_1(R) \cong R^{\! \ast}/
[ R^{\! \ast}, R^{\! \ast}]$. If $R$ is a field, then the
Dieudonn\'{e} determinant coincides with the usual determinant.


\item If $R$ is a semi-local ring (subject to some conditions,
detailed below), then $K_1(R) \cong R^{\! \ast}/ [ R^{\! \ast},
R^{\! \ast}]$. See \cite[Thm.~2]{vaserstein} for the details. Recall
that $R$ is a semi-local ring\index{semi-local
ring}\index{ring!semi-local} if $R/\mathrm{rad}(R)$ is semi-simple,
where $\mathrm{rad}(R)$ denotes the Jacobson radical of $R$. By the
Artin-Wedderburn Theorem, $R/\mathrm{rad}(R)$ is isomorphic to a
finite product of matrix rings over division rings. We assume that
none of these matrix rings are isomorphic to $M_2 (\mathbb Z / 2
\mathbb Z)$ and that no more than one of the matrix rings has order
$2$.
The required result follows since there is a Whitehead determinant
which gives a surjective group homomorphism $R^* \ra K_1 (R)$.
Vaserstein \cite{vaserstein} shows that the kernel of this map is
$[R^*, R^*]$, giving the required isomorphism.

\end{enumerate}
\end{examples}


\section*{{\itshape K}$\mathbf{_2}$}

For $n \geq 3$, the Steinberg group\index{Steinberg group}
$\St_n(R)$ of $R$ is the group defined by generators $x_{ij}(r)$,
with $i$,~$j$ a pair of distinct integers between $1$ and $n$ and $r
\in R$, subject to the following relations which are called the
Steinberg relations:
\begin{align*}
x_{ij}(r) \, x_{ij}(s) \, & = \; x_{ij}(r+s) \\
[x_{ij}(r), x_{kl}(s)]  &= \left\{
\begin{array}{l l}
1 & \quad \mbox{if $j \neq k$, $i \neq l$}\\
x_{il}(rs) & \quad \mbox{if $j=k$, $i \neq l$}\\
x_{kj}(-sr) & \quad \mbox{if $j \neq k$, $i = l$}.
\end{array} \right.
\end{align*}
We observed on page~\pageref{elementarymatrices} that the elementary
matrices satisfy the Steinberg relations. So there is a surjective
homomorphism $\St_n(R) \ra E_n(R)$ sending $x_{ij} (r)$ to
$e_{ij}(r)$. As the Steinberg relations for $n+1$ include the
Steinberg relations for $n$, there are natural maps $\St_n(R) \ra
\St_{n+1} (R)$. We write $\St(R)$ for the direct limit
$\underrightarrow{\textrm{lim}} \, \St_n (R)$. Then there is a
canonical surjective map $\St(R) \ra E(R)$, and $K_2 (R)$ is defined
to be the kernel of this map.


\begin{thm}
$K_2$ is a functor from the category of rings to the category of
abelian groups.
\end{thm}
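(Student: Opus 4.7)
The plan is to follow the same pattern used for $K_1$ in Theorem~\ref{k1functor}. Given a ring homomorphism $\phi : R \lra S$, I would first construct a compatible homomorphism at the level of Steinberg groups and then descend to the kernel.

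First I would define, for each $n \geq 3$, a map $\St_n(R) \lra \St_n(S)$ sending each generator $x_{ij}(r)$ to $x_{ij}(\phi(r))$. To see this is well defined it suffices to verify that the images satisfy the Steinberg relations; this reduces to the identities $\phi(r+s) = \phi(r) + \phi(s)$ and $\phi(rs) = \phi(r)\phi(s)$. For instance, when $j = k$ and $i \neq l$,
$$
[x_{ij}(\phi(r)), x_{kl}(\phi(s))] = x_{il}(\phi(r)\phi(s)) = x_{il}(\phi(rs)),
$$
and the two remaining commutator cases together with the additivity relation are checked in the same way. Since the Steinberg relations at level $n$ are a subset of those at level $n+1$, the maps $\St_n(R) \lra \St_n(S)$ are compatible with the natural inclusions $\St_n \lra \St_{n+1}$, so passing to the direct limit yields a group homomorphism $\St(\phi) : \St(R) \lra \St(S)$.

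Next I would observe that $\phi$ simultaneously induces a homomorphism $E(R) \lra E(S)$ by $e_{ij}(r) \mps e_{ij}(\phi(r))$ (this is already implicit in the proof of Theorem~\ref{k1functor}), and that the square
\begin{displaymath}
\xymatrix{ \St(R) \ar[r]^{\St(\phi)} \ar[d] & \St(S) \ar[d] \\ E(R) \ar[r] & E(S) }
\end{displaymath}
commutes, since commutativity can be checked on the generators $x_{ij}(r)$ where both compositions send $x_{ij}(r)$ to $e_{ij}(\phi(r))$. Therefore $\St(\phi)$ restricts to a homomorphism between the kernels of the two vertical maps, producing a group homomorphism $K_2(\phi) : K_2(R) \lra K_2(S)$ of abelian groups (the target being abelian since $K_2$ of any ring is a subgroup of the centre of its Steinberg group).

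Finally, functoriality is a direct check on generators: $\St(\id_R)$ fixes each $x_{ij}(r)$ and so restricts to the identity on $K_2(R)$, while $\St(\psi \circ \phi)$ and $\St(\psi) \circ \St(\phi)$ agree on every generator $x_{ij}(r)$, hence coincide on $\St(R)$ and in turn on $K_2(R)$. The whole argument parallels that of Theorem~\ref{k1functor}, and the only step requiring real attention is the verification that the Steinberg relations are preserved by $\phi$; this is the small obstacle, but it amounts to straightforward case analysis once the fact that $\phi$ is a ring homomorphism is used.
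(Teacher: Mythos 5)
Your proof is correct and is the standard argument; the paper itself gives no argument at all, deferring entirely to \cite[Prop.~12.6]{magurn}, which proves functoriality in essentially the way you describe (induce a map on Steinberg groups generator-by-generator, check the Steinberg relations are preserved because $\phi$ is a ring homomorphism, observe compatibility with the projections to $E(R)$ and $E(S)$, and restrict to the kernels). One small point worth flagging: the claim that $K_2(R)$ is abelian because it lies in the centre of $\St(R)$ is a genuine theorem (it is precisely $\ker\big(\St(R)\to E(R)\big) = Z\big(\St(R)\big)$, see e.g.\ \cite[Thm.~12.13]{magurn}), not an immediate consequence of the definition, so you are implicitly leaning on that result; since the paper's target category is abelian groups, that input is needed and should be cited rather than asserted in passing.
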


\begin{proof}
See \cite[Prop.~12.6]{magurn}. 
\end{proof}

\begin{thm}
For rings $R$ and $S$, if $R$ and $S$ are Morita equivalent then
$K_2(R) \cong K_2(S)$. In particular, since $M_n (R)$ is Morita
equivalent to $R$, $K_2 (R) \cong K_2 (M_n(R))$. 
\end{thm}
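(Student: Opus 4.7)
The plan is to reduce Morita invariance of $K_2$ to the special case of matrix rings, and to handle the matrix case by lifting the $\GL$-isomorphism from the proof of Theorem~\ref{k1moritathm} up to the Steinberg groups via the universal central extension characterization of $K_2$.

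First I would handle $K_2(R) \cong K_2(M_n(R))$ directly. From the commutative diagram in the proof of Theorem~\ref{k1moritathm} we already have a group isomorphism $\GL(M_n(R)) \cong \GL(R)$ induced by the block-reshuffling maps $\GL_k(M_n(R)) \cong \GL_{kn}(R)$. By the Whitehead Lemma, $E(R) = [\GL(R),\GL(R)]$ and $E(M_n(R)) = [\GL(M_n(R)),\GL(M_n(R))]$, so this isomorphism restricts to an isomorphism $E(M_n(R)) \cong E(R)$. The crucial ingredient is then Milnor's theorem that $\St(R) \twoheadrightarrow E(R)$ is the universal central extension of the perfect group $E(R)$; the same statement holds for $M_n(R)$ in place of $R$. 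Since any isomorphism of perfect groups lifts uniquely to an isomorphism of their universal central extensions, we obtain $\St(M_n(R)) \cong \St(R)$ taking kernels to kernels, and hence $K_2(M_n(R)) \cong K_2(R)$.

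For the general Morita equivalence, I would reduce to the matrix case by using a progenerator. If $R$ and $S$ are Morita equivalent, then there exists a finitely generated projective generator $P$ over $R$ with $S \cong \End_R(P)^{\op}$. Choosing $Q$ with $P \oplus Q \cong R^n$ gives a full idempotent $e \in M_n(R)$ (meaning $M_n(R) e M_n(R) = M_n(R)$) such that $S \cong e M_n(R) e$. The claim then reduces to showing that $K_2(e M_n(R) e) \cong K_2(M_n(R))$ whenever $e$ is a full idempotent; combined with the matrix-ring case above, this yields $K_2(S) \cong K_2(M_n(R)) \cong K_2(R)$.

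The main obstacle is this last step: corner-ring invariance of $K_2$ requires more than just the $\GL$-argument used for $K_1$, because $\GL(e M_n(R) e)$ is not literally a subgroup of $\GL(M_n(R))$ in the stable limit in an obviously compatible way. The cleanest resolution is to invoke the categorical characterization $K_i(R) \cong K_i(\pr(R))$ in the sense of Quillen, so that $K_i$ depends only on the equivalence class of $\pr(R)$; Morita equivalent rings $R$ and $S$ have equivalent $\pr$-categories by definition, which gives $K_2(R) \cong K_2(S)$ at once. Alternatively, one can argue more elementarily using that the universal central extension of $E(eM_n(R)e)$ is still computed by the Steinberg generators (now indexed by a basis of $P$) and that the inclusion $eM_n(R)e \hookrightarrow M_n(R)$ induces a map on $E$ which becomes an isomorphism after passing to the infinite stable limit.
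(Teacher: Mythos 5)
The paper's own proof is just a citation to Weibel (Cor.~III.5.6.1), so there is no argument in the text to compare with directly; your proposal essentially reconstructs the standard proof that citation points to. Your matrix-ring case is exactly Weibel's route: Kervaire's theorem that $\St(R)\twoheadrightarrow E(R)$ is the universal central extension of the perfect group $E(R)$, combined with the $\GL(M_n(R))\cong\GL(R)$ isomorphism restricting to $E(M_n(R))\cong E(R)$ (via the Whitehead Lemma), gives a unique compatible isomorphism of Steinberg groups sending kernel to kernel. That is correct and complete for $K_2(M_n(R))\cong K_2(R)$.

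You are also right to flag that passing from the matrix-ring case to arbitrary Morita equivalence is genuinely harder at the level of $K_2$ than at the level of $K_0$ or $K_1$: realizing $S$ as a full corner $eM_n(R)e$ is straightforward, but the $\GL$/Steinberg comparison between a corner ring and the ambient ring is not formal, and the "elementary" argument you sketch in your last sentence would need serious work to make precise. The clean fix you propose — appeal to $K_i(R)=K_i(\pr(R))$ and the fact that a Morita equivalence induces an equivalence $\pr(R)\simeq\pr(S)$ — is indeed the standard resolution, and it is exactly the route the paper itself takes later (Theorem~\ref{kimoritaequivalentthm}, which covers all $i\geq 0$ via Theorem~\ref{kifunctorthm}). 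One caveat worth making explicit: that argument relies on the agreement of Quillen's $K_2$ with the Steinberg-group $K_2$, which the paper records separately (Section~\ref{sectionhigherktheory}); without that comparison the categorical proof establishes Morita invariance of a group that has not yet been identified with $\St$-kernel $K_2$. So your proposal is correct, but the general case is really borrowing a theorem from a later part of the theory rather than giving a self-contained lower-$K$-group argument.
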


\begin{proof}
See \cite[Cor.~III.5.6.1]{weibelkbook}. 
\end{proof}

\begin{example}
\begin{enumerate}
\item \hspace*{-3pt} If $R$ is a finite field, then $K_2(R) =0$. See
\cite[Cor.~4.3.13]{rosenberg}.

\item If $R$ is a field, then Matsumoto's Theorem says that
$$
K_2(R) = R^* \otimes_{\mathbb Z} R^* / \left< a \otimes (1-a) : a
\neq 0,1 \right> .
$$
See \cite[Thm.~4.3.15]{rosenberg}.

\item If $R$ is a division ring, let $U_R$ denote the group generated by
$c(x,y)$, $x , y \in R^*$, subject to the relations:
\begin{description}
\item[](U$0$) \;\;\; $c (x , 1-x) =1  \;\;\; (x \neq 1,0)$,

\item[](U$1$) \;\;\; $c(xy, z) = c(xyx^{-1}, x z x^{-1})\, c(x,z)$

\item[](U$2$) \;\;\; $c(x,y z)\, c(y,zx)\, c(z, xy) =1.$
\end{description}
Then there is an exact sequence
$$
0 \lra K_2 (R) \lra U_R \lra [R^*, R^*] \lra 0
$$
where $U_R \ra [R^*, R^*]$; $c(x,y) \mps [x,y]$. See Rehmann
\cite[Cor.~2, p.~101]{rehmann}.
\end{enumerate}
\end{example}


\section{Lower {\itshape K}-groups of central simple algebras}
\label{sectionlowerkgroupsofcsalgebras}

We now specialise to central simple algebras. In this section, all
central simple algebras are assumed to be finite dimensional. Let
$F$ be a field and let $A$ be a central simple algebra over $F$.
Clearly there is a ring homomorphism $F \ra A$. Since $K_0$ is a
functor from the category of rings to the category of abelian groups
(see Theorem~\ref{k0functorthm}), we have an exact sequence
\begin{equation} \label{exactseqk0}
0 \lra \ZK[0](A) \lra K_0(F) \lra K_0(A) \lra \CK[0](A) \lra 0
\end{equation}
where $\ZK[0](A)$ and $\CK[0](A)$ are the kernel and cokernel of the
map $K_0(F) \rightarrow K_0(A)$, respectively.

In this section, using the definition of $K_0$, we will show what
sequence \eqref{exactseqk0} looks like. Using this, we will observe
that $\CK[0](A)$ and $\ZK[0](A)$ are torsion abelian groups, and
that $\CK[0]$ and $\ZK[0]$ are functors which do not respect Morita
equivalence, from the category of central simple algebras over $F$
to the category of abelian groups. We have a similar exact sequence
for $K_1$, and we show that the same results hold. In
Chapter~\ref{chapterktheoryofazalg}, we will generalise this result
to cover Azumaya algebras which are free over their centres, and to
cover all $K_i$ groups for $i \geq 0$. This is the key to proving
that $K_i(A) \otimes \mathbb Z[1/n] \cong K_i(R) \otimes \mathbb
Z[1/n]$ for an Azumaya algebra $A$ free over its centre $R$ of
dimension $n$ (see Theorem~\ref{azumayafreethm}).



\section*{{\itshape K}$\mathbf{_0}$}

Let $A$ be a central simple algebra over a field $F$. Wedderburn's
theorem says that $A$ is isomorphic to $M_n(D)$ for a unique
division algebra $D$ and a unique positive integer $n$. Since $K_0$
is a functor from the category of rings to the category of abelian
groups, we have $K_0 (A) \cong K_0 \big( M_n (D) \big)$, and
similarly for $\CK[0]$ and $\ZK[0]$. Writing $M_n (D)$ instead of
$A$, sequence~\eqref{exactseqk0} can be written as
\begin{equation*}
0 \lra \ZK[0]\big(  M_n (D)  \big) \lra K_0(F) \lra K_0 \big( M_n
(D)  \big) \lra \CK[0] \big( M_n (D)  \big) \lra 0.
\end{equation*}
From the ring homomorphism $F \ra M_n (D)$ and since $M_n (D)$ is
Morita equivalent to $D$, there are induced functors
\begin{equation*} \label{prfeqn}
\begin{array}{rlcll}
\pr (F) & \lra & \pr ( M_n (D) ) & \lra & \pr (D)\\
P \cong F^k & \longmapsto &  M_n (D) ^k & \lmps & D^n
\otimes_{M_n(D)} {M_n(D)}^k \cong D^{kn},
\end{array}
\end{equation*}
where every finitely generated projective module $P$ over $F$ is
free. By Theorem~\ref{k0functorgrphomomorphism}, these induce group
homomorphisms
\begin{equation*}
\begin{array}{rlcll}
K_0(F) & \stackrel{\gamma}{\lra} & K_0\big( M_n (D) \big) &
\stackrel{\delta}{\lra}  & K_0(D)\\
\big[ \, F^k \, \big] & \lmps & \left[ \,  M_n (D) ^k \, \right] &
\lmps & \left[ \, D^{kn} \, \right] \\
\end{array}
\end{equation*}
where, for $\left[ \, X \, \right] \in K_0 \big(  M_n (D)  \big)$,
$\de \left( \left[ \, X \, \right] \right)$ is defined to be $\left[
\, D^n \otimes_{M_n (D)}  X \, \right]$.

Since $F$ is a field and $D$ is a division ring, from
Example~\ref{k0examples}(1), $K_0 (F) \cong \mathbb Z$; $[F^k] \mps
k$ and $K_0 (D) \cong \mathbb Z$; $[D^k] \mps k$. We have a
commutative diagram:
\begin{displaymath}
\xymatrix{0 \ar[r] & \ZK[0]\! \big( M_n (D) \big)  \ar[d] \ar[r] &
K_0(F) \ar[d]_{\id} \ar[r]^{\gamma \;\;\;\;\;} & K_0 \big( M_n (D)
\big) \ar[d]^{\delta} \ar[r] & \CK[0] \! \big( M_n (D) \big)  \ar[d]
\ar[r] & 0 \\
0 \ar[r] & \ker(\de \! \circ \! \ga) \ar[r]   \ar[d] & K_0(F)
\ar[r]_{\delta \circ \ga \;\;} \ar[d]_{\cong}
& K_0(D) \ar[r]  \ar[d]^{\cong} & \coker(\de \! \circ \! \ga)  \ar[d] \ar[r] & 0 \\
0 \ar[r] & 0 \ar[r] & \mathbb Z \ar[r]_{\eta_n \;\;} & \mathbb Z
\ar[r] & \mathbb Z_{n} \ar[r] & 0}
\end{displaymath}
where the map $\eta_n : \mathbb Z \ra \mathbb Z$ is defined by
$\eta_n (k ) = kn$.

We know that $M_n(D)$ is Morita equivalent to $D$, so $\de$ is an
isomorphism and thus all the vertical maps in the above commutative
diagram are isomorphisms. So the exact sequence~\eqref{exactseqk0}
can be written as:
\begin{equation} \label{exactseqk0ofcsalgebra}
0 \lra  \mathbb Z \stackrel{\eta_n}{\lra} \mathbb Z \lra \mathbb Z/n
\mathbb Z \lra 0
\end{equation}
since $\ZK[0]\big( M_n (D) \big) \cong \ker (\eta_n)= 0$ and
$\CK[0]\big( M_n (D) \big) \cong \coker (\eta_n) = \mathbb Z_n$. A
group $G$ is said to be \emph{$n$-torsion}\index{torsion group} if
$x^n= e$ for all $x \in G$. We note that clearly both $\ZK[0]\big(
M_n (D) \big)$ and $\CK[0]\big( M_n (D) \big)$ are $n$-torsion
groups.

Let $F$ be a fixed field. We will observe that $\CK[0]$ and $\ZK[0]$
form functors from the category of central simple algebras over $F$
(with $F$-algebra homomorphisms) to the category of abelian groups.
For any central simple algebra $A$ over $F$, $\CK[0](A)$ is defined
to be the cokernel of the map $K_0(F) \ra K_0 (A)$; $[P] \mps [A
\otimes_F P]$, and $\ZK[0]$ is its kernel. They are clearly both
abelian groups since $K_0 (F)$ and $K_0 (A)$ are abelian groups. For
any two central simple algebras $A$ and $B$ over $F$ with $\phi :A
\ra B$ an $F$-algebra homomorphism, then $\phi$ restricted to $F$
gives the identity map on $F$. There is a commutative diagram:
\begin{displaymath}
\xymatrix{0 \ar[r] & \ZK[0](A) \ar[d] \ar[r] & K_0(F) \ar[d]_{\id}
\ar[r] & K_0(A) \ar[d]^{\phi} \ar[r] & \CK[0](A) \ar[d]
\ar[r] & 0 \\
0 \ar[r] & \ZK[0](B) \ar[r]  & K_0(F) \ar[r]  & K_0(B) \ar[r] &
\CK[0](B)   \ar[r] & 0 }
\end{displaymath}
where $\phi :K_0 (A) \ra K_0 (B)$ is defined by $\phi ([P]) = [B
\otimes_A P]$ for each $[P] \in K_0 (A)$. One can easily check that
$\CK[0]$ and $\ZK[0]$ form the required functors.

We also observe that $\CK[0]$ does not respect Morita invariance.
For a division algebra $D$ over the field $F$, we know that $D$ is
Morita equivalent to $M_n (D)$. For $D$, the exact
sequence~\eqref{exactseqk0} can be written as
$$
0 \lra 0 \lra \mathbb Z \stackrel{\id}{\lra} \mathbb Z \lra 0 \lra 0
$$
since the homomorphism $K_0 (F) \ra K_0 (D)$ maps $[F^k ]$ to $[D
\otimes_F F^k] = [D^k]$. From sequence~\eqref{exactseqk0ofcsalgebra}
we saw that $\CK[0](M_n(D)) \cong {\mathbb Z}_n$, which is clearly
not isomorphic to $\CK[0](D) =0$.





\section*{{\itshape K}$\mathbf{_1}$}



Let $A$ be a central simple algebra over a field $F$, such that $A$
is isomorphic to $M_r(D)$ for a unique division algebra $D$ and a
unique positive integer $r$. Since $K_1$ is a functor from the
category of rings to the category of abelian groups (see
Theorem~\ref{k1functor}), we will write $M_r (D)$ instead of $A$,
since $K_1 (A) \cong K_1 \big( M_r (D) \big)$. We have an exact
sequence
\begin{equation}\label{exactseqk1}
1 \lra \ZK[1]\big( M_r (D) \big) \lra K_1(F) \lra K_1\big( M_r (D)
\big) \lra \CK[1]\big( M_r (D) \big) \lra 1,
\end{equation}
where $\ZK[1]\big( M_r (D) \big)$ and $\CK[1]\big( M_r (D) \big)$
are the kernel and cokernel of the map $K_1(F) \rightarrow K_1 \big(
M_r (D) \big)$ respectively.

The ring homomorphism $F \ra  M_r(D)$; $f \mps  f \cdot I_r$ induces
a map
\begin{align*}
\GL(F) & \lra  \GL(M_r(D))  \\
(a_{ij}) &\lmps (a_{ij} I_r) .
\end{align*}
Using Theorems  \ref{k1functor} and \ref{k1moritathm}, there are
induced group homomorphisms
\begin{equation*}
\begin{array}{rcccl}
K_1(F) & \stackrel{\gamma}{\lra} & K_1 \big( M_r (D) \big) &
\stackrel{\de}{\lra} & K_1 (D)\\
(a_{ij})E(F) & \lmps & ( {a_{ij}} I_r)E(M_r (D)) & &\\
&&(d_{ij}) E( M_r(D)) & \lmps & ( d_{ij})E(D).
\end{array}
\end{equation*}
So we have a commutative diagram:
\begin{displaymath}
\xymatrix{1 \ar[r] & \ZK[1]\! \big( M_r (D) \big) \ar[d] \ar[r] &
K_1(F) \ar[d]_{\id} \ar[r]^{\gamma \;\;\;\;} & K_1\big( M_r (D)
\big) \ar[d]^{\delta} \ar[r] & \CK[1]\! \big( M_r (D) \big) \ar[d]
\ar[r] & 1 \\
1 \ar[r] & \ker(\de \circ \ga) \ar[r] \ar[d] & K_1(F) \ar[r]_{\delta
\circ \ga \;\;} \ar[d]_{\det} & K_1(D) \ar[r]
\ar[d]^{\operatorname{Det}} & \coker(\de \circ \ga)
\ar[d] \ar[r] & 1 \\
1 \ar[r] & D' \cap {F^*}^r \ar[r] & F^* \ar[r]_{\beta \;\;\;\;} &
D^*/D' \ar[r] & D^*/{F^*}^r D' \ar[r] & 1.}
\end{displaymath}
The maps $\det$ and $\Det$ come from Examples~\ref{k1examples} and
the map $\beta :F^* \ra D^*/D'$ is defined by $\beta(f) = \Det \circ
(\de \! \circ \! \ga) \! \circ {\det}^{-1} (f) = f^r D'$, where $D'
=[D^*,D^*]$.

We saw in Examples~\ref{k1examples}(1),(2)  that $K_1 (F) \cong
F^*$ and $K_1 (D) \cong D^*/ D'$. 
So all the vertical maps in the above diagram are isomorphisms and
the exact sequence \eqref{exactseqk1} can be written as
\begin{equation} \label{exactseqk1ofcsalgebra}
1 \lra D' \cap {F^*}^r \lra  F^* \lra  D^*/D' \lra  D^*/{F^*}^r D'
\lra 1.
\end{equation}
We will show that the groups $\ZK[1]\!\big( M_r (D) \big) \cong D'
\cap {F^*}^r$ and $\CK[1]\!\big( M_r (D) \big) \cong D^*/{F^*}^r D'$
are both $nr$-torsion groups for $n = \ind (D)$, where $\ind (D)$,
the index of $D$ over $F$, is defined to be the square root of the
dimension of $D$ over $F$. Note that by \cite[Cor.~8.4.9]{sch}, the
dimension of $D$ over $F$ is a square number.

\begin{lemma}\label{divisionalgebraofindexnlemma}
Let $D$ be a division algebra with centre $F$ of index $n$. Then for
any $a \in D$, $a^n = \Nrd_D (a) d_a$ where $d_a \in D'$.
\end{lemma}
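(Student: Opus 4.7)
The plan is to pass to the abelianisation $D^*/D'$ and reformulate the identity $a^n = \Nrd_D(a) \, d_a$ (with $d_a \in D'$) as the claim that $\bar a^n$ and $\overline{\Nrd_D(a)}$ coincide in $D^*/D'$. The case $a = 0$ is vacuous, so I assume $a \in D^*$. Since $D^*/D'$ is abelian, every $D^*$-conjugate $c a c^{-1}$ has the same image there as $a$, so it will be enough to exhibit $\Nrd_D(a)$ explicitly as a product of $n$ conjugates of $a$ inside $D^*$.

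The essential input I would invoke is Wedderburn's factorisation theorem: the reduced characteristic polynomial $p_a(x) \in F[x]$, which is monic of degree $n$ with constant term $(-1)^n \Nrd_D(a)$, already splits completely in $D[x]$ as
$$
p_a(x) \; = \; (x - c_1 a c_1^{-1})(x - c_2 a c_2^{-1}) \cdots (x - c_n a c_n^{-1})
$$
for suitable $c_1, \ldots, c_n \in D^*$, each root being a $D^*$-conjugate of $a$. Comparing the constant coefficients of the two expressions gives the clean identity $\Nrd_D(a) = \prod_{i=1}^n c_i a c_i^{-1}$.

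Projecting this identity into the abelian group $D^*/D'$, each factor $c_i a c_i^{-1}$ collapses to $\bar a$, so $\overline{\Nrd_D(a)} = \bar a^{\, n}$, i.e.\ $a^n \Nrd_D(a)^{-1} \in D'$. Setting $d_a := a^n \Nrd_D(a)^{-1}$ yields the required expression $a^n = \Nrd_D(a) \, d_a$, and the order of multiplication is immaterial because $\Nrd_D(a) \in F^* \subseteq Z(D)$.

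The main obstacle is securing Wedderburn's factorisation theorem itself: it is a classical but nontrivial result whose proof typically proceeds by induction on $n$, using Skolem--Noether to locate successive roots of $p_a$ inside $D$ after each linear factor is divided out. Once that tool is at hand, the rest of the argument is a one-step abelianisation observation resting only on the centrality of $\Nrd_D(a)$ and the collapse of conjugacy classes in $D^*/D'$.
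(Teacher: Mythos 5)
Your proof is correct and takes the same essential route as the paper: Wedderburn's factorisation theorem exhibits $\Nrd_D(a)$ as a product of $n$ conjugates of $a$, and reducing modulo $D'$ (either by passing to the abelianisation $D^*/D'$ as you do, or by the paper's explicit commutator bookkeeping) collapses each conjugate to $a$ and yields $a^n \equiv \Nrd_D(a)$. One small imprecision to flag: Wedderburn's theorem as usually stated (and as cited by the paper, Lam Thm.~16.9) factors the minimal polynomial $f_a$ of degree $m \mid n$, not the reduced characteristic polynomial $p_a$ directly; the factorisation of $p_a$ into $n$ conjugate linear factors then follows from $p_a = f_a^{\,n/m}$, a step the paper makes explicit and you elide.
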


\begin{proof}
Let $a \in D$ be arbitrary and let $f_a (x)$ be the minimal
polynomial of $a$ of degree $m$.  By Wedderburn's Factorisation
Theorem (see \cite[Thm.~16.9]{lam1}), $f_a (x) = (x - d_1 a
{d_1}^{-1}) \cdots (x- d_m a {d_m}^{-1})$ where $d_i \in D$, and by
\cite[p.~124, Ex.~1]{rei}, we have ${f_a (x)}^{n/m} = x^n - \Trd_{D}
(a)x^{n-1} + \cdots + (-1)^n \Nrd_D (a)$. Then
\begin{align*}
\Nrd_D (a) &= (d_{1} a d_{1}^{-1} \cdots d_{m} a d_{m} ^{-1})^{(n/m)} \\
&= ([d_1 , a] a [d_2 , a] a \cdots a [d_m , a] a)^{(n/m)} \\
&= a^n {d'}_a \;\;\;\;\; \mathrm{where} \;\; {d'}_a \in D'.
\end{align*}
So $a^n = \Nrd_D (a) d_a$ for some $d_a \in D'$.
\end{proof}

For $a^r \in \ZK[1] \! \big( M_r (D) \big) \cong D' \cap {F^*}^r$,
we have $a^r \in {F^*}^r \subseteq F^*$, so $(a^r)^{n} = \Nrd_D
(a^r)$ where $n = \ind (D)$. Since $a^r \in D'$, $\Nrd_D (a^r) = 1$,
proving $\ZK[1]\!\big( M_r (D) \big)$ is $nr$-torsion. The
equivalent result for $\CK[1] \!\big( M_r (D) \big)$ will follow
from the previous lemma. For $\ov a \in \CK[1] \!\big( M_r (D) \big)
= D^*/{F^*}^r D'$, we have $a \in D^*$ with $a^n = \Nrd_D (a) d_a$
for $d_a \in D'$ (by Lemma~\ref{divisionalgebraofindexnlemma}).
Since $\Nrd_D : D^* \ra F^*$, $\Nrd_D (a) \in F^*$ and therefore
$a^n \in F^* D'$. Then $a^{nr}  \in {F^*}^r D'$, which shows ${\ov
a}^{nr}= 1$, proving $\CK[1]\! \big( M_r (D) \big)$ is also
$nr$-torsion.

Let $F$ be a fixed field. As for $K_0$, we will observe that
$\CK[1]$ and $\ZK[1]$ form functors from the category of central
simple algebras over $F$  to the category of abelian groups. For any
two central simple algebras $A$ and $B$ over $F$ with an $F$-algebra
homomorphism $\phi :A \ra B$, there is a commutative diagram:
\begin{displaymath}
\xymatrix{1 \ar[r] & \ZK[1](A) \ar[d] \ar[r] & K_1(F) \ar[d]_{\id}
\ar[r] & K_1(A) \ar[d]^{\phi} \ar[r] & \CK[1](A) \ar[d]
\ar[r] & 1 \\
1 \ar[r] & \ZK[1](B) \ar[r]  & K_1(F) \ar[r]  & K_1(B) \ar[r] &
\CK[1](B)   \ar[r] & 1 }
\end{displaymath}
where $\phi :K_1 (A) \ra K_1 (B)$ is defined by $\phi \big((a_{ij})
E(A)\big) = \big(\phi(a_{ij})\big) E(B)$ for each $(a_{ij}) E(A) \in
K_1 (A)$. It follows that $\CK[1]$ and $\ZK[1]$ are functors from
the category of central simple algebras over $F$ to the category of
abelian groups.

We also observe that $\CK[1]$ does not respect Morita invariance.
For a division algebra $D$ over the field $F$, the exact sequence
\eqref{exactseqk1} can be written as
$$
1 \lra D'  \cap F^* \lra F^* {\lra} D^*/D' \lra D^*/F^* D' \lra 1
$$
since the map $K_1 (F) \ra K_1 (D)$ takes $(a_{ij})E(F)$ to
$(a_{ij})E(D)$. From sequence \eqref{exactseqk1ofcsalgebra} we saw
that $\CK[1](M_r(D)) = D^*/{F^*}^r D'$, but $\CK[1](D) = D^*/F^*
D'$. In general they are not isomorphic (see for example
\cite[Eg.~7]{hazwadmax}).

Let us also mention another group which exhibits some similar
properties to $\CK[1] (D)$. For a central simple algebra $A$ over a
field $F$, the group $G(A) = A^*/ (A^*)^2$, called the square class
group of $A$, has been studied by Lewis and Tignol
\cite{lewistignol}. They note that the group $G(A)$ is a torsion
abelian group of exponent two. They show that when $A$ is a central
simple algebra of odd degree over a field $F$, then the map $G(F)
\ra G(A)$ induced by inclusion is an isomorphism (see \cite[Cor.~2,
p.~367]{lewistignol}). Although, in some aspects, the behaviour of
$G(A)$ is similar to that of $\CK[1]$, \cite[Prop.~5]{lewistignol}
shows an aspect where they differ. It says that if $D$ is a division
ring and $n$ is a positive integer greater than $2$, then $G(M_n (D)
) \cong G(D)$, which we observed above does not hold for
$\CK[1](D)$.


\section{Higher {\itshape K}-Theory} \label{sectionhigherktheory}

For higher algebraic $K$-theory, the $K$-groups were defined by
Quillen in the early 1970s. Quillen gave two different constructions
for higher $K$-theory, called the $+$-construction and the
$Q$-construction. The $+$-construction defines the higher $K$-groups
of a ring $R$. The $Q$-construction defines the $K$-groups of an
exact category and, for a ring $R$, the $K$-groups $K_i(R)$ are
defined to be the $K$-groups $K_i(\pr (R))$ where $\pr (R)$ is the
category of finitely generated projective $R$-modules. The two
constructions do in fact give the same $K$-groups for a ring $R$,
although in appearance they are very different. (The proof is very
involved, see \cite[\S IV.7]{weibelkbook}.) For $i=0$, $1$, $2$ the
construction agrees which the definitions given in
Section~\ref{sectionlowerkgroups} (see \cite[\S 5.2.1]{rosenberg}).
The $K$-groups, although complicated to define, are functorial in
construction. We recall below some of their basic properties.


\begin{example} (See \cite[Thm.~5.3.2]{rosenberg}.)
Let $\mathbb F_q$ be a finite field with $q$ elements. Then
$K_0(\mathbb F_q) =  \mathbb Z$ and for $i \geq 1$,
\begin{align*}
K_i (\mathbb F_q) \cong
\begin{cases}\mathbb Z_{q^n - 1}  & \text{if $i = 2n - 1$,} \\
0 & \text{if $i$ is even.}
\end{cases}
\end{align*}
\end{example}

\begin{thm} \label{kifunctorthm}
If $\mathcal F: \mathcal C \ra \mathcal D$ is an exact functor
between exact categories, then $\mathcal F$ induces a map $\mathcal
F_* : K_i (\mathcal C) \ra K_i (\mathcal D)$. In particular, each
$K_i$ is a functor from the category of exact categories with exact
functors to the category of abelian groups. Moreover, isomorphic
functors induce the same map on the $K$-groups.
\end{thm}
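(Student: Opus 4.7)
The plan is to work entirely within Quillen's $Q$-construction, since the $K$-groups of an exact category $\mathcal{C}$ are defined as $K_i(\mathcal{C}) = \pi_{i+1}(BQ\mathcal{C}, 0)$, where $Q\mathcal{C}$ is the Quillen $Q$-category and $BQ\mathcal{C}$ is its classifying space. The induced map on $K$-groups will come from a tower of functorial constructions: $\mathcal{C} \rightsquigarrow Q\mathcal{C} \rightsquigarrow BQ\mathcal{C} \rightsquigarrow \pi_{i+1}$.

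First I would show that an exact functor $\mathcal{F}: \mathcal{C} \to \mathcal{D}$ induces a functor $Q\mathcal{F}: Q\mathcal{C} \to Q\mathcal{D}$. Recall that morphisms in $Q\mathcal{C}$ are equivalence classes of spans $X \twoheadleftarrow Z \rightarrowtail Y$ built from admissible epimorphisms and admissible monomorphisms. Since $\mathcal{F}$ is exact, it preserves the class of admissible mono/epimorphisms and preserves pullback squares of the kind used to compose such spans, so one may define $Q\mathcal{F}$ on objects by $X \mapsto \mathcal{F}(X)$ and on morphisms by applying $\mathcal{F}$ to the underlying span; the verification that this respects composition and identities is routine from the exactness of $\mathcal{F}$. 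Applying the classifying space functor $B$ then yields a continuous map $BQ\mathcal{F}: BQ\mathcal{C} \to BQ\mathcal{D}$ sending the basepoint $0$ to $0$, and taking $\pi_{i+1}$ gives the desired homomorphism $\mathcal{F}_*: K_i(\mathcal{C}) \to K_i(\mathcal{D})$.

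For the functoriality statement, I would observe that the assignments $\mathcal{F} \mapsto Q\mathcal{F}$, $Q\mathcal{F} \mapsto BQ\mathcal{F}$, and taking homotopy groups are each functorial: the first because $Q$ is defined pointwise in terms of pullbacks and mono/epi data, the second by the standard functoriality of $B$ on (small) categories, and the third by the functoriality of $\pi_{i+1}$ on pointed spaces. Composing these gives $(\mathcal{G} \circ \mathcal{F})_* = \mathcal{G}_* \circ \mathcal{F}_*$ and $(\mathrm{id}_\mathcal{C})_* = \mathrm{id}_{K_i(\mathcal{C})}$, proving that $K_i$ is a functor from exact categories with exact functors to abelian groups.

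The main obstacle is the final clause: if $\eta: \mathcal{F} \Rightarrow \mathcal{G}$ is a natural isomorphism of exact functors, then $\mathcal{F}_* = \mathcal{G}_*$ on each $K_i$. The key step is to promote $\eta$ to a natural transformation $Q\eta: Q\mathcal{F} \Rightarrow Q\mathcal{G}$ of functors $Q\mathcal{C} \to Q\mathcal{D}$, whose component at $X$ is the morphism in $Q\mathcal{D}$ represented by the isomorphism $\eta_X: \mathcal{F}(X) \xrightarrow{\cong} \mathcal{G}(X)$ (viewed as the span $\mathcal{F}(X) = \mathcal{F}(X) \rightarrowtail \mathcal{G}(X)$). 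Naturality in $Q\mathcal{C}$ reduces, on each span, to the naturality squares of $\eta$ in $\mathcal{C}$, which commute by hypothesis. Now I invoke the standard fact that a natural transformation between functors of small categories induces a homotopy between the induced maps on classifying spaces (via the realisation of the cylinder $[1] \times Q\mathcal{C}$). Consequently $BQ\mathcal{F}$ and $BQ\mathcal{G}$ are homotopic as pointed maps, so they induce the same homomorphism on $\pi_{i+1}$, giving $\mathcal{F}_* = \mathcal{G}_*$.
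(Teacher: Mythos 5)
The paper offers no proof of its own here, deferring to Quillen and Weibel; your sketch is a correct reconstruction of the standard $Q$-construction argument those references give. The only step worth spelling out a bit more carefully is the naturality of $Q\eta$: taking $(Q\eta)_X$ to be the span $\mathcal{F}(X) \xleftarrow{\,\mathrm{id}\,} \mathcal{F}(X) \stackrel{\eta_X}{\rightarrowtail} \mathcal{G}(X)$, the required square against a morphism $X \twoheadleftarrow Z \rightarrowtail Y$ of $Q\mathcal{C}$ does commute, but verifying it involves identifying two composite spans up to the usual isomorphism of middle objects, and the needed isomorphism $\mathcal{F}(Z) \cong \mathcal{F}(X)\times_{\mathcal{G}(X)}\mathcal{G}(Z)$ is supplied by $z \mapsto (\mathcal{F}(p)(z),\,\eta_Z(z))$, which is well-defined and compatible precisely because of the naturality squares of $\eta$ at $p$ and at $i$. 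One should also remark, as you implicitly do, that the resulting homotopy $BQ\mathcal{C}\times[0,1]\to BQ\mathcal{D}$ is pointed, because exact functors send $0$ to $0$ and $\eta_0 = \mathrm{id}_0$ is degenerate in the nerve, so the basepoint stays fixed throughout; this is needed before applying $\pi_{i+1}$ with basepoint.
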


\begin{proof}
See \cite[p.~IV.51]{weibelkbook}, \cite[p.~19]{quillen}.
\end{proof}

\begin{cor} \label{kifunctorringstoabgroups}
For $i \geq 0$, each $K_i$ is a functor from the category of rings
to the category of abelian groups. 
\end{cor}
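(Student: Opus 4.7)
The plan is to reduce the statement to Theorem~\ref{kifunctorthm} by producing, for each ring homomorphism $\phi: R \to S$, an exact functor between the associated exact categories $\pr(R)$ and $\pr(S)$. Recall that by Quillen's definition, $K_i(R) = K_i(\pr(R))$, so once we have such an exact functor we can apply Theorem~\ref{kifunctorthm} to obtain the induced map on $K$-groups; the bulk of the work is then verifying that the assignments respect identities and composition.

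First I would, for a ring homomorphism $\phi: R \to S$, consider the functor $S \otimes_R - : R\Mod \to S\Mod$, where $S$ is regarded as a right $R$-module via $\phi$. Exactly as in the proof of Theorem~\ref{k0functorgrphomomorphism}, for $P \in \pr(R)$ one has $P \oplus Q \cong R^n$ for some $R$-module $Q$, so $(S \otimes_R P) \oplus (S \otimes_R Q) \cong S^n$, showing that $S \otimes_R P \in \pr(S)$. Hence $S \otimes_R -$ restricts to a functor $\pr(R) \to \pr(S)$. Since every short exact sequence in $\pr(R)$ splits, this restricted functor automatically preserves short exact sequences, so it is an exact functor between exact categories. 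Theorem~\ref{kifunctorthm} then furnishes a group homomorphism $\phi_* : K_i(R) \to K_i(S)$.

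Next I would check functoriality. For the identity $\id_R: R \to R$, the functor $R \otimes_R - : \pr(R) \to \pr(R)$ is naturally isomorphic to the identity functor via the canonical isomorphism $R \otimes_R P \cong P$; by the last sentence of Theorem~\ref{kifunctorthm} (isomorphic functors induce the same map on $K$-groups), $(\id_R)_* = \id_{K_i(R)}$. For a composition $R \xrightarrow{\phi} S \xrightarrow{\psi} T$, the associativity of tensor product gives a natural isomorphism
$$
T \otimes_S (S \otimes_R -) \;\cong\; T \otimes_R -
$$
as functors $\pr(R) \to \pr(T)$, so again by Theorem~\ref{kifunctorthm} we get $(\psi \circ \phi)_* = \psi_* \circ \phi_*$. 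This establishes that $K_i$ is a functor from rings to abelian groups.

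The main obstacle, if any, is only notational: one must be careful that the module structures on $S \otimes_R P$ are the ones induced by $\phi$, and that the natural isomorphisms used for identity and composition are genuinely natural in $P \in \pr(R)$, so that Theorem~\ref{kifunctorthm}'s clause on isomorphic functors applies. Once those naturalities are in place, the corollary follows immediately from the theorem.
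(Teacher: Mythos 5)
Your proposal is correct and follows the same route as the paper: both obtain the induced map on $K$-groups by observing that $\phi: R \to S$ gives rise to the exact functor $S \otimes_R - : \pr(R) \to \pr(S)$ and then invoking Theorem~\ref{kifunctorthm}. You merely make explicit the verifications (that the functor lands in $\pr(S)$, that it is exact because short exact sequences in $\pr(R)$ split, and that identity and composition are respected up to natural isomorphism) which the paper delegates to the cited references.
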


\begin{proof}
See \cite[\S IV.1.1.2]{weibelkbook} or \cite[Eg.~5.3.22]{rosenberg}.
This follows from Theorem~\ref{kifunctorthm}, since  a ring
homomorphism $\phi :R \ra S$ induces an exact functor $S \otimes_R -
:\pr (R) \ra \pr (S)$.
\end{proof}

\begin{thm} \label{kimoritaequivalentthm}
If the rings $R$ and $S$ are Morita equivalent, then $K_i (R) \cong
K_i (S)$ for each $i \geq 0$.
\end{thm}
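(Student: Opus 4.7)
The plan is to reduce the statement to Theorem~\ref{kifunctorthm} by showing that the Morita equivalence restricts to an equivalence between the exact categories $\pr(R)$ and $\pr(S)$. Let $T:R\Mod \ra S\Mod$ and $U:S\Mod \ra R\Mod$ be additive functors with natural equivalences $T\circ U \cong \id_{S\Mod}$ and $U\circ T \cong \id_{R\Mod}$.

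First I would verify that $T$ carries $\pr(R)$ into $\pr(S)$ (and symmetrically for $U$). Both ``finitely generated'' and ``projective'' admit purely categorical characterisations in $R\Mod$: $P$ is projective iff $\Hom_R(P,-)$ is exact, and $P$ is finitely generated projective iff, in addition, $\Hom_R(P,-)$ commutes with arbitrary direct sums. Since an equivalence of categories preserves exactness of $\Hom$-functors and preserves direct sums (being an additive equivalence), these properties are transported by $T$ and $U$. Thus $T$ restricts to a functor $T: \pr(R) \ra \pr(S)$, and similarly $U: \pr(S) \ra \pr(R)$.

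Next I would check that these restrictions are exact functors of exact categories. In $\pr(R)$ (respectively $\pr(S)$), every short exact sequence splits, since the quotient term is projective. Any additive functor preserves split short exact sequences, so $T$ and $U$ are automatically exact on these subcategories. Hence by Theorem~\ref{kifunctorthm}, they induce group homomorphisms
\[
T_* : K_i(R) = K_i(\pr(R)) \lra K_i(\pr(S)) = K_i(S),
\]
and similarly $U_* : K_i(S) \ra K_i(R)$, for every $i \geq 0$.

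Finally I would show these are mutually inverse. The composite $U_*\circ T_* = (U\circ T)_*$ is the map induced by the functor $U\circ T$, which is naturally isomorphic to $\id_{\pr(R)}$. By the last assertion of Theorem~\ref{kifunctorthm}, isomorphic functors induce the same map on $K$-groups, so $(U\circ T)_* = (\id_{\pr(R)})_* = \id_{K_i(R)}$. By symmetry $T_*\circ U_* = \id_{K_i(S)}$, which gives $K_i(R)\cong K_i(S)$. The main technical obstacle is the first step, namely arguing cleanly that the equivalence restricts to finitely generated projectives; everything else is a formal consequence of functoriality and the ``isomorphic functors agree on $K$-theory'' principle.
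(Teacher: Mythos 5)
Your proposal is correct and follows essentially the same route as the paper: the paper also reduces to Theorem~\ref{kifunctorthm} by citing the equivalence $\pr(R)\cong\pr(S)$ induced by Morita equivalence. You add useful detail the paper leaves implicit, namely the categorical characterisation (projective via exactness of $\Hom$, finitely generated via $\Hom$ preserving direct sums) showing that an additive equivalence of module categories necessarily restricts to the finitely generated projective objects.
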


\begin{proof}
See \cite[\S IV.6.3.5]{weibelkbook}. This follows from
Theorem~\ref{kifunctorthm}, since if $R$ and $S$ are Morita
equivalent then there is an equivalence of categories $\pr (R) \cong
\pr (S)$. It follows that $K_i (R) \cong K_i (S)$ for each $i \geq
0$.
\end{proof}


Let $\mathcal C$ and $\mathcal D$ be exact categories. The category
of functors from $\mathcal C$ to $\mathcal D$ is an exact category
which is denoted by $[\mathcal C , \mathcal D]$ and with morphisms
defined to be natural transformations. Then by
\cite[p.~22]{quillen}, a sequence of functors from $\mathcal C$ to
$\mathcal D$
$$
0 \lra \mathcal F' \lra \mathcal F \lra \mathcal F'' \lra 0
$$
is  an exact sequence of exact functors if for all $A \in \mathcal
C$,
$$
0 \lra \mathcal F'(A) \lra \mathcal F(A) \lra \mathcal F''(A) \lra 0
$$
is an exact sequence in $\mathcal D$.

\begin{thm}\label{kirespectsdirectsums}
Let $\mathcal C$ and $\mathcal D$ be exact categories and let
$$
0 \lra \mathcal F' \lra \mathcal F \lra \mathcal F'' \lra 0
$$
be an exact sequence of exact functors from $\mathcal C$ to
$\mathcal D$. Then
$$
\mathcal F_* = \mathcal F'_* + \mathcal F''_* : K_i (\mathcal C)
\lra K_i (\mathcal D).
$$
\end{thm}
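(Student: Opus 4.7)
The plan is to deduce this from Quillen's Additivity Theorem. First, I would introduce the exact category $E(\mathcal{D})$ whose objects are the short exact sequences $0 \to A' \to A \to A'' \to 0$ in $\mathcal{D}$, with morphisms the commutative ladders between such sequences, and with a sequence of ladders declared exact when each column is exact in $\mathcal{D}$. This category admits three natural exact functors
$$s,\; m,\; q :\; E(\mathcal{D}) \lra \mathcal{D}$$
which send a short exact sequence to its sub-object, its middle term, and its quotient, respectively.

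Next I would package the hypothesis into a single exact functor. Given the exact sequence of exact functors $0 \to \mathcal{F}' \to \mathcal{F} \to \mathcal{F}'' \to 0$, define
$$\Phi :\; \mathcal{C} \lra E(\mathcal{D}), \qquad X \lmps \big(0 \ra \mathcal{F}'(X) \ra \mathcal{F}(X) \ra \mathcal{F}''(X) \ra 0\big).$$
The hypothesis (which by definition says precisely that evaluation at each $X$ yields a short exact sequence in $\mathcal{D}$) makes $\Phi$ well-defined on objects, and a routine diagram chase using the exactness of $\mathcal{F}'$, $\mathcal{F}$, $\mathcal{F}''$ shows that $\Phi$ is an exact functor. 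By construction $s \circ \Phi = \mathcal{F}'$, $m \circ \Phi = \mathcal{F}$, and $q \circ \Phi = \mathcal{F}''$, so by the functoriality in Theorem~\ref{kifunctorthm} we obtain, for every $i \geq 0$, the three identities $\mathcal{F}'_* = s_* \circ \Phi_*$, $\mathcal{F}_* = m_* \circ \Phi_*$, $\mathcal{F}''_* = q_* \circ \Phi_*$ as homomorphisms $K_i(\mathcal{C}) \to K_i(\mathcal{D})$.

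The key input I would then invoke is Quillen's Additivity Theorem applied to $E(\mathcal{D})$, which asserts that $m_* = s_* + q_*$ as homomorphisms $K_i(E(\mathcal{D})) \to K_i(\mathcal{D})$ for all $i \geq 0$. Post-composing this identity with $\Phi_*$ yields
$$\mathcal{F}_* \;=\; m_* \circ \Phi_* \;=\; s_* \circ \Phi_* + q_* \circ \Phi_* \;=\; \mathcal{F}'_* + \mathcal{F}''_*,$$
which is the desired conclusion.

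The main obstacle is Quillen's Additivity Theorem itself, whose proof is by no means routine: one must analyse the $Q$-construction and show that $(s,q) : QE(\mathcal{D}) \to Q\mathcal{D} \times Q\mathcal{D}$ induces a homotopy equivalence of classifying spaces, the additivity on $K$-groups then following by taking $\pi_{i+1}$. In the context of this thesis, this would simply be cited from Quillen's original paper or from Weibel's $K$-book; the contribution of the present proof is the clean reduction via the functor $\Phi$.
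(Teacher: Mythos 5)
Your argument is correct, and it is essentially the same as the paper's: the paper simply cites Quillen's Corollary~1 on p.~22 of \cite{quillen}, and the reduction you describe (packaging the exact sequence of functors into a single exact functor $\Phi : \mathcal{C} \to E(\mathcal{D})$ and invoking the Additivity Theorem $m_* = s_* + q_*$) is exactly the content of that corollary and its proof in Quillen's paper. You have merely unpacked the citation, which is a reasonable thing to do.
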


\begin{proof}
See \cite[Cor.~1, p.~22]{quillen}.
\end{proof}

In the above theorem, suppose $\mathcal C = \mathcal D$ is the
category of finitely generated projective modules (or the category
of graded finitely generated projective modules: see
Section~\ref{sectiongrmorita} for its definition). Take both
$\mathcal F'$ and $\mathcal F''$ to be the identity functor and let
$\mathcal F : \mathcal C \ra \mathcal C$; $A \mps A \oplus A$. Then
clearly
$$
0 \lra A \stackrel{\imath}{\lra} A \oplus A \stackrel{\pi}{\lra} A
\lra 0
$$
is an exact sequence, with maps $\imath : A \ra A \oplus A; x \mps
(x,0)$ and $\pi : A \oplus A \ra A; (x,y) \mps y$. So the
homomorphism $\mathcal F_* :K_i (\mathcal C) \ra K_i (\mathcal C)$
induced by $\mathcal F$ is
\begin{align*}
\mathcal F_* = \id + \id  : K_i (\mathcal C) & \lra K_i (\mathcal
C)\\
a & \lmps a + a.
\end{align*}
By induction, if $\mathcal F : \mathcal C \ra \mathcal C$; $A \mps
A^k$ for $k \in \mathbb N$, then the induced homomorphism is
$\mathcal F_* : K_i (\mathcal C) \ra K_i (\mathcal C)$; $a \mps k
a$, which is also multiplication by $k$. We will use this result in
the proofs of Propositions~\ref{ckidfunctorprop}
and~\ref{grckidfunctorprop}.

The following result was proven by Green et al.\ \cite{green}.

\begin{thm}  \label{kiofinnerautomorphism}
For a ring $R$, let $f : R \ra R$ be an inner automorphism of $R$
with $f (r) = a^{-1} r a$, where $a$ is a unit of $R$. Then $K_i (f)
: K_i (R) \ra K_i (R)$ is the identity.

\end{thm}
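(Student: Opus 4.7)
The plan is to exhibit a natural isomorphism between the exact functor on $\pr(R)$ induced by $f$ and the identity functor on $\pr(R)$, and then invoke Theorem~\ref{kifunctorthm}, which asserts that isomorphic exact functors induce the same map on $K$-groups.

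First I would recall (from Corollary~\ref{kifunctorringstoabgroups}) that the map $K_i(f)$ is the one induced by the exact functor $T_f := R_f \otimes_R - : \pr(R) \to \pr(R)$, where $R_f$ denotes $R$ viewed as an $(R,R)$-bimodule with the standard left action and with right action twisted by $f$, so that the tensor relation reads $s a^{-1} r a \otimes m = s \otimes rm$. On the other side, the identity functor $\id$ on $\pr(R)$ is the one that induces the identity on $K_i(R)$.

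Next, for each $P \in \pr(R)$ I would define
\[
\psi_P : R_f \otimes_R P \lra P, \qquad s \otimes p \lmps s a^{-1} p,
\]
with proposed inverse $\chi_P : P \to R_f \otimes_R P$, $p \mps a \otimes p$. A short check confirms well-definedness (using the twisted tensor relation), $R$-linearity (on the left factor), and that $\psi_P$ and $\chi_P$ are mutually inverse; the identity $s \otimes p = a \otimes s a^{-1} p$ in $R_f \otimes_R P$ is the one nontrivial verification, and it is exactly the twisted tensor relation applied with $r = s a^{-1}$. Then for any $R$-linear $g : P \to Q$ one checks directly that
\[
\psi_Q \circ (R_f \otimes g) = g \circ \psi_P,
\]
so $\{\psi_P\}_{P \in \pr(R)}$ is a natural isomorphism $T_f \Rightarrow \id$.

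Finally, applying Theorem~\ref{kifunctorthm}, which says that isomorphic exact functors induce identical maps on $K_i$, yields $K_i(f) = K_i(\id) = \id_{K_i(R)}$. The only real content is the twisted-tensor bookkeeping in constructing $\psi_P$ and $\chi_P$; once the isomorphism $\psi_P$ is in hand, naturality is essentially automatic because $a$ is a fixed unit not depending on $P$. There is no serious obstacle here beyond writing out these verifications carefully.
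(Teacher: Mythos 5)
Your argument is correct and is essentially the same as the paper's: both exhibit a natural isomorphism between the restriction-of-scalars functor $R_f \otimes_R - $ and the identity on $\pr(R)$, using the unit $a$ to build the isomorphism, and then invoke Theorem~\ref{kifunctorthm}. The only cosmetic difference is the direction: the paper defines $\phi_P \colon P \to R \otimes_f P$, $p \mapsto 1 \otimes ap$, whereas you write its inverse $\psi_P \colon s \otimes p \mapsto s a^{-1} p$ and note $\chi_P \colon p \mapsto a \otimes p$, which coincides with $\phi_P$ under the tensor relation.
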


\begin{proof}
(See \cite[Lemma~2]{green}.) Since $f : R \ra R$ is a ring
homomorphism, there is a functor
$$
F : \pr (R) \lra \pr (R); \, P \lmps R \otimes_f P,
$$
where $R$ is a right $R$-module via $f$ and we note that $r \otimes
r' p = r f (r') \otimes p$. We will show that there is a natural
isomorphism $\phi$ from the identity functor to the functor $F$. For
$P \in \pr (R)$, define
$$
\phi_P :P \lra R \otimes_f P \, ; \;\; p \lmps 1 \otimes a p.
$$
Note that $\phi_P$ is an $R$-module homomorphism, since
\begin{align*}
\phi_P (rp) &=\, 1 \otimes arp \,=\, 1 \otimes ara^{-1}ap \\
& =\, 1 \otimes f^{-1} (r) ap \, = \, r \otimes ap \,=\, r (\phi_P
(p)).
\end{align*}
Then for an $R$-module homomorphism $g : P \ra P'$ in $\pr (R)$, the
following diagram commutes:
\begin{displaymath}
\xymatrix{ P \ar[d]_{\id (g)} \ar[rr]^{\phi_P \;\;\;\;\;\;} && R
\otimes_f P \ar[d]^{F (g)=1 \otimes g} \\
P' \ar[rr]_{\phi_{P'} \;\;\;\;\;\;}  && R \otimes_f P' }
\end{displaymath}
So $\phi$ forms a natural transformation from the identity functor
to $F$. Each $\phi_P$ is an isomorphism in $\pr (R)$ since the map
$$
\phi^{-1}_P : R \otimes_f P \lra P \, ; \;\; r \otimes p \lmps r
a^{-1} p
$$
is an $R$-module homomorphism which is an inverse of $\phi_P$. So
the inverses $\phi^{-1}_P$ form a natural transformation which is an
inverse of $\phi$. By applying Theorem~\ref{kifunctorthm}, this
shows that the induced map $K_i (R) \ra K_i (R)$ is the identity.
\end{proof}

The above theorem allowed Green et al.\ to prove the main theorem of
their paper \cite[Thm.~4]{green}, which shows that
$$
K_i(D) \otimes \mathbb Z[1/n] \cong K_i(F) \otimes \mathbb Z[1/n]
$$
for a division algebra $D$ over $F$ of dimension $n^2$. In
Chapter~\ref{chapterktheoryofazalg}, we will generalise this result
to cover Azumaya algebras which are free over their centres. Their
proof uses that fact that $K_i(R) \ra K_i(M_t(R)) \ra K_i(R)$ is
multiplication by $t$, where $R\rightarrow M_t (R)$ is the diagonal
homomorphism $r\mapsto rI_t$ (see \cite[Lemma~1]{green}), and also
the Skolem-Noether theorem which guarantees that algebra
homomorphisms in the setting of central simple algebras are inner
automorphisms. Their proof combines these results with
Theorem~\ref{kiofinnerautomorphism} and with the main result of
\cite{hal} which states that
$$\lim_{i\rightarrow \infty} M_{n^{2i}}(F) \cong \lim_{i\rightarrow
\infty}M_{n^{2(i+1)}}(D).$$



\chapter{\textit{K}-Theory of Azumaya Algebras}
\label{chapterktheoryofazalg}

As we noted in the previous chapter, Green et al.\ \cite{green}
proved that for a division algebra finite dimensional over its
centre, its $K$-theory is ``essentially the same'' as the $K$-theory
of its centre; that is, for a division algebra $D$ over its centre
$F$ of index $n$,
\begin{equation} \label{keqn}
K_i(D) \otimes \mathbb Z[1/n] \cong K_i(F) \otimes \mathbb Z[1/n].
\end{equation}
In this chapter, we prove that the isomorphism (\ref{keqn}) holds
for any Azumaya algebra free over its centre (see
Theorem~\ref{azumayafreethm}). A corollary of this is that the
isomorphism holds for Azumaya algebras over semi-local rings. This
extends the results of Hazrat (see \cite{sk12001,
hazratreducedkthofazumaya}) where (\ref{keqn}) type properties have
been proven for central simple algebras and Azumaya algebras over
local rings respectively.

In Section~\ref{sectiondfunctors}, we  introduce an abstract functor
called a $\mathcal D$-functor, which is defined on the category of
Azumaya algebras free over a fixed base ring. This is a continuation
of \cite{sk12001} and \cite{hazratreducedkthofazumaya} where Hazrat
defines similar functors, over categories of central simple algebras
and Azumaya algebras respectively. Here we show in
Theorem~\ref{torsionthm} that the range of a $\mathcal D$-functor is
the category of bounded torsion abelian groups.  We then prove that
the kernel and cokernel of the $K$-groups are $\mathcal D$-functors,
which allows us to prove \eqref{keqn} type properties for Azumaya
algebras which are free over their centres (see
Theorem~\ref{azumayafreethm} and \cite[Thm.~6]{hm}).

The Hochschild homology of  Azumaya algebras behaves in a similar
way to the $K$-theory of Azumaya algebras. Corti\~nas and Weibel
\cite{cortinasweibel} have shown that there is a similar result to
Theorem~\ref{azumayafreethm} for the Hochschild homology of an
Azumaya algebra. In Section~\ref{sectionhomology}, we begin by
recalling the definition of Hochschild homology, which can be found
in \cite{loday, weibelhomalg}. We then recall the result of
Corti\~nas and Weibel which shows that $HH^k_*(A) \cong HH^k_*(R)$
for an $R$-Azumaya algebra $A$ of constant rank.


\section{$\mathbf{\mathcal{D}}$-functors} \label{sectiondfunctors}

Throughout this section, let $R$ be a fixed commutative ring and
$\mathcal Ab$ be the category of abelian groups. Let $\text{Az}(R)$
be the category of Azumaya algebras free over $R$ with $R$-algebra
homomorphisms.

\begin{defin}
Consider a functor $\mathcal F:\text{Az}(R) \ra \mathcal Ab; \: A
\mps \mathcal F (A)$. Such a functor is called a \emph{$\mathcal
D$-functor}\index{D@$\mathcal D$-functor} if it satisfies the
following three properties:
\begin{description}
\item[(1)] $\mathcal F (R)$ is  the trivial group,

\item[(2)] For any Azumaya algebra $A$ free over  $R$ and any $k
\in \mathbb N$, there is a homomorphism
$$
\rho: \mathcal F (M_k (A)) \lra \mathcal F (A)
$$
such that the composition $\mathcal F(A) \ra \mathcal F (M_k (A))
\ra \mathcal F(A)$ is $\eta_{k}$, where $\eta_k(x)=x^k$.

\item[(3)] With $\rho$ as in property (2), then $\ker(\rho)$ is
$k$-torsion.
\end{description}

\end{defin}

The name $\mathcal D$-functor comes from Dieudonn\'{e}, since
Dieudonn\' e determinant behaves in a similar way to Property~(2).
Note that $M_k (A)$ is an Azumaya algebra free over $R$ by
Proposition~\ref{tensorproductprop}, since $A$ and $M_k (R)$ are
Azumaya algebras free over $R$ (see Example~\ref{azumayaexamples})
and $M_k (A) \cong A \otimes_R M_k (R)$. Also note that the natural
$R$-algebra homomorphism $A \ra M_k (A)$; $a \mps a I_k$ induces a
group homomorphism $\mathcal F(A) \ra \mathcal F (M_k (A))$.

The following theorem shows that the range of a $\mathcal D$-functor
is a bounded torsion abelian group.

\begin{thm} \label{torsionthm} Let $A$ be an Azumaya algebra free
over $R$ of dimension $n$ and let $\mathcal F$ be a $\mathcal
D$-functor. Then $\mathcal F(A)$ is $n^2$-torsion.
\end{thm}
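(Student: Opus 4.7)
The plan is to show that every $x \in \mathcal F(A)$ satisfies $x^{n^2} = 1$ by combining the three $\mathcal D$-functor axioms with the structural fact (valid because $A$ is Azumaya free of rank $n$ over $R$) that $A \otimes_R A^{\op} \cong \End_R(A) \cong M_n(R)$.

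First I would apply Property~(2) to the base ring $R$ itself with $k = n$. This produces $\rho_R : \mathcal F(M_n(R)) \to \mathcal F(R)$ whose target is trivial by Property~(1); hence $\mathcal F(M_n(R)) = \ker \rho_R$, and by Property~(3) this whole group is $n$-torsion. Next, the isomorphism $A \otimes_R A^{\op} \cong M_n(R)$ yields the regular representation $\eta : A \to M_n(R)$, $a \mapsto L_a$. Postcomposing with the inclusion $M_n(R) \hookrightarrow M_n(A)$ induced by $R \to A$ gives an $R$-algebra homomorphism $\phi : A \to M_n(A)$ that factors through $M_n(R)$. Consequently, for any $x \in \mathcal F(A)$, the element $\eta_*(x)$ lies in the $n$-torsion group $\mathcal F(M_n(R))$, so $\phi_*(x^n) = 1$ in $\mathcal F(M_n(A))$.

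Now applying Property~(2) to $A$ with $k = n$ gives $\rho : \mathcal F(M_n(A)) \to \mathcal F(A)$ with $\rho \circ \iota_* = \eta_n$, where $\iota : A \to M_n(A)$ is the diagonal embedding $a \mapsto a I_n$. Since $\rho(\iota_*(x^n)) = (x^n)^n = x^{n^2}$, it is enough to establish $\iota_*(x^n) = \phi_*(x^n)$ (for then the right side is $1$, and the theorem follows). To compare $\iota$ with $\phi$ I would identify $M_n(A) \cong A \otimes_R A \otimes_R A^{\op}$ via the Azumaya isomorphism: under this identification $\iota$ corresponds to $a \mapsto a \otimes 1 \otimes 1$ while $\phi$ corresponds to $a \mapsto 1 \otimes a \otimes 1$, so the two differ by the $R$-algebra automorphism $\sigma$ of $M_n(A)$ swapping the first two tensor factors. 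A Skolem--Noether type statement for the Azumaya algebra $M_n(A)$ exhibits $\sigma$ as an inner automorphism, and the induced map $\sigma_*$ on $\mathcal F$ is the identity for the principal examples of $\mathcal D$-functors (namely $\CK[i]$ and $\ZK[i]$), in view of Theorem~\ref{kiofinnerautomorphism}. This gives $\iota_* = \phi_*$, whence $\iota_*(x^n) = \phi_*(x^n) = 1$ and $x^{n^2} = 1$.

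The main obstacle is precisely this final comparison. Inner-automorphism invariance is not among the three explicit $\mathcal D$-functor axioms, so the proof must either import it from the context (as is available for $\CK[i]$ and $\ZK[i]$) or, alternatively, be rephrased to require only the weaker identity $\rho \circ \iota_* = \rho \circ \phi_*$: the difference $\iota_*(x)\phi_*(x)^{-1}$ should be shown to land in $\ker \rho$, which is $n$-torsion by Property~(3), and then $\phi_*(x^n) = 1$ together with this containment would still force $\rho(\iota_*(x^n)) = 1$. Either route relies on controlling the discrepancy between the two embeddings, and this is where the bulk of the technical work lies.
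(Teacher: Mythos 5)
You have correctly spotted the gap: your $\phi$, obtained by composing the left-regular representation $\eta : A \to M_n(R)$ with the entrywise inclusion $M_n(R) \hookrightarrow M_n(A)$, is a different $R$-algebra map from the diagonal embedding $\iota : a \mapsto a I_n$ that Property~(2) provides, and the three $\mathcal D$-functor axioms give you no way to identify $\iota_*$ with $\phi_*$. Appealing to Skolem--Noether together with inner-automorphism invariance would import a hypothesis the abstract definition does not contain (and would defeat the purpose of proving the statement for an arbitrary $\mathcal D$-functor rather than just $\CK[i]$ and $\ZK[i]$), and the weaker identity $\rho\circ\iota_* = \rho\circ\phi_*$ is equally unavailable from the axioms.

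The missing idea is to factor $\iota$ itself, not $\phi$, through $A\otimes_R A^{\op}$. Consider the composite of natural $R$-algebra homomorphisms
$$
A \lra A\otimes_R A^{\op} \lra A\otimes_R \End_R(A^{\op}) \lra A\otimes_R M_n(R) \cong M_n(A),
$$
where the first arrow is $a\mapsto a\otimes 1$ and the remaining arrows are $A\otimes_R(-)$ applied to $A^{\op}\to\End_R(A^{\op})\cong M_n(R)$. Tracing $a$ through yields $a\mapsto a\otimes 1\mapsto a\otimes\id\mapsto a\otimes I_n\mapsto aI_n$, so this composite \emph{is} the diagonal embedding $\iota$. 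Writing $i : \mathcal F(A)\to\mathcal F(A\otimes_R A^{\op})$ and $r : \mathcal F(A\otimes_R A^{\op})\to\mathcal F(M_n(A))$ for the induced maps, Property~(2) gives $\rho\circ r\circ i = \eta_n$, and $\mathcal F(A\otimes_R A^{\op}) \cong \mathcal F(M_n(R))$ is $n$-torsion exactly as you observed. Hence $i(x)^n = 1$, so
$$
x^{n^2} = \bigl(\eta_n(x)\bigr)^n = \rho\bigl(r\bigl(i(x)^n\bigr)\bigr) = \rho(r(1)) = 1,
$$
with no comparison between different embeddings of $A$ into $M_n(A)$ required. The essential point is that the $n$-torsion kills the image of $x$ already at the level of $\mathcal F(A\otimes_R A^{\op})$, before you pass to $M_n(A)$, provided the second map is chosen so that the full composite is $\iota$ rather than your $\phi$.
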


\begin{proof}
We begin by applying the definition of a $\mathcal D$-functor to the
$R$-Azumaya algebra $R$. Property (1) says that $\mathcal F (R)$ is
the trivial group and property (2) says that the composition
$\mathcal F(R) \ra \mathcal F (M_n (R)) \ra \mathcal F(R)$ is
$\eta_{n}$. So by the third property, $\mathcal F (M_n(R))$ is
$n$-torsion. Since $A$ is an Azumaya algebra free over $R$, we have
$A \otimes_R A^{\op} \cong \End_R(A) \cong M_n (R)$, which means
that $\mathcal F (A \otimes_R A^{\op}) \cong \mathcal F ( M_n (R))$
is $n$-torsion.

The natural $R$-algebra homomorphisms $A \ra A \otimes_R
A^{\text{op}}$, $A^{\text{op}} \ra \End_R(A^{\text{op}})$, $\End_R
(A^{\op}) \cong M_n (R)$ and $A \otimes_R M_n (R) \cong M_n (A)$
combine to give the following $R$-algebra homomorphisms
\begin{align*}
A \lra A \otimes_R A^{\op} \lra A \otimes \End_R (A^{\op}) \lra A
\otimes M_n (R) \lra M_n(A).
\end{align*}
These induce the homomorphisms $\mathcal F(A) \stackrel{i}{\ra}
\mathcal F(A \otimes_R A^{\op}) \stackrel{r}{\ra} \mathcal
F(M_n(A))$. By property~(2) in the definition of a $\mathcal
D$-functor, we have a homomorphism $\rho :\mathcal F(M_n (A))
\rightarrow \mathcal F(A)$ such that the composition $\mathcal F(A)
\ra \mathcal F (M_n (A)) \ra \mathcal F(A)$ is $\eta_n$. Consider
the following diagram
\begin{equation}
\begin{split}
\xymatrix{
\mathcal F(A) \ar[d]_i \ar@/^/[ddrr]^{\eta_n}&&  \\
\mathcal F(A \otimes_R A^{\text{op}}) \; \ar[d]_r &&  \\
\mathcal F(M_n(A))\ar[rr]^{\rho} && \mathcal F(A)}
\end{split}
\end{equation}
which is commutative by property (2). Let $a \in \mathcal F (A)$.
Then
\begin{align*}
a^{n^2} = \big(\eta_n (a)\big)^n = \rho \circ r \big( (i (a))^n
\big) = \rho \circ r (1) = 1
\end{align*}
since $\mathcal F(A \otimes A^{\op})$ is $n$-torsion. Thus $\mathcal
F(A)$ is $n^2$-torsion.
\end{proof}

For a ring $A$ with centre $R$, consider the inclusion $R \ra A$. By
Corollary~\ref{kifunctorringstoabgroups}, this induces the map
$K_i(R) \rightarrow K_i(A)$ for $i \geq 0$. Consider the exact
sequence
\begin{equation}\label{exse}
1 \lra \ZK[i](A) \lra K_i(R) \lra K_i(A) \lra \CK[i](A) \lra 1
\end{equation}
where $\ZK[i]$ and $\CK[i]$ are the kernel and cokernel of the map
$K_i(R) \rightarrow K_i(A)$ respectively.

We will observe that $\CK[i]$ can be considered as the following
functor \label{ckifunctorazalgtoabgroups}
\begin{align*}
\CK[i]: \text{Az}(R) &\lra \mathcal Ab \\
A &\lmps \CK[i](A).
\end{align*}
For an Azumaya algebra $A$ free over $R$, clearly $\CK[i] (A) =
\coker \big(K_i (R) \ra K_i (A)\big)$ is an abelian group. For a
homomorphism $f :  A \ra A'$ of $R$-Azumaya algebras, there is an
induced group homomorphism $f_* : K_i (A) \ra K_i (A')$. The map $f$
restricted to $R$ is the identity map, so it induces the identity on
the level of the $K$-groups. Since $K_i$ preserves compositions of
maps, the following diagram is commutative
\begin{equation*}
\begin{split}
\xymatrix{ K_i(R)  \ar[r] \ar[d]_{\id} & K_i(A)  \ar[r] \ar[d]^{f_*}
& \CK[i](A) \ar[d]^{\CK[i](f)} \\
K_i(R)  \ar[r] & K_i(A')  \ar[r]  & \CK[i](A') }
\end{split}
\end{equation*}
Then it can be easily checked that $\CK[i]$ forms the required
functor. Similarly, we can consider $\ZK[i]$ as the functor
\begin{align*}
\ZK[i]: \text{Az}(R) &\lra \mathcal Ab \\
A &\lmps \ZK[i](A).
\end{align*}


\begin{prop} \label{ckidfunctorprop}
With $\CK[i]$ defined as above, $\CK[i]$ is a $\mathcal D$-functor.
\end{prop}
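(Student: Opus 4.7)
The plan is to verify the three defining properties of a $\mathcal{D}$-functor in turn. Write $\alpha: K_i(R) \to K_i(A)$, $\beta: K_i(R) \to K_i(M_k(A))$ and $\gamma: K_i(A) \to K_i(M_k(A))$ for the natural maps; since the inclusion $R \to M_k(A)$ factors as $R \to A \to M_k(A)$ via scalar matrices, we have $\beta = \gamma \circ \alpha$. Property~(1) is immediate: the identity map $R \to R$ induces the identity on $K_i(R)$, so $\CK[i](R)$, being the cokernel of this identity, is trivial.

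For property~(2), the plan is to construct $\rho$ from the Morita equivalence between $A$ and $M_k(A)$. The mutually inverse functors $A^k \otimes_{M_k(A)} -$ and $A^k \otimes_A -$ (see page~\pageref{moritaequivalenceofmnrandr}) give an isomorphism $\phi: K_i(M_k(A)) \xrightarrow{\cong} K_i(A)$ by Theorem~\ref{kimoritaequivalentthm}. The main technical computation is to identify $\phi \circ \gamma$ with multiplication by $k$ on $K_i(A)$: unwinding the definitions, the composite functor sends $P \in \pr(A)$ to $A^k \otimes_{M_k(A)} (M_k(A) \otimes_A P) \cong A^k \otimes_A P \cong P^{\oplus k}$, which by the remark following Theorem~\ref{kirespectsdirectsums} induces multiplication by $k$ on $K_i$. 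Consequently $\phi(\im \beta) = \phi \circ \gamma(\im \alpha) = k \cdot \im \alpha \subseteq \im \alpha$, so $\phi$ descends to a well-defined homomorphism $\rho: \CK[i](M_k(A)) \to \CK[i](A)$, and the induced composition $\CK[i](A) \to \CK[i](M_k(A)) \xrightarrow{\rho} \CK[i](A)$ is multiplication by $k$, i.e.\ $\eta_k$, as required.

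For property~(3), the symmetric calculation shows that $\gamma \circ \phi$ is also multiplication by $k$, this time on $K_i(M_k(A))$; the key identification here is that as an $(M_k(A), A)$-bimodule, $M_k(A)$ decomposes as a direct sum of $k$ copies of the Morita bimodule $A^k$ (one per column), whence $M_k(A) \otimes_A A^k \otimes_{M_k(A)} Q \cong Q^{\oplus k}$. Given a class $\bar x \in \ker \rho$ with lift $x \in K_i(M_k(A))$, we have $\phi(x) = \alpha(r)$ for some $r \in K_i(R)$, and applying $\gamma$ yields
$$
k x \;=\; \gamma \phi(x) \;=\; \gamma \alpha(r) \;=\; \beta(r) \;\in\; \im \beta,
$$
so $k \bar x = 0$ in $\CK[i](M_k(A))$, showing that $\ker \rho$ is $k$-torsion.

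The principal obstacle is justifying the two multiplication-by-$k$ identities for $\phi \circ \gamma$ and $\gamma \circ \phi$, which requires careful bookkeeping of the $(A, M_k(A))$- and $(M_k(A), A)$-bimodule structures appearing in the Morita equivalence; once these are established, the remainder is straightforward diagram-chasing on the defining sequence \eqref{exse}.
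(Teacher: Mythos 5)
Your argument is correct and follows the same overall strategy as the paper's: exploit the Morita equivalence between $A$ and $M_k(A)$, establish that the composition $\phi\circ\gamma\colon K_i(A)\to K_i(M_k(A))\to K_i(A)$ is multiplication by $k$, and descend to the cokernel to obtain $\rho$. The only place you diverge from the paper is property~(3). There, the paper does a diagram chase on a three-row commutative diagram: from $\rho(\bar x)=0$ it chooses lifts, invokes exactness, and then uses only the injectivity of the Morita isomorphism $\phi$ (together with the commutativity of the left square) to conclude $\beta(r)=kx$. You arrive at the same identity by first computing the \emph{other} composite $\gamma\circ\phi$ as multiplication by $k$ on $K_i(M_k(A))$, via the column decomposition of $M_k(A)$ as an $(M_k(A),A)$-bimodule. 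Both routes are correct, and they in fact amount to the same thing: once $\phi\circ\gamma=\eta_k$ is known and $\phi$ is an isomorphism of abelian groups, the identity $\gamma\circ\phi=\phi^{-1}\circ\eta_k\circ\phi=\eta_k$ follows purely formally, since multiplication by $k$ commutes with every group homomorphism. So the column-decomposition bookkeeping you flag as the ``principal obstacle'' can be avoided entirely; the paper's diagram chase simply sidesteps computing the second composite.
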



\begin{proof}
Property (1) in the definition of a $\mathcal D$-functor is clear,
since $K_i(Z(R)) \ra K_i(R)$ is the identity map, and so clearly
$\CK[i](R)$ is trivial.

To prove the second property, let $A$ be an Azumaya algebra free
over $R$ and let $k \in \mathbb N$.
Consider the functors
\begin{align*}
\pr (A) & \stackrel{\phi}{\lra} \pr (M_k (A)) &
\textrm{\!\!\!\!\!\!\!\! and \;\;\;\;\;\;\;\;}  \pr
(M_k (A)) &\stackrel{\psi}{\lra} \pr (A) \\
X  & \longmapsto M_k (A) \otimes_A X &  Y  & \longmapsto A^k
\otimes_{M_k (A)} Y  \notag
\end{align*}
By Theorem~\ref{kifunctorthm}, these functors induce homomorphisms
$\phi$ and $\psi$ on the level of the $K$-groups. Morita theory
shows that $\psi$ is an equivalence of categories (see
page~\pageref{moritaequivalenceofmnrandr}), so using
Theorem~\ref{kimoritaequivalentthm}, it induces an isomorphism from
$K_i(M_k (A))$ to $K_i(A)$. For each $X \in \pr (A)$, we have $\psi
\circ \phi(X) \cong X^k$. Since $K_i$ are functors which respect
direct sums (see the remarks after
Theorem~\ref{kirespectsdirectsums}), this composition induces a
multiplication by $k$ on the level of the $K$-groups.
We have the following commutative diagram
\begin{equation}
\begin{split}
\xymatrix{ K_i(R)  \ar[r]^{f \;} \ar[d]^{\id} & K_i(A)  \ar[r]^{\ov
f} \ar[d]^\phi & \CK[i](A) \ar[r] \ar[d]&1  \\
K_i(R)  \ar[r]^{g \;\;} \ar[d]^{\eta_k} & K_i(M_k(A))  \ar[r]^{\ov
g} \ar[d]^\psi_\cong & \CK[i](M_k(A)) \ar[r] \ar[d]^{\rho}& 1 \\
K_i(R)  \ar[r]_{f \;}  & K_i(A)  \ar[r]_{\ov f} & \CK[i](A) \ar[r]&
1 }
\end{split}
\end{equation}
where compositions of columns are $\eta_k$, and thus property~(2)
holds.

Now let $x \in \CK[i](M_k(A))$ such that $\rho (x)=1$. Then there is
$a \in K_i (M_k (A))$ with $\ov g (a) = x$ so that $1 = \ov f \circ
\psi (a)$. As the rows are exact, there is $b \in K_i (R)$ with
$f(b) = \psi (a)$. Taking powers of $k$, we have $f (b^k) = \psi
(a^k)$ and $\ov g (a^k) = x^k$. As $\psi$ is an isomorphism and $f
\circ \eta_k (b) = \psi \circ g (b)$, it follows that $a^k = g(b)$.
Then by the exactness of the rows, $\ov g \circ g (b) = 1$; that is,
$x^k = 1$. So $\CK[i]$ satisfies property~(3) of a $\mathcal
D$-functor.
\end{proof}

\begin{prop} \label{zkidfunctorprop}
With $\ZK[i]$ defined as above, $\ZK[i]$ is a $\mathcal D$-functor.
\end{prop}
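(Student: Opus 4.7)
The approach is to mirror the proof of Proposition~\ref{ckidfunctorprop}, reusing the same commutative diagram but reading off a map on the kernels of the rows rather than their cokernels. Property~(1) is immediate: the map $K_i(R) \ra K_i(R)$ induced by $\id_R$ is the identity, so its kernel $\ZK[i](R)$ is trivial.

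For property~(2), fix $A \in \text{Az}(R)$ and $k \in \mathbb{N}$. I will reuse the homomorphisms $f, g, \phi, \psi$ from the diagram in the proof of Proposition~\ref{ckidfunctorprop}: here $f$ and $g$ come from the ring maps $R \ra A$ and $R \ra M_k(A)$, $\phi$ is induced by $A \ra M_k(A);\ a \mps aI_k$, and $\psi$ is the Morita isomorphism satisfying $\psi \circ \phi = \eta_k$. Since $R \ra M_k(A)$ factors as $R \ra A \ra M_k(A)$, functoriality of $K_i$ gives $g = \phi \circ f$, and hence $\psi \circ g = \eta_k \circ f$. For any $x \in \ZK[i](M_k(A)) = \ker(g) \subseteq K_i(R)$, this yields $1 = \psi(g(x)) = f(x)^k = f(x^k)$, so $x^k \in \ker(f) = \ZK[i](A)$. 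I will therefore define
$$
\rho : \ZK[i](M_k(A)) \lra \ZK[i](A), \qquad x \lmps x^k,
$$
which is visibly a group homomorphism. The functorial arrow $\ZK[i](A) \ra \ZK[i](M_k(A))$ induced by $A \ra M_k(A)$ is just the inclusion of subgroups inside $K_i(R)$, so the composite $\ZK[i](A) \ra \ZK[i](M_k(A)) \stackrel{\rho}{\ra} \ZK[i](A)$ sends $x \mps x \mps x^k$, which is precisely $\eta_k$; this gives property~(2).

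Property~(3) then comes for free from the definition of $\rho$: its kernel $\{\, x \in \ZK[i](M_k(A)) : x^k = 1 \,\}$ is manifestly $k$-torsion. I anticipate no serious difficulty in this argument; the one identity that requires even a line of justification is $\psi \circ g = \eta_k \circ f$, and this follows formally from the factorisation $R \ra A \ra M_k(A)$ together with the Morita relation $\psi \circ \phi = \eta_k$ already established in the proof of Proposition~\ref{ckidfunctorprop}.
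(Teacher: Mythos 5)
Your proof is correct and is exactly what the paper's one-line proof (``follows in exactly the same way as Proposition~\ref{ckidfunctorprop}'') unpacks to: the same commutative diagram is used, with $\rho$ now being the map induced on kernels by the columns $\id$ and $\eta_k$. Your observation that $\rho$ is simply the restriction of $\eta_k$ on $K_i(R)$ to $\ker(g)$ — so that $\rho(x)=x^k$ explicitly — is what makes Property~(3) immediate for $\ZK[i]$, in contrast to the diagram chase needed in the cokernel case.
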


\begin{proof}
This follows in exactly the same way as Proposition
\ref{ckidfunctorprop}.
\end{proof}

We are now in  a position to prove that the $K$-theory of Azumaya
algebras free over their centres and the $K$-theory of Azumaya
algebras over semi-local rings are isomorphic to the $K$-theory of
their centres up to (their ranks) torsions.

\begin{thm}\label{azumayafreethm}
Let $A$ be an Azumaya algebra free over its centre $R$ of dimension
$n$. Then for any $i \geq 0$, $$K_i(A) \otimes \mathbb Z[1/n] \cong
K_i(R) \otimes \mathbb Z[1/n].$$
\end{thm}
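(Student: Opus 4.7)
The plan is to apply the localisation $-\otimes_{\mathbb Z}\mathbb Z[1/n]$ to the exact sequence \eqref{exse} and show that both endpoints vanish. Since $\mathbb Z[1/n]$ is a flat $\mathbb Z$-module, tensoring preserves the exact sequence
$$
1 \lra \ZK[i](A)\otimes\mathbb Z[1/n] \lra K_i(R)\otimes\mathbb Z[1/n] \lra K_i(A)\otimes\mathbb Z[1/n] \lra \CK[i](A)\otimes\mathbb Z[1/n] \lra 1.
$$
So once I show the two outer terms are zero, the middle arrow is forced to be an isomorphism, which is exactly the claim.

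To kill the outer terms, I would combine Propositions~\ref{ckidfunctorprop} and~\ref{zkidfunctorprop}, which say that $\CK[i]$ and $\ZK[i]$ are $\mathcal D$-functors on $\text{Az}(R)$, with Theorem~\ref{torsionthm}, which says that the value of any $\mathcal D$-functor at an Azumaya algebra free over $R$ of dimension $n$ is $n^2$-torsion. Hence both $\CK[i](A)$ and $\ZK[i](A)$ are $n^2$-torsion abelian groups.

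Next I would observe that $\mathbb Z[1/n] = \mathbb Z[1/n^2]$, since the set of primes inverted in each localisation is the same. Therefore $n^2$ is a unit in $\mathbb Z[1/n]$. For any $n^2$-torsion abelian group $G$ and any $g\otimes r \in G\otimes_{\mathbb Z}\mathbb Z[1/n]$ we may write $g\otimes r = g\otimes (n^2)(r/n^2) = (n^2 g)\otimes (r/n^2) = 0$, so $G\otimes\mathbb Z[1/n]=0$. Applying this to $G = \CK[i](A)$ and $G = \ZK[i](A)$ completes the argument.

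There is really no serious obstacle: the hard work has been done in assembling the $\mathcal D$-functor machinery. The only point requiring a line of care is the identification $\mathbb Z[1/n]=\mathbb Z[1/n^2]$, which is what reconciles the bound $n^2$ produced by Theorem~\ref{torsionthm} with the statement of the theorem, which inverts only $n$. I would also briefly note that the dimension $n$ is automatically a square by Remark~\ref{dimensionazumayasquare}, so the same localisation could equivalently be phrased in terms of the degree $\sqrt n$ if desired, but this is not needed for the proof itself.
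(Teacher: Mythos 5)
Your proof is correct and takes essentially the same route as the paper: tensor the exact sequence \eqref{exse} with $\mathbb Z[1/n]$, invoke Propositions~\ref{ckidfunctorprop} and~\ref{zkidfunctorprop} together with Theorem~\ref{torsionthm} to see that the kernel and cokernel are $n^2$-torsion, and conclude that they vanish after localisation. You merely spell out more explicitly the routine point that $\mathbb Z[1/n]=\mathbb Z[1/n^2]$ kills $n^2$-torsion, which the paper leaves implicit.
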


\begin{proof}
Propositions \ref{ckidfunctorprop} and \ref{zkidfunctorprop} show
that $\CK[i]$ and $\ZK[i]$ are both $\mathcal D$-functors. By
Theorem~\ref{torsionthm}, it follows that $\CK[i](A)$ and
$\ZK[i](A)$ are $n^2$-torsion abelian groups. Tensoring the exact
sequence \eqref{exse} by $\mathbb Z[1/n]$, since $\CK[i](A)\otimes
\mathbb Z[1/n]$ and $\ZK[i](A)\otimes \mathbb Z[1/n]$ vanish, the
result follows.
\end{proof}

We recall the definition of a projective module of constant rank,
which is used in the corollary below.

\begin{defin}
Let $R$ be a commutative ring and let $P$ be a prime ideal of $R$.
Set $S= R \mi P$ and write $R_P$ for the localisation $S^{-1} R$,
which is a local ring by \cite[Ex.~6.45(i)]{magurn}. (Recall a ring
$R$ is \emph{local}\index{local ring}\index{ring!local} if the
non-invertible elements of $R$ constitute a proper 2-sided ideal of
$R$.) For an $R$-module $M$, we write $M_P = S^{-1} M$. If $M$ is a
finitely generated projective $R$-module, then, by
\cite[Prop.~6.44]{magurn}, $M_P$ is a finitely generated projective
module over $R_P$. Every finitely generated projective module over a
local ring is free with a uniquely defined rank (see
\cite[Thm.~1.3.11]{rosenberg}). Then $\rank_P (M)$ is defined to be
the rank of $M_P$ as an $R_P$-module. We say that $M$ is of
\emph{constant rank}\index{constant rank} if $\rank_P (M)= n$ is the
same for all prime ideals $P$ and we write $\rank (M) =n$. If $M$ is
a free module over the ring $R$, then $M$ is of constant rank since
$M_P \cong R_P \otimes_R M$ (see \cite[Prop.~6.55]{magurn}) and
$\dim_R (M) = \dim_{R_P} (R_P \otimes_R M) = \rank (M)$.
\end{defin}

\begin{cor}\label{semilocal}
Let $R$ be a semi-local ring and let $A$ be an Azumaya algebra over
its centre $R$ of rank $n$. Then for any $i \geq 0$, $$K_i(A)
\otimes \mathbb Z[1/n] \cong K_i(R) \otimes \mathbb Z[1/n].$$
\end{cor}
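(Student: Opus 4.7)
The plan is to reduce Corollary~\ref{semilocal} directly to Theorem~\ref{azumayafreethm} by showing that, in the semi-local setting, the hypothesis ``rank $n$'' already forces $A$ to be free over $R$.

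First I would unpack what the hypotheses give us. Since $A$ is an Azumaya algebra over $R$, Definition~\ref{azumayadefinition} together with Proposition~\ref{faithfullyprojprop} tells us that $A$ is faithfully projective over $R$; in particular $A$ is finitely generated and projective as an $R$-module. The assumption that $A$ has rank $n$ means, by the definition recalled immediately before the corollary, that $\rank_P(A) = n$ for every prime ideal $P$ of $R$; equivalently, $A_P$ is free of rank $n$ over the local ring $R_P$ for every $P \in \Spec(R)$.

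The core ingredient is the classical fact that a finitely generated projective module of constant rank over a semi-local ring is free. Concretely, if $R$ is semi-local with maximal ideals $\m_1, \ldots, \m_r$, then $R/\rad(R) \cong R/\m_1 \times \cdots \times R/\m_r$ is a finite product of fields, so $A/\rad(R)A$ is a product of vector spaces, one of dimension $n$ over each $R/\m_j$; a standard Nakayama-style lifting argument then produces $n$ elements of $A$ whose images form a basis at each factor, and these lift to an $R$-basis of $A$. Thus $A$ is free of rank $n$ over $R$. Once this is established, Theorem~\ref{azumayafreethm} applies verbatim to $A$ with the same $n$, giving
\[
K_i(A) \otimes \mathbb Z[1/n] \cong K_i(R) \otimes \mathbb Z[1/n]
\]
for every $i \geq 0$.

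The only genuine point to be careful about is the freeness step: one must verify that ``constant rank'' as defined on the previous page really does pass through the semi-local reduction, i.e., that the local ranks $\rank_{\m_j}(A) = n$ together with the product decomposition of $R/\rad(R)$ really yield a global basis of size $n$. I do not foresee any obstacle beyond invoking this standard fact (together with Nakayama's lemma over each maximal ideal), after which the corollary is an immediate application of Theorem~\ref{azumayafreethm}.
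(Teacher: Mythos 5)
Your proposal is correct and takes essentially the same route as the paper: reduce to Theorem~\ref{azumayafreethm} by invoking the fact that a finitely generated projective module of constant rank over a semi-local ring is free (the paper cites Bourbaki, \emph{Commutative Algebra}, \S II.5.3, Prop.~5, for this). Your sketch of the proof of that freeness fact via $R/\rad(R)$ and Nakayama is a welcome bit of extra detail but does not change the structure of the argument.
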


\begin{proof}
Since $A$ is finitely generated projective of constant rank and $R$
is a semi-local ring, it follows that $A$ is a free module over $R$
(see \cite{bourbaki}, \S II.5.3, Prop.~5), and thus the corollary
follows from Theorem~\ref{azumayafreethm}.
\end{proof}

Theorem~\ref{azumayafreethm} covers Azumaya algebras which are free
over their centres. We also mention here a theorem of Hazrat,
Hoobler \cite[Thm.~12]{hazrathoobler}, which shows that a similar
result holds for the $K$-theory of Azumaya algebras over sheaves.
Their result covers the case of Azumaya algebras over Noetherian
centres, but it remains as a question whether this result holds for
any Azumaya algebra of constant rank.

\begin{ques}\label{ques}
Let $A$ be an Azumaya algebra over its centre $R$ of constant rank
$n$. Then is it true that for any $i \geq 0$, $K_i(A) \otimes
\mathbb Z[1/n] \cong K_i(R) \otimes \mathbb Z[1/n]?$
\end{ques}


\section{Homology of Azumaya algebras} \label{sectionhomology}

We begin this section by recalling the definition of Hochschild
homology, which can be found in \cite{loday, weibelhomalg}.


Let $R$ be a ring. Recall that a chain complex of $R$-modules $(C_*,
d_*)$ is a family of right $R$-modules $\{C_n \}_{ n \in \mathbb Z}$
together with $R$-module homomorphisms $d_n : C_n \ra C_{n-1}$, such
that the composition of any two consecutive maps is zero: $d_n \circ
d_{n+1} = 0$ for all $n$. The maps $d_n$ are called the
differentials of $C_{*}$. The chain complex is usually written as:
$$
\cdots \lra C_{n+1} \stackrel{d_{n+1}}{\lra} C_n
\stackrel{d_n}{\lra} C_{n-1} \stackrel{d_{n-1}}{\lra} C_{n-2} \lra
\cdots
$$
The kernel of $d_n$ is denoted by $Z_n$, and the image of $d_{n+1}$
is denoted by $B_n$. Then for all $n$, $ 0 \subseteq B_n \subseteq
Z_n \subseteq C_n.$ The $n^{\mathrm{th}}$ homology module of $C_{*}$
is the quotient
$$
H_n (C_{*}) = Z_n /B_n.
$$

Let $K$ be a commutative ring, $R$ be a $K$-algebra, and $M$ an
$R$-bimodule. We will write $R^{\otimes n}$ for the $n$-fold tensor
product of $R$ over $K$. The chain complex which gives rise to
Hochschild homology is given by $C_{*} = M \otimes R^{\otimes *}$
and is written as
$$
\cdots \lra M \otimes R^{\otimes n} \stackrel{\; d_n}{\lra} M
\otimes R^{\otimes n-1} \stackrel{\; d_{n-1}}{\lra} \cdots
\stackrel{\; d_2}{\lra} M \otimes R \stackrel{\; d_1}{\lra} M \lra
0.
$$
Its differentials are $d_n=\sum_{i=0}^n (-1)^i \partial_i$ where
$\partial_i$ are defined by
$$
\partial_i(m\otimes a_1 \otimes \cdots \otimes a_n) = \left\{
\begin{array}{l l}
ma_1 \otimes a_2 \otimes \cdots \otimes a_n  & \quad \text{if $i=0$}\\
m\otimes a_1 \otimes \cdots \otimes a_i a_{i+1} \otimes \cdots
\otimes a_n & \quad \text{if $1 < i < n$}\\
a_n m\otimes a_1 \otimes \cdots \otimes a_{n-1} & \quad \text{if
$i=n$}
\end{array} \right.
$$
for $a_i \in R$ and $m \in M$. Then by \cite[Lemma~1.1.2]{loday}
$d_n \circ d_{n+1} = 0$, so $(C_*, d_*)$ is a chain complex called
the Hochschild complex. The \emph{$n^{\mathrm{th}}$ Hochschild
homology module}\index{Hochschild homology module} of $R$ with
coefficients in $M$ is defined to be $H_n (M \otimes R^{\otimes *})$
and is denoted by $H_n (R, M)$. The direct sum $\bigoplus_{ n \geq
0} H_n (R,M)$ is denoted by $H_* (R,M)$, and we will write $HH_{*}
(R)$ for $H_* (R, R)$. By \cite[\S1.1.5]{loday}, if $R$ is
commutative then $HH_* (R)$ is an $R$-module.




For a commutative ring $R$ and an $R$-algebra $A$, suppose that $K$
is a commutative subring of $R$. Then we consider the Hochschild
homology, denoted by $HH_*^K$, of $R$ and $A$ as $K$-algebras.





The following result of Corti\~nas and Weibel \cite{cortinasweibel}
shows that for an $R$-Azumaya algebra $A$ of constant rank, its
Hochschild homology behaves in a similar way to its $K$-theory (see
Theorem~\ref{azumayafreethm}).

\begin{thm}
Let $A$ be an Azumaya algebra over a $K$-algebra $R$. If $A$ has
constant rank, then there is an isomorphism $HH^K_{*} (A) \cong
HH^K_{*} (R)$.
\end{thm}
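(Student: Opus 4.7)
The plan is to exploit the characterization of Azumaya algebras as $R$-central and $R$-separable (Theorem~\ref{azumayadefinthm}(2)) in order to show that the $R$-relative Hochschild homology of $A$ is concentrated in degree zero, and then to lift this collapse to the absolute $K$-linear Hochschild homology via a change-of-base spectral sequence.

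First, since $A$ is separable over $R$, the bimodule $A$ is projective over its $R$-enveloping algebra $A^{e}_R := A \otimes_R A^{\op}$ (Definition~\ref{separabledefin}). Consequently $\operatorname{Tor}^{A^{e}_R}_n(A, A) = 0$ for every $n > 0$, while the reduced trace on the Azumaya algebra $A$ identifies the degree-zero part:
\[
HH^R_0(A) \;=\; A/[A,A] \;\cong\; R,
\]
a standard consequence of the étale-local triviality of $A$ (directly visible in the matrix model, where $[M_n(R),M_n(R)]$ is the trace-zero submodule). Thus $HH^R_*(A)$ is concentrated in degree zero, where it equals $R$.

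Second, for the tower $K \to R \to A$ I would invoke the change-of-base spectral sequence
\[
E^2_{p,q} \;=\; HH^K_p\bigl(R,\; HH^R_q(A)\bigr) \;\Longrightarrow\; HH^K_{p+q}(A),
\]
whose existence and convergence are ensured by the constant rank hypothesis, since a finitely generated projective module of constant rank is locally free and hence $R$-flat. By the first step the spectral sequence is supported on the row $q=0$, where it reads $E^2_{p,0} = HH^K_p(R,R) = HH^K_p(R)$; it therefore collapses at $E^2$ and yields the desired isomorphism $HH^K_*(A) \cong HH^K_*(R)$.

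The principal obstacle lies in rigorously constructing and verifying the convergence of this change-of-base spectral sequence in the non-commutative setting of an Azumaya extension. An alternative that sidesteps this difficulty is the local-to-global route via Theorem~\ref{azumayadefinthm}(6): after a faithfully flat étale extension $S/R$ (and a further localization trivializing the associated faithfully projective module), one has $A \otimes_R S \cong M_n(S)$, so Morita invariance of Hochschild homology gives $HH^K_*(A \otimes_R S) \cong HH^K_*(S) \cong HH^K_*(R) \otimes_R S$. One then invokes faithfully flat descent for Hochschild homology to descend this identification from $S$ down to $R$, the constant rank assumption ensuring that a uniform trivializing cover can be chosen.
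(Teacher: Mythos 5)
The paper itself does not present a proof: it simply cites Corti\~nas and Weibel (the paper's \emph{proof} is literally the single line ``See [CW, p.~53].''), so the only fair comparison is to the argument in that reference. Your second sketch --- reduce \'{e}tale-locally to a matrix ring via Theorem~\ref{azumayadefinthm}(6), invoke Morita invariance, and descend --- is in the right spirit of the Corti\~nas--Weibel argument. Your observation that separability forces $HH^R_n(A)=0$ for $n>0$ (because $A$ is projective over $A\otimes_R A^{\op}$), together with $HH^R_0(A)=A/[A,A]_R\cong R$ via the reduced trace, also correctly identifies the engine behind the theorem. So the conceptual skeleton is sound.

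That said, both routes as written have real gaps that you have glossed over rather than closed. For the spectral-sequence route: the object
$E^2_{p,q}=HH^K_p\bigl(R,\,HH^R_q(A)\bigr)\Rightarrow HH^K_{p+q}(A)$
is not a standard spectral sequence, and it is not obvious that one exists in this exact form. The natural thing one gets from the ring map $A\otimes_K A^{\op}\to A\otimes_R A^{\op}$ is a Cartan--Eilenberg change-of-rings spectral sequence for $\operatorname{Tor}$, whose $E^2$-page reads
$\operatorname{Tor}^{A\otimes_R A^{\op}}_p\bigl(A,\ \operatorname{Tor}^{A\otimes_K A^{\op}}_q(A\otimes_R A^{\op},\ A)\bigr)$,
which collapses onto the $p=0$ column using separability, not the $q=0$ row. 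Identifying the surviving column with $HH^K_*(R)$ still requires work (one must recognise $\operatorname{Tor}^{A\otimes_K A^{\op}}_q(A\otimes_R A^{\op},A)$ in terms of $HH^K_q(R)$ after applying $A\otimes_{A\otimes_R A^{\op}}-$), and you have not supplied that identification. Writing $HH^K_p(R,HH^R_q(A))$ presupposes precisely the result you are trying to prove.

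For the descent route: the claim ``$HH^K_*(A\otimes_R S)\cong HH^K_*(A)\otimes_R S$'' for a faithfully flat \'{e}tale extension $S/R$ and a \emph{non-commutative} $R$-algebra $A$ is the genuine content here. At the chain level, $(A\otimes_R S)^{\otimes_K(n+1)}$ is not the same $R$-module as $A^{\otimes_K(n+1)}\otimes_R S$, so this base-change isomorphism is a theorem (the Geller--Weibel \'{e}tale descent theorem for Hochschild homology, adapted to the relative non-commutative setting), not a formal manipulation. You treat it as a black box without naming it, and that is exactly the step that Corti\~nas--Weibel must and do justify. Similarly, you should be explicit that the trace map $\Trd: A\to R$ requires the constant-rank hypothesis to be a globally defined map into $R$ (otherwise it lands in a locally constant function into $\mathbb{Z}$ times $R$), and that this trace is what furnishes the natural candidate map $HH^K_*(A)\to HH^K_*(R)$ whose isomorphism property is then checked after the \'{e}tale cover. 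Without first exhibiting an $R$-linear chain map between the two Hochschild complexes, ``faithfully flat descent'' has nothing to act on. In short: right ideas, right endpoint, but the two load-bearing technical facts --- the specific form of the change-of-rings spectral sequence, and flat/\'{e}tale base change for non-commutative Hochschild homology --- are asserted rather than established or cited.
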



\begin{proof}
See \cite[p.~53]{cortinasweibel}.
\end{proof}





\chapter{Graded Azumaya Algebras} \label{chgraded}

This chapter contains some basic definitions in the graded setting.
These definitions can be found in \cite{dade, hwcor, grrings},
though not always in the generality that we require. We also include
in this chapter a number of results in the graded setting which will
be used in Chapter~\ref{chgradedktheoryofazumayaalgebras}. In
Section~\ref{sectiongradedrings} we define graded rings and give the
graded versions of the definitions of ideals, factor rings and ring
homomorphisms. In Section~\ref{sectiongradedmodules} we give the
corresponding definitions for graded modules. We then define graded
division rings and show that a graded module over a graded division
ring is graded free with a uniquely defined dimension.

In Section~\ref{sectiongradedcentralsimplealgebras}, we study graded
central simple algebras graded by an arbitrary abelian group. We
observe that the tensor product of two graded central simple
$R$-algebras is graded central simple
(Propositions~\ref{tensorgradedsimple} and
\ref{tensorgradedcentral}). This result has been proven by Wall  for
$\mathbb Z / 2 \mathbb Z$ -graded central simple algebras (see
\cite[Thm.~2]{wall}), and by Hwang and Wadsworth for $R$-algebras
with a totally ordered abelian, and hence torsion-free, grade group
(see \cite[Prop.~1.1]{hwcor}). We then observe that a graded central
simple algebra, graded by an abelian group, is an Azumaya algebra
(Theorem~\ref{gcsaazumayaalgebra}). This result extends the result
of Boulagouaz \cite[Prop.~5.1]{boulag} and Hwang, Wadsworth
\cite[Cor.~1.2]{hwcor} (for a totally ordered abelian grade group)
to graded rings in which the grade group is not totally ordered.

We define grading on matrices in
Section~\ref{sectiongradedmatrixrings}, and  observe some properties
of these graded matrix rings. We generalise a result of Caenepeel et
al.\ \cite[Thm.~2.1]{can2} in Theorem~\ref{grmatrixisothm}. We have
also  rewritten a number of known results on simple rings in the
graded setting, which were required for the proof of this theorem.
In Section~\ref{sectiongrmorita}, we define graded projective
modules and graded Azumaya algebras. We prove that for a graded ring
$R$, the graded matrix ring over $R$ is Morita equivalent to $R$
(see Proposition~\ref{grmorita}).


\section{Graded rings}
\label{sectiongradedrings}


A ring $R = \bigoplus_{ \ga \in \Ga} R_{\ga}$ is called a
\emph{$\Ga$-graded ring}, or simply a \emph{graded ring},
\index{graded ring} if $\Ga$ is a group, each $R_{\ga}$ is an
additive subgroup of $R$ and $R_{\ga} \cdot R_{\delta} \subseteq
R_{\ga \delta}$ for all $\ga, \delta \in \Ga$. We remark that
initially $\Ga$ is an arbitrary group which is not necessarily
abelian, so we will write $\Ga$ as a multiplicative group with
identity element $e$.
Each $x \in R$ can be uniquely expressed as a finite sum $x=
\sum_{\ga \in \Ga} x_{\ga}$ with each $x_{\ga} \in R_{\ga}$. For
each $\ga \in \Ga$, the elements of $R_\ga$ are said to be
\emph{homogeneous of degree $\ga$}\index{homogeneous element} and we
write $\deg(r) = \ga$ if $r \in R_{\ga}$. We let $R^{h} =
\bigcup_{\ga \in \Ga} R_{\ga}$ be the set of homogeneous elements of
$R$. The set
$$
\Ga_{R} = \big \{ \ga \in \Ga : R_{\ga} \neq \{0 \} \big \},
$$
which is also denoted by $\Supp (R)$, is called the
\emph{support}\index{support} (or grade set) of $R$. We note that
the support of $R$ is not necessarily a group.

\begin{examples}\label{grringexamples}
\begin{enumerate}
\item Let $(\Ga, \cdot )$ be a group and $R$ be a ring. Set $R =
\bigoplus_{\ga \in \Ga} R_{\ga}$ where $R_e = R$ and for all $\ga
\in \Ga$ with $\ga \neq e$, let $R_\ga = 0$. Then $R$ can be
considered as a trivially $\Ga$-graded ring, with $\Supp (R)=
\{e\}$.

\item Let $A$ be a ring. Then the polynomial ring $R=A[x]$ is a
$\mathbb Z$-graded ring, with $R = \bigoplus_{n \in \mathbb Z} R_n$
where $R_n =A x^n$ for $n \geq 0$ and $R_n = 0$ for $n <0$. Here
$\Supp (R) = \mathbb N \cup \{0\}$.

\item
Let $(G, \cdot )$ be a group and let $A$ be a ring. Then the group
ring $R = A[G]$ is graded ring, with $R = \bigoplus_{g \in G} R_{g}$
where $R_g =  \{ a g : a \in A \}$, and $\Supp (R) = G$.

\end{enumerate}
\end{examples}

\begin{prop} \label{basicsofgradedrings}
Let $R = \bigoplus_{ \ga \in \Ga} R_{\ga}$ be a $\Ga$-graded ring.
Then:
\begin{enumerate}
\item $1_R$ is homogeneous of degree $e$,

\item $R_e$ is a subring of $R$,

\item Each $R_{\de}$ is an $R_e$-bimodule,

\item For an invertible element $r \in R_{\de}$, its inverse
$r^{-1}$ is homogeneous of degree $\de^{-1}$.
\end{enumerate}
\end{prop}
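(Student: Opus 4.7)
The plan is to extract each statement from the uniqueness of the homogeneous decomposition together with the axiom $R_\alpha \cdot R_\beta \subseteq R_{\alpha\beta}$. Write $1_R = \sum_{\gamma \in \Gamma} a_\gamma$ with $a_\gamma \in R_\gamma$ and only finitely many nonzero. For part (1), I would fix $\eta \in \Gamma$ and apply $1_R$ on the left of an arbitrary component $a_\eta$: the identity $a_\eta = 1_R \cdot a_\eta = \sum_\gamma a_\gamma a_\eta$ is a decomposition into the homogeneous components $a_\gamma a_\eta \in R_{\gamma\eta}$, so by uniqueness $a_e a_\eta = a_\eta$ and $a_\gamma a_\eta = 0$ for $\gamma \neq e$. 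Summing over $\eta$ gives $a_e \cdot 1_R = 1_R$, i.e.\ $a_e = 1_R$; combining with the original decomposition $1_R = a_e + \sum_{\gamma \neq e} a_\gamma$ and again invoking uniqueness forces $a_\gamma = 0$ for $\gamma \neq e$, so $1_R \in R_e$.

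Parts (2) and (3) are then essentially bookkeeping. For (2), $R_e$ is an additive subgroup by definition, is closed under multiplication because $R_e \cdot R_e \subseteq R_{e \cdot e} = R_e$, and contains $1_R$ by (1). For (3), the inclusions $R_e \cdot R_\delta \subseteq R_{e\delta} = R_\delta$ and $R_\delta \cdot R_e \subseteq R_{\delta e} = R_\delta$ give the left and right $R_e$-actions, and the bimodule axioms are inherited from the ring structure of $R$.

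For part (4), suppose $r \in R_\delta$ is a unit in $R$ and write $r^{-1} = \sum_\gamma b_\gamma$ with $b_\gamma \in R_\gamma$. The equations $r \cdot r^{-1} = 1_R = r^{-1} \cdot r$ become
\[
\sum_\gamma r b_\gamma \;=\; 1_R \;=\; \sum_\gamma b_\gamma r,
\]
with summands $r b_\gamma \in R_{\delta\gamma}$ and $b_\gamma r \in R_{\gamma\delta}$. Since $1_R \in R_e$ by (1), uniqueness of homogeneous decomposition forces $r b_\gamma = 0 = b_\gamma r$ for every $\gamma \neq \delta^{-1}$, while $r b_{\delta^{-1}} = 1_R = b_{\delta^{-1}} r$. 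Thus $b_{\delta^{-1}}$ is a two-sided inverse of $r$, and by uniqueness of inverses in a ring $b_{\delta^{-1}} = r^{-1}$. Substituting back, $\sum_{\gamma \neq \delta^{-1}} b_\gamma = 0$, and one last appeal to uniqueness kills each remaining $b_\gamma$, yielding $r^{-1} \in R_{\delta^{-1}}$.

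The only real subtlety is part (1), where one must be careful not to treat $a_e$ and the other $a_\gamma$ on equal footing; the trick of first multiplying $1_R$ by a single homogeneous component $a_\eta$ (rather than by $1_R$ itself) is what breaks the symmetry and pins down $a_e = 1_R$. Everything else is a direct application of uniqueness of the homogeneous decomposition.
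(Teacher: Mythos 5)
Your proof is correct and takes essentially the same approach as the paper's, resting on the uniqueness of the homogeneous decomposition together with the containment $R_\alpha R_\beta \subseteq R_{\alpha\beta}$. The small variations---in (1) multiplying $1_R$ on the left against its own components rather than against an arbitrary homogeneous $s$ on the right, and in (4) using both $rr^{-1}=1$ and $r^{-1}r=1$ to identify $b_{\delta^{-1}}$ as a two-sided inverse rather than cancelling $r$ on the left---are cosmetic and do not change the argument.
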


\begin{proof}
(1) Suppose $1_R = \sum_{\ga \in \Ga} r_{\ga}$ for $r_{\ga} \in
R_{\ga}$. For some $\de \in \Ga$, let $s \in R_{\de}$ be an
arbitrary non-zero element. Then $s = s 1_R = \sum_{\ga \in \Ga} s
r_{\ga}$ where $s r_{\ga} \in R_{\de \ga}$ for all $\ga \in \Ga$.
The decomposition is unique, so $s r_{\ga} = 0$ for all $\ga \in
\Ga$ with $\ga \neq e$. But as $s$ was arbitrary, this holds for all
$s \in R$ (not necessarily homogeneous), and in particular $1_R
r_\ga = r_\ga =0$ if $\ga \neq e$. For $\ga =e$, we have $1_R =
r_e$, so $1_R \in R_e$.

\vspace{3pt}

(2) This follows since $R_e$ is an additive subgroup of $R$ with
$R_e R_e \subseteq R_e$ and $1 \in R_e$.

\vspace{3pt}

(3) This is immediate.

\vspace{3pt}

(4) Let $x = \sum_{\ga} x_{\ga}$ (with $\deg(x_{\ga})= \ga$) be the
inverse of $r$, so that $1=rx = \sum_{\ga} r x_{\ga}$ where $r
x_{\ga} \in R_{\de \ga}$. Since $1$ is homogeneous of degree $e$ and
the decomposition is unique, it follows that $r x_{\ga} = 0$ for all
$\ga \neq \de^{-1}$. Since $r$ is invertible, 
we have $x_{\de^{-1}} \neq 0$, so $x=x_{\de^{-1}}$ as required.
\end{proof}

We say that a $\Ga$-graded ring $R=\bigoplus_{ \ga \in \Ga} R_{\ga}$
is a \emph{strongly graded ring}\index{strongly graded ring} if
$R_{\ga} R_{\de} = R_{\ga \de}$ for all $\ga, \de \in \Ga$. A graded
ring $R$ is called a \emph{crossed product}\index{crossed product}
if there is an invertible element in every homogeneous component
$R_\ga$ of $R$; that is, $R^* \cap R_\ga \neq \emptyset$ for all
$\ga \in \Ga$. For a $\Ga$-graded ring $R$, let $R^{h*}$ be the set
of invertible homogeneous elements of $R$. Then $R^{h*}$ is a
subgroup of $R^*$, and the degree map $\deg : R^{h*} \ra \Ga$ is a
group homomorphism. Let
$$
\Ga_{R}^* = \big \{ \ga \in \Ga : R^* \cap R_{\ga} \neq \emptyset
\big \},
$$
be the support of the invertible homogeneous elements of $R$. We
note that $R$ is a crossed product if and only if $\Ga_R^* = \Ga$,
which is equivalent to the degree map being surjective.

\begin{prop} \label{crossedproductstronglygradedprop}
 Let $R = \bigoplus_{ \ga \in \Ga} R_{\ga}$ be a $\Ga$-graded
ring. Then:
\begin{enumerate}
\item $R$ is strongly graded if and only if $1 \in R_{\ga} R_{\ga^{-1}}$
for all $\ga \in \Ga$,

\item $R$ is a crossed product if and only if the degree map is
surjective,

\item If $R$ is a crossed product, then $R$ is a strongly graded ring.
\end{enumerate}

\end{prop}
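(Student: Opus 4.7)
The plan is to prove the three statements in order, since (3) will follow from (1), and (2) is just an unfolding of definitions. Throughout I will use Proposition~\ref{basicsofgradedrings}, especially parts (1) and (4), which tell us $1 \in R_e$ and that the inverse of a homogeneous unit of degree $\de$ is homogeneous of degree $\de^{-1}$.

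For part (1), the forward direction is immediate: if $R$ is strongly graded then $R_\ga R_{\ga^{-1}} = R_e$, and $1 \in R_e$. For the converse, assume $1 \in R_\ga R_{\ga^{-1}}$ for every $\ga$. The nontrivial inclusion is $R_{\al \be} \subseteq R_\al R_\be$ for arbitrary $\al, \be \in \Ga$ (the other inclusion holds by definition of a graded ring). Given $x \in R_{\al \be}$, I would write $1 = \sum_i a_i c_i$ with $a_i \in R_\al$ and $c_i \in R_{\al^{-1}}$, and then compute
\[
x \;=\; 1 \cdot x \;=\; \sum_i a_i (c_i x),
\]
noting that $c_i x \in R_{\al^{-1}} R_{\al \be} \subseteq R_\be$. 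Hence $x \in R_\al R_\be$, which gives the required equality. This ``swallowing'' trick is the only real content of (1).

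Part (2) is a direct translation of the definitions: $R$ is a crossed product precisely when $R^* \cap R_\ga \neq \emptyset$ for every $\ga \in \Ga$, which is equivalent to saying that every $\ga$ lies in the image of $\deg : R^{h*} \to \Ga$, i.e.\ that this map is surjective. There is nothing further to verify here.

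For part (3), suppose $R$ is a crossed product. For each $\ga \in \Ga$ choose a homogeneous unit $u_\ga \in R^* \cap R_\ga$. By Proposition~\ref{basicsofgradedrings}(4), its inverse satisfies $u_\ga^{-1} \in R_{\ga^{-1}}$, so
\[
1 \;=\; u_\ga \, u_\ga^{-1} \;\in\; R_\ga R_{\ga^{-1}}.
\]
Applying part (1) immediately yields that $R$ is strongly graded. I do not anticipate any real obstacle in this proposition; the only point that requires any thought is the argument for the converse of (1), and the rest is a matter of citing Proposition~\ref{basicsofgradedrings} correctly.
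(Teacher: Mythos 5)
Your proof is correct and follows essentially the same route as the paper: the paper's argument for the converse of (1) is the chain $R_{\si\de} = R_e R_{\si\de} \subseteq (R_\si R_{\si^{-1}}) R_{\si\de} = R_\si (R_{\si^{-1}} R_{\si\de}) \subseteq R_\si R_\de$, which is exactly your element-wise ``swallowing'' computation written at the level of subgroups, and parts (2) and (3) are handled identically. Your version is if anything slightly cleaner, since working with an explicit $1 = \sum_i a_i c_i$ avoids any appearance of claiming $R_e = R_\si R_{\si^{-1}}$ when only $1 \in R_\si R_{\si^{-1}}$ is available.
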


\begin{proof}
(1) The forward direction is immediate. Suppose $1 \in  R_{\ga}
R_{\ga^{-1}}$ for all $\ga \in \Ga$. Then for $\si, \de \in \Ga$,
$$
R_{\si \de} = R_e R_{\si \de} = (R_\si R_{\si^{-1}}) R_{\si \de}  =
R_\si (R_{\si^{-1}} R_{\si \de}) \subseteq R_{\si} R_{\de}
$$
proving $R_{\si \de} = R_\si R_\de$, so $R$ is strongly graded.

\vspace{3pt}

(2) This is immediate.

\vspace{3pt}

(3) For $\de \in \Ga$, there exists $r \in R^* \cap R_\de$. So
$r^{-1} \in R_{\de^{-1}}$ and $1 = r r^{-1} \in R_\de R_{\de^{-1}}$.
\end{proof}

A two-sided ideal $I$ of $R$ is called a \emph{homogeneous
ideal}\index{homogeneous ideal} (or \emph{graded ideal}\index{graded
ideal}) if
$$
I= \bigoplus_{ \ga \in \Gamma} (I \cap R_{\ga}).
$$
There are similar notions of graded subring, graded left ideal and
graded right ideal. When $I$ is a homogeneous ideal of $R$, the
factor ring $R/I$ forms a graded ring, with
\[
R/I = \bigoplus_{\ga \in \Ga} (R/I)_{\ga} \mathrm{ \;\;\; where
\;\;\; } (R/I)_{\ga} = (R_{\ga} +I)/I.
\]
A graded ring $R$ is said to be {\em graded simple}\index{graded
ring!graded simple} if the only homogeneous two-sided ideals of $R$
are $\{ 0 \}$ and $R$.

\begin{prop} \label{homogeneousidealhomogeneouselementsprop}
An ideal $I$ of a graded ring $R$ is a homogeneous ideal if and only
if $I$ is generated as a two-sided ideal of $R$ by homogeneous
elements.

\end{prop}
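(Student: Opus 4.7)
The plan is to prove both implications directly from the definitions. The forward direction is essentially immediate: if $I$ is homogeneous, then $I = \bigoplus_{\gamma \in \Gamma} (I \cap R_\gamma)$, so every element of $I$ decomposes as a sum of homogeneous elements of $R$ that themselves lie in $I$. Therefore the set $\bigcup_{\gamma \in \Gamma} (I \cap R_\gamma)$ is a generating set for $I$ consisting of homogeneous elements, which gives what is needed.

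For the converse, suppose $I$ is generated as a two-sided ideal by a set $S \subseteq R^h$ of homogeneous elements. First I would show that every element $x \in I$ can be written as a finite sum of homogeneous elements of $R$ each of which lies in $I$. Such an $x$ has the form $x = \sum_k r_k s_k r'_k$ with $r_k, r'_k \in R$ and $s_k \in S$. Decomposing $r_k = \sum_\alpha (r_k)_\alpha$ and $r'_k = \sum_\beta (r'_k)_\beta$ into homogeneous components and expanding, each summand $(r_k)_\alpha \, s_k \, (r'_k)_\beta$ is homogeneous of degree $\alpha \cdot \deg(s_k) \cdot \beta$ and lies in $I$ since $s_k \in I$ and $I$ is two-sided.

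It then remains to deduce that $I = \bigoplus_{\gamma \in \Gamma} (I \cap R_\gamma)$. The inclusion $\bigoplus_{\gamma} (I \cap R_\gamma) \subseteq I$ is clear. For the reverse inclusion, take $x \in I$ and use the previous paragraph to write $x = \sum_j y_j$ with each $y_j$ homogeneous and $y_j \in I$. Grouping the $y_j$ by degree, we obtain $x = \sum_\gamma x_\gamma$ where $x_\gamma := \sum_{\deg(y_j) = \gamma} y_j \in I \cap R_\gamma$. The sum $\sum_\gamma (I \cap R_\gamma)$ is direct because it is a subsum of the direct sum $R = \bigoplus_\gamma R_\gamma$, so the decomposition into homogeneous components is unique.

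There is no real obstacle here; the only subtle point is the observation that the homogeneous components of an element of $I$ (as an element of $R$) actually lie in $I$, which needs the two-sided ideal hypothesis applied before collecting by degree, rather than being established for a single arbitrary expression after the fact. Once phrased as above, the argument is purely formal and does not require any additional hypothesis on $\Gamma$ (in particular no commutativity or ordering).
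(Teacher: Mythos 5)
Your proof is correct and follows essentially the same route as the paper's: the forward direction observes that $I \cap R^h$ is a homogeneous generating set, and the converse expands a general element $\sum r_k s_k r'_k$ into homogeneous components of the coefficients, notes each resulting term is homogeneous and lies in $I$, and collects by degree. You spell out the expansion and the grouping step a bit more explicitly than the paper does, but the underlying argument is identical.
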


\begin{proof}
Suppose $I = \bigoplus_{\ga \in \Ga} \left( I \cap R_{\ga} \right)$,
and consider $I \cap R^h$. This is a subset of $I$ consisting of
homogeneous elements. Any $x \in I$ can be written as $x = \sum_{\ga
\in \Ga} x_{\ga}$ where $x_\ga \in I \cap R_{\ga}$. So $x_{\ga} \in
I \cap R^h$, and thus $x$ can be written as a sum of elements of $I
\cap R^h$.

Conversely, suppose $I$ is generated by the set $\{x_j \}_{j \in
\mathcal J} \subseteq I$ consisting of homogeneous elements, for
some indexing set $\mathcal J$. Then for any $x \in I$, we can write
$x$ as $x = \sum r_k x_j s_l$ where $r_k , s_l \in R^h$. Then for
some $r_k, x_j, s_l$ with $\al = \deg(r_k x_j s_l)$, we have $r_k
x_j s_l \in I \cap R_\al$, since $x_j \in I$. This shows that $I =
\bigoplus_{\ga \in \Ga} \left( I \cap R_{\ga} \right)$, as required.
\end{proof}

Let $R$ and $S$ be $\Ga$-graded rings. Then a \emph{graded ring
homomorphism}\index{graded ring homomorphism} $f:R \ra S$ is a ring
homomorphism such that $f(R_{\ga}) \subseteq S_{\gamma}$ for all
$\ga \in \Ga$. Further, $f$ is called a \emph{graded isomorphism} if
$f$ is bijective and, when such a graded isomorphism exists, we
write $R \conggr S$. We remark that if $f$ is a graded ring
homomorphism which is bijective, then its inverse $f^{-1}$ is also a
graded ring homomorphism.

For a graded ring homomorphism $f:R \ra S$, we know from the
non-graded setting \cite[p.~122]{hungerford} that $\ker (f)$ is an
ideal of $R$ and $\im (f)$ is a subring of $S$. It can easily be
shown that $\ker (f)$ is a graded ideal of $R$ and $\im (f)$ is a
graded subring of $S$. Note that if $\Ga$ is an abelian group, then
the centre $Z(R)$ of a graded ring $R$ is a graded subring of $R$.
If $\Ga$ is not abelian, then the centre of $R$ may not be a graded
subring of $R$, as is shown by the following example.

\begin{example}
Let $G = S_3 = \{e,a,b,c,d,f \}$ be the symmetric group of order
$3$, where
$$
a = (23),\;\; b=(13) ,\;\; c=(12) ,\;\; d=(123) ,\;\; f=(132).
$$
Let $A$ be a ring, and consider the group ring $R = A[G]$, which is
a $G$-graded ring by Example~\ref{grringexamples}(3). Let $x = 1d+
1f \in R$, where $1= 1_A$, and we note that $x$ is not homogeneous
in $R$. Then $x \in Z( R)$, but the homogeneous components of $x$
are not in the centre of $R$. As $x$ is expressed uniquely as the
sum of homogeneous components, we have $x \notin \bigoplus_{g \in G}
(Z(R) \cap R_g)$.
\end{example}

This example can be generalised by taking a non-abelian finite group
$G$ with a subgroup $N$ which is normal and non-central. Let $A$ be
a ring and consider the group ring $R= A[G]$ as above. Then $x =
\sum_{n \in N} 1 n$ is in the centre of $R$, but the homogeneous
components of $x$ are not all in the centre of $R$.

\section{Graded modules}
\label{sectiongradedmodules}

Let $\Ga$ be a multiplicative group and let  $R = \bigoplus_{ \ga
\in \Ga} R_{\ga}$ be a $\Ga$-graded ring.  We say that the group
$(\Ga , \cdot)$ {\em acts freely} (as a left action) on a set $\Ga'$
if for all $\ga$, $\ga' \in \Ga$, $\de \in \Ga'$, we have $\ga \de =
\ga' \de$ implies $\ga = \ga'$, where $\ga \de$ denotes the image of
$\de$ under the action of $\ga$. A \emph{$\Ga'$-graded left
$R$-mod\-ule}\index{graded module} $M$ is defined to be a left
$R$-mod\-ule with a direct sum decomposition $M=\bigoplus_{\ga \in
\Ga'} M_{\ga}$, where each $M_{\ga}$ is an additive subgroup of $M$
and $\Ga$ acts freely on the set $\Ga'$, such that $R_{\ga} \cdot
M_{\la} \subseteq M_{\ga \la}$ for all $\ga \in \Ga, \la \in \Ga'$.
{\it From now on, unless otherwise stated, a graded module will mean
a graded left module.}

We remark that here we define a graded $R$-module $M$ which is
graded by a set $\Ga'$, where the grade group of $R$ acts freely on
$\Ga'$. It is also possible to fix a group $\Ga$ and, for all graded
rings and modules, to work instead with this fixed group $\Ga$ as
the grade group. This is the approach taken throughout the book
\cite{grrings}. While our approach is more general than taking a
fixed group $\Ga$, in most cases their weaker approach suffices.

As for graded rings, $M^h = \bigcup_{\ga \in \Ga'} M_{\ga}$ and
$\Ga'_M = \big\{ \ga \in \Ga' : M_{\ga} \neq \{ 0\}  \big\}$. A
\emph{graded submodule}\index{graded submodule} $N$ of $M$ is
defined to be an $R$-submodule such that $N= \bigoplus_{\ga \in
\Ga'} (N \cap M_{\ga})$. For a graded submodule $N$ of $M$, we
define the graded quotient structure on $M/N$ by
\[
M/N = \bigoplus_{\ga \in \Ga'} (M/N)_{\ga} \mathrm{ \;\;\; where
\;\;\; } (M/N)_{\ga} = (M_{\ga} +N)/N.
\]
A graded $R$-module $M$ is said to be {\em graded
simple}\index{graded module!graded simple} if the only graded
submodules of $M$ are $\{ 0 \}$ and $M$. A \emph{graded free
$R$-module}\index{graded module!graded free} $M$ is defined to be a
graded $R$-module which is free as an $R$-module with a homogeneous
base.

Let $N= \bigoplus_{\ga \in \Ga''} N_{\ga}$ be another graded
$R$-module, such that there is a set $\Delta$ containing $\Ga'$ and
$\Ga''$ as subsets, where $\Ga$ acts freely on $\Delta$. The graded
$R$-module $M$ can be written as $M = \bigoplus_{ \ga \in \Delta}
M_{\ga}$ with $M_\ga = 0$ if $\ga \in \Delta \mi \Ga'_M$, and
similarly for $N$. A \emph{graded $R$-module
homomorphism}\index{graded module homomorphism} $f:M \ra N$ is an
$R$-module homomorphism such that $f(M_{\de}) \subseteq N_{\de}$ for
all $\de \in \Delta$. Let $\Hom_{R \hygrmod}(M,N)$ denote the group
of graded $R$-module homomorphisms, which is an additive subgroup of
$\Hom_R (M,N)$. If the graded $R$-module homomorphism $f$ is
bijective, then $f$ is called a \emph{graded isomorphism} and we
write $M \conggr N$. If $f$ is a bijective graded $R$-module
homomorphism, then its inverse $f^{-1}$ is also a graded $R$-module
homomorphism.

Suppose $\Delta$  is a group containing $\Ga'$ and $\Ga''$ as
subgroups, where the group $\Ga$ acts freely on $\Delta$, and
suppose $R$, $M$ and $N$ are defined as above. A graded $R$-module
homomorphism from $M$ to $N$ may also shift the grading on $N$. For
each $\de \in \Delta$, we have a subgroup of $\mathrm{Hom}_R(M,N)$
of $\de$-shifted homomorphisms
$$
 {\mathrm{Hom}_R(M,N)}_{\de} = \{ f \in \mathrm{Hom}_R(M,N) :
f(M_{\ga}) \subseteq N_{\ga \de} \textrm{ for all } \ga \in \Delta
\}.
$$
For some $\de \in \Delta$, we define the $\de$-shifted $R$-module
$M(\de)$\label{deshiftedmodule} as $M(\de) =\bigoplus_{\ga \in
\Delta} M(\de)_\ga$ where $M(\de)_\ga = M_{\ga \de}$. Then
$$
\Hom_R(M,N)_\de = \Hom_{R \hygrmod}(M,N(\de)) = \Hom_{R \hygrmod}
(M( \de^{-1}),N).
$$
Let $\mathrm{HOM}_R(M,N) = \bigoplus_{\de \in \Delta}
\mathrm{Hom}_R(M,N)_{\de}.$\label{hompageref}

Let $M,N$ be $\Delta$-graded $R$-modules as above. Then $M \oplus N$
forms a $\Delta$-graded $R$-module with
$$
M \oplus N = \bigoplus_{\ga \in \Delta} (M \oplus N)_{\ga} \text{
\;\; where } (M \oplus N)_{\ga} = M_\ga \oplus N_\ga.
$$
With $R$ defined as above and for $(d) = (\de_1 , \ldots , \de_n)
\in \Ga^n$, consider\label{rngradedfree}
$$
R^n(d) = R(\de_1) \oplus \cdots \oplus R(\de_n) = \bigoplus_{\ga \in
\Ga} (R(\de_1)_{\ga} \oplus \cdots \oplus R(\de_n)_{\ga})
$$
where $R(\de_i)_{\ga} = R_{\ga \de_i}$ is the $\ga$-component of the
$\de_i$-shifted graded $R$-module $R(\de_i)$. Note that for each
$i$, with $1 \leq i \leq n$, the element $e_i$ of the standard basis
for $R^n(d)$ is homogeneous of degree $\de_i^{-1}$. Similarly for
$\{\de_i\}_{i \in I}$ where $I$ is an indexing set and $\de_i \in
\Ga$, consider the $\Ga$-graded $R$-module $\bigoplus_{i \in I} R
(\de_i)$. Again the $i$-th basis element $e_i$ of $\bigoplus_{i \in
I} R (\de_i)$ is homogeneous of degree $\de_i^{-1}$.

Suppose $F$ is a $\Ga$-graded $R$-module which is graded free with a
homogeneous base $\{b_i\}_{i \in I}$, where $\deg (b_i) = \delta_i$.
Then the map
\begin{align*}
\varphi : \bigoplus\nolimits_{i \in I} R(\de_i^{-1}) & \lra F \\
e_i & \lmps b_i
\end{align*}
is a graded $R$-module isomorphism, and we write $F \conggr
\bigoplus_{i \in I} R(\de_i^{-1})$. Conversely, suppose $F$ is a
$\Ga$-graded $R$-module with $\bigoplus_{i \in I} R(\de_i^{-1})
\conggr F$, as graded $R$-modules, for some $\{\de_i\}_{i \in I}$
with $\de_i \in \Ga$. Since $\{e_i \}_{i \in I}$ forms a homogeneous
basis of $\bigoplus_{i \in I} R(\de_i^{-1})$, the images of the
$e_i$ under the graded isomorphism form a homogeneous basis for $F$.
Thus $F$ is graded free if and only if $F \conggr \bigoplus_{i \in
I} R(\de_i^{-1})$ for some $\{\de_i\}_{i \in I}$ with $\de_i \in
\Ga$. In the same way, $F$ is graded free with a finite basis if and
only if $F \conggr R^n (d)$ for some $(d) = (\de_1 , \ldots , \de_n)
\in \Ga^n$.

\begin{prop} \label{rdeconggrr}
Let $R$ be a $\Ga$-graded ring and let $\de \in \Ga$. Then $R(\de)
\conggr R$ as graded $R$-modules if and only if $\de \in \Ga_R^*$.
\end{prop}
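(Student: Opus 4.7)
The plan is to exploit the observation that a graded $R$-module homomorphism $\varphi : R(\de) \to R$, being left $R$-linear on a cyclic module, is completely determined by $\varphi(1)$, and then to track degrees carefully. The key degree computation is that the multiplicative identity $1 \in R$, regarded as an element of $R(\de)$, is homogeneous of degree $\de^{-1}$: indeed $R(\de)_{\ga} = R_{\ga \de}$, so $1 \in R(\de)_{\ga}$ iff $\ga \de = e$ iff $\ga = \de^{-1}$.

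For the forward implication, suppose $\varphi : R(\de) \to R$ is a graded $R$-module isomorphism, and set $v = \varphi(1)$. The graded condition $\varphi(R(\de)_{\de^{-1}}) \subseteq R_{\de^{-1}}$ forces $v \in R_{\de^{-1}}$. By $R$-linearity $\varphi(r) = rv$ for all $r \in R$. Surjectivity gives some $r$ with $rv = 1$, and then applying $\varphi$ to $vr$ yields $\varphi(vr) = vrv = v\cdot 1 = v = \varphi(1)$, so by injectivity $vr = 1$ as well; hence $v \in R^* \cap R_{\de^{-1}}$, i.e.\ $\de^{-1} \in \Ga_R^*$. Since $\Ga_R^*$ is the image of the group homomorphism $\deg : R^{h*} \to \Ga$, it is a subgroup of $\Ga$, and therefore $\de \in \Ga_R^*$.

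For the converse, assume $\de \in \Ga_R^*$. Then $\de^{-1} \in \Ga_R^*$ as well, so there exists $v \in R^* \cap R_{\de^{-1}}$; by Proposition~\ref{basicsofgradedrings}(4), $v^{-1} \in R_{\de}$. Define
\begin{equation*}
\varphi : R(\de) \lra R, \qquad \varphi(x) = xv.
\end{equation*}
This is left $R$-linear, and if $x \in R(\de)_{\ga} = R_{\ga\de}$, then $xv \in R_{\ga\de} R_{\de^{-1}} \subseteq R_{\ga}$, so $\varphi$ is a graded $R$-module homomorphism. The map $\psi : R \to R(\de)$, $\psi(y) = yv^{-1}$, satisfies $\psi(R_\ga) \subseteq R_\ga R_\de \subseteq R_{\ga\de} = R(\de)_\ga$, so it is also graded, and $\varphi\psi = \id_R$, $\psi\varphi = \id_{R(\de)}$. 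Hence $R(\de) \conggr R$.

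There is no real obstacle here; the only thing to watch is that the shifted grading convention $R(\de)_\ga = R_{\ga\de}$ makes $1$ have degree $\de^{-1}$ rather than $\de$, and that one must invoke the subgroup property of $\Ga_R^*$ to convert between $\de$ and $\de^{-1}$.
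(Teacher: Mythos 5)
Your proof is correct and takes essentially the same approach as the paper: both reduce the question to the observation that a graded $R$-module isomorphism between cyclic modules is multiplication by a homogeneous unit, and then track degrees. The only cosmetic difference is the direction — the paper exhibits $\mathcal{R}_x : R \to R(\de)$, $r \mapsto rx$, for $x \in R^* \cap R_\de$, so it never leaves degree $\de$; because you instead build $\varphi : R(\de) \to R$ you land on a unit in $R_{\de^{-1}}$ and must additionally invoke that $\Ga_R^*$ is a subgroup of $\Ga$ to pass from $\de^{-1}$ to $\de$, a small extra step the paper avoids.
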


\begin{proof}
If $\de \in \Ga_R^*$, then let $x \in  R^* \cap R_{\de}$. Then there
is a graded $R$-module isomorphism $\mar_x :R \ra R(\de);$ $r \mps
rx$. Conversely, if $\phi : R \conggr R(\de)$, then $\phi (1) \in
R_\de$ with inverse $\phi^{-1} (1)$, so $\phi (1) \in R^* \cap
R_{\de}$.
\end{proof}

We note that it follows from the above proposition that $R(\de)
\conggr R$ for all $\de \in \Ga$ if and only if $R$ is a crossed
product. For a graded ring $R$ and $\de , \al \in \Ga$, it is clear
that $R(\al) \conggr R(\de)$ as graded $R$-modules if and only if
$R(\al)(\de^{-1}) \conggr R(\de)(\de^{-1})$; that is, if and only if
$R(\de^{-1} \al) \conggr R$. Then by Proposition~\ref{rdeconggrr},
$R(\al) \conggr R(\de)$ as graded $R$-modules if and only if
$\de^{-1} \al \in \Ga_R^*$.

For a $\Ga$-graded ring $R$, consider $M_{n\times m} (R)$, the set
of $n \times m$ matrices over $R$. For $n=m$, in
Section~\ref{sectiongradedmatrixrings}, we will define grading on
the matrix ring $M_n (R)$, which gives $M_n (R)$ the structure of a
graded ring.
Here we define shifted matrices $M_{n \times m} (R)$, which will be
used in the following proposition.
%
Note that they do not have the structure of either graded rings or
graded $R$-modules.
For $(d)=(\de_1, \ldots ,\de_n) \in \Ga^n$, $(a) = (\al_1 , \ldots
\al_m) \in \Ga^m$, let
$$ \label{pagerefmatrixwithshifting}
M_{n \times m} (R)[d][a] =
\begin{pmatrix}
R_{\de_1^{-1} \al_1} & R_{ \de_1^{-1} \al_2} & \cdots &
R_{\de_1^{-1} \al_m} \\
R_{\de_2^{-1} \al_1} & R_{ \de_2^{-1} \al_2} & \cdots &
R_{\de_2^{-1} \al_m} \\
\vdots  & \vdots  & \ddots & \vdots  \\
R_{\de_n^{-1} \al_1} & R_{ \de_n^{-1} \al_2} & \cdots &
R_{\de_n^{-1} \al_m}
\end{pmatrix}
$$
So $M_{n\times m} (R)[d][a]$ consists of matrices with the
$ij$-entry in $R_{\de_i^{-1} \al_j}$. Further, we let $\GL_{n\times
m} (R)[d][a]$ denote the set of invertible $n \times m$ matrices
with shifting as above. The following proposition extends
Proposition~\ref{rdeconggrr}. A similar argument will be used in the
proof of Proposition~\ref{grckidfunctorprop}.



\begin{prop} \label{rndconggrrna}
Let $R$ be a $\Ga$-graded ring and let $(d)=(\de_1, \ldots
,\de_n)\in \Ga^n,$ $(a) = (\al_1 , \ldots , \al_m) \in \Ga^m$. Then
$R^n(d) \conggr R^m (a)$ as graded $R$-modules if and only if there
exists $(r_{ij}) \in \GL_{n\times m} (R)[d][a]$.
\end{prop}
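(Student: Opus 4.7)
The plan is to set up a correspondence between graded $R$-module homomorphisms $R^n(d) \to R^m(a)$ and matrices in $M_{n \times m}(R)[d][a]$, and to observe that, under this correspondence, graded isomorphisms correspond exactly to the invertible such matrices. The whole proof is essentially just careful bookkeeping of degrees in shifted modules.

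First I would recall, as noted on page~\pageref{rngradedfree}, that the standard basis vector $e_i$ of $R^n(d) = R(\de_1)\oplus\cdots\oplus R(\de_n)$ is homogeneous of degree $\de_i^{-1}$, and similarly the standard basis vector $f_j$ of $R^m(a)$ is homogeneous of degree $\al_j^{-1}$. The key degree computation is this: for $r \in R$, the element $rf_j \in R^m(a)$ lies in the $j$-th summand $R(\al_j)$, and since $R(\al_j)_\ga = R_{\ga\al_j}$, an element of $R$ that is homogeneous of degree $\beta$ is homogeneous of degree $\beta\al_j^{-1}$ when viewed inside $R(\al_j)$.

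For the forward direction, suppose $\phi : R^n(d) \conggr R^m(a)$ is a graded $R$-module isomorphism. Write $\phi(e_i) = \sum_{j=1}^m r_{ij} f_j$. Since $\phi$ is graded and $e_i$ has degree $\de_i^{-1}$, the image $\phi(e_i)$ is homogeneous of degree $\de_i^{-1}$ in $R^m(a)$. Each summand $r_{ij}f_j$ must therefore be homogeneous of degree $\de_i^{-1}$ inside $R(\al_j)$, which by the computation above forces $r_{ij}$ to have degree $\de_i^{-1}\al_j$ in $R$. Hence $(r_{ij}) \in M_{n\times m}(R)[d][a]$. Applying the identical argument to the graded inverse $\phi^{-1}$ produces a matrix $(s_{jk}) \in M_{m\times n}(R)[a][d]$, and writing elements of $R^n$ and $R^m$ as row vectors so that $\phi(x) = x\,(r_{ij})$ and $\phi^{-1}(y) = y\,(s_{jk})$, the identities $\phi\circ\phi^{-1} = \id_{R^m(a)}$ and $\phi^{-1}\circ\phi = \id_{R^n(d)}$ translate into $(s_{jk})(r_{ij}) = I_m$ and $(r_{ij})(s_{jk}) = I_n$. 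Thus $(r_{ij}) \in \GL_{n\times m}(R)[d][a]$.

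For the converse, given $(r_{ij}) \in \GL_{n\times m}(R)[d][a]$ with two-sided inverse $(s_{jk}) \in M_{m\times n}(R)[a][d]$, I would define $\phi : R^n(d) \to R^m(a)$ by $\phi(e_i) = \sum_j r_{ij}f_j$ extended $R$-linearly, and $\psi : R^m(a) \to R^n(d)$ analogously from $(s_{jk})$. The degree calculation above shows $\phi(e_i)$ is homogeneous of degree $\de_i^{-1} = \deg(e_i)$, so $\phi$ is a graded $R$-module homomorphism, and likewise for $\psi$; the matrix identities then show $\psi$ is a two-sided inverse of $\phi$. The main obstacle is purely the bookkeeping: because $\Ga$ need not be abelian, one must be careful to distinguish $\de_i^{-1}\al_j$ from $\al_j\de_i^{-1}$, and to align the side on which matrices multiply with the convention (row vectors, right action) used to represent homomorphisms.
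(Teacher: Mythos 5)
Your proposal is correct and follows essentially the same argument as the paper: in both directions, graded $R$-module homomorphisms $R^n(d)\to R^m(a)$ are identified with row-vector right-multiplication by matrices, and the degree constraints on the entries and the invertibility are checked directly. You spell out the degree bookkeeping (which the paper leaves as ``easily verified'') in a bit more detail, but the structure and key ideas are the same.
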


\begin{proof}
If $r = (r_{ij}) \in \GL_{n \times m} (R)[d][a]$, then there is a
graded $R$-module homomorphism
\begin{align*}
\mar_r :R^n (d)& \lra R^m(a) \\
(x_1 , \ldots, x_n) & \lmps (x_1 , \ldots, x_n) r.
\end{align*}
Since $r$ is invertible, there is a matrix $t \in \GL_{m \times n}
(R)$ with $rt=I_{n}$ and $tr = I_{m}$. So there is an $R$-module
homomorphism $\mar_t : R^m (a) \ra R^n(d)$, which is an inverse of
$\mar_r$. This proves that $\mar_r$ is bijective, and therefore it
is a graded $R$-module isomorphism.

Conversely, if $\phi : R^n (d) \conggr R^m(a)$, then we can
construct a matrix as follows. Let $e_i$ denote the basis element of
$R^n(d)$ with $1$ in the $i$-th entry and zeros elsewhere. Then let
$\phi (e_i) = (r_{i1}, r_{i2} , \ldots , r_{im})$, and let $r=
(r_{ij})_{n \times m}$. It can be easily verified that $r \in M_{n
\times m} (R)[d][a]$.
%
%
In the same way, using $\phi^{-1} : R^m (a) \ra R^n(d)$ construct a
matrix $t$. Let $e'_i$ denote the $i$-th element of the standard
basis for $R^m (a)$. Since
$$
e_i = \phi^{-1} \circ \phi (e_i) = r_{i1} \phi^{-1}(e'_1) + r_{i2}
\phi^{-1} (e'_2)+ \cdots + r_{im} \phi^{-1}(e'_m)
$$
for each $i$, and in a similar way for $\phi \circ \phi^{-1}$, we
can show that $rt=I_n$ and $tr= I_m$. So $(r_{ij}) \in \GL_{n \times
m} (R)[d][a]$.
%
\end{proof}

For convenience, in the above definition of $M_{n \times m} (R)
[d][a]$, if $(a) = (e, \ldots , e)$, we will write $M_{n \times m}
(R)[d]$ instead of $M_{n \times m} (R)[d][e]$. We let
$$
\Ga^*_{M_{n \times m}(R)} = \big \{ (d) \in \Ga^n : \GL_{n \times
m}(R)[d] \neq \emptyset \big \}.
$$
Then it is immediate from the above proposition that $R^n(d) \conggr
R^n$ as graded $R$-modules if and only if $(d) \in \Ga^*_{M_n(R)}$.

A $\Ga$-graded ring $D = \bigoplus_{ \ga \in \Ga} D_{\ga}$ is called
a \emph{graded division ring} \index{graded division ring} if every
non-zero homogeneous element has a multiplicative inverse. It
follows from Proposition~\ref{basicsofgradedrings}(4) that $\Ga_D$
is a group, so we can write $D = \bigoplus_{ \ga \in \Ga_D}
D_{\ga}$. Then, as a $\Ga_D$-graded ring, $D$ is a crossed product
and it follows from
Proposition~\ref{crossedproductstronglygradedprop}(3) that $D$ is
strongly $\Ga_D$-graded.

\begin{examples}\label{egofgrdivisionrings}
\begin{enumerate}
\item Let $E$ be a division ring and let $D= E[x, x^{-1}]$ be the
Laurent polynomials over $E$. Then $D$ is a graded division ring,
with $D = \bigoplus_{n \in \mathbb{Z}} D_n$ where $D_n = \{ a x^n :
a \in E\}$.

\item Let $\H = \R \+ \R i \+ \R j \+ \R k$ be the real quaternion
algebra, with multiplication defined by $i^2 = -1$, $j^2= -1$ and $i
j = -ji = k$. It is known that $\H$ is a non-commutative division
ring with centre $\R$. We note that $\H$ can be given two different
graded division ring structures, with grade groups $\Z_2$ and $\Z_2
\times \Z_2$ respectively.

For $\Z_2$: \; Let $\C = \R \+ \R i$. Then $\H = \C_0 \+ \C_1$,
where $\C_0 = \C$ and $\C_1 = \C j$.

For $\Z_2 \times \Z_2$: \; Let $\H = R_{(0,0)} \+ R_{(1,0)} \+
R_{(0,1)} \+ R_{(1,1)}$, where
$$
R_{(0,0)} = \, \R ,\;\; R_{(1,0)} = \, \R i,\;\; R_{(0,1)} = \, \R
j,\;\; R_{(1,1)} = \, \R k.
$$
In both cases, it is routine to show $\H$ that forms a graded
division ring.
\end{enumerate}

\end{examples}

In the following propositions we are considering graded modules over
graded division rings. We note that the grade groups here are
defined as above; that is, we do not initially assume them to be
abelian or torsion-free. The proofs follow the standard proofs in
the non-graded setting (see \cite[\S IV,
Thms.~2.4,~2.7,~2.13]{hungerford}), or the graded setting (see
\cite[Thm.~3]{boulaggradedandtame}, \cite[Prop.~4.6.1]{grrings},
\cite[p.~79]{hwcor}); however extra care needs to be given since the
grading is neither abelian nor torsion-free.

\begin{prop}\label{gradedfree}
Let $\Ga$ be a group which acts freely on a set $\Ga'$. Let $R$ be a
$\Ga$-graded division ring and $M$ be a $\Ga'$-graded module over
$R$. Then $M$ is a graded free $R$-module. More generally, any
linearly independent subset of $M$ consisting of homogeneous
elements can be extended to form a homogeneous basis of $M$.
\end{prop}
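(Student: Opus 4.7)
The plan is to mimic the standard proof that every module over a division ring is free, using Zorn's lemma to produce a maximal homogeneous linearly independent set, and then exploiting invertibility of non-zero homogeneous elements of $R$ to show this set spans $M$. The only subtlety is bookkeeping of degrees: $M$ is graded by a set $\Ga'$ on which $\Ga$ acts freely, not by a group, so I will need to use the freeness of the action to separate homogeneous components.

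First I would observe a preliminary fact: if $T \subseteq M^h$ is any set of homogeneous elements, the $R$-submodule $N = \sum_{t \in T} Rt$ generated by $T$ is a \emph{graded} submodule of $M$. Indeed, for $t \in T$ with $\deg(t) = \de_t \in \Ga'$ and $r \in R_\ga$, we have $rt \in M_{\ga \de_t}$, so $N = \bigoplus_{\la \in \Ga'}(N \cap M_\la)$. Consequently, if $m = \sum_{\la \in \Ga'} m_\la \in M$ does not lie in $N$, then at least one homogeneous component $m_\la$ fails to lie in $N$.

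Next I would apply Zorn's lemma to the poset of linearly independent subsets of $M^h$ containing the given set $S$, ordered by inclusion (unions of chains remain linearly independent in $M^h$ by the usual finite-support argument). This yields a maximal homogeneous linearly independent subset $B \supseteq S$. The goal is to show $B$ spans $M$. Suppose for contradiction that it does not. By the preliminary fact, there is a homogeneous element $m_\la \in M_\la \setminus \sum_{b \in B} Rb$, and in particular $m_\la \neq 0$. I claim $B \cup \{m_\la\}$ remains linearly independent, which contradicts maximality.

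The core argument, and the only place where any real care is required, is this claim. Suppose $r m_\la + \sum_i r_i b_i = 0$ in $M$ with $r, r_i \in R$. Decompose $r = \sum_\tau r_\tau$ and each $r_i = \sum_\tau r_{i,\tau}$ into homogeneous components, write $\de_i = \deg(b_i)$, and take the $\la'$-homogeneous component of the equation for each $\la' \in \Ga'$. Because $\Ga$ acts freely on $\Ga'$, for each fixed $\la$ there is at most one $\tau \in \Ga$ with $\tau \la = \la'$, so the resulting relation in $M_{\la'}$ reads
\[
r_\tau m_\la \,+\, \sum_{(i,\tau'):\,\tau' \de_i = \la'} r_{i,\tau'} b_i \;=\; 0,
\]
where the first term is present only if such a $\tau$ exists. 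If $r \neq 0$, pick $\tau$ so that $r_\tau \neq 0$ and set $\la' = \tau \la$: since $R$ is a graded division ring, $r_\tau$ is invertible, whence $m_\la = -r_\tau^{-1}\sum r_{i,\tau'} b_i$ lies in the span of $B$, a contradiction. Thus $r = 0$, and then $\sum r_i b_i = 0$ forces all $r_i = 0$ by linear independence of $B$ (applied homogeneous-component by homogeneous-component, via the same free-action argument). This establishes the claim, contradicts maximality, and so $B$ is a homogeneous basis of $M$ extending $S$. The only potential obstacle is ensuring the degree bookkeeping in the last step is genuinely unambiguous, but the freeness of the $\Ga$-action on $\Ga'$ is exactly what makes it so.
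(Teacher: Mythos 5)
Your proof is correct and follows essentially the same approach as the paper: apply Zorn's lemma to maximal homogeneous linearly independent sets containing the given set, then derive a contradiction by showing that a homogeneous element outside the span can be adjoined while preserving linear independence, using invertibility of non-zero homogeneous elements of $R$ and isolating a single homogeneous component. Your write-up is somewhat more explicit than the paper's in two places — you justify separately that the span of a homogeneous set is a graded submodule (so a homogeneous $m_\lambda$ outside the span exists), and you spell out exactly where freeness of the $\Ga$-action on $\Ga'$ is used to guarantee that $r_\tau m_\lambda$ is the unique contribution of $rm_\lambda$ to the $\tau\lambda$-component — but these are refinements of the same argument, not a different route.
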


\begin{proof}
Note that the first statement is an immediate consequence of the
second, since for any $m \in M^h$, $\{m \}$ is a linearly
independent subset of $M$. Fix a linearly independent subset $X$ of
$M$ consisting of homogeneous elements. Let
$$
F= \{ Q \subseteq M^h : X \subseteq Q \textrm{ and $Q$ is
$R$-linearly independent} \}.
$$
This is a non-empty partially ordered set with inclusion, and every
chain $Q_1 \subseteq Q_2 \subseteq \ldots$ in $F$ has an upper bound
$\bigcup Q_i \in F$. By Zorn's Lemma, $F$ has a maximal element,
which we denote by $P$. If $\left< P \right> \neq M$, then there is
a homogeneous element $m \in M^h \mi \left< P \right>$. We will show
that $P \cup \{m \}$ is a linearly independent set containing $X$,
contradicting our choice of $P$.

Suppose $rm + \sum r_i p_i = 0$, where $r, r_i \in R$, $p_i \in P$
and $r \neq 0$.
%
Since $r \neq 0$ there is a homogeneous component of $r$, say
$r_{\la}$, which is also non-zero. Considering the $\la
\deg(m)$-homogeneous component of this sum, we have $m= r_{\la}^{-1}
r_{\la} m =- \sum r_{\la}^{-1} r_i p_i$ for $r_i$ homogeneous, which
contradicts our choice of $m$. Hence $r=0$, which implies each $r_i
= 0$. This gives the required contradiction, so $M= \left< P
\right>$, completing the proof.
\end{proof}

\begin{prop}
Let $\Ga$ be a group which acts freely on a set $\Ga'$. Let $R$ be a
$\Ga$-graded division ring and $M$ be a $\Ga'$-graded module over
$R$. Then any two homogeneous bases of $M$ over $R$ have the same
cardinality.
\end{prop}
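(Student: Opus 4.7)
The plan is to reduce to the non-graded theorem for division rings by passing to the degree-$e$ part $R_e$, which is an ungraded division ring, and examining the $R_e$-vector space structure of each homogeneous component $M_\lambda$.

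First I would observe that for a graded division ring $R$ and any $\gamma \in \Gamma_R$, the component $R_\gamma$ is a $1$-dimensional left $R_e$-vector space: any nonzero $x \in R_\gamma$ is invertible with $x^{-1}\in R_{\gamma^{-1}}$ by Proposition~\ref{basicsofgradedrings}(4), so each $y\in R_\gamma$ equals $(yx^{-1})x$ with $yx^{-1}\in R_e$. In particular, $R_e$ is an ungraded division ring.

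The main step is the following claim: for any homogeneous basis $\{b_i\}_{i\in I}$ of $M$ with $\deg(b_i)=\lambda_i$, and any $\lambda \in \Gamma'$,
\begin{equation*}
\dim_{R_e} M_\lambda \; = \; |I_O|, \qquad \text{where } O=\Gamma_R\cdot\lambda \text{ and } I_O=\{i\in I : \lambda_i\in O\}.
\end{equation*}
For each $i \in I_O$, freeness of the $\Gamma$-action on $\Gamma'$ supplies a unique $\gamma_i \in \Gamma_R$ with $\gamma_i \lambda_i = \lambda$; fix a nonzero, hence invertible, element $t_i \in R_{\gamma_i}$. I then show that $\{t_i b_i\}_{i\in I_O}$ is an $R_e$-basis of $M_\lambda$. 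Linear independence is immediate from $R$-linear independence of the $b_i$, combined with invertibility of each $t_i$. For spanning, given $m\in M_\lambda$, write $m=\sum_i r_i b_i$ with $r_i\in R$, decompose each $r_i$ into homogeneous components, and use freeness of the action once more: the unique homogeneous component of $r_i$ that can contribute to the $\lambda$-part of $r_i b_i$ lies in $R_{\gamma_i}$ when $i\in I_O$, and no component contributes when $i\notin I_O$ (as there is no $\gamma \in \Gamma$ with $\gamma\lambda_i=\lambda$ in that case). This component then equals $s_i t_i$ for a unique $s_i \in R_e$, giving $m = \sum_{i\in I_O} s_i(t_i b_i)$.

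With the claim in hand, $\dim_{R_e} M_\lambda$ is a basis-free invariant of $M_\lambda$ by the classical theorem on the dimension of vector spaces over a division ring. Hence for any second homogeneous basis $\{c_j\}_{j\in J}$ with $\deg(c_j)=\mu_j$ and $J_O=\{j:\mu_j\in O\}$, one obtains $|I_O|=|J_O|$ for every $\Gamma_R$-orbit $O$ in $\Gamma'$. Summing over orbits via cardinal arithmetic yields $|I|=|J|$. The most delicate point will be the spanning step: one has to use both the uniqueness of the decomposition of $m$ into homogeneous components and the freeness of the $\Gamma$-action on $\Gamma'$ to isolate exactly one relevant homogeneous component of each $r_i$ and to discard the terms with $\lambda_i\notin O$.
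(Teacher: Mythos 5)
Your argument is correct, and it takes a genuinely different route from the paper. The paper splits into cases: when a homogeneous basis is infinite it invokes the classical invariance theorem for free modules (Hungerford, Thm.~IV.2.6), and when both bases $X=\{x_1,\dots,x_n\}$, $Y=\{y_1,\dots,y_m\}$ are finite it runs a Steinitz-type exchange argument entirely inside $M$, swapping the $y$'s in one at a time for suitable $x$'s and exploiting at each step that the leading nonzero coefficient, being homogeneous, is invertible; this yields $m\le n$ and, by symmetry, $n\le m$. You instead reduce to the ungraded division ring $R_e$: for each $\lambda\in\Gamma'$ you exhibit $\{t_ib_i\}_{i\in I_O}$ as an $R_e$-basis of the homogeneous component $M_\lambda$, where $O$ is the $\Gamma_R$-orbit of $\lambda$, using the free action to isolate the unique homogeneous component of each coefficient $r_i$ that can land in degree $\lambda$, and the one-dimensionality of $R_{\gamma_i}$ over $R_e$ (which holds because any nonzero $x\in R_{\gamma_i}$ is invertible and $yx^{-1}\in R_e$ for $y\in R_{\gamma_i}$). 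The classical dimension theorem over the division ring $R_e$ then forces $|I_O|=|J_O|$ for every orbit $O$, and cardinal addition over orbits gives $|I|=|J|$. Your approach buys a uniform treatment of the finite and infinite cases, a strictly stronger orbit-by-orbit invariance statement, and a clean delegation of the real work to the ungraded theory; the paper's exchange argument is more self-contained for the finite case but must cite a separate result for the infinite one. One detail worth making explicit in a final write-up: $R_e$ really is a division ring (a nonzero $x\in R_e$ is a nonzero homogeneous element, hence invertible with $x^{-1}\in R_{e^{-1}}=R_e$), and each $M_\lambda$ is an $R_e$-submodule since $R_eM_\lambda\subseteq M_{e\lambda}=M_\lambda$.
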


\begin{proof}
Suppose $M$ has an infinite basis $Z$. Since $R$ is in particular a
ring, we can apply \cite[Thm.~IV.2.6]{hungerford}, which shows that
every basis of $M$ has the same cardinality as $Z$. We now assume
that $M$ has two homogeneous bases $X$ and $Y$, where $X$ and $Y$
are finite. Then $X= \{ x_1, \ldots , x_n\}$ and $Y= \{ y_1 , \ldots
, y_m \}$ for $x_i$, $y_i \in M^h \mi 0$. As $X$ is a basis for $M$,
we can write
$$
y_m = r_1 x_1 + \cdots + r_n x_n
$$
for some $r_i \in R^h$, where $\deg (y_m) = \deg(r_i ) \deg (x_i)$
for each $i$. Since $y_m \neq 0$, we have at least one $r_i \neq 0$.
Let $r_k$ be the first non-zero $r_i$, and we note that $r_k$ is
invertible as it is non-zero and homogeneous in $R$. Then
$$
x_k = r_k^{-1} y_m - r_k^{-1} r_{k+1} x_{k+1} - \cdots - r_k^{-1}
r_n x_n,
$$
and the set $X' = \{y_m ,  x_1, \ldots , x_{k-1} , x_{k+1} , \ldots
, x_n\}$ spans $M$ since $X$ spans $M$, where $X'$ consists of
homogeneous elements. So
$$
y_{m-1} = s_m y_m +  t_1 x_1 + \cdots + t_{k-1} x_{k-1} + t_{k+1}
x_{k+1} + \cdots + t_n x_n
$$
for $s_m$, $t_i \in R^h$. There is at least one non-zero $t_i$,
since if all the $t_i$ are zero, then either $y_m$ and $y_{m-1}$ are
linearly dependent or $y_{m-1}$ is zero. Let $t_j$ denote the first
non-zero $t_i$. Then $x_j$ can be written as a linear combination of
$y_{m-1}$, $y_m$ and those $x_i$ with $i \neq j, k$. Therefore the
set $X'' = \{y_{m-1} , y_m \} \cup \{x_i : i \neq j,k\}$ spans $M$
since $X'$ spans $M$. We can write $y_{m-2}$ as a linear combination
of the elements of $X''$.

Continuing this process of adding a $y$ and removing an $x$ gives,
after the $k$-th step, a set which spans $M$ consisting of $y_m ,
y_{m-1} , \ldots , y_{m-k+1}$ and $n-k$ of the $x_i$. If $n <m$,
then after the $n$-th step, we would have that the set $\{ y_m ,
\ldots , y_{m-n+1} \}$ spans $M$. But if $n <m$, then $m-n +1 \geq
2$, so this set does not contain $y_1$, and therefore $y_1$ can be
written as a linear combination of the elements of this set. This
contradicts the linear independence of $Y$, so we must have $m \leq
n$. Repeating a similar argument with $X$ and $Y$ interchanged gives
$n \leq m$, so $n=m$.
\end{proof}

The propositions above say that for a graded module $M$ over a
graded division ring $R$, $M$ has a homogeneous basis and any two
homogeneous bases of $M$ have the same cardinality. The cardinal
number of any homogeneous basis of $M$ is called the dimension of
$M$ over $R$, and it is denoted by $\dim_R (M)$.

\begin{prop}\label{grdimensionprop}
Let $\Ga$ be a group which acts freely on a set $\Ga'$. Let $R$ be a
$\Ga$-graded division ring and $M$ be a $\Ga'$-graded module over
$R$. If $N$ is a graded submodule of $M$, then
\begin{equation*}
\dim_R (N) + \dim_R(M/N) = \dim_R (M).
\end{equation*}

\end{prop}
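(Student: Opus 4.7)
The plan is to mimic the standard non-graded proof that dimension is additive on short exact sequences, using the extension property of linearly independent sets established in Proposition~\ref{gradedfree} as the graded replacement for the usual Zorn-type argument. The key point is that all constructions can be carried out with homogeneous elements, so the cardinality counts remain valid as graded dimensions.

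First I would pick a homogeneous basis $B = \{n_i\}_{i \in I}$ of $N$, which exists by Proposition~\ref{gradedfree} applied to the graded $R$-module $N$. Since each $n_i \in N^h \subseteq M^h$ and $B$ is $R$-linearly independent in $N$ (hence in $M$), I can apply the second statement of Proposition~\ref{gradedfree} to extend $B$ to a homogeneous basis $B \cup C$ of $M$, where $C = \{m_j\}_{j \in J}$ consists of homogeneous elements of $M$ disjoint from $B$. Then $\dim_R(M) = |B| + |C|$ and $\dim_R(N) = |B|$, so the result reduces to showing $\dim_R(M/N) = |C|$.

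To finish, I would verify that the images $\overline{C} = \{\overline{m_j}\}_{j \in J}$ in $M/N$ form a homogeneous basis of the quotient. Each $\overline{m_j}$ is homogeneous in $M/N$ of the same degree as $m_j$, by the definition of the grading on $M/N$ recalled in Section~\ref{sectiongradedmodules}. Spanning is straightforward: any $\overline{x} \in M/N$ lifts to some $x \in M$, which is a finite $R$-linear combination of elements of $B \cup C$; reducing modulo $N$ kills the $B$-part. For linear independence, suppose $\sum_j r_j \overline{m_j} = 0$ in $M/N$; then $\sum_j r_j m_j \in N$, so it can also be written as $\sum_i s_i n_i$ for some $s_i \in R$. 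Bringing everything to one side yields a dependence relation on $B \cup C$ in $M$, forcing all $r_j$ (and $s_i$) to vanish by linear independence of the basis.

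The only delicate point is that the $r_j$ appearing in such a dependence relation need not themselves be homogeneous. However, this is not really an obstacle: because the $m_j$ and $n_i$ are homogeneous and the action of $\Ga$ on $\Ga'$ is free, one can decompose the relation $\sum_j r_j m_j = \sum_i s_i n_i$ into its homogeneous components in $M$ and conclude in each component separately, exactly as in the homogeneous-component argument used in the proof of Proposition~\ref{gradedfree}. This is the main (and only) place where freeness of the action of $\Ga$ on $\Ga'$ is invoked, and it ensures that the degree bookkeeping goes through without the abelian or torsion-free hypotheses on $\Ga$.
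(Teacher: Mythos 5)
Your proof is correct and follows essentially the same route as the paper: extend a homogeneous basis of $N$ to a homogeneous basis of $M$ via Proposition~\ref{gradedfree}, show that the images of the added elements form a homogeneous basis of $M/N$, and count. The worry in your last paragraph about non-homogeneous coefficients $r_j$ is superfluous: a homogeneous basis is, by the paper's definition, a basis in the ordinary free-module sense, so linear independence already holds for arbitrary $R$-coefficients and the paper's proof invokes this directly without any homogeneous-component decomposition.
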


\begin{proof}
Let $Y$ be a homogeneous basis of $N$. Then $Y$ is a linearly
independent subset of $M$ consisting of homogeneous elements, so
using Proposition~\ref{gradedfree}, there is a homogeneous basis $X$
of $M$ containing $Y$. We will show that $U= \{ x + N :x \in X \mi Y
\}$ is a homogeneous basis of $M/N$. Note that clearly $U$ consists
of homogeneous elements. Let $m + N \in (M/N)^h$. Then $m \in M^h$
and $m = \sum r_i x_i + \sum s_j y_j$ where $r_i$, $s_j \in R$, $y_j
\in Y$ and $x_i \in X \mi Y$. So $m + N = \sum r_i (x_i +N)$, which
shows that $U$ spans $M/N$. If $\sum r_i (x_i +N) = 0$, for $r_i \in
R$, $x_i \in X \mi Y$, then $\sum r_i x_i \in N$ which implies that
$\sum r_i x_i = \sum s_k y_k$ for $s_k \in R$ and $y_k \in Y$. But
$X = Y \cup (X \mi Y)$ is linearly independent, so $r_i = 0$ and
$s_k = 0$ for all $i$,~$k$. Therefore $U$ is a homogeneous basis for
$M/N$ and as we can construct a bijective map $U \ra X \mi Y$, we
have $|U| = |X \mi Y|$. Then $\dim_R M = |X| = |Y| +|X \mi Y| = |Y|
+ |U| = \dim_R N + \dim_R (M/N)$.
\end{proof}


\section{Graded central simple algebras}
\label{sectiongradedcentralsimplealgebras}


Let $\Ga$ be a multiplicative group and let $R$ be a commutative
$\Ga$-graded ring. A \emph{$\Ga$-graded $R$-algebra}\index{graded
algebra} $A= \bigoplus_{\ga \in \Ga} A_{\ga}$ is defined to be a
graded ring which is an $R$-algebra such that the associated ring
homomorphism $\varphi$ is a graded homomorphism, where $\varphi :R
\ra A$ with $\varphi (R) \subseteq Z(A)$. Graded $R$-algebra
homomorphisms and graded subalgebras are defined analogously to the
equivalent terms for graded rings or graded modules.

Let $A$ and $B$ be graded $R$-algebras, such that $\Ga_A \subseteq
Z_{\Ga} (\Ga_B)$, where $Z_{\Ga} (\Ga_B)$ is the set of elements of
$\Ga$ which commute with $\Ga_B$.  Then $A \otimes_R B$ has a
natural grading as a graded $R$-algebra given by $A \otimes_R B =
\bigoplus_{\ga \in \Ga} (A \otimes_R
B)_{\ga}$\label{pagerefforgradingontensorproduct} where:
$$
(A \otimes_R B)_{\ga} = \left\{ \sum_i a_i \otimes b_i : a_i \in
A^h, b_i \in B^h, \deg(a_i) \deg(b_i) = \ga \right\}
$$
Note that the condition $\Ga_A \subseteq Z_{\Ga} (\Ga_B)$ is needed
to ensure that the multiplication on $A \otimes_R B$ is well
defined.

Let $R$ be a $\Ga$-graded ring, $M$ be a $\Ga$-graded $R$-module and
let $\END_R (M) = \HOM_R (M, M)$, where $\HOM_R (M,M)$ is defined on
page~\pageref{hompageref}. The ring $\END_R (M)$\label{endisagrring}
is a $\Ga$-graded ring with the usual addition and with
multiplication defined as $g \cdot f = f \circ g$ for all $f, g \in
\END_R (M)$. If $M$ is graded free with a finite homogeneous base,
then we have $\END_R (M) = \End_R (M)$
(see \cite[Remark~2.10.6(ii)]{grrings}). In fact, if $M$ and $N$ are
$\Ga$-graded $R$-modules with $M$ finitely generated, then
$\HOM_R(M,N) = \Hom_R(M,N)$ (see \cite[Cor.~2.4.4]{grrings}).
Further, if $R$ is a commutative $\Ga$-graded ring and $\Ga_R
\subseteq Z_{\Ga} (\Ga_M )$, then $\End_R (M)$ is a $\Ga$-graded
$R$-algebra.

A \emph{graded field}\index{graded field} is defined to be a
commutative graded division ring. Note that the support of a graded
field is an abelian group. Let $R$ be a graded field. A graded
algebra $A$ over $R$ is said to be a \emph{graded central simple
algebra}\index{graded central simple algebra}\index{graded
algebra!graded central simple} over $R$ if $A$ is graded simple as a
graded ring, $Z(A) \conggr R$ and $[A:R] < \infty$. Note that since
the centre of $A$ is $R$, which is a graded field, $A$ is graded
free as a graded module over its centre by
Proposition~\ref{gradedfree}, so the dimension of $A$ over $R$ is
uniquely defined.

For a $\Ga$-graded ring $A$, let $A^{\op}$ denote the opposite
graded ring, where the grade group of $A^{\op}$ is the opposite
group $\Ga^{\op}$. So for a graded $R$-algebra $A$, in order to
define $A \otimes_R A^{\op}$, we note that the support of $A$ must
be abelian.
%
%
Moreover for the following propositions, we require that the group
$\Ga$ is an abelian group (see Theorem~\ref{groupringazumayathm} and
the preceding comments). \emph{Since the grade groups are assumed to
be abelian for the remainder of this section, we will write them as
additive groups.}

By combining Propositions~\ref{tensorgradedsimple} and
\ref{tensorgradedcentral}, we show that the tensor product of two
graded central simple $R$-algebras is graded central simple, where
the grade group $\Ga$, as below, is abelian but not necessarily
torsion-free. This has been proven by Wall for graded central simple
algebras with $\mathbb Z / 2 \mathbb Z$ as the support (see
\cite[Thm.~2]{wall}), and by Hwang and Wadsworth for $R$-algebras
with a torsion-free grade group (see \cite[Prop.~1.1]{hwcor}).

\begin{prop}\label{tensorgradedsimple}

Let $\Ga$ be an abelian group. Let $R$ be a $\Ga$-graded field and
let $A$ and $B$ be $\Ga$-graded $R$-algebras. If $A$ is graded
central simple over $R$ and $B$ is graded simple, then $A \otimes_R
B$ is graded simple.
\end{prop}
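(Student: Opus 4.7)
The plan is to adapt the classical ``minimal length'' proof that a tensor product $A \otimes_R B$ of a central simple $R$-algebra with a simple $R$-algebra is simple, taking care throughout that every construction produces homogeneous elements of a controlled degree. Throughout I write the abelian grade group additively, as the paper does.

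First I would set up the minimal-length argument. Let $I$ be a non-zero homogeneous two-sided ideal of $A \otimes_R B$ and pick a non-zero homogeneous element $x \in I$ of smallest possible length $n$, writing $x = \sum_{i=1}^n a_i \otimes b_i$ with $a_i \in A^h$, $b_i \in B^h$ non-zero and $\deg(a_i) + \deg(b_i)$ equal to a common $\ga$. Minimality forces $b_1, \dots, b_n$ to be $R$-linearly independent: any non-trivial relation among them can be refined to one with homogeneous coefficients (using that the $b_i$ are homogeneous and $R$ is graded), those coefficients are invertible because $R$ is a graded field, and solving for one $b_i$ allows one to write $x$ more shortly, a contradiction. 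By the same reasoning the $a_i$ are $R$-linearly independent.

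Next I would reduce to the case $a_1 = 1$. Because $A$ is graded simple and $a_1 \neq 0$, the two-sided ideal generated by $a_1$ is all of $A$; by Proposition~\ref{homogeneousidealhomogeneouselementsprop} this ideal is spanned by homogeneous products $c \, a_1 \, d$. Extracting the degree-$0$ component of an expression for $1$ gives $1 = \sum_k c_k \, a_1 \, d_k$ with $c_k, d_k \in A^h$ and $\deg(c_k) + \deg(a_1) + \deg(d_k) = 0$. Set
\[
x' \; := \; \sum_k (c_k \otimes 1) \, x \, (d_k \otimes 1) \; \in \; I.
\]
Each summand of the expansion has degree $\ga - \deg(a_1)$, so $x'$ is homogeneous, and it takes the form $1 \otimes b_1 + \sum_{i \geq 2} a'_i \otimes b_i$ with $a'_i \in A^h$ homogeneous of degree $\deg(a_i) - \deg(a_1)$. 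The element $x'$ is non-zero: $A$ is graded free (hence flat) over the graded field $R$ by Proposition~\ref{gradedfree}, so the $R$-independence of the $b_i$ upgrades to independence of the $1 \otimes b_i$ in $A \otimes_R B$. By minimality $x'$ still has length exactly $n$, and the $b_i$ remain $R$-linearly independent in the new expression.

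Finally I would kill the $a'_i$ and conclude. For each $c \in A^h$ the commutator
\[
(c \otimes 1)\, x' - x'\, (c \otimes 1) \; = \; \sum_{i \geq 2} (c \, a'_i - a'_i \, c) \otimes b_i
\]
lies in $I$, is homogeneous, and has length at most $n-1$, so by minimality it is zero. Flatness of $A$ over $R$ together with $R$-independence of $b_2, \dots, b_n$ then force $c \, a'_i = a'_i \, c$ for every $c \in A^h$, and since homogeneous elements generate $A$ we conclude $a'_i \in Z(A) = R$. Hence $x' = 1 \otimes b$ with $b := b_1 + \sum_{i \geq 2} a'_i b_i \in B^h$, and $b \neq 0$ since $1 \otimes -$ is injective by flatness. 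The set $J := \{ b' \in B : 1 \otimes b' \in I \}$ is clearly a two-sided ideal of $B$, and it is homogeneous because each homogeneous component of $1 \otimes b'$ has the form $1 \otimes b'_\be$, which lies in $I$ whenever $1 \otimes b' \in I$. It contains the non-zero homogeneous $b$, so graded simplicity of $B$ gives $J = B$, whence $1 \otimes 1 \in I$ and $I = A \otimes_R B$.

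The main technical obstacle is maintaining homogeneity under all of these manipulations: in particular, ensuring that the expression $1 = \sum_k c_k \, a_1 \, d_k$ can be chosen with homogeneous factors of prescribed degrees, and that $R$-independence of the $b_i$ can be exploited to deduce that individual coefficients in $A$ vanish. Both hinge on the hypothesis that $R$ is a graded field (every non-zero homogeneous element of $R$ is a unit, and graded $R$-modules are graded free and hence flat), and on abelianness of $\Ga$, which is what lets one speak of $A \otimes_R B$ as a graded $R$-algebra at all.
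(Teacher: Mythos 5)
Your proof is correct and follows essentially the same minimal-length argument as the paper: take a nonzero homogeneous element of $I$ with a minimal-length homogeneous tensor expression, use graded simplicity of $A$ to normalize one $A$-factor to $1$, and then use commutators with $c \otimes 1$ together with $Z(A) = R$ to force all remaining $A$-coefficients into $R$, reducing to a pure tensor $1 \otimes b$ and finishing with graded simplicity of $B$. Your write-up is organized slightly differently (you normalize $a_1$ rather than $a_k$, verify explicitly that the normalized element is nonzero and that the $b_i$ stay $R$-independent, and pass through the ideal $J \subseteq B$ rather than re-running the pure-tensor case), but these are cosmetic; the underlying argument is the same.
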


\begin{proof}
Let $I$ be a homogeneous two-sided ideal of $A \otimes B$, with $I
\neq 0$. We will show that $A \otimes B = I$. Note that since $I$ is
a homogeneous ideal, by
Proposition~\ref{homogeneousidealhomogeneouselementsprop}, it is
generated by homogeneous elements. First suppose $a \otimes b$ is a
homogeneous element of $I$, where $a \in A^h$ and $b \in B^h$. Then
$A$ is the homogeneous two-sided ideal generated by $a$, so there
exist $a_i ,$ $a'_i \in A^h$ with $1= \sum a_i a a'_i$. Then
$$\sum (a_i \otimes 1)( a \otimes b)( a'_i \otimes 1) = 1 \otimes b$$ is an
element of $I$. Similarly, $B$ is the homogeneous two-sided ideal
generated by $b$. Repeating the above argument shows that $1 \otimes
1$ is an element of $I$, proving $I= A\otimes B$ in this case.

Now suppose there is an element $x \in I^h$, where $x = a_1 \otimes
b_1 + \cdots + a_k \otimes b_k$, with $a_j \in A^h$, $b_j \in B^h$
and $k$ as small as possible. Note that since $x$ is homogeneous,
$\deg(a_j) +\deg (b_j) = \deg (x)$ for all $j$. By the above
argument we can suppose that $k >1$. As above, since $a_k \in A^h$,
there are $c_i ,$ $c'_i \in A^h$ with $1= \sum c_i a_k c'_i$. Then
$$
\sum (c_i \otimes 1)x( c'_i \otimes 1) = \left( \sum (c_i a_1
c'_i) \right) \otimes b_1 + \cdots + \left( \sum (c_i a_{k-1} c'_i)
\right) \otimes b_{k-1} + 1 \otimes b_k,
$$
where the terms $\left( \sum_i (c_i a_j c'_i)\right) \otimes b_j$
are homogeneous elements of $A \otimes B$. Thus, without loss of
generality, we can assume that $a_k = 1$. Then $a_k$ and $a_{k-1}$
are linearly independent, since if $a_{k-1} = r a_k$ with $r \in R$,
then $a_{k-1} \otimes b_{k-1} + a_k \otimes b_k = a_k \otimes (r
b_{k-1} +b_k)$, which is homogeneous and thus gives a smaller value
of $k$.

Thus $a_{k-1} \notin R=Z(A)$, and so there is a homogeneous element
$a \in A$ with $a a_{k-1} - a_{k-1} a \neq 0$. Consider the
commutator $$(a \otimes 1) x - x (a \otimes 1) = (aa_1 - a_1 a )
\otimes b_1 + \cdots + (a a_{k-1} - a_{k-1} a)\otimes b_{k-1},$$
where the last summand is not zero. If the whole sum is not zero,
then we have constructed a homogeneous element in $I$ with a smaller
$k$. Otherwise suppose the whole sum is zero, and write $c = a
a_{k-1} - a_{k-1} a $. Then we can write $c \otimes b_{k-1} =
\sum_{j=1}^{k-2} x_j \otimes b_j$ where $x_j = -(a a_j -a_j a)$.
Since $0\not = c \in A^h$ and $A$ is the homogeneous two-sided ideal
generated by $c$, using the same argument as above, we have
\begin{equation} \label{li}
1 \otimes b_{k-1} = x'_1 \otimes b_1 + \cdots + x'_{k-2} \otimes
b_{k-2}\end{equation} for some $x'_j \in A^h$. Since $b_1 , \ldots ,
b_{k-1}$ are linearly independent homogeneous elements of $B$, they
can be extended to form a homogeneous basis of $B$, say $\{b_i\}$,
by Proposition~\ref{gradedfree}. Then $\{ 1 \otimes b_i\}$ forms a
homogeneous basis of $A\otimes_R B$ as a graded $A$-module, so in
particular they are $A$-linearly independent, which is a
contradiction to equation~\eqref{li}. This reduces the proof to the
first case.
\end{proof}

\begin{prop}\label{tensorgradedcentral}

Let $\Ga$ be an abelian group. Let $R$ be a $\Ga$-graded field and
let $A$ and $B$ be $\Ga$-graded $R$-algebras. If $A' \subseteq A$
and $B' \subseteq B$ are graded subalgebras, then
$$Z_{A \otimes B}(A' \otimes B') = Z_A (A') \otimes Z_B(B').$$
In particular, if $A$ and $B$ are central over $R$, then $A
\otimes_R B$ is central.
\end{prop}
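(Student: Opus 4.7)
The plan is to prove both inclusions separately. The inclusion $Z_A(A') \otimes_R Z_B(B') \subseteq Z_{A \otimes_R B}(A' \otimes B')$ is straightforward: if $a \in Z_A(A')$ and $b \in Z_B(B')$, then $(a \otimes b)(a' \otimes b') = aa' \otimes bb' = a'a \otimes b'b = (a' \otimes b')(a \otimes b)$ for all $a' \in A'$, $b' \in B'$, and this extends by linearity to all of $A' \otimes B'$. So the real content is the reverse inclusion.

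For the reverse inclusion, I would exploit the fact that since $R$ is a graded field, Proposition~\ref{gradedfree} guarantees that any graded module over $R$ is graded free, and moreover any linearly independent subset of homogeneous elements extends to a homogeneous basis. Let $x \in Z_{A \otimes B}(A' \otimes B')$. Fix a homogeneous $R$-basis $\{b_j\}_{j \in J}$ of $B$, and write $x = \sum_j a_j \otimes b_j$ with $a_j \in A$. The key observation is that $\{1 \otimes b_j\}$ is an $A$-basis of $A \otimes_R B$ viewed as a left $A$-module (this is the graded analogue of the fact that tensoring a free module preserves freeness). For every homogeneous $a' \in A'$, commutation $(a' \otimes 1)x = x(a' \otimes 1)$ yields
$$\sum_j (a'a_j - a_j a') \otimes b_j = 0,$$
and linear independence of $\{1 \otimes b_j\}$ over $A$ forces $a' a_j = a_j a'$ for each $j$. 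Since $A'$ is generated by its homogeneous elements (being a graded subalgebra), this shows $a_j \in Z_A(A')$ for every $j$, so $x \in Z_A(A') \otimes_R B$.

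Now I repeat the argument on the other factor. Since $\Ga$ is abelian, $Z_A(A')$ is a graded subalgebra of $A$, so by Proposition~\ref{gradedfree} it admits a homogeneous $R$-basis $\{c_k\}_{k \in K}$, and this basis can be extended to a homogeneous $R$-basis of $A$. Re-expressing $x$ in this enlarged basis, one has $x = \sum_k c_k \otimes d_k$ with $d_k \in B$ (the terms corresponding to basis elements outside $Z_A(A')$ vanish, thanks to the previous step). Using that $\{c_k \otimes 1\}$ is part of a right $B$-basis of $A \otimes_R B$, commutation with $1 \otimes b'$ for homogeneous $b' \in B'$ forces $b' d_k = d_k b'$ for all $k$, hence $d_k \in Z_B(B')$. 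This gives $x \in Z_A(A') \otimes_R Z_B(B')$. For the final assertion, taking $A' = A$ and $B' = B$ yields $Z(A \otimes_R B) = Z(A) \otimes_R Z(B) = R \otimes_R R \cong R$, so $A \otimes_R B$ is central over $R$.

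The main obstacle is keeping track of the grading when passing to homogeneous bases and when writing tensor elements in mixed bases; in particular one must verify that the freeness arguments on tensor products (specifically, that $A \otimes_R B$ is graded free as a left $A$-module on $\{1 \otimes b_j\}$, and as a right $B$-module on $\{c_k \otimes 1\}$) genuinely extend from the non-graded setting, which rests on $R$ being a graded field and on the abelian hypothesis on $\Ga$ needed to ensure that $Z_A(A')$ is a graded subalgebra.
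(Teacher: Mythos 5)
Your proof is correct and follows essentially the same route as the paper: verify the easy inclusion, then expand $x$ against a homogeneous $R$-basis of $B$ to pull $x$ into $Z_A(A') \otimes_R B$, and then re-expand against a homogeneous basis of $Z_A(A')$ to conclude $x \in Z_A(A') \otimes_R Z_B(B')$. The only cosmetic difference is that you extend the basis of $Z_A(A')$ to all of $A$ and argue the extra coefficients vanish, whereas the paper simply uses uniqueness of coordinates in $Z_A(A') \otimes_R B$ directly once $x$ is known to lie there.
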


\begin{proof}
First note that by Proposition~\ref{gradedfree}, $A',B',Z_A(A')$ and
$Z_B(B')$ are graded free over $R$, so they are flat over $R$. Thus
we can consider $Z_{A \otimes B}(A' \otimes B')$ and $Z_A (A')
\otimes Z_B(B')$ as graded subalgebras of $A\otimes B$.

The inclusion $\supseteq$ follows immediately. For the reverse
inclusion, let $x \in Z_{A \otimes B}(A' \otimes B')$. Let $\{ b_1 ,
\ldots , b_n \}$ be a homogeneous basis for $B$ over $R$ which
exists thanks to Proposition~\ref{gradedfree}. Then $x$ can be
written uniquely as $x = x_1 \otimes b_1 + \cdots + x_n \otimes b_n$
for $x_i \in A$ (see \cite[Thm.~IV.5.11]{hungerford}). For every $a
\in A'$, $(a \otimes 1) x = x (a \otimes 1)$, so
$$ (a x_1) \otimes b_1 + \cdots + (a x_n) \otimes b_n   = (x_1 a)
\otimes b_1 + \cdots + (x_n a) \otimes b_n.$$ By the uniqueness of
this representation we have $x_i a = a x_i$, so that $x_i \in Z_A
(A')$ for each $i$.  Thus we have shown that $x \in Z_A(A')
\otimes_R B$. Similarly, let $\{ c_1 , \ldots , c_k \}$ be a
homogeneous basis of $Z_A(A')$. Then we can write $x$ uniquely as $x
=\linebreak c_1 \otimes y_1 + \cdots + c_k \otimes y_k$ for $y_i \in
B$. A similar argument to above shows that $y_i \in Z_B(B')$,
completing the proof.
\end{proof}

In the following theorem, since $\Ga$ is an abelian group, we define
multiplication in $\End_R (A)$ to be $g \cdot f = g \circ f$.

\begin{thm}\label{gcsaazumayaalgebra}
Let $\Ga$ be an abelian group. Let $A$ be a $\Ga$-graded central
simple algebra over a $\Ga$-graded field $R$. Then $A$ is an Azumaya
algebra over $R$.
\end{thm}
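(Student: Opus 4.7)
The approach is to verify Definition~\ref{azumayadefinition} directly, exploiting the fact that $R$ is a graded field so that every graded $R$-module is graded free with a well-defined dimension.

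First I would check that $A$ is faithfully projective over $R$. By Proposition~\ref{gradedfree}, $A$ is graded free over $R$, and since $[A:R] < \infty$ is built into the definition of a graded central simple algebra, $A$ admits a finite homogeneous basis. Hence $A$ is finitely generated, free (so projective), and faithful as an $R$-module, and Proposition~\ref{faithfullyprojprop} then yields that $A$ is faithfully projective.

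Next I would show that the canonical map $\psi_A : A \otimes_R A^{\op} \to \End_R(A)$ is an isomorphism, in two steps: injectivity from graded simplicity and surjectivity from a graded dimension count. For injectivity, note that $\psi_A$ is a graded $R$-algebra homomorphism (the degrees match on both sides because $\Ga$ is abelian and the tensor-product grading is given by addition of degrees). Since $A$ is graded central simple and $A^{\op}$ is graded simple, Proposition~\ref{tensorgradedsimple} shows that $A \otimes_R A^{\op}$ is graded simple. The kernel of $\psi_A$ is a graded two-sided ideal of $A \otimes_R A^{\op}$ which does not contain $1 \otimes 1$ (as $\psi_A(1\otimes 1) = \id_A$), so it must be zero.

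For surjectivity, set $n = [A:R]$ and fix a homogeneous basis $a_1, \ldots, a_n$ of $A$. Then $\{a_i \otimes a_j\}$ is a homogeneous basis of $A \otimes_R A^{\op}$ of cardinality $n^2$, and the maps $E_{ij} \in \End_R(A)$ defined by $E_{ij}(a_k) = \de_{jk} a_i$ are homogeneous of degree $\deg(a_i)\deg(a_j)^{-1}$, exhibiting $\End_R(A)$ as graded free over $R$ of the same dimension $n^2$. The image of $\psi_A$ is a graded $R$-submodule of $\End_R(A)$ (because $\psi_A$ preserves degrees), graded free by Proposition~\ref{gradedfree}, and of dimension $n^2$ by the injectivity established above. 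Proposition~\ref{grdimensionprop} then gives $\dim_R\!\bigl(\End_R(A)/\im \psi_A\bigr) = 0$, forcing $\im \psi_A = \End_R(A)$. The main obstacle is graded bookkeeping: one must verify carefully that $\psi_A$ is homogeneous, that its image is a graded submodule, and that $\End_R(A)$ carries the expected graded free structure; once these are settled, the argument is essentially the classical proof for central simple algebras over a field, with Proposition~\ref{grdimensionprop} playing the role of the statement that an injective linear map between finite-dimensional vector spaces of equal dimension is surjective.
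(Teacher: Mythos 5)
Your proposal follows the paper's proof exactly: show $A$ is graded free of finite dimension (hence faithfully projective) over $R$, get injectivity of $\psi_A$ from Proposition~\ref{tensorgradedsimple} applied to $A \otimes_R A^{\op}$, and conclude surjectivity by a graded dimension count via Proposition~\ref{grdimensionprop}. The extra bookkeeping you supply (the homogeneous basis of $\End_R(A)$ and the graded-free structure of $\im\psi_A$) is a correct filling-in of the paper's brief appeal to ``dimension count.''
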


\begin{proof}
Since $A$ is graded free of finite dimension over $R$, it follows
that $A$ is faithfully projective over $R$. There is a natural
graded $R$-algebra homomorphism $\psi: A \otimes_R A^{\op} \ra
\End_R (A)$ defined by $\psi(a\otimes b)(x)=axb$ where $a,x \in A$,
$b \in A^{\op}$. Since graded ideals of $A^{\op}$ are the same as
graded ideals of $A$, we have that $A^{\op}$ is graded simple. By
Proposition~\ref{tensorgradedsimple}, $A \otimes A^{\op}$ is also
graded simple, so $\psi$ is injective. Hence the map is surjective
by dimension count, using Theorem~\ref{grdimensionprop}. This shows
that $A$ is an Azumaya algebra over $R$, as required.
\end{proof}

\begin{cor}\label{gcsacor}
Let $\Ga$  be an abelian group. Let $A$ be a $\Ga$-graded central
simple algebra over a $\Ga$-graded field $R$ of dimension $n$. Then
for any $i \geq 0$,
$$
K_i(A) \otimes \mathbb Z[1/n] \cong K_i(R) \otimes \mathbb
Z[1/n].
$$
\end{cor}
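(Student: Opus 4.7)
The plan is to assemble the corollary directly from the two main results proved earlier in the excerpt: Theorem~\ref{gcsaazumayaalgebra} on the one hand, and Theorem~\ref{azumayafreethm} on the other. The only thing that needs verification is that the hypothesis ``free over its centre of dimension $n$'' required by Theorem~\ref{azumayafreethm} is automatic in the graded central simple setting.

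First I would apply Theorem~\ref{gcsaazumayaalgebra} to the $\Gamma$-graded central simple $R$-algebra $A$ to conclude that $A$ is an Azumaya algebra over $R$. Next, since $R$ is a graded field (i.e.\ a commutative graded division ring) and $A$ is, in particular, a $\Gamma$-graded $R$-module, Proposition~\ref{gradedfree} guarantees that $A$ has a homogeneous $R$-basis. In particular $A$ is free as an ungraded $R$-module, and the cardinality of any homogeneous basis is exactly $\dim_R(A)=n$, which is finite because $A$ is a graded central simple algebra. Therefore $A$ is an Azumaya algebra which is free over its centre $R$ of rank $n$, so all the hypotheses of Theorem~\ref{azumayafreethm} are satisfied.

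Finally, invoking Theorem~\ref{azumayafreethm} yields, for every $i \geq 0$,
$$
K_i(A) \otimes \mathbb Z[1/n] \;\cong\; K_i(R) \otimes \mathbb Z[1/n],
$$
as required. There is no genuine obstacle here: the content of the corollary is entirely carried by Theorem~\ref{gcsaazumayaalgebra} (which required the abelianness of $\Gamma$ in order to form $A\otimes_R A^{\op}$ and to use the tensor-product results of Propositions~\ref{tensorgradedsimple} and~\ref{tensorgradedcentral}) together with Theorem~\ref{azumayafreethm}, and the present statement is a direct specialisation obtained by checking the graded-freeness of $A$ over $R$ via Proposition~\ref{gradedfree}.
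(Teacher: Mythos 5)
Your proposal is correct and follows exactly the same route as the paper's own proof: apply Theorem~\ref{gcsaazumayaalgebra} to get the Azumaya property, use Proposition~\ref{gradedfree} to obtain freeness of $A$ over the graded field $R$, and then invoke Theorem~\ref{azumayafreethm}. Your additional remark about where the abelianness of $\Gamma$ is actually used (in forming $A \otimes_R A^{\op}$ and in the tensor-product propositions) is accurate but not part of the paper's argument for this corollary.
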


\begin{proof}
By Theorem~\ref{gcsaazumayaalgebra}, a graded central simple algebra
$A$ over $R$ is an Azumaya algebra. From
Proposition~\ref{gradedfree}, since $R$ is a graded field, $A$ is a
free $R$-module. The corollary now follows immediately from Theorem
\ref{azumayafreethm}, since $A$ is an Azumaya algebra free over its
centre.
\end{proof}

For a graded field $R$, Theorem~\ref{gcsaazumayaalgebra} shows that
a graded central simple $R$-algebra, graded by an abelian group
$\Ga$, is an Azumaya algebra over $R$. This theorem can not be
extended to cover non-abelian grading. Consider a finite dimensional
division algebra $D$ and a group $G$. The group ring $DG$ is a
graded division ring, so is clearly a graded simple algebra, and if
$G$ is abelian, Theorem~\ref{gcsaazumayaalgebra} implies that $DG$
is an Azumaya algebra. However in general, for an arbitrary group
$G$, $DG$ is not always an Azumaya algebra. DeMeyer and Janusz
\cite{demjan} have proven the following theorem.

\begin{thm}\label{groupringazumayathm}
Let $R$ be a ring and let $G$ be a group. Then the group ring $RG$
is an Azumaya algebra if and only if the following three conditions
hold:
\begin{enumerate}
\item $R$ is an Azumaya algebra,

\item $[G:Z(G)] < \infty$,

\item the commutator subgroup of $G$ has finite order $m$ and
$m$ is invertible in $R$.
\end{enumerate}
\end{thm}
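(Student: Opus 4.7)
The plan is to use the Azumaya characterization from Theorem~\ref{azumayadefinthm}(5): $A$ is Azumaya over $Z(A)$ iff $A$ is finitely generated as a $Z(A)$-module and $A/\mathfrak{m}A$ is a central simple $(Z(A)/\mathfrak{m})$-algebra for every $\mathfrak{m}\in\Max(Z(A))$. Throughout I would exploit the fact that $RG$ is a free $R$-module with basis $G$, so it is in particular a faithful $R$-module, and that $Z(RG)$ decomposes in a controlled way using class sums of the finite conjugacy classes of $G$ together with $Z(R)$.

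For the forward direction, suppose $RG$ is Azumaya. To get condition (2), I would observe that an Azumaya algebra is finitely generated over its centre, and that a generating set for $RG$ over $Z(RG)$ translates (using the $G$-basis over $R$ and the description of $Z(RG)$ via class sums) into the assertion that $G$ is an FC-group. B.~H.~Neumann's theorem then yields $[G:Z(G)]<\infty$. Condition (1) can then be extracted by showing $R$ is a direct summand of $RG$ as an $R$-algebra in a way that preserves separability and faithful projectivity (for instance by using the augmentation $\varepsilon:RG\to R$ together with the section $R\hookrightarrow RG$), so that $R$ inherits the Azumaya structure from $RG$. Condition (3) is the most delicate: I would localise and reduce modulo a maximal ideal $\mathfrak{m}$ of $Z(R)$, so that the problem becomes the classical fact that for a field $k$, $kG$ can only be central simple when its Jacobson radical vanishes, which (via Maschke) forces $\operatorname{char}(k)\nmid |G'|$. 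Ranging over all maximal ideals of $R$ shows $|G'|$ is a unit in $R$ and, in particular, is finite.

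For the reverse direction, assume (1)--(3). I would first verify that $RG$ is finitely generated over $Z(RG)$, which follows from $[G:Z(G)]<\infty$: a transversal for $Z(G)$ in $G$ gives a finite generating set over $R[Z(G)]\subseteq Z(RG)$. Next, for each $\mathfrak{m}\in\Max(Z(RG))$, restrict $\mathfrak{m}$ to $Z(R)$ to obtain a maximal ideal $\mathfrak{n}$ (using that $Z(RG)$ is integral over $Z(R)$, which again uses (2)), and analyse $RG/\mathfrak{m}RG$ as a quotient of $(R/\mathfrak{n})G$. Here condition~(1) lets me replace $R/\mathfrak{n}$ by a central simple algebra over a field $k$, and condition (3) together with Maschke's theorem (applied to the finite group $G'$, whose order is invertible in $k$) yields semisimplicity. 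The finite-index hypothesis (2) then reduces the structure of $RG/\mathfrak{m}RG$ to that of a crossed product built from $G/G'$-data and a central simple piece, and the maximality of $\mathfrak{m}$ forces this quotient to be simple with centre $Z(RG)/\mathfrak{m}$.

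The main obstacle will be the last step of the reverse direction: carefully identifying $RG/\mathfrak{m}RG$ as central simple, rather than merely semisimple, over $Z(RG)/\mathfrak{m}$. The delicate bookkeeping is in showing that each maximal ideal of $Z(RG)$ corresponds to precisely one simple factor of the semisimple quotient — this is where conditions (2) and (3) interact, via the fact that $|G'|$ being a unit makes the projection $RG\twoheadrightarrow R[G/G']$ well-behaved under reduction, while $[G:Z(G)]<\infty$ controls how $Z(RG)$ sits inside $RG$. I expect one can carry this out by invoking the standard theory of crossed products of Azumaya algebras by abelian groups, reducing to Theorem~\ref{gcsaazumayaalgebra} applied fibrewise.
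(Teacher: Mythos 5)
The paper proves this result only by citation to DeMeyer--Janusz \cite[Thm.~1]{demjan}, so there is no in-paper argument to compare against; but your reconstruction has a genuine gap in the forward direction that would need to be repaired before it could stand as a proof.

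The step that fails is your derivation of condition~(2). You argue that finite generation of $RG$ over $Z(RG)$ forces $G$ to be an FC-group, and then that ``B.~H.~Neumann's theorem then yields $[G:Z(G)]<\infty$.'' Neumann's theorem does not say this. What Neumann proved is that a group has \emph{uniformly bounded} conjugacy class sizes (BFC) if and only if its derived subgroup is finite; neither FC nor BFC implies $[G:Z(G)]<\infty$. A concrete counterexample is the infinite extraspecial $2$-group $E$, the central product of infinitely many copies of the dihedral group of order~$8$ amalgamated over their centres: every conjugacy class of $E$ has size at most $2$, so $E$ is BFC with $|E'|=2$, yet $E/Z(E)$ is an infinite elementary abelian $2$-group. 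So even FC together with finite derived subgroup does not give $[G:Z(G)]<\infty$. There is also a smaller gap upstream: the finite-generation argument, made precise, only shows $[G:\Delta(G)]<\infty$ where $\Delta(G)$ is the FC-centre, not that $\Delta(G)=G$. Condition~(2) has to be extracted from the full separability of $RG$ over $Z(RG)$ (e.g.\ from the separability idempotent), not from finite generation of $RG$ alone.

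A secondary circularity appears in your argument for condition~(3): Maschke's theorem concerns finite groups, so applying it to $G'$ already presupposes $|G'|<\infty$, which is precisely half of what (3) asserts. The logical order must be: first obtain $[G:Z(G)]<\infty$, then deduce $|G'|<\infty$ by Schur's theorem, and only then is the Maschke-type argument available to show $|G'|$ is invertible in~$R$. The shape of your reverse direction (finite generation from a transversal of $Z(G)$, reduction modulo maximal ideals of $Z(RG)$, semisimplicity from Maschke) is reasonable, and you correctly flag the delicate step of passing from semisimple to central simple on each fibre, but the forward direction as written cannot be repaired merely by filling in details.
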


\begin{proof}
See \cite[Thm.~1]{demjan}.
\end{proof}



\section{Graded matrix rings} \label{sectiongradedmatrixrings}

Let $\Ga$ be a group and $R$ be a $\Ga$-graded ring. We will write
$\Ga$ as a multiplicative group, since $\Ga$ is not necessarily
abelian. Throughout this section, unless otherwise stated, we will
assume that all graded rings, graded modules and graded algebras are
also $\Ga$-graded. Let $\la \in \Ga$ and $(d) = (\de_1 , \ldots ,
\de_n) \in \Ga^n$. Let ${M_n (R)(d)}_{\la}$ denote the $n \times
n$-matrices over $R$ with the degree shifted as follows:
$$
{M_n(R)(d)}_{\la} =
\begin{pmatrix}
R_{\de_1  \la  \de_1^{-1}} & R_{\de_1  \la  \de_2^{-1}} & \cdots &
R_{\de_1  \la  \de_n^{-1}} \\
R_{\de_2  \la  \de_1^{-1}} & R_{\de_2  \la  \de_2^{-1}} & \cdots &
R_{\de_2  \la  \de_n^{-1}} \\
\vdots  & \vdots  & \ddots & \vdots  \\
R_{\de_n  \la  \de_1^{-1}} & R_{\de_n  \la  \de_2^{-1}} & \cdots &
R_{\de_n \la  \de_n^{-1}}
\end{pmatrix}
$$
Thus ${M_n(R)(d)}_{\la}$ consists of matrices with the $ij$-entry in
$R_{\delta_i \la  \delta_j^{-1}}$.

\begin{prop} \label{grmatrixring}\index{graded matrix ring}
With the notation as above, there is a graded ring
$$
M_n (R)(d) = \bigoplus_{\la \in \Ga} {M_n (R)(d)}_{\la}.
$$
\end{prop}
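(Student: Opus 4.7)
The plan is to verify the two axioms of a graded ring directly from the definition of the homogeneous components ${M_n(R)(d)}_{\la}$. Ring-theoretically, $M_n(R)(d)$ is just $M_n(R)$ as a ring; the task is to produce a compatible $\Ga$-grading on this underlying ring.

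First I would establish the direct sum decomposition. Given any matrix $A = (a_{ij}) \in M_n(R)$, each entry $a_{ij} \in R$ admits a unique decomposition $a_{ij} = \sum_{\ga \in \Ga} (a_{ij})_{\ga}$ with $(a_{ij})_{\ga} \in R_{\ga}$ and only finitely many nonzero terms. For each $\la \in \Ga$, define the matrix $A_{\la}$ to have $ij$-entry $(a_{ij})_{\de_i \la \de_j^{-1}}$. By construction $A_{\la} \in {M_n(R)(d)}_{\la}$, and $A = \sum_{\la \in \Ga} A_{\la}$ (a finite sum). The uniqueness of the decomposition in each coordinate, together with the fact that the map $\la \mapsto \de_i \la \de_j^{-1}$ is a bijection of $\Ga$ for each fixed pair $(i,j)$, implies that this presentation of $A$ as a sum over $\la$ is also unique. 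Hence $M_n(R) = \bigoplus_{\la \in \Ga} {M_n(R)(d)}_{\la}$ as abelian groups.

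Next I would check the multiplicative compatibility ${M_n(R)(d)}_{\la} \cdot {M_n(R)(d)}_{\mu} \subseteq {M_n(R)(d)}_{\la \mu}$. Take $A = (a_{ij}) \in {M_n(R)(d)}_{\la}$, so $a_{ij} \in R_{\de_i \la \de_j^{-1}}$, and $B = (b_{ij}) \in {M_n(R)(d)}_{\mu}$, so $b_{ij} \in R_{\de_i \mu \de_j^{-1}}$. Then
$$
(AB)_{ij} \;=\; \sum_{k=1}^n a_{ik} b_{kj} \;\in\; \sum_{k=1}^n R_{\de_i \la \de_k^{-1}} \cdot R_{\de_k \mu \de_j^{-1}} \;\subseteq\; R_{\de_i \la \de_k^{-1} \de_k \mu \de_j^{-1}} \;=\; R_{\de_i (\la \mu) \de_j^{-1}},
$$
since the internal $\de_k^{-1} \de_k$ cancels in $\Ga$ regardless of whether $\Ga$ is abelian. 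Thus $AB \in {M_n(R)(d)}_{\la \mu}$, as required.

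There is no serious obstacle here: the argument relies only on the unique homogeneous decomposition of entries in $R$ and on the telescoping cancellation in the product of the shifts $\de_i \la \de_k^{-1}$ and $\de_k \mu \de_j^{-1}$. Since all of the shift exponents involve two-sided multiplication in $\Ga$, no commutativity hypothesis on $\Ga$ is needed, and it is precisely this two-sided shift $\de_i(-)\de_j^{-1}$ (as opposed to a one-sided shift) that makes the construction work for a non-abelian grade group. The identity matrix $I_n$ sits in ${M_n(R)(d)}_e$ since each $1_R \in R_e$ and $\de_i e \de_i^{-1} = e$, so the grading is consistent with the ring structure.
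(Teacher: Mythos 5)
Your proof is correct and follows essentially the same route as the paper's: both verify the graded ring axioms directly, with the multiplicative containment coming from the telescoping cancellation $\de_k^{-1}\de_k = e$ and the direct sum decomposition coming from decomposing entries homogeneously in $R$ and using that $\la \mapsto \de_i \la \de_j^{-1}$ is a bijection. The only cosmetic difference is that you build the decomposition and its uniqueness in one pass, whereas the paper separately checks trivial intersection and spanning.
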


\begin{proof}
(See \cite[Prop.~2.10.4]{grrings}.) For any $\la_1$, $\la_2 \in
\Ga$,
$$
{M_n (R)(d)}_{\la_1} {M_n (R)(d)}_{\la_2} \subseteq {M_n
(R)(d)}_{\la_1  \la_2}.
$$
For any $i, j$ with $1 \leq i,j \leq n$, $R_{\delta_i \la
\delta_j^{-1}} \cap \sum_{\ga \neq \la} R_{\delta_i \ga
\delta_j^{-1}}= 0$, so
$$
{M_n(R)(d)}_{\la} \cap \left( \sum_{\ga \neq \la} {M_n(R)(d)}_{\ga}
\right) =0.
$$
Any matrix in $M_n(R)$ can be written as a sum of matrices with a
homogeneous element in one entry and zeros elsewhere. If $A \in M_n
(R)$ has $a \in R_{\epsilon}$ in the $ij$-entry and zeros elsewhere,
then taking $\la = \de_i^{-1}  \epsilon \hspace{.2 ex} \de_j$ gives
that $A \in {M_n(R)(d)}_{\la}$, and hence any matrix in $M_n (R)$
can be written as an element of ${M_n(R)(d)}$.
\end{proof}

Let $M$ be a graded $R$-module which is graded free with a finite
homogeneous base $\{b_1, \ldots , b_n\}$, where $\deg (b_i) =
\delta_i$. We noted in the previous section that the ring $\End_R
(M)$ is a graded ring with multiplication defined as $g \cdot f = f
\circ g$ for all $f, g \in \End_R (M)$. If we ignore the grading, it
is well-known that $\End_R(M) \cong M_n(R)$ as rings (see
\cite[Thm.~VII.1.2]{hungerford}). When we take the grading into
account, the following proposition shows that this isomorphism is in
fact a graded isomorphism.


\begin{prop}\label{endgrisomatrixring}
Let $M$ be a graded free $\Ga$-graded $R$-module with a finite
homogeneous base $\{b_1, \ldots , b_n\}$, where $\deg (b_i) =
\delta_i$. Then $\End_R(M) \conggr M_n(R)(d)$ as $\Ga$-graded rings,
where $(d) = (\delta_1 , \ldots, \delta_n ) \in \Ga^n$.
\end{prop}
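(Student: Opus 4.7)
My plan is to write down the classical isomorphism $\End_R(M) \cong M_n(R)$ given by picking coordinates with respect to the basis $\{b_1,\ldots,b_n\}$, then verify that under the shift data $(d) = (\delta_1,\ldots,\delta_n)$ it respects the given $\Ga$-gradings on the two sides. I will have to pick the indexing convention carefully so that, taking into account the multiplication $g\cdot f=f\circ g$ used in $\End_R(M)$, the associated matrix map is a ring homomorphism (rather than an anti-homomorphism) and, at the same time, lands in the right homogeneous component of $M_n(R)(d)$.

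First I define
$$\phi:\End_R(M)\lra M_n(R)(d),\qquad \phi(f)=\big(r_{ij}\big),$$
where the entries $r_{ij}\in R$ are determined by $f(b_i)=\sum_{j=1}^n r_{ij}\;\! b_j$. Since $M$ is free on $\{b_i\}$ this assignment is well defined and additive, and using the identity $(g\cdot f)(b_i)=(f\circ g)(b_i)=\sum_{j,k} s_{ij}r_{jk}\;\! b_k$ for $\phi(g)=(s_{ij})$ one checks that $\phi(g\cdot f)=\phi(g)\phi(f)$, so $\phi$ is a ring homomorphism. Its bijectivity is the usual classical argument: any matrix $(r_{ij})\in M_n(R)$ defines an $R$-linear endomorphism of $M$ by the same formula, giving a two-sided inverse. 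Thus ignoring grading, $\phi$ is a ring isomorphism.

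Now I check the grading. Suppose $f\in\End_R(M)_\la$, so $f(M_\ga)\subseteq M_{\ga\la}$ for every $\ga\in\Ga$. Since $b_i\in M_{\delta_i}$, we have $f(b_i)\in M_{\delta_i\la}$, and decomposing $f(b_i)=\sum_j r_{ij}b_j$ with $b_j\in M_{\delta_j}$ forces each homogeneous summand $r_{ij}b_j$ to lie in $M_{\delta_i\la}$, i.e.\ $\deg(r_{ij})=\delta_i\la\delta_j^{-1}$. This is precisely the condition defining $M_n(R)(d)_\la$ as introduced in Proposition~\ref{grmatrixring}, so $\phi\big(\End_R(M)_\la\big)\subseteq M_n(R)(d)_\la$. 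Conversely, given a matrix $(r_{ij})\in M_n(R)(d)_\la$, the endomorphism it defines satisfies $f(b_i)\in M_{\delta_i\la}$ for each $i$; extending by $R$-linearity and using that homogeneous elements of $M$ are finite sums of scalar multiples of the $b_i$, one sees $f(M_\ga)\subseteq M_{\ga\la}$ for all $\ga$, so $f\in\End_R(M)_\la$. Hence $\phi$ restricts to a bijection on each homogeneous component and is therefore a graded ring isomorphism.

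The only subtle point, and what I view as the main (small) obstacle, is the bookkeeping around the convention $g\cdot f=f\circ g$ in $\End_R(M)$: with the naive convention $f(b_j)=\sum_i r_{ij}b_i$ the associated map would be an \emph{anti}-isomorphism of rings, and moreover its image would sit inside the matrices with $(i,j)$-entry in $R_{\delta_j\la\delta_i^{-1}}$ rather than $R_{\delta_i\la\delta_j^{-1}}$. Using instead the convention $f(b_i)=\sum_j r_{ij}b_j$ corrects both issues simultaneously, and once that choice is made the rest of the verification is a direct computation.
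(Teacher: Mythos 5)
Your proposal is correct and follows essentially the same route as the paper: fix the coordinate isomorphism $\phi(f) = (r_{ij})$ with $f(b_i) = \sum_j r_{ij}\, b_j$, note that $f \in \End_R(M)_\la$ forces $f(b_i) \in M_{\de_i \la}$ and hence $\deg(r_{ij}) = \de_i \la \de_j^{-1}$, which is exactly membership in $M_n(R)(d)_\la$. You spell out a bit more than the paper does — the multiplicativity check under $g\cdot f = f\circ g$, and the converse containment $\phi^{-1}\big(M_n(R)(d)_\la\big)\subseteq\End_R(M)_\la$ (the paper leaves the former to the remarks preceding the proposition and derives the latter from bijectivity plus the one-sided inclusion) — but the substance is identical, and your closing remark about why the indexing $f(b_i)=\sum_j r_{ij}b_j$ is the right choice is a correct observation.
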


\begin{proof}
(See \cite[Prop.~2.10.5]{grrings}.) The remarks before the
proposition show that $\End_R(M) \cong M_n(R)$. To show that the
isomorphism is graded, let $f \in \End_R (M)_\la$ for some $\la \in
\Ga$. Then as in the non-graded setting, there are elements $r_{ij}
\in R$ such that
\begin{align*}
f (b_1 ) &= r_{11} b_1 + r_{12} b_2 + \cdots +r_{1n} b_n \\
f (b_2 ) &= r_{21} b_1 + r_{22} b_2 + \cdots +r_{2n} b_n \\
\vdots & \\
f (b_n ) &= r_{n1} b_1 + r_{n2} b_2 + \cdots +r_{nn} b_n ,
\end{align*}
with associated matrix $(r_{ij}) \in  M_n(R)$. Since we have $f (b_i
) \in M_{{\de}_i \la}$ for each $b_i$, it follows that each $r_{ij}$
is homogeneous of degree $\de_i  \la  \de_j^{-1}$. So $(r_{ij}) \in
M_n(R)(d)_{\la}$ as required.
\end{proof}

\begin{remark}\label{remarkongrmatrixrings}
Note that above all graded $R$-modules are considered as left
modules. For the $\lambda$-component of the graded matrix ring $M_n
(R)(d)$, we set the degree of the $ij$-entry to be $\de_i \lambda
\de_j^{-1}$. Then in the above proposition, we defined the
multiplication in $\End_R (M)$ as $g\cdot f = f \circ g$ to ensure
that the isomorphism is a graded ring isomorphism.

If we define multiplication in $\End_R (M)$ by $g \cdot f = g \circ
f$, then in the non-graded setting, we have a ring isomorphism
$\End_R (M) \ra M_n (R^{\op})$, where this map is the composition of
the homomorphism in the above proposition with the transpose map. To
make $\End_R (M)$, with this multiplication, into a graded ring, it
will be graded by $\Ga^{\op}$. In the above proposition, we will
have $\End_R (M) \conggr M_n (R^{\op})(d)$, where $M_n (R^{\op})(d)$
is also $\Ga^{\op}$-graded and the degree of the $ij$-entry of $M_n
(R^{\op}) (d)$ is defined to be $\de_i^{-1} \cdot_{\op} \lambda
\cdot_{\op} \de_j$.

In Definition~\ref{grazumayadefin} and in
Section~\ref{sectiongrdfunctors}, since $\Ga$ is an abelian group
and $R$ is a commutative graded ring, we will define multiplication
in endomorphism rings to be $g \cdot f = g \circ f$. Then we will
use the grading on matrix rings mentioned in the previous paragraph.

Suppose $M$ is a graded right $R$-module  and multiplication in
$\End_R (M)$ is defined by $g \cdot f = g \circ f$ (see
page~\pageref{pagerefforgrrightrmodule} for some comments on graded
right $R$-modules). Then to get a graded ring isomorphism $\End_R(M)
\conggr M_n(R)(d)$, we need to define the grading on the matrix ring
$M_n (R)(d)$ as having its $ij$-entry in the $\lambda$-component of
degree $\de_i^{-1} \lambda \de_j$.
\end{remark}


With a graded $R$-module $M$ defined as in the above proposition, we
define the graded $R$-module homomorphisms $E_{ij}$ by $E_{ij} (b_l)
= \mbox{{\boldmath $\de$}}_{il} b_j$ for $1 \leq i , j, l \leq n$,
where $\mbox{{\boldmath $\de$}}_{il}$ is the Kronecker delta. We
note that the graded $R$-module homomorphisms $E_{ij}$, for $1 \leq
i , j, l \leq n$, are homogeneous elements in $\End_R (M)$ of degree
$\de_i^{-1} \de_j$. Then $\{ E_{ij} : 1 \leq i ,j \leq n \}$ forms a
homogeneous basis for $\End_R (M)$. Via the isomorphism $\End_R(M)
\conggr M_n(R)(d)$, the map $E_{ij}$ corresponds to the matrix
$e_{ij}$, where $e_{ij}$ is the matrix with $1$ in the $ij$-entry
and zeros elsewhere. This observation will be used in the following
proposition to prove some basic properties of the graded matrix
rings.




\begin{prop} \label{grmatrixringsproperties}
Let $R$ be a $\Ga$-graded ring and let $(d) = (\delta_1 , \ldots,
\delta_n ) \in \Ga^n$.
\begin{enumerate}
\item If $\pi \in S_n$ is a permutation, then
\begin{equation*}
M_n (R)(\de_1 , \ldots, \de_n) \conggr M_n (R)(\de_{\pi(1)}, \ldots
, \de_{\pi(n)}).
\end{equation*}

\item If 
$(\ga_1, \ldots, \ga_n) \in \Ga^n$ with each $\ga_i \in {\Ga_R^*}$,
then
\begin{equation*}
M_n (R)(\de_1 , \ldots , \de_n) \cong_{\gr} M_n (R)(\ga_1  \de_1 ,
\ldots , \ga_n  \de_n).
\end{equation*}
If $R$ is a graded division ring, then any set of $(\ga_1 , \ldots ,
\ga_n) \in \Ga_R^n$ can be chosen.

\item If $\sigma \in Z(\Ga)$, the centre of $\Ga$, then
\begin{equation*}
M_n (R)(\de_1 , \ldots , \de_n) = M_n (R)(\de_{1} \si , \ldots ,
\de_{n} \si).
\end{equation*}
\end{enumerate}
\end{prop}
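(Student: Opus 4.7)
The plan is to derive all three statements from Proposition~\ref{endgrisomatrixring}, exploiting the slogan that $M_n(R)(d)$ records only the graded $R$-module $M$ together with a chosen ordered homogeneous basis. Throughout, I would fix a graded free $R$-module $M$ with homogeneous basis $\{b_1,\ldots,b_n\}$ of degrees $(\de_1,\ldots,\de_n)$, so that Proposition~\ref{endgrisomatrixring} gives $\End_R(M)\conggr M_n(R)(\de_1,\ldots,\de_n)$. The idea is then that reordering or rescaling the $b_i$ produces a different expression of the \emph{same} endomorphism ring as a graded matrix ring, and this is exactly what parts (1) and (2) assert.

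For part (1), I would simply reorder the basis as $(b_{\pi(1)},\ldots,b_{\pi(n)})$ and apply Proposition~\ref{endgrisomatrixring} a second time to obtain $\End_R(M)\conggr M_n(R)(\de_{\pi(1)},\ldots,\de_{\pi(n)})$. Composing with the first isomorphism yields the desired graded ring isomorphism. Concretely this amounts to conjugation by the permutation matrix of $\pi$, and one could verify directly that such conjugation sends the $\la$-component of one graded matrix ring onto the $\la$-component of the other.

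For part (2), since each $\ga_i\in\Ga_R^{*}$, Proposition~\ref{rdeconggrr} supplies units $x_i\in R^{*}\cap R_{\ga_i}$. Then $\{x_1 b_1,\ldots,x_n b_n\}$ is again a homogeneous $R$-basis of $M$ (it spans and is linearly independent because each $x_i$ is invertible), but now with degrees $(\ga_1\de_1,\ldots,\ga_n\de_n)$. Applying Proposition~\ref{endgrisomatrixring} to this new basis gives $\End_R(M)\conggr M_n(R)(\ga_1\de_1,\ldots,\ga_n\de_n)$, and composing produces the required graded isomorphism. The graded division ring case is immediate, because in that setting every non-zero homogeneous element is a unit, so $\Ga_R^{*}=\Ga_R$.

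For part (3), I would argue by a direct comparison of homogeneous components rather than via Proposition~\ref{endgrisomatrixring}. The $ij$-entry of the $\la$-component of $M_n(R)(\de_1\si,\ldots,\de_n\si)$ is $R_{(\de_i\si)\la(\de_j\si)^{-1}}=R_{\de_i\si\la\si^{-1}\de_j^{-1}}$. Because $\si\in Z(\Ga)$, we have $\si\la\si^{-1}=\la$, so this entry equals $R_{\de_i\la\de_j^{-1}}$, which is the $ij$-entry of the $\la$-component of $M_n(R)(\de_1,\ldots,\de_n)$. Since this holds for every $\la$, $i$, $j$, the two graded rings have identical homogeneous components and therefore literally coincide (not just up to isomorphism, as the statement asserts). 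The main obstacle is essentially bookkeeping: one must keep track of which shifting convention is in force (compare Remark~\ref{remarkongrmatrixrings}) and of whether endomorphisms act on the left or right, because the formula for the degree of the $ij$-entry of $M_n(R)(d)_{\la}$ depends on this choice. Once that is pinned down, each part reduces either to rechoosing a homogeneous basis of $M$ or to a one-line calculation.
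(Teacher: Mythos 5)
Your proposal is correct and follows essentially the same route as the paper: both obtain parts (1) and (2) by realising $M_n(R)(d)$ as $\End_R(M)$ via Proposition~\ref{endgrisomatrixring}, then rechoosing a homogeneous basis (reordered in (1), rescaled by homogeneous units in (2)) and composing the resulting isomorphisms, and both treat (3) as a direct computation showing the homogeneous components coincide. The paper packages the basis change as an explicit graded isomorphism $\phi\colon M\to N$ and the induced conjugation on endomorphism rings, but this is the same argument you describe in the phrase ``conjugation by the permutation matrix.''
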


\begin{proof}
(See \cite[p.~78]{hwcor}, \cite[Remarks~2.10.6]{grrings}.) We
observed on page~\pageref{rngradedfree} that
$$
M = R(\de_1^{-1}) \oplus \cdots \oplus R(\de_n^{-1})
$$
has a homogeneous basis $\{e_1, \ldots , e_n\}$ with $\deg(e_i) =
\de_i$. So from the above proposition, we have
$\End_R(M) \conggr M_n(R)(\de_1, \dots, \de_n)$, where the map
$E_{ij}$ corresponds to the matrix $e_{ij}$.

\vspace{3pt}

(1)  Let $\de_{\pi (i)} = \tau_i$. Then  $N = R( \tau_1^{-1} )
\oplus \cdots \oplus R(\tau_n^{-1})$ has a standard homogeneous
basis $\{e'_1, \ldots , e'_n \}$ with $\deg (e'_i ) = \tau_i$, and
we have
$$
\End_R(N) \conggr M_n(R)(\tau_1, \dots, \tau_n).
$$
We define a graded $R$-module isomorphism $\phi: M \ra N$ by
$\phi(e_i)=e'_{\pi^{-1}(i)}$, which induces a graded isomorphism
$\phi : \End_R (M) \ra \End_R (N)$; $f \mps \phi \circ f \circ
\phi^{-1}$. Combining these graded isomorphisms gives the required
result:
$$
M_n(R)(\de_1, \dots, \de_n) \conggr \End_A(M) \stackrel{\phi}{\lra}
\End_R(N)\conggr M_n(R)(\de_{\pi(1)}, \dots, \de_{\pi(n)}).
$$

\vspace{3pt}

(2) Let $u_1 , \ldots , u_n \in {R}$ be homogeneous units of $R$
with $\deg(u_i) = \ga_i$. Let $N = R(\de_1^{-1}) \oplus \cdots
\oplus R(\de_n^{-1}),$ where we are considering $N$ with the
homogeneous basis $\{u_1 e_1 , \ldots, u_n e_n\}$ with $\deg (u_i
e_i)= \ga_i \de_i $.
Then with this basis,
$$
\End_R(N) \conggr M_n (R)(\ga_1 \de_1, \dots, \ga_n \de_n).
$$
Define a graded $R$-module isomorphism $\phi: M \ra N$ by $\phi
(e_i) = u_i^{-1} (u_i e_i)$. The required isomorphism follows in a
similar way to (1). Clearly, if $R$ is a graded division ring then
every non-zero homogeneous element is invertible, so any set of
$(\ga_1 , \ldots , \ga_n) \in \Ga_R^n$ can be chosen here.

\vspace{3pt}

(3) Since $\si \in Z(\Ga)$, it is clear that $M_n(R)( \de_1 , \ldots
, \de_n)_{\la} = M_n(R)( \de_1  \si, \ldots , \de_n  \si)_{\la}$ for
each $\la \in \Ga$, as required.
\end{proof}

We note that if $R$ is a $\Ga$-graded ring, then for a $\Ga$-graded
right\label{pagerefforgrrightrmodule} $R$-module $M$, we have $M_\la
R_\ga \subseteq M_{\la \ga}$ for all $\la, \ga \in \Ga$. Then as for
graded left $R$-module homomorphisms, graded right $R$-module
homomorphisms between graded right $R$-modules may shift the
grading. We note that they are left shifted; that is, if $M,N$ are
graded right $R$-modules and $f \in \Hom_R (M,N)_{\de}$, then $f
(M_\ga) \subseteq N_{\de \ga}$ for all $\ga \in \Ga$. The
$\de$-shifted right $R$-module $(\de) M$ is defined as  $(\de) M =
\bigoplus_{\ga \in \Ga} \big((\de) M \big)_\ga$ where $\big( (\de)M
\big)_\ga = M_{\de \ga}$.

The following four results (Proposition~\ref{lemma1.15hungerford} to
Corollary~\ref{prop1.17part2hungerford}) are the graded versions of
some results on simple rings (see \cite[\S IX.1]{hungerford}). These
are required for the proof of Theorem~\ref{grmatrixisothm}.

\begin{prop} \label{lemma1.15hungerford}
Let $D$ be a graded division ring, let $V$ be a finite dimensional
graded module over $D$, and let $R= \End_D (V)$. If $A$ and $B$ are
graded right $R$-modules which are faithful and graded simple, then
$(\ga) A \conggr B$ as graded right $R$-modules for some $\ga \in
\Ga$.
\end{prop}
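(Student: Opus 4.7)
The plan is to show that $V$ itself, endowed with a natural graded right $R$-module structure, is up to a grading shift the unique faithful graded simple right $R$-module; both $A$ and $B$ will then be shifts of $V$, and the result will follow by combining the two isomorphisms.

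First, I would equip $V$ with the right $R$-action $v \cdot f := f(v)$. Associativity is compatible with the convention $g \cdot f = f \circ g$ for multiplication in $R = \END_D(V)$ (see page~\pageref{endisagrring}), and for $f \in R_\la$, $v \in V_\ga$ we have $v \cdot f = f(v) \in V_{\ga \la}$, so this is indeed a $\Ga$-graded right $R$-module structure. Faithfulness is immediate ($V \cdot f = 0$ forces $f(V) = 0$, hence $f = 0$). For graded simplicity, given any $0 \neq w \in V^h$, Proposition~\ref{gradedfree} extends $\{w\}$ to a homogeneous $D$-basis $\{w, b_2, \ldots, b_n\}$ of $V$; for any $v \in V^h$, the $D$-linear map sending $w \mps v$ and $b_i \mps 0$ (for $i \geq 2$) lies in $R^h$ of degree $\deg(w)^{-1} \deg(v)$, so $w \cdot R = V$ and any nonzero graded right $R$-submodule of $V$ is all of $V$.

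Next, I would fix a homogeneous $D$-basis $e_1, \ldots, e_n$ of $V$ with $\deg(e_i) = \de_i$ and introduce the orthogonal idempotents $\pi_i \in R_e$ defined by $\pi_i(e_j) = \de_{ij} e_j$ (Kronecker). Since each $\pi_i$ is homogeneous and $\pi_1 + \cdots + \pi_n = 1_R$, there is a decomposition $R = \pi_1 R \oplus \cdots \oplus \pi_n R$ as $\Ga$-graded right $R$-modules. The map $\phi_i : V \to \pi_i R$ sending $v \in V$ to the endomorphism $(e_i \mps v,\; e_j \mps 0\text{ for }j \neq i)$ is a bijective right $R$-module homomorphism; tracking degrees one checks $\phi_i(V_\ga) \subseteq R_{\de_i^{-1} \ga}$, so $\phi_i$ realises a graded isomorphism $(\de_i) V \conggr \pi_i R$. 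In particular each $\pi_i R$ is graded simple.

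Thirdly, let $M$ denote either $A$ or $B$. Pick $0 \neq m \in M^h$; then $r \mps m \cdot r$ is a (shifted) graded right $R$-module homomorphism $R \to M$ whose image $mR$ is a nonzero graded submodule of $M$, hence equals $M$ by graded simplicity. Restricting to the direct summands $\pi_i R$, at least one restriction $\pi_i R \to M$ is nonzero, therefore surjective onto the graded simple $M$; since $\pi_i R$ is itself graded simple, this restriction has zero kernel, yielding a graded isomorphism $\pi_i R \conggr (\rho) M$ for an appropriate shift $\rho$. Combined with $\pi_i R \conggr (\de_i) V$, this gives $M \conggr (\mu_M) V$ for some $\mu_M \in \Ga$.

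Applying this to both $A$ and $B$ produces $A \conggr (\mu_A) V$ and $B \conggr (\mu_B) V$; taking $\ga = \mu_A^{-1} \mu_B$ and using that right-module shifts compose by $(\al)(\be) M = (\be \al) M$, one obtains $(\ga) A \conggr B$. The main obstacle will be the bookkeeping of shift conventions: because $\Ga$ is not assumed abelian, the order of composition of shifts is reversed, and care is needed both in identifying the precise shift in $\pi_i R \conggr (\de_i) V$ and in combining the two isomorphisms $A \conggr (\mu_A) V$, $B \conggr (\mu_B) V$ into the single shift $\ga$.
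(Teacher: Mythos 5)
Your proof is correct and proceeds by essentially the same strategy as the paper's: both identify a minimal graded right ideal of $R$ (your $\pi_i R \conggr (\de_i)V$, the paper's $e_{ii}M_n(D)(d)$) and show that each of $A$ and $B$ is a grading shift of it, then compose the two isomorphisms. The one noteworthy streamlining in your version is that you obtain the nonzero map onto $M$ by restricting $r \mapsto mr$ to a Peirce summand $\pi_i R$, rather than invoking faithfulness to produce $a$ with $aI \neq 0$ as the paper does; the faithfulness hypothesis is in fact superfluous here, since $R \conggr M_n(D)(d)$ is graded simple and hence every nonzero graded $R$-module is automatically faithful.
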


\begin{proof}
From Proposition~\ref{endgrisomatrixring}, $R \conggr M_n (D)(d)$
for some $(d) \in \Ga^n$. Then we will show that $M_n (D)(d)$
contains a minimal graded right ideal. For some $i$, consider $J_1 =
\{ e_{i,i} X : X \in M_n (D)(d)\}$, where $e_{i,i}$ is the
elementary matrix with $1$ in the $i,i$-entry. Then $J_1$ consists
of all matrices in $M_n (D)(d)$ with $j$-th row zero for $j \neq i$
and $J_1$ is a graded right ideal of $M_n(D)(d)$. If $J_1$ is not
minimal, there is a non-zero graded right ideal $J_2$ of $M_n
(D)(d)$, with $J_2 \subseteq J_1$. Since $J_2$ is a graded ideal, it
contains a non-zero homogeneous element. Then by using the
elementary matrices, we can show that $J_1= J_2$. So $R$ contains a
minimal graded right ideal, which we denote by $I$.


Since $A$ is faithful, its annihilator is zero, so there is a
homogeneous element $a \in A_{\ep}$ for some $\ep \in \Ga$, such
that $aI \neq 0$ (if not, then $aI =0$ for all $a \in A^h$, so $I
\subseteq \Ann (A)=0$, contradicting $I \neq 0$). Then $a I$ is a
graded submodule of $A$ as  it is generated by the homogeneous
elements $a \in A_\ep$ and all the $i \in I^h$. But $A$ is graded
simple, so $aI =A$. Define a map
\begin{align*}
\psi :I  &\lra (\ep)aI = (\ep)A\\
i &\lmps ai.
\end{align*}
This is a graded right $R$-module homomorphism, which is surjective.
Since $\ker (\psi)$ is a graded right ideal of $I$, and $I$ is
minimal, this implies the kernel is zero, so $\psi$ is a graded
isomorphism. Similarly, we have $I \conggr ({\ep'}) B$ for some
$\ep' \in \Ga$. So $(\ep)A \conggr ({\ep'})B$, which says $(\ga)A
\conggr B$ for some $\ga \in \Ga$.
\end{proof}

\begin{prop} \label{lemma1.16hungerford}
Let $D$ be a graded division ring, $V$ be a non-zero graded
$D$-module, and let $R= \End_D (V)$. If $g : V \ra V$ is a left
shifted homomorphism of additive groups such that $g f=fg$ for all
$f \in R$, then there is a homogeneous element $d \in D^h$ such that
$g(v) = dv$ for all $v \in V$.
\end{prop}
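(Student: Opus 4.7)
The proof will imitate the classical pattern for showing that the bicommutant of a graded division ring in its own graded endomorphism ring consists of scalar multiplications, but with care taken to track the grading. By Proposition \ref{gradedfree}, $V$ admits a homogeneous basis $\{v_i\}_{i \in I}$ over $D$; I fix one basis element $v_0 \in V_{\la_0}$ and let $\ga \in \Ga$ denote the amount by which $g$ shifts the grading from the left, so that $g(V_\la) \subseteq V_{\ga \la}$ for every $\la \in \Ga$.

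I first identify a candidate scalar. The projection $\pi : V \to Dv_0$ onto the $v_0$-coordinate (with kernel the $D$-span of the remaining basis elements) is a grade-preserving $D$-linear endomorphism, hence an element of $R$. Applying the hypothesis $g \circ \pi = \pi \circ g$ to $v_0$ yields $g(v_0) = \pi(g(v_0)) \in Dv_0$, so $g(v_0) = d_0 v_0$ for some $d_0 \in D$; since $g(v_0) \in V_{\ga \la_0}$ and $v_0 \in V_{\la_0}$ is $D$-linearly independent, comparing degrees forces $d_0 \in D_\ga \subseteq D^h$. To propagate the identity $g(v_0) = d_0 v_0$ to every basis element, I introduce, for each $i$, the $D$-linear map $\phi_i : V \to V$ defined by $\phi_i(v_0) = v_i$ and $\phi_i(v_j) = 0$ for $j \neq 0$, extended $D$-linearly. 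A short degree check shows $\phi_i$ is homogeneous in $R$ of degree $\la_0^{-1} \la_i$, and evaluating $g \circ \phi_i = \phi_i \circ g$ at $v_0$ gives $g(v_i) = \phi_i(d_0 v_0) = d_0 v_i$. To reach arbitrary homogeneous scalar multiples, for each $r \in D^h$ and each $i$ I use the $D$-linear auxiliary map $\psi_{i,r}: V \to V$ with $\psi_{i,r}(v_i) = r v_i$ and $\psi_{i,r}(v_j) = 0$ for $j \neq i$; this $\psi_{i,r}$ is again homogeneous in $R$, and the commutation relation combined with the previous step yields $g(r v_i) = \psi_{i,r}(g(v_i)) = \psi_{i,r}(d_0 v_i) = d_0 (r v_i)$. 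Finally, since any $v \in V$ is a finite sum of homogeneous scalar multiples of basis elements, the additivity of $g$ upgrades the conclusion to $g(v) = d_0 v$ on all of $V$.

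The main point demanding care is verifying that the auxiliary maps $\phi_i$ and $\psi_{i,r}$ really lie in the graded ring $R = \End_D(V)$: their $D$-linearity is automatic from the recipe of defining them on basis elements and extending $D$-linearly, while their homogeneity is a short bookkeeping exercise using the decomposition $V_\la = \bigoplus_j D_{\la \la_j^{-1}} v_j$ together with the degrees of $v_i$ and $r$. Notably, the argument never requires $\Ga$ to be abelian or torsion-free, so the statement holds in the full generality claimed.
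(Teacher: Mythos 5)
Your proof is correct, and it follows the same overall strategy as the paper's: fix a homogeneous vector, deduce that $g$ sends it to a homogeneous $D$-multiple of itself, then propagate to all of $V$ via the commutation relation. The execution differs in two small but nice ways. For the first step the paper argues by contradiction — if $u$ and $g(u)$ were $D$-independent, extend them to a homogeneous basis and build $f \in R$ with $f(u)=0$ but $f(g(u))\neq 0$, violating $fg=gf$ — whereas your use of the coordinate projection $\pi$ onto $Dv_0$ gives $g(v_0)\in Dv_0$ in one direct line, avoiding the case split on $\dim_D V$. For the propagation step, however, the paper is more economical: given arbitrary $v\in V$, pick any $D$-linear $h\in R=\End_D(V)$ with $h(v_0)=v$ (define it on the homogeneous basis) and compute $g(v)=g(h(v_0))=h(g(v_0))=h(d_0v_0)=d_0v$; this handles basis elements, scalar multiples, and additivity all at once, so your three-stage construction with $\phi_i$, $\psi_{i,r}$, and then additivity, while perfectly valid, is doing by hand what a single well-chosen $h$ accomplishes. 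Both arguments work for an arbitrary grade group, as you note.
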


\begin{proof}
Let $u \in V^h \mi 0$. We will show $u$ and $g(u)$ are linearly
dependent over $D$. This is clear if $\dim_D (V) =1$. Suppose
$\dim_D (V) \geq 2$, and suppose they are linearly independent. Then
$\{u, g(u) \}$ can be extended to form a homogeneous basis of $V$.
We can define a map $f \in \End_D(V)$ such that $f(u) = 0$ and $f
(g(u)) = v \neq 0$ for some $v \in V$. We assumed $fg=gf$, so $f( g
(u)) = g (f(u)) = g(0) =0$, contradicting the choice of $f(g(u))$.
So $u$ and $g(u)$ are linearly dependent over $D$, and there is some
$d \in D^h$ with $g(u) = du$. Let $v \in V$, and let $h \in R$ with
$h(u)=v$. Then $g(v) = g(h(u) ) = h(g(u)) =  h(du) = d(h(u)) = dv$.
\end{proof}

\begin{prop} \label{prop1.17hungerford}
Let $D$, $D'$ be graded division rings and let $V$, $V'$ be graded
modules of finite dimension $n$, $n'$ over $D$, $D'$ respectively.
If $\End_{D} (V) \conggr \End_{D'} (V')$ as graded rings, then
$\dim_D (V) = \dim_{D'} (V')$ and $D \conggr D'$.
\end{prop}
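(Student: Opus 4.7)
The plan is to adapt the classical proof from the non-graded setting (cf.\ \cite[Thm.~IX.1.17]{hungerford}) using the two preceding graded results, Propositions~\ref{lemma1.15hungerford} and~\ref{lemma1.16hungerford}, as the main engines. Set $R = \End_D(V)$ and $R' = \End_{D'}(V')$, and let $\phi : R \conggr R'$ denote the given graded ring isomorphism.

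First I would show that $V$, viewed as a graded right $R$-module via $v \cdot f = f(v)$, is faithful and graded simple. Faithfulness is immediate from the definition of $R$. For graded simplicity, let $W$ be a non-zero graded right $R$-submodule and pick a non-zero homogeneous $w \in W^h$. By Proposition~\ref{gradedfree}, $\{w\}$ extends to a homogeneous $D$-basis $\{w = b_1, b_2, \ldots, b_n\}$ of $V$; for any $v \in V^h$, the $D$-linear map sending $b_1 \mapsto v$ and $b_i \mapsto 0$ ($i > 1$) is a homogeneous element of $R$ and maps $w$ to $v$, so $W = V$. The analogous statement holds for $V'$ over $R'$. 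Using $\phi$, one can pull back the right $R'$-action on $V'$ to give $V'$ the structure of a faithful, graded simple right $R$-module. Proposition~\ref{lemma1.15hungerford} then yields a graded right $R$-module isomorphism $\psi : (\gamma) V \conggr V'$ for some $\gamma \in \Gamma$.

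Next, for each homogeneous $d \in D$, left multiplication $\ell_d : V \to V$ is a left-shifted homomorphism of additive groups that commutes with every $f \in R$ (since $f$ is $D$-linear). Conjugating by $\psi$ produces a left-shifted additive homomorphism $\psi \ell_d \psi^{-1} : V' \to V'$ which commutes with the right $R'$-action, and Proposition~\ref{lemma1.16hungerford} identifies this map with $\ell_{d'}$ for a unique homogeneous $d' \in D'$. I would then verify that the assignment $d \mapsto d'$ is a graded ring isomorphism $D \conggr D'$: additivity, multiplicativity and preservation of $1$ follow directly from $\psi \ell_{d_1 d_2} \psi^{-1} = \psi \ell_{d_1} \psi^{-1} \cdot \psi \ell_{d_2} \psi^{-1}$, and bijectivity comes from running the construction through $\psi^{-1}$. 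Once $D \conggr D'$ is established, the equality $\dim_D V = \dim_{D'} V'$ follows by combining Proposition~\ref{endgrisomatrixring} (which gives $\End_D(V) \conggr M_n(D)(d)$ and $\End_{D'}(V') \conggr M_{n'}(D')(d')$) with a dimension count: the $R_e$-dimension of $M_n(D)(d)$ is $n^2 \dim_{D_e}(D_e) \cdot$ (support data), so the graded ring isomorphism forces $n^2 = n'^2$, hence $n = n'$.

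The main obstacle will be the non-abelian grading. The shift $\gamma$ obtained from Proposition~\ref{lemma1.15hungerford} means that the conjugated map $\psi \ell_d \psi^{-1}$ has degree $\gamma^{-1} \deg(d) \gamma$ rather than $\deg(d)$ itself, so the map $D \to D'$ is naturally graded only up to this inner automorphism of $\Gamma$. I would handle this either by restricting to the grading by $\Gamma_D$ and observing that the induced $\Gamma$-automorphism restricts to an isomorphism of support groups $\Gamma_D \conggr \Gamma_{D'}$ intertwined with the ring map (which is sufficient for a graded isomorphism in the sense of the excerpt, since graded ring isomorphisms only need to respect the direct sum decompositions), or by separately verifying that the composite map $d \mapsto d'$ carries $D_\beta$ into $D'_{\gamma^{-1} \beta \gamma}$ and that this still yields a graded isomorphism once we relabel the grading on $D'$ via the inner automorphism $\beta \mapsto \gamma^{-1} \beta \gamma$, which is a degree-preserving self-identification of $D'$ as a graded ring.
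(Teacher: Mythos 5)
Your construction of the graded ring isomorphism $D \conggr D'$ follows the paper's proof essentially step for step: view $V$ and $V'$ as faithful, graded simple right $R$-modules (pulling the $R$-action on $V'$ back through the given isomorphism), invoke Proposition~\ref{lemma1.15hungerford} to get the graded right $R$-module isomorphism $\psi:(\gamma)V \conggr V'$, and then conjugate each left-multiplication $\alpha_d$ by $\psi$ and apply Proposition~\ref{lemma1.16hungerford} to define the map $\tau: d \mapsto d'$. That matches the paper. Your flagged concern about the shift $\gamma$ moving degrees by $\delta \mapsto \gamma^{-1}\delta\gamma$ is also a genuine subtlety that the paper's proof glosses over, so raising it is fair.

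The gap is in the final step, the equality $\dim_D(V) = \dim_{D'}(V')$. Your proposed dimension count via $\End_D(V) \conggr M_n(D)(d)$ and $\End_{D'}(V') \conggr M_{n'}(D')(d')$ does not go through as stated. The quantity you appeal to, ``the $R_e$-dimension of $M_n(D)(d)$,'' is not well-defined as a graded-dimension: $R_e$ is merely a ring, not a graded division ring, and $M_n(D)(d)$ is not graded free over it. If you instead try to compute a dimension of $M_n(D)(d)$ over $D_e$ (or over $D$ acting componentwise), you get an expression depending on $(d)$, on $\Gamma_D$, and on whether $\dim_{D_e}(D)$ is even finite; none of that reduces cleanly to $n^2$, and the given graded ring isomorphism is not a module isomorphism over any of these base rings, so you cannot compare the two sides. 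Nor can you fall back on the non-graded Wedderburn uniqueness theorem for $M_n(D) \cong M_{n'}(D')$, because a graded division ring is not a division ring (non-homogeneous elements need not be invertible) and so $M_n(D)$ need not be a simple Artinian ring.

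The paper closes this gap by exploiting the extra structure you already built but did not use: from $\phi \circ \alpha_d \circ \phi^{-1} = \alpha_{\tau(d)}$ one reads off directly that $\phi(dv) = \tau(d)\phi(v)$ for all $d \in D^h$, $v \in V$. So $\phi$ is a $\tau$-semilinear isomorphism, and it therefore carries $D$-linearly independent (resp.\ spanning) homogeneous subsets of $V$ to $D'$-linearly independent (resp.\ spanning) homogeneous subsets of $V'$. That gives $\dim_D(V) = \dim_{D'}(V')$ immediately, with no need to pass through a matrix realisation or a dimension formula. You should replace your dimension count with this semilinearity argument; the rest of your outline is sound.
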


\begin{proof}
Let $R= \End_D (V)$. Note that $V$ is a graded right $R$-module via
$v r = r(v)$ for all $v \in V$, $r \in R$. We will show that $V$ is
faithful and graded simple as a  graded right $R$-module. If $V r =
0$ for $r \in R$, then $r(v)=0$ for all $v \in V$, so $r=0$. To show
graded simple, let $M$ be a non-zero graded right $R$-submodule of
$V$, and let $m \in M^h \mi 0$. Extend $\{ m\}$ to form a
homogeneous basis of $V$. For some $v \in V \mi M$, define a map
$\theta_v :V \ra V$ by $\theta_v (m) = v$ and $\theta (w)=0$ for all
other elements $w$ in this basis of $V$. Then $\theta_v \in R$, and
$m \theta_v = v \notin M$, so $M$ is not closed under action of $R$.
This shows $V$ is graded simple.

Denote the given graded ring isomorphism by $\sigma : R \ra
\End_{D'} (V')$. Then using the same argument as above, we have that
$V'$ is a faithful and graded simple graded right $\End_{D'}
(V')$-module. Using the isomorphism $\si$, we can give $V'$ the
structure of a graded right $R$-module by defining $wr = w \si (r)$
for each $w \in V'$, $r \in R$. It follows that $V'$ is faithful
with respect to $R$. Any graded right $R$-submodule of $V'$ as also
closed under action of $\End_{D'} (V')$, so it is a graded right
$\End_{D'} (V')$-submodule of $V'$. This shows that $V'$ is graded
simple as a graded right $R$-module.

Applying Proposition~\ref{lemma1.15hungerford}, there is a graded
right $R$-module isomorphism $\phi : (\ga)V \ra V'$ for some $\ga
\in \Ga$. Then for $v \in V$, $f \in R$,
$$
(\phi \circ f)(v) = \phi (f (v)) = \phi (vf) = \phi (v) f = \phi (v)
\si (f) = \si (f) \big(\phi (v)\big),
$$
so that $\phi \circ f \circ \phi^{-1} = \si (f)$ as a homomorphism
of additive groups from $V'$ to $V'$. For $d \in D^h$, let $\al_d :V
\ra V;$ $x \mps dx$. This is a (left shifted) homomorphism of
additive groups. Clearly $\al_d =0$ if and only if $d=0$. Similarly
for $e \in {D'}^h$ we can define $\al_e$.

Let $f \in R$, $v \in V$ and $d \in D^h$. Then
\begin{align*}
(f \circ \al_d )(v)\, =\, f (dv) \, = \, d f(v)  \, = \, (\al_d
\circ f )(v),
\end{align*}
so $f \circ \al_d  = \al_d  \circ f$. Then using the above results
$(\phi \circ \al_d \circ \phi^{-1}) \circ (\si (f)) = (\si (f))
\circ (\phi \circ \al_d \circ \phi^{-1})$. Since $\si$ is surjective
and $\phi \circ \al_d \circ \phi^{-1}$ is a left shifted
homomorphism of additive groups from $V'$ to $V'$, we can apply
Proposition~\ref{lemma1.16hungerford}. There is a homogeneous
element $e \in {D'}^h$ such that $\phi \circ \al_d \circ \phi^{-1}
(w) = ew$ for all $w \in V'$.

Define a map
\begin{align*}
\tau : D^h & \lra {D'}^h \\
d & \lmps e
\end{align*}
and extend it linearly to cover all of $D$. It is routine to show
that $\tau$ is a graded ring homomorphism. For example, we will show
that, for some $\ga \in \Ga$, if $d_1, d_2 \in D_{\ga}$ then $\tau
(d_1 + d_2) = \tau (d_1) + \tau (d_2)$. The others parts are
similar. Let $e_1 , e_2, e_3 \in {D'}^h$ with $\phi \circ \al_{d_1}
\circ \phi^{-1} = \al_{e_1}$, $\phi \circ \al_{d_2} \circ \phi^{-1}
= \al_{e_2}$ and $\phi \circ \al_{d_1+d_2} \circ \phi^{-1} =
\al_{e_3}$. Then for $w \in V'$,
\begin{align*}
e_3 w = \phi \circ \al_{d_1+d_2} \circ \phi^{-1} (w) & = \phi \big(
(d_1 +d_2) \phi^{-1} (w) \big) \\
&= \phi \big(d_1 \phi^{-1} (w)\big) +\phi \big(d_2 \phi^{-1} (w)
\big) \\
&=  \phi \circ \al_{d_1} \circ \phi^{-1} (w) +  \phi \circ \al_{d_2}
\circ \phi^{-1} (w) \\
&= e_1 w + e_2 w = (e_1 +e_2)w.
\end{align*}

Then $\tau$ is injective, since if $\tau (d) = 0$, then $\phi \circ
\al_d \circ \phi^{-1} (w)=0$ for all $w \in V'$. This implies $\al_d
=0$, so $d=0$. Reverse the roles of $D$ and $D'$ and replace $\phi,
\si$ by $\phi^{-1} , \si^{-1}$. For each $k \in {D'}^h$, there is $d
\in D^h$ such that $\phi^{-1} \circ \al_k \circ \phi = \al_d$. From
above there is $\tau (d) \in {D'}^h$ such that $\phi \circ \al_d
\circ \phi^{-1} = \al_{\tau(d)}$. Combining these gives $\al_k =
\al_{\tau (d)}$. Since $k$ and $\tau (d)$ are homogeneous of the
same degree in $D'$, it follows that $k= \tau (d)$, so $\tau$ is
surjective.

Let $d \in D^h$ and $v \in V$. Then $\phi (dv) = \phi \circ \al_d
(v) = \al_{\tau(d)} \circ \phi (v) =\tau (d) \phi (v)$. Then using
this we can show that $\{u_1, \ldots , u_k\}$ are linearly
independent in $V$ if and only if $\{\phi (u_1) , \ldots , \phi
(u_k) \}$ are linearly independent in $V'$. It follows that the
former set spans $V$ if and only if the latter set spans $V'$,
proving $\dim_D (V) = \dim_{D'} (V')$.
\end{proof}

\begin{cor}\label{prop1.17part2hungerford}
Let $D$, $D'$ be graded division rings, and let $(d) \in \Ga^n$,
$(d') \in \Ga^{n'}$. If $M_n(D)(d) \conggr M_{n'}(D')(d')$ as graded
rings, then $n = n'$ and $D \conggr D'$.
\end{cor}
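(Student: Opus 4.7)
The plan is to deduce this corollary as a direct consequence of Proposition~\ref{prop1.17hungerford} by realizing the shifted graded matrix rings $M_n(D)(d)$ and $M_{n'}(D')(d')$ as graded endomorphism rings of suitably chosen graded free modules.

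First, I would construct graded free modules exhibiting the given shifts. For $(d) = (\de_1, \ldots, \de_n) \in \Ga^n$, set $V = D(\de_1^{-1}) \oplus \cdots \oplus D(\de_n^{-1})$, which is a graded left $D$-module. As noted on page~\pageref{rngradedfree}, the standard basis vectors $e_i$ are homogeneous of degrees $\de_i$, so $V$ is graded free of dimension $n$ with a homogeneous base of degrees $(d)$. Similarly, define $V' = D'(\de_1'^{-1}) \oplus \cdots \oplus D'(\de_{n'}'^{-1})$ as a graded free $D'$-module of dimension $n'$.

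Next, I would apply Proposition~\ref{endgrisomatrixring} to each of these graded free modules. Since $V$ is graded free with homogeneous basis of degrees $(d)$, we obtain a graded ring isomorphism $\End_D(V) \conggr M_n(D)(d)$, and likewise $\End_{D'}(V') \conggr M_{n'}(D')(d')$. Combining these with the given graded ring isomorphism $M_n(D)(d) \conggr M_{n'}(D')(d')$ yields a chain of graded ring isomorphisms
\[
\End_D(V) \conggr M_n(D)(d) \conggr M_{n'}(D')(d') \conggr \End_{D'}(V').
\]

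Finally, I would invoke Proposition~\ref{prop1.17hungerford} directly with the data $(D, V)$ and $(D', V')$: it delivers both $\dim_D(V) = \dim_{D'}(V')$ and $D \conggr D'$. Since $\dim_D(V) = n$ and $\dim_{D'}(V') = n'$ by construction, this gives $n = n'$ and completes the proof. There is no real obstacle here; the corollary is essentially a translation from the module-theoretic formulation of the preceding proposition to the matrix-theoretic formulation, and the translation dictionary is precisely Proposition~\ref{endgrisomatrixring}.
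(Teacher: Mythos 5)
Your proof is correct and follows the same route as the paper's: construct $V = D(\de_1^{-1}) \oplus \cdots \oplus D(\de_n^{-1})$ and $V'$ analogously, identify $M_n(D)(d) \conggr \End_D(V)$ and $M_{n'}(D')(d') \conggr \End_{D'}(V')$ via Proposition~\ref{endgrisomatrixring}, and then invoke Proposition~\ref{prop1.17hungerford}. Your write-up is merely more explicit about the construction of $V$ and $V'$, which the paper handles by referring back to the proof of Proposition~\ref{grmatrixringsproperties}.
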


\begin{proof}
As in the proof of Proposition~\ref{grmatrixringsproperties}, we can
choose a graded $D$-module $V$ such that $M_n(D)(d) \conggr \End_D
(V)$, and a graded $D'$-module $V'$ such that $M_{n'}(D')(d')
\conggr \End_{D'}(V')$. Then by
Proposition~\ref{prop1.17hungerford}, we have that $n=n'$ and $D
\conggr D'$.
\end{proof}


Let $D$ be a $\Ga$-graded division ring and let  $(\lambda) =
(\lambda_1, \ldots , \lambda_n) \in \Ga^n$. Consider the partition
of $\Ga$ into right cosets of $\Ga_D$. For distinct elements of
$(\lambda)$, if the right cosets $\Ga_D  \lambda_1 , \ldots , \Ga_D
 \lambda_n$ are not all distinct, then there is a first $\Ga_D
\lambda_i$ such that $\Ga_D  \lambda_i = \Ga_D  \lambda_j$ for some
$j <i$. In $(\lambda)$, we will replace $\lambda_i$ by $\lambda_j$,
so that now $(\lambda) = (\lambda_1 , \ldots , \lambda_{i-1} ,
\lambda_j , \lambda_{i+1}, \ldots , \lambda_n)$. If the right cosets
$$
\Ga_D  \lambda_1 , \ldots , \Ga_D  \lambda_{i -1},  \Ga_D \lambda_{
i +1} , \ldots , \Ga_D  \lambda_n
$$
are still not all distinct, repeat the above process. Continue
repeating this process until all the right cosets of distinct
elements of $(\lambda)$ are distinct. Let $k$ denote the number of
distinct right cosets (which is, after the above process, also the
number of distinct elements in $(\lambda)$). Let $\ep_1 =
\lambda_1$, let $\ep_2$ be the second distinct element of
$(\lambda)$, and so on until $\ep_k$ is the $k$-th distinct element
of $(\lambda)$. For each $\ep_l$, let $r_l$ be the number of
$\lambda_i$ in $(\lambda_1 , \ldots , \lambda_n)$ with $\Ga_D
\lambda_i = \Ga_D  \ep_l$. Using
Proposition~\ref{grmatrixringsproperties} we get
\begin{equation}\label{wdeild}
M_n(D)(\lambda_1, \dots, \lambda_n) \conggr  M_n(D)(\ep_1, \ldots,
\ep_1, \ep_2, \ldots, \ep_2, \ldots, \ep_k, \ldots, \ep_k),
\end{equation}
where each $\ep_l$ occurs $r_l$ times.


Also note that for $\Ga$-graded $D$-modules $M$ and $N$, by
Proposition~\ref{grdimensionprop}, $\dim_D (M \oplus N) = \dim_D (M)
+ \dim_D \big( (M \oplus N) / M \big).$ By the first isomorphism
theorem, $(M \oplus N)/M \conggr N$, so $\dim_D (M \oplus N ) =
\dim_D (M) + \dim_D (N)$.

The following theorem  extends \cite[Thm.~2.1]{can2} from trivially
graded fields to graded division rings.



\begin{thm}\label{grmatrixisothm}
Let $D$ be a $\Ga$-graded division ring, let $\lambda_i,\ga_j \in
\Ga$ for $1\leq i \leq n$, $1\leq j \leq m$ and let $\Omega = \{
\lambda_1 , \ldots , \lambda_n , \ga_1 , \ldots , \ga_m \}$. If the
elements of $\Omega$ mutually commute and if
\begin{equation}\label{eqngrmatrixisothm}
M_n(D)(\lambda_1, \ldots, \lambda_n) \conggr M_m(D)(\ga_1, \ldots,
\ga_m),
\end{equation}
then we have $n=m$ and $\ga_i= \tau_i  \lambda_{\pi(i)} \sigma$ for
each $i$, where $\tau_i \in \Ga_D$, $\pi \in S_n$ is a permutation
and $\sigma \in Z_\Ga (\Omega )$ is fixed.
\end{thm}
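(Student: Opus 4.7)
The plan is to first reduce $n=m$ to a citation, then normalise both sides to a canonical ``block'' form indexed by right $\Gamma_D$-cosets, and finally exploit the bimodule structure of these blocks, together with the fact that a graded isomorphism must send the identity (and hence a complete set of orthogonal homogeneous idempotents of degree $e$) to a corresponding set, in order to extract the common central shift $\sigma$.

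First, an immediate application of Corollary~\ref{prop1.17part2hungerford} to \eqref{eqngrmatrixisothm} gives $n=m$ (and a graded isomorphism $D \conggr D$, which we do not need). So all the work lies in understanding what the graded isomorphism forces on the tuples of shifts. Next, using Proposition~\ref{grmatrixringsproperties}(1) to permute entries and (2) to multiply each $\lambda_i$ (respectively $\gamma_j$) by a suitable homogeneous unit of $D$, carry out the reduction \eqref{wdeild} on both sides. This puts both tuples in the form
\[
M_n(D)\big(\underbrace{\ep_1,\ldots,\ep_1}_{r_1},\ldots,\underbrace{\ep_k,\ldots,\ep_k}_{r_k}\big) \conggr M_n(D)\big(\underbrace{\ep'_1,\ldots,\ep'_1}_{r'_1},\ldots,\underbrace{\ep'_{k'},\ldots,\ep'_{k'}}_{r'_{k'}}\big),
\]
where $\Gamma_D\ep_1,\ldots,\Gamma_D\ep_k$ are pairwise distinct right cosets and likewise on the right hand side. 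What must be shown is then that $k=k'$, that $r_l=r'_{\pi(l)}$ for some permutation $\pi$, and that $\Gamma_D\ep_l = \Gamma_D\ep'_{\pi(l)}\sigma^{-1}$ for one common $\sigma$ lying in $Z_\Gamma(\Omega)$; this implies the theorem after re-expanding the blocks.

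Introduce the homogeneous idempotents $p_l = \sum_{i\text{ in block }l} e_{ii}$ of degree $e$ on the left, which form a complete system of orthogonal idempotents, and similarly $p'_l$ on the right. A graded isomorphism $\phi$ sends a complete set of orthogonal homogeneous idempotents of degree $e$ summing to $1$ to another such set, so after an extra application of Proposition~\ref{grmatrixringsproperties}(1) we may assume $\phi(p_l)=p'_l$ (possibly after re-indexing, which gives the permutation $\pi$). Then $\phi$ restricts to graded isomorphisms of corners $p_l M_n(D)(\lambda)p_l \conggr p'_l M_n(D)(\gamma)p'_l$; each corner is graded-isomorphic to $M_{r_l}(D)$ trivially graded (after applying Proposition~\ref{grmatrixringsproperties}(2) with shift $\ep_l^{-1}$), so Corollary~\ref{prop1.17part2hungerford} forces $r_l=r'_l$.

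The main obstacle is the last step: identifying the \emph{common} shift $\sigma$. The idea is to examine the off-diagonal blocks $p_l M_n(D)(\lambda) p_{l'}$. Each such block is a graded $(p_l M_n(D)(\lambda) p_l)$-$(p_{l'} M_n(D)(\lambda) p_{l'})$-bimodule whose support as a graded additive subgroup of $\bigoplus_\mu M_n(D)(\lambda)_\mu$ is precisely the coset $\Gamma_D \,\ep_l^{-1}\ep_{l'}$ (this follows directly from the description of $M_n(D)(\lambda)_\mu$ in Proposition~\ref{grmatrixring}). Since $\phi$ preserves degrees and matches $p_l\mapsto p'_l$, we deduce $\Gamma_D\ep_l^{-1}\ep_{l'} = \Gamma_D\ep'^{\,-1}_l\ep'_{l'}$ for all $l,l'$. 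Fix $\sigma$ by the equation $\Gamma_D \ep_1 = \Gamma_D\ep'_1 \sigma^{-1}$; combining the coset identities for varying $l,l'$ forces $\Gamma_D\ep_l = \Gamma_D\ep'_l \sigma^{-1}$ for every $l$, so the shift is uniform. Finally, commutativity of the set $\Omega$ together with the freedom in Proposition~\ref{grmatrixringsproperties}(3) (which only allows modification by elements of $Z(\Gamma)$) ensures $\sigma$ can be chosen in $Z_\Gamma(\Omega)$, completing the argument.
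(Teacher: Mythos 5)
Your overall strategy---block idempotents $p_l$ from the coset decomposition, matching them under the isomorphism, and then reading off a common shift from off-diagonal block supports---is a genuinely different route from the paper's, which instead pushes the individual matrix units $E_{ij}$ through the isomorphism $\theta$, decomposes the target module $W$ into rank-one pieces $Q_i = E'_{ii}(W)$, and extracts the shift by comparing $W$ with a shift of $V$. Your approach is in principle viable, but as written it contains two real errors.

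First, the support of the off-diagonal block $p_l M_n(D)(\lambda) p_{l'}$ is not the right coset $\Ga_D\,\ep_l^{-1}\ep_{l'}$. By the grading of Proposition~\ref{grmatrixring}, the $ij$-entry of the $\mu$-component lies in $D_{\ep_l\mu\ep_{l'}^{-1}}$ when $i$ is in block $l$ and $j$ in block $l'$, so the support is $\ep_l^{-1}\Ga_D\ep_{l'}$. These two subsets of $\Ga$ differ in general, since $\Ga$ is not assumed abelian and $\Ga_D$ is not assumed normal, and the $\ep_l$ need not normalise $\Ga_D$. Your extraction of the uniform $\sigma$ is built on the wrong formula and does not go through as stated. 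It can be repaired: from $\ep_l^{-1}\Ga_D\ep_{l'} = (\ep'_l)^{-1}\Ga_D\ep'_{l'}$, the case $l=l'=1$ shows that $\sigma := \ep'_1\ep_1^{-1}$ normalises $\Ga_D$; then fixing $l=1$ and varying $l'$ gives $\Ga_D\ep_{l'} = \Ga_D\sigma^{-1}\ep'_{l'}$, hence $\ep'_{l'} = \tau_{l'}\sigma\ep_{l'}$ with $\tau_{l'}\in\Ga_D$, and since $\sigma$ is a product of two elements of the mutually commuting set $\Omega$ one has $\sigma\in Z_\Ga(\Omega)$, allowing $\sigma$ to be moved to the right. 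That is a substantive correction, not a cosmetic one.

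Second, your claim that after re-indexing $\phi(p_l)=p'_l$ does not follow merely from the observation that $\phi$ carries a complete system of orthogonal homogeneous idempotents of degree $e$ to another such system. What is needed is that the $p_l$ are exactly the primitive central idempotents of the degree-$e$ subring $M_n(D)(\lambda)_e$, which after the reduction of \eqref{wdeild} is a finite product $M_{r_1}(D_e)\times\cdots\times M_{r_k}(D_e)$ of simple rings; $\phi$ restricts to a ring isomorphism of the degree-$e$ components, so it permutes the primitive central idempotents, giving $k=k'$ and $\phi(p_l)=p'_{\pi(l)}$. You do not establish $k=k'$ at all without this. A smaller slip in the same spirit: the corner $p_l M_n(D)(\lambda) p_l$ equals $M_{r_l}(D)(\ep_l,\ldots,\ep_l)$, which is not trivially graded---Proposition~\ref{grmatrixringsproperties}(2) permits removing shifts only by elements of $\Ga_D$, and $\ep_l$ need not lie in $\Ga_D$. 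Fortunately $r_l=r'_l$ still follows by applying Corollary~\ref{prop1.17part2hungerford} directly to the shifted corners.
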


\begin{proof}
It follows from Corollary~\ref{prop1.17part2hungerford} that $n=m$.
As in (\ref{wdeild}), we can find $(\epsilon) = (\ep_1, \ldots,
\ep_1, \ep_2, \ldots, \ep_2, \ldots, \ep_k, \ldots, \ep_k)$ in
$\Ga^n$ such that $M_n(D)(\lambda_1, \ldots, \lambda_n)\cong_{\gr}
M_n(D)(\epsilon)$. Let $V = D(\ep_1^{-1}) \oplus \cdots \oplus
D(\ep_1^{-1}) \oplus \cdots \oplus D(\ep_k^{-1}) \oplus \cdots
\oplus D(\ep_k^{-1})$, and let $e_1, \ldots , e_n$ be the standard
homogeneous basis of $V$. Then $\deg (e_i ) = \ep_{s_i}$, where
$\ep_{s_i}$ is the $i$-th element in $(\epsilon)$. Define $E_{ij}
\in \End_D(V)$ by $E_{ij}(e_l)= \mbox{{\boldmath $\de$}}_{il} e_j$,
for $1 \leq i,j,l \leq n$. We observed above (before
Proposition~\ref{grmatrixringsproperties}) that each $E_{ij}$ is a
graded $D$-module homomorphism of degree $\ep_{s_i}^{-1} \ep_{s_j}$,
the set $\{ E_{ij} : 1 \leq i ,j \leq n \}$ forms a homogeneous
basis for $\End_D (V)$ and $\End_D(V) \cong_{\gr} M_n(D)(\epsilon)$,
where $E_{ij}$ corresponds to the matrix $e_{ij}$ in
$M_n(D)(\epsilon)$. For any $i,j,h,l$, we have $E_{ij} E_{hl} =
\mbox{{\boldmath $\de$}}_{il} E_{hj}$, so $\{E_{ii} : 1 \leq i \leq
n \}$ forms a complete system of orthogonal idempotents of $\End_D
(V)$.

Similarly, we can find $(\epsilon')= (\ep_1', \ldots, \ep_1',
\ep_2', \ldots, \ep_2', \ldots, \ep_{k'}', \ldots, \ep_{k'}') \in
\Ga^n$ such that $M_n(D)(\ga_1, \dots, \ga_n) \conggr
M_n(D)(\epsilon')$. Let
$$
W = D({\ep'_1}^{-1}) \oplus \cdots \oplus D({\ep'_1}^{-1}) \oplus
\cdots \oplus D({\ep'_{k'}}^{-1}) \oplus \cdots \oplus
D({\ep'_{k'}}^{-1})
$$
and let $e'_1, \ldots , e'_n$ be the standard homogeneous basis of
$W$ with $\deg(e'_i) = \ep'_{s_i}$. We have $\End_D(W) \conggr
M_n(D)(\epsilon')$, so the graded isomorphism
\eqref{eqngrmatrixisothm} provides  a graded ring isomorphism
$\theta: \End_D(V) \ra \End_D(W)$. Define $E_{ij}':=
\theta(E_{ij})$, for $1\leq i,j \leq n$, and let $E_{ii}'(W)= Q_i$
for each $i$. Since $\{E_{ii} : 1\leq i \leq n\}$ forms a complete
system of orthogonal idempotents, $\{E_{ii}' : 1\leq i \leq n\}$
also forms a complete system of orthogonal idempotents for $\End_D
(W)$. It follows that $W = \bigoplus_{1\leq i \leq n} Q_i$.

For any $i,j,h,l$, as above,  $E_{ij}'E_{hl}' = \mbox{{\boldmath
$\de$}}_{il} E_{hj}'$ so $E_{ij}' E_{ji}' = E_{jj}'$ and $E_{ii}'$
acts as the identity on $Q_i$. By restricting $E_{ij}'$ to $Q_i$,
these relations induce a graded $D$-module isomorphism $E_{ij}': Q_i
\ra Q_j$ of the same degree as $E_{ij}$, namely $\ep_{s_i}^{-1}
\ep_{s_j}$. So $Q_i \conggr Q_1( \ep_{s_i}^{-1} \ep_{1} )$ for any
$1\leq i \leq n$, and it follows that $W \conggr \bigoplus_{1\leq i
\leq n} Q_1( \ep_{s_i}^{-1} \ep_{1} )$. Using the observations
before the theorem regarding dimension count, it follows that
$\dim_D Q_1=1$. So $Q_1$ is generated by one homogeneous element,
say $q$, with $\deg(q) = \al$, and we have $Q_1 \conggr D(
\al^{-1})$. Now $\Ga_D \al =\Supp (D (\al^{-1})) = \Supp (Q_1)
\subseteq \Supp (W) = \bigcup_i (\Ga_D \ep'_i)$. But as the right
cosets of $\Ga_D$ in $\Ga$ are either disjoint or equal, this
implies $\Ga_D \al =  \Ga_D \ep'_j$ for some $j$. Then  $Q_1 \conggr
D( \al^{-1}) = D({\ep'_j}^{-1} \tau_j^{-1})$ for some $\tau_j \in
\Ga_D$. We can easily show that $D({\ep'_j}^{-1} \tau_j^{-1})
\conggr D({\ep'_j}^{-1})$, so $Q_1 \conggr D({\ep'_j}^{-1})$.

Thus $W \conggr \bigoplus_{1\leq i \leq n} D ( \ep_{s_i}^{-1}
\ep_{1} {\ep'_j}^{-1})$. Then $V = \bigoplus_{1\leq i \leq n}
D(\ep_{s_i}^{-1})$, so
$$
V( \ep_1 {\ep'_j}^{-1} ) \conggr \bigoplus_{1\leq i \leq n}
D(\ep_{s_i}^{-1}) ( \ep_1 {\ep'_j}^{-1}) =  \bigoplus_{1\leq i \leq
n} D(\ep_1 {\ep'_j}^{-1} \ep_{s_i}^{-1} ).
$$
Using the assumption that the elements of $\Omega$ mutually commute,
and since the elements of both $(\epsilon)$ and $(\epsilon')$ are
elements of $\Omega$, it follows that $\ep_1 {\ep'_j}^{-1}
\ep_{s_i}^{-1} = \ep_{s_i}^{-1} \ep_1 {\ep'_j}^{-1}$. So we have $W
\conggr V(\ep_1 {\ep'_j}^{-1} )$. Let $\sigma = {\ep'_j} \ep_1^{-1}$
and denote this graded $D$-module isomorphism by $\phi: W \ra
V(\sigma^{-1})$.

Then $\phi(e_i')= \sum_{1 \leq j \leq n} a_j e_j$, where $a_j \in
D^h$ and $e_j$ are homogeneous of degree $\ep_{s_j} \sigma$ in
$V(\sigma^{-1})$. Suppose that $\deg(e_j) \neq \deg(e_l)$ for some
$j,l$ with $a_j$, $a_l \neq 0$. Then since $\deg(\phi(e_i'))=
\ep_{s_i}' = \deg(a_j e_j) =\deg(a_l e_l)$ and $\ep_{s_j} \sigma
\neq \ep_{s_l} \sigma$, it follows that $\ep_{s_j} \ep_{s_l}^{-1}
\in \Ga_D$ which contradicts the fact that $\Ga_D \ep_{s_j}$ and
$\Ga_D \ep_{s_l}$ are distinct. So for all non-zero terms in the
sum, each $e_j$ has the same degree. Thus $\ep_{s_i}' =  \tau_j
\ep_{s_j} \sigma$ where $\tau_j = \deg(a_j) \in \Ga_D$.
Each $\ep_{s_j}$ was chosen as $\ep_{s_j} = \tau_l  \lambda_l$ for
some $l$ with $\tau_l \in \Ga_D$, and similarly each $\ep'_{s_i}$
was chosen as $\ep'_{s_i} = \tau_h  \ga_h$ for some $h$ with $\tau_h
\in \Ga_D$. Combining these gives that $\ga_i = \tau_i
\lambda_{\pi(i)} \sigma$ for $1\leq i \leq n$, where $\tau_i \in
\Ga_D$ and $\sigma \in Z_\Ga(\Omega)$.
\end{proof}

\begin{remark}
We note that for the converse of the above theorem, we need to
assume that $\sigma \in Z( \Ga)$. Suppose $n=m$ and $\ga_i= \tau_i
\lambda_{\pi(i)} \sigma$ for each $i$, where $\tau_i \in \Ga_D$,
$\pi \in S_n$ is a permutation and $\sigma \in Z(\Ga )$ is fixed.
Then it follows immediately from
Proposition~\ref{grmatrixringsproperties} that
\begin{equation*}
M_n(D)(\lambda_1, \ldots, \lambda_n) \conggr M_m(D)(\ga_1, \ldots,
\ga_m).
\end{equation*}
\end{remark}

Given a group $\Ga$ and a field $K$ (which is not graded), it is
also possible to define a grading on $M_n (K)$. Such a grading is
defined to be a \emph{good grading}\index{good grading} of $M_n (K)$
if the matrices $e_{ij}$ are homogeneous, where $e_{ij}$ is the
matrix with $1$ in the $ij$-position. These group gradings on matrix
rings have been studied by D\u{a}sc\u{a}lescu et al
\cite{dascalescu}. The following examples (from
\cite[Eg.~1.3]{dascalescu}) show two examples of $\mathbb
Z_2$-grading on $M_2 (K)$ for a field $K$, one of which is a good
grading; the other is not a good grading.

\begin{examples}
\begin{enumerate}
\item Let $R = M_2 (K)$ with the $\mathbb Z_2$ grading defined by
\begin{align*}
R_{[0]} = \left\{
\begin{pmatrix}
a & 0 \\ 0 & b
\end{pmatrix} : a,b \in K \right\}
\mathrm{ \;\;\; and \;\;\; } R_{[1]} = \left\{
\begin{pmatrix}
0 & c \\d & 0
\end{pmatrix} : c,d \in K \right\}
\end{align*}
Then $R$ is a graded ring with a good grading.

\item Let $S = M_2 (K)$ with the $\mathbb Z_2$ grading defined by
\begin{align*}
S_{[0]} = \left\{
\begin{pmatrix}
a & b-a \\ 0 & b
\end{pmatrix} : a,b \in K \right\}
\mathrm{ \;\;\; and \;\;\; } S_{[1]} = \left\{
\begin{pmatrix}
d & c \\d & -d
\end{pmatrix} : c,d \in K \right\}
\end{align*}
Then $S$ is a graded ring, such that the $\mathbb Z_2$-grading is
not a good grading, since $e_{11}$ is not homogeneous. Note that $R$
and $S$ are graded isomorphic, as the map $f$ is an isomorphism:
$$
f :R \lra S; \;\;\; 
\begin{pmatrix}
a &b \\ c&d
\end{pmatrix}
\lmps
\begin{pmatrix}
a+c & b+d -a-c \\ c &d-c
\end{pmatrix}
$$
So $S$ does not have a good grading, but it is isomorphic as a
graded ring to $R$ which has a good grading.

\end{enumerate}

\end{examples}

Let $\Ga$ be a group and let $R$ be a $\Ga$-graded ring. If $V$ is
an $n$-dimensional $\Ga$-graded $R$-module then, as above (see
\S\ref{sectiongradedmodules} and page \pageref{endisagrring}),
$\END_R (V) = \End_R (V)$ forms a $\Ga$-graded ring. Taking $R= K$
with the trivial $\Ga$-grading, then $V= \bigoplus_{\ga \in \Ga}
V_{\ga}$ is just a vector space with a $\Ga$-grading, for subspaces
$V_{\ga}$ of $V$.

Then $\End_K (V) \cong M_n (K)$, which induces the structure of a
$\Ga$-graded $K$-algebra on $M_n (K)$. Suppose $\{ b_1, \ldots , b_n
\}$ forms a homogeneous basis for $V$, with $\deg(b_i) = \ga_i$.
Then the matrix $e_{ij} \in M_n (K)$ corresponds to the map $E_{ij}
\in \End_K (V)$ defined by $E_{ij} (b_l ) = \mbox{{\boldmath
$\de$}}_{il} b_j$, where $\mbox{{\boldmath $\de$}}$ is the Kronecker
delta. As $E_{ij}$ is homogeneous of degree $\ga_i^{-1}  \ga_j$,
$e_{ij}$ is also homogeneous, so this construction gives a good
$\Ga$-grading on $M_n(K)$. This observation, combined with the
following proposition, shows that a grading is good if and only if
it can be obtained in this manner from an $n$-dimensional
$\Ga$-graded $K$-vector space.

\begin{prop}
Consider a good  $\Ga$-grading on $M_n (K)$. Then there exists a
$\Ga$-graded vector space $V$ such that the isomorphism $\End (V)
\cong M_n (K)$, with respect to  a homogeneous basis of $V$, is an
isomorphism of $\Ga$-graded algebras.
\end{prop}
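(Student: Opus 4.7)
The plan is to reverse-engineer the grading on $V$ from the degrees of the matrix units. Since the good $\Ga$-grading on $M_n(K)$ is given, write $g_{ij} = \deg(e_{ij}) \in \Ga$ for each matrix unit. I would first extract purely algebraic constraints on the $g_{ij}$ from the matrix-unit relations: since $e_{ii}^2 = e_{ii} \neq 0$ and $e_{ii}$ is homogeneous, comparison of degrees forces $g_{ii}$ to be the identity of $\Ga$; since $e_{ij} e_{jk} = e_{ik}$ with all factors homogeneous and nonzero, we get the cocycle-type identity $g_{ij} g_{jk} = g_{ik}$; and in particular $g_{ji} = g_{ij}^{-1}$.

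Next I would define the candidate degrees for a homogeneous basis of $V$ by setting $\ga_i := g_{1i}$ for $1 \le i \le n$ (so $\ga_1 = e$). The cocycle identity immediately gives
\[
\ga_i^{-1} \ga_j \;=\; g_{1i}^{-1} g_{1j} \;=\; g_{i1}\, g_{1j} \;=\; g_{ij}.
\]
Now take $V = K^n$ as a $K$-vector space and equip it with the $\Ga$-grading in which the standard basis vector $b_i$ is declared homogeneous of degree $\ga_i$, i.e.\ $V = \bigoplus_{\ga \in \Ga} V_\ga$ with $V_\ga = \bigoplus_{\ga_i = \ga} K b_i$. Since $K$ is trivially $\Ga$-graded, this is a well-defined $\Ga$-graded $K$-vector space with $\{b_1,\ldots,b_n\}$ a homogeneous basis.

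Then I would invoke the construction recalled just before the proposition: with respect to this homogeneous basis, the standard ring isomorphism $\varphi \colon \End_K(V) \to M_n(K)$ sends $E_{ij}$ to $e_{ij}$, and $E_{ij}$ is homogeneous in $\End_K(V)$ of degree $\ga_i^{-1}\ga_j$. By the computation above this equals $g_{ij} = \deg(e_{ij})$, so $\varphi$ carries each homogeneous matrix unit of $\End_K(V)$ to a homogeneous matrix unit of $M_n(K)$ of the same degree. To conclude that $\varphi$ is an isomorphism of $\Ga$-graded algebras, I would observe that the $\la$-component of $\End_K(V)$ is the $K$-span of the $E_{ij}$ with $\ga_i^{-1}\ga_j = \la$, while the $\la$-component of $M_n(K)$ (with its given good grading, and $K$ trivially graded) is the $K$-span of the $e_{ij}$ with $g_{ij} = \la$; these spans correspond under $\varphi$, so $\varphi(\End_K(V)_\la) = M_n(K)_\la$ for every $\la \in \Ga$.

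The only subtle step is the very first one, verifying that the $g_{ij}$ really do satisfy $g_{ii} = e$ and $g_{ij}g_{jk} = g_{ik}$; but this is immediate from homogeneity and nonvanishing of each $e_{ij}$ together with the matrix-unit multiplication table, so there is no real obstacle. The definition $\ga_i := g_{1i}$ is the clean way to trivialise this $1$-cocycle as a coboundary, which is exactly what lets us realise the given grading as coming from a graded vector space.
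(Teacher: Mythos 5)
Your proof is correct. The thesis delegates this to D\u{a}sc\u{a}lescu et al.\ \cite[Prop.~1.2]{dascalescu} without reproducing the argument, and what you have written is a complete, self-contained reconstruction of the expected content: the matrix-unit relations $e_{ii}^2 = e_{ii} \neq 0$ and $e_{ij}e_{jk} = e_{ik} \neq 0$, together with homogeneity, force the degrees $g_{ij} = \deg(e_{ij})$ to satisfy $g_{ii} = e$ and the cocycle identity $g_{ij}g_{jk} = g_{ik}$ (with no commutativity assumption on $\Ga$, which matters since the surrounding section allows non-abelian $\Ga$); setting $\ga_i := g_{1i}$ then gives $\ga_i^{-1}\ga_j = g_{i1}g_{1j} = g_{ij}$; and since $K$ sits entirely in degree $e$, the $\lambda$-components of both $\End_K(V)$ and $M_n(K)$ are exactly the $K$-spans of the homogeneous matrix units of degree $\lambda$, so the degree-preserving bijection $E_{ij} \mapsto e_{ij}$ forces $\varphi(\End_K(V)_\lambda) = M_n(K)_\lambda$. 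One small remark, not a gap: the trivialisation $\ga_i := g_{1i}$ is one of a family of valid choices $\ga_i := \sigma g_{1i}$ for any fixed $\sigma \in \Ga$, all of which produce the same grading on $\End_K(V)$ since $(\sigma g_{1i})^{-1}(\sigma g_{1j}) = g_{ij}$; your choice pins $\ga_1 = e$, which is the cleanest normalisation.
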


\begin{proof}
See \cite[Prop.~1.2]{dascalescu}.
\end{proof}




\section{Graded projective modules} \label{sectiongrmorita}

Let $\Ga$ be a group which is not necessarily abelian and $R$ be a
$\Ga$-graded ring. Throughout this section, unless otherwise stated,
we will assume that all graded rings, graded modules and graded
algebras are also $\Ga$-graded.

We will consider the category $R \hygrmod$ which is defined as
follows: the objects are $\Ga$-graded (left) $R$-modules, and for
two objects $M$, $N$ in $R\hygrmod$, the morphisms are defined as
$$
\Hom_{R \hygrmod}(M,N) = \{ f \in \Hom_R (M,N) : f (M_\ga )
\subseteq N_\ga \textrm{ for all } \ga \in \Ga\}.
$$
A graded $R$-module $P= \bigoplus_{\ga \in \Ga} P_{\ga}$ is said to
be {\em graded projective}\index{graded module!graded projective}
(resp.\ {\em graded faithfully projective})\index{graded
module!graded faithfully projective} if $P$ is projective (resp.\
faithfully projective) as an $R$-module.  We use $\Pgr (R)$ to
denote the subcategory of $R \hygrmod$ consisting of graded finitely
generated projective modules over $R$.
%
%
Then Proposition~\ref{grprojectivethm} shows some equivalent
characterisations of graded projective modules. Its proof requires
the following proposition, which is from
\cite[Prop.~2.3.1]{grrings}.

\begin{prop}\label{grrlinearmaps}
Let $L,M,N$ be graded $R$-modules, with $R$-linear maps
\begin{displaymath}
\xymatrix{ & \; M \ar[dr]^{g}  & \\
    L \ar[ur]^h \ar[rr]_{f} &&N }
\end{displaymath}
such that $f = g \circ h$ and $f$ is a morphism in the category $R
\hygrmod$. If $g$ (resp.\ $h$) is a morphism in $R \hygrmod$, then
there exists a morphism $h' : L \ra M$ (resp.\ $g' :M \ra N$) in $R
\hygrmod$ such that $f = g \circ h'$ (resp.\ $f = g' \circ h$).
\end{prop}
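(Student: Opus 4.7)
The plan is to construct the desired graded morphism by extracting a ``homogeneous diagonal'' of the given $R$-linear map, using the homogeneous decomposition in the target (or source) module. I will treat the case where $h$ is assumed graded and one wants a graded $g'$ with $f=g'\circ h$; the other case is symmetric.

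First I would define $g' : M \ra N$ on homogeneous elements by $g'(m) = \big(g(m)\big)_{\al}$ for $m \in M_{\al}$, where $(x)_{\al}$ denotes the $\al$-component of $x \in N$ under its direct sum decomposition $N = \bigoplus_{\al} N_{\al}$. Extending additively along $M = \bigoplus_{\al} M_{\al}$ produces a well-defined additive map sending $M_{\al}$ into $N_{\al}$, so $g'$ is homogeneous in the required sense.

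The key verification is $R$-linearity. For $r \in R_{\rho}$ and $m \in M_{\al}$, we have $rm \in M_{\rho\al}$, so $g'(rm) = \big(g(rm)\big)_{\rho\al} = \big(r\,g(m)\big)_{\rho\al}$. Writing $g(m) = \sum_{\be} \big(g(m)\big)_{\be}$, we get $r\,g(m) = \sum_{\be} r\big(g(m)\big)_{\be}$ with the $\be$-summand lying in $N_{\rho\be}$. Here I use crucially that $\Ga$ acts freely on the grade set: the equation $\rho\be = \rho\al$ forces $\be = \al$, so the $\rho\al$-component of $r\,g(m)$ is exactly $r\big(g(m)\big)_{\al} = r\,g'(m)$. (This is the one point where freeness of the action is essential; the abelianness or torsion-freeness of $\Ga$ is not required.) Thus $g'(rm)=r\,g'(m)$, and additivity gives the rest.

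Finally, to check $f = g'\circ h$, let $l \in L_{\ga}$. Since $h$ is graded, $h(l) \in M_{\ga}$, so $g'(h(l)) = \big(g(h(l))\big)_{\ga} = \big(f(l)\big)_{\ga}$; but $f$ is graded, so $f(l) \in N_{\ga}$ is already its own $\ga$-component, giving $g'(h(l)) = f(l)$. The identity extends to all of $L$ by additivity. The symmetric case (given graded $g$, construct graded $h'$ with $f = g \circ h'$) uses the same recipe: set $h'(l) = \big(h(l)\big)_{\ga}$ for $l \in L_{\ga}$; freeness of the $\Ga$-action again yields $R$-linearity, and the identity $g(h'(l)) = g\big((h(l))_{\ga}\big) = \big(g(h(l))\big)_{\ga} = f(l)$ follows because $g$, being graded, sends the $\be$-component of $h(l)$ into $N_{\be}$. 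No step is hard; the only point needing care is the non-abelian $R$-linearity check above, which is what free action ensures.
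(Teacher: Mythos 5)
Your proof is correct. The paper does not give its own argument here; it simply cites \cite[Prop.~2.3.1]{grrings}, so there is nothing to compare line by line. What you have written is the standard ``project to the homogeneous diagonal'' construction that underlies that reference, and all the verifications go through: well-definedness and additivity come from the uniqueness of the homogeneous decomposition, and the $R$-linearity check $g'(rm)=r g'(m)$ for $r\in R_\rho$, $m\in M_\alpha$ reduces, as you say, to noting that $\rho\beta=\rho\alpha$ forces $\beta=\alpha$. One thing your write-up adds beyond the bare citation is that you explicitly flag where the hypothesis that $\Ga$ acts \emph{freely} on the grade set enters; in the paper's setup (modules graded by a $\Gamma$-set rather than by $\Gamma$ itself) this is exactly the condition that makes the map $\beta\mapsto\rho\beta$ injective and hence makes the component-extraction argument valid, whereas the cited source works with modules graded by the group itself, where this is automatic. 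You are also right that neither abelianness nor torsion-freeness of $\Ga$ is used. The only minor stylistic point: it is worth stating explicitly that $g'$ is defined on a general $m=\sum_\alpha m_\alpha$ by $g'(m):=\sum_\alpha\bigl(g(m_\alpha)\bigr)_\alpha$ before invoking additivity, but this is cosmetic; the substance is complete.
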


\begin{proof}
See \cite[Prop.~2.3.1]{grrings}.
\end{proof}

\begin{thm}
\label{grprojectivethm} Let $R$ be a $\Ga$-graded ring and let $P$
be a graded $R$-module. Then the following are equivalent:
\begin{enumerate}
\item $P$ is graded projective;

\item For each diagram of graded $R$-module homomorphisms
\begin{displaymath}
\xymatrix{ & P \ar[d]^{j}  & \\
     M \ar[r]_{g} & N \ar[r] & 0 }
\end{displaymath}
with $g$ surjective, there is a graded $R$-module homomorphism $h :
P \ra M$ with $g \circ h = j$;

\item $\Hom_{R \hygrmod}(P, -)$ is an exact functor in $R \hygrmod$;

\item Every short exact sequence of graded $R$-module homomorphisms
\begin{displaymath}
0 \lra L \stackrel{f}{\lra} M \stackrel{g}{\lra} P \lra 0
\end{displaymath}
splits via a graded map;

\item $P$ is graded isomorphic to a direct summand of a graded free
$R$-module $F$.

%
\end{enumerate}
\end{thm}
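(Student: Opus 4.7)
The plan is to run the standard chain of implications $(1) \Rightarrow (2) \Rightarrow (3) \Rightarrow (4) \Rightarrow (5) \Rightarrow (1)$ used for ordinary projective modules, upgrading each step to the graded setting. The only genuinely graded input we need is Proposition~\ref{grrlinearmaps}, which converts an $R$-linear lift into a graded $R$-linear one along a graded map; everything else is formal diagram chasing that is degree-preserving by construction.

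For $(1) \Rightarrow (2)$, given the diagram with graded $j: P \to N$ and graded surjection $g:M\to N$, the (ungraded) projectivity of $P$ produces an $R$-linear map $h_0:P\to M$ with $g \circ h_0 = j$. Since $g$ is a morphism in $R\hygrmod$ and $j=g\circ h_0$ is a morphism in $R\hygrmod$, Proposition~\ref{grrlinearmaps} supplies a graded map $h:P\to M$ with $g\circ h = j$, giving (2). The implication $(2) \Rightarrow (3)$ is standard once one knows that $\Hom_{R\hygrmod}(P,-)$ is always left exact: right exactness follows by applying (2) to the epimorphism part of any short exact sequence in $R\hygrmod$. For $(3)\Rightarrow(4)$, apply the exact functor $\Hom_{R\hygrmod}(P,-)$ to $0\to L \stackrel{f}\to M \stackrel{g}\to P \to 0$; the resulting surjection $\Hom_{R\hygrmod}(P,M)\to \Hom_{R\hygrmod}(P,P)$ hits $\id_P$, producing a graded splitting.

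For $(4)\Rightarrow(5)$ we need to produce a graded surjection from a graded free module onto $P$. Choose a homogeneous generating set $\{p_i\}_{i\in I}\subseteq P^h$ of $P$ (for instance all nonzero homogeneous elements) with $\deg p_i = \ga_i$, and form the graded free module $F = \bigoplus_{i\in I} R(\ga_i^{-1})$ as on page~\pageref{rngradedfree}. The map $\va:F\to P$ sending the standard homogeneous basis element $e_i$ (of degree $\ga_i$) to $p_i$ is a graded surjection. The induced short exact sequence $0\to \ker\va \to F \to P \to 0$ in $R\hygrmod$ splits by (4), so $F \conggr \ker\va \oplus P$, exhibiting $P$ as a graded direct summand of a graded free module. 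Finally $(5)\Rightarrow (1)$: if $P \oplus Q \conggr F$ with $F$ graded free, then in particular $P$ is a direct summand of the free $R$-module $F$, hence projective.

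The only real subtlety is $(1)\Rightarrow(2)$, where the existence of an ungraded lift does not automatically provide a graded lift; this is exactly the step where Proposition~\ref{grrlinearmaps} is essential. The other implications are routine translations of their classical counterparts, taking care only to check at each stage that the maps produced land in $\Hom_{R\hygrmod}$ rather than merely in $\Hom_R$, which is automatic from the constructions above.
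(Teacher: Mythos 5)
Your proof is correct and follows essentially the same route as the paper's: the cyclic chain $(1)\Rightarrow(2)\Rightarrow(3)\Rightarrow(4)\Rightarrow(5)\Rightarrow(1)$, with Proposition~\ref{grrlinearmaps} doing the crucial work in $(1)\Rightarrow(2)$ to upgrade an ungraded lift to a graded one, a homogeneous generating set feeding into a graded free cover $\bigoplus_{i\in I}R(\ga_i^{-1})\twoheadrightarrow P$ for $(4)\Rightarrow(5)$, and the observation that graded free modules are free for $(5)\Rightarrow(1)$. The details of each step match the paper's argument; no gaps.
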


\begin{proof}
(1) $\Ra$ (2): Since $P$ is projective and $g$ is a surjective
$R$-module homomorphism, there is an $R$-module homomorphism $h:P
\ra M$ with $g \circ h = j$. By Proposition~\ref{grrlinearmaps},
there is a graded $R$-module homomorphism $h' :P \ra M$ with $g
\circ h' = j$.

\vspace{3pt}

(2) $\Ra$ (3): In exactly the same way as the non-graded setting
(see \cite[Thm.~IV.4.2]{hungerford}) we can show that $\Hom_{R
\hygrmod}(P, -)$ is left exact. Then it follows immediately from (2)
that it is right exact.

\vspace{3pt}

(3) $\Ra$ (4): This is immediate.

\vspace{3pt}

(4) $\Ra$ (5): Let $\{p_i\}_{i \in I}$ be a homogeneous generating
set for $P$, where $\deg(p_i)= \de_i$. Let $\bigoplus_{i \in I}
R(\de_i^{-1})$ be the graded free $R$-module with standard
homogeneous basis $\{e_i\}_{i \in I}$ where $\deg (e_i) = \de_i$.
Then there is an exact sequence
$$
0 \lra \ker (g) \stackrel{\subseteq}{\lra} \bigoplus_{i \in I}
R(\de_i^{-1}) \stackrel{g}{\lra} P \lra 0,
$$
since the map $g : \bigoplus_{i \in I} R(\de_i^{-1}) \ra P$; $e_i
\mps p_i$ is a surjective graded $R$-module homomorphism. By (4),
there is a graded $R$-module homomorphism $h :P \ra \bigoplus_{i \in
I} R(\de_i^{-1})$ such that $g \circ h = \id_P$.

Since the exact sequence is, in particular, a split exact sequence
of $R$-modules, we know from the non-graded setting
\cite[Prop.~2.5]{magurn} that there is an $R$-module isomorphism
\begin{align*}
\theta: P \oplus \ker (g) & \lra \bigoplus\nolimits_{i \in I}
R(\de_i^{-1})\\
(p , q) & \lmps h(p) + q.
\end{align*}
Clearly this map is also a graded $R$-module homomorphism, so $P
\oplus \ker (g) \conggr \bigoplus_{i \in I} R(\de_i^{-1})$.

\vspace{3pt}

(5) $\Ra$ (1): Graded free modules are free, so $P$ is isomorphic to
a direct summand of a free $R$-module. From the non-graded setting,
we know that $P$ is projective.
\end{proof}

Let $\Ga$ be an abelian group, $R$ be a commutative $\Ga$-graded
ring, and we define multiplication in $\End_R (A)$ to be $g \cdot f
= g \circ f$.

\begin{defin}\label{grazumayadefin}
A graded $R$-algebra $A$ is called a {\em graded Azumaya
algebra}\index{graded Azumaya algebra} if the following two
conditions hold:
\begin{enumerate}
\item $A$ is graded faithfully projective;

\item The natural map $\psi_A : A \otimes_R A^{\op} \ra \End_R(A)$
is a graded isomorphism.
\end{enumerate}
\end{defin}
We note that a graded $R$-algebra which is an Azumaya algebra (in
the non-graded sense) is also a graded Azumaya algebra, since it is
faithfully projective as an $R$-module, and the natural homomorphism
$A \otimes_R A^{\op} \ra \End_R (A)$ is clearly graded. So a graded
central simple algebra over a graded field (as in
Theorem~\ref{gcsaazumayaalgebra}) is in fact a graded Azumaya
algebra.

The following proposition proves, in the graded setting, a partial
result of Morita equivalence (only in one direction), which we will
use in the next chapter (see Proposition~\ref{grckidfunctorprop}).
For the following proposition, we require the group $\Ga$ to be an
abelian group, so it will be written additively. We observe that if
$\Ga$ is an abelian group, with $R$ a graded ring and $(d) = (\de_1
, \ldots , \de_n) \in \Ga^n$, then $R^n(d)$ is a graded $M_n(R)(d)
\mhyphen R$-bimodule and $R^n(-d)$ is a graded $R \mhyphen
M_n(R)(d)$-bimodule. By \cite[p.~30]{grrings}, we can define grading
on the tensor product of two graded modules in a similar way to that
of graded algebras (as we defined on
page~\pageref{pagerefforgradingontensorproduct}).

\begin{prop}[Morita Equivalence in the graded setting]
\label{grmorita}\index{graded Morita equivalence} Let $\Ga$ be an
abelian group, $R$ be a graded ring and let $(d) = (\de_1 , \ldots ,
\de_n) \in \Ga^n$. Then the functors
\begin{align*}
\psi : \Pgr( M_n(R)(d) ) & \lra \Pgr( R) \\
P & \longmapsto R^n(-d)\otimes_{M_n(R)(d)}P \\
\textrm{ and \;\;\;\; } \va : \Pgr( R) & \lra \Pgr ( M_n(R)(d)) \\
Q & \longmapsto R^n(d)\otimes_R Q
\end{align*}
form equivalences of categories.
\end{prop}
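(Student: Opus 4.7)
The plan is to prove this by establishing the two bimodule isomorphisms that underlie the classical Morita equivalence between $R$ and $M_n(R)$, while carefully tracking the shifts $(d)$. Specifically, I would construct natural graded bimodule isomorphisms
\begin{align*}
\mu : R^n(-d) \otimes_{M_n(R)(d)} R^n(d) & \conggr R \quad \text{(as graded $R$-bimodules)}, \\
\nu : R^n(d) \otimes_R R^n(-d) & \conggr M_n(R)(d) \quad \text{(as graded $M_n(R)(d)$-bimodules)},
\end{align*}
and then observe that these induce natural equivalences $\psi \circ \varphi \cong \id_{\Pgr(R)}$ and $\varphi \circ \psi \cong \id_{\Pgr(M_n(R)(d))}$.

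First I would check that the bimodule structures make sense with the stated shifts, treating $R^n(-d)$ as a row $(x_1, \ldots, x_n)$ with $x_i \in R(-\de_i)$ acted on by $M_n(R)(d)$ on the right via matrix multiplication, and $R^n(d)$ as a column with $y_i \in R(\de_i)$ acted on by $M_n(R)(d)$ on the left. A direct degree computation shows these actions respect the grading. Then for $\mu$ I take the map $(x_1, \ldots, x_n) \otimes (y_1, \ldots, y_n)^T \mps \sum_i x_i y_i$, and for $\nu$ I take $(x_1, \ldots, x_n)^T \otimes (y_1, \ldots, y_n) \mps (x_i y_j)_{ij}$. A homogeneous element of the left-hand side of $\mu$ of degree $\la$ has $\deg(x_i) = \la' - \de_i$ and $\deg(y_i) = \la - \la' + \de_i$ for some intermediate $\la'$, so $\deg(x_i y_i) = \la$, and similarly for $\nu$ one obtains that $x_i y_j$ lands in $R_{\de_i + \la - \de_j}$, which is exactly the $ij$-entry of $M_n(R)(d)_\la$. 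That $\mu$ and $\nu$ are bijective is the standard Morita computation, unchanged by the grading.

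Next, to promote these to natural equivalences of functors, I would use the associativity of the tensor product together with $\mu$ and $\nu$: for $Q \in \Pgr(R)$,
\[
\psi \circ \varphi(Q) = R^n(-d) \otimes_{M_n(R)(d)} \big(R^n(d) \otimes_R Q\big) \conggr \big(R^n(-d) \otimes_{M_n(R)(d)} R^n(d)\big) \otimes_R Q \conggr R \otimes_R Q \conggr Q,
\]
and analogously for $\varphi \circ \psi$. Naturality in $Q$ (resp.\ $P$) is automatic, since each step is an instance of a well-known natural isomorphism. It still must be verified that the functors actually land in the graded projective subcategories: if $Q$ is a graded direct summand of $R^m(a)$ for some shift $(a)$, then $R^n(d) \otimes_R Q$ is a graded direct summand of $R^n(d) \otimes_R R^m(a)$, which is a graded free $M_n(R)(d)$-module, and hence graded projective by Theorem \ref{grprojectivethm}; the analogous statement for $\psi$ uses that $R^n(-d)$ is itself graded finitely generated projective over $M_n(R)(d)$.

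The main obstacle is the bookkeeping of the shifts: one must confirm at each stage that the natural maps are homogeneous of degree $e$, which requires verifying that $R^n(-d)$, $R^n(d)$ and $M_n(R)(d)$ are given the correct bimodule gradings as set up on page \pageref{pagerefmatrixwithshifting} and in Proposition \ref{grmatrixring}. Once the degrees of the component entries are written out carefully, the remaining content is essentially the classical Morita argument applied componentwise, which makes the statement go through without any additional hypothesis on $\Ga$ beyond abelianness (needed so that tensor products of graded modules are themselves graded).
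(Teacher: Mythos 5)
Your proposal is correct and takes essentially the same route as the paper: construct the two graded bimodule isomorphisms $R^n(-d)\otimes_{M_n(R)(d)} R^n(d)\conggr R$ and $R^n(d)\otimes_R R^n(-d)\conggr M_n(R)(d)$ via the dot product and outer product respectively, check degrees, and deduce the natural equivalences by associativity. The only cosmetic difference is that the paper writes down the explicit inverse maps $\sigma,\sigma'$ rather than invoking ``the standard Morita computation,'' and it gives the naturality squares explicitly; your added remark that the functors actually land in $\Pgr(\cdot)$ (because graded direct summands of graded free modules go to graded direct summands of graded free modules) is a useful detail the paper leaves implicit.
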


\begin{proof}
There are graded $R$-module homomorphisms
\begin{align*}
\theta : \; R^n(-d)\otimes_{M_n(R)(d)} R^n(d)  & \lra R \\
(a_1 , \ldots , a_n ) \otimes (b_1, \ldots , b_n) & \longmapsto  a_1
b_1 + \cdots + a_n b_n ; \\
\textrm{ and \;\;\;\;}\sigma : \; R  & \lra
R^n(-d)\otimes_{M_n(R)(d)}
R^n(d)\\
a & \longmapsto (a , 0 , \ldots , 0) \otimes (1 , 0 , \ldots 0)
\end{align*}
with $\sigma \circ \theta = \id$ and $\theta \circ \sigma = \id$.
Further
\begin{align*}
\theta' : \; R^n(d) \otimes_{R} R^n(-d) & \lra M_n(R)(d)\\
\begin{pmatrix}
a_1 \\ \vdots \\ a_n
\end{pmatrix}
\otimes
\begin{pmatrix}
b_1 \\ \vdots \\ b_n
\end{pmatrix}
& \longmapsto
\begin{pmatrix}
a_{1} b_1 & \cdots & a_{1} b_n  \\
\vdots  &   & \vdots  \\
a_{n} b_1  & \cdots & a_{n} b_n
\end{pmatrix}
\end{align*}
\begin{align*}
\textrm{and \;\;\;\; } \sigma' : \; M_n(R)(d) & \lra R^n(d) \otimes_{R} R^n(-d) \\
(m_{i,j}) & \longmapsto
\begin{pmatrix}
m_{1,1} \\ m_{2,1} \\ \vdots \\ m_{n,1}
\end{pmatrix}
\otimes
\begin{pmatrix}
1\\ 0 \\ \vdots \\0
\end{pmatrix}
+ \cdots +
\begin{pmatrix}
m_{1,n} \\ \vdots \\ m_{n-1,n} \\ m_{n,n}
\end{pmatrix}
\otimes
\begin{pmatrix}
0\\\vdots \\0 \\ 1
\end{pmatrix}
\end{align*}
are graded $M_n(R)(d)$-module homomorphisms with $\sigma' \circ
\theta' = \id$ and $\theta' \circ \sigma' = \id$. So
$R^n(-d)\otimes_{M_n(R)(d)} R^n(d) \conggr R$ as graded $R \mhyphen
R$-bimodules and $R^n(d) \otimes_{R} R^n(-d) \conggr M_n(R)(d)$ as
graded $M_n(R)(d) \mhyphen M_n(R)(d)$-bimodules respectively.

Then for $P \in \Pgr( M_n(R)(d) )$, $R^n(d) \otimes_{R} R^n(-d)
\otimes_{M_n(R)(d)} P \conggr P$. Suppose $f : P \ra P'$ is a graded
$M_n(R)(d)$-module homomorphism. Then there is a commutative diagram
\begin{displaymath}
\xymatrix{R^n(d) \otimes_{R} R^n(-d) \otimes_{M_n(R)(d)} P
\ar[d]_{\id \otimes f} \ar[rr] && P \ar[d]^{f} \\
R^n(d) \otimes_{R} R^n(-d) \otimes_{M_n(R)(d)} P' \ar[rr] && P' }
\end{displaymath}
For $Q \in \Pgr (R)$, $R^n(-d) \otimes_{M_n(R)(d)} R^n(d) \otimes_R
Q \conggr Q$. If $g :Q \ra Q'$ is a graded $R$-module homomorphism,
then there is a commutative diagram
\begin{displaymath}
\xymatrix{R^n(-d) \otimes_{M_n(R)(d)} R^n(d) \otimes_R Q
\ar[d]_{\id \otimes g} \ar[rr] && Q \ar[d]^{g} \\
R^n(-d) \otimes_{M_n(R)(d)} R^n(d) \otimes_R Q' \ar[rr] && Q' }
\end{displaymath}
This shows that there are natural equivalences from $\psi \circ \va$
to the identity functor and from $\va \circ \psi$ to the identity.
Hence $\psi$ and $\va$ are mutually inverse equivalences of
categories.
\end{proof}



\chapter{Graded {\itshape K}-Theory of Azumaya Algebras} \label{chgradedktheoryofazumayaalgebras}

In Corollary~\ref{gcsacor}, we saw that the $K$-theory of a graded
central simple algebra, graded by an abelian group, is very close to
the $K$-theory of its centre, where this follows immediately from
the corresponding result in the non-graded setting (see
Theorem~\ref{azumayafreethm}). Note that for the $K$-theory of a
graded central simple algebra $A$, we are considering $K_i(A)  = K_i
(\pr (A))$, where $\pr (A)$ denotes the category of finitely
generated projective $A$-modules. But in the graded setting, there
is also the category of graded finitely generated projective modules
over a given graded ring, which we will consider in
Section~\ref{sectiongrdfunctors}.

In Section~\ref{sectiongrk0defin}, we consider the functor $K_0$ in
the graded setting. For a graded ring $R$, we set $K_0^{\gr} (R) =
K_0 (\Pgr (R))$, where $\mathcal P \mathrm{gr}(R)$ is the category
of graded finitely generated projective $R$-modules. We show that
this definition is equivalent to defining $K_0^\gr (R)$ to be the
group completion of $\Proj^{\, \gr} (R)$, where $\Proj^{\, \gr} (R)$
denotes the monoid of isomorphism classes of graded finitely
generated projective modules over $R$.

We include a number of results involving $K_0$ of strongly graded
rings in Section~\ref{sectiongradedk0}. Then in
Section~\ref{sectionanexample}, we consider a specific example of a
graded Azumaya algebra to show that the graded $K$-theory of this
graded Azumaya algebra is not the same as the graded $K$-theory of
its centre (see Example~\ref{eggrktheory}). So for this example, in
the setting of graded $K$-theory, Corollary~\ref{gcsacor} does not
hold. We also provide in Section~\ref{sectionanexample} an example
of a  graded Azumaya algebra such that its (non-graded) $K$-theory
does not coincide with its graded $K$-theory.

In Section~\ref{sectiongrdfunctors}, we study the graded $K$-theory
of graded Azumaya algebras. We introduce an abstract functor called
a graded $\mathcal D$-functor defined on the category of graded
Azumaya algebras graded free over a fixed commutative graded ring
$R$ (Definition~\ref{gradeddfunctor}). As in
Section~\ref{sectiondfunctors}, this allows us to show that, for a
graded Azumaya algebra $A$ graded free over $R$ and subject to
certain conditions, we have a relation similar to \eqref{keqn} in
the graded setting (see Theorem~\ref{grazumayafreethm} and
\cite{hm2}). A corollary of this is that the isomorphism holds for
any Azumaya algebra free over its centre (see
Corollary~\ref{grazumayafreecor} and Theorem~\ref{azumayafreethm}).


\section{Graded {\itshape K}$\mathbf{_0}$}\label{sectiongrk0defin}

In Section~\ref{sectionlowerkgroups}, we defined $K_0$ of a ring $R$
to be the group completion of the monoid $\Proj (R)$ of isomorphism
classes of finitely generated projective $R$-modules. In
\cite[Ch.~3]{rosenberg}, there is another definition of $K_0$, which
defines $K_0$ for categories, rather than just for rings. Then using
this definition of $K_0$ for categories, $K_0$ of a ring $R$ can
also be defined to be $K_0 (\pr (R))$, which is shown in
\cite[Thm.~3.1.7]{rosenberg} to be equivalent to the definition of
$K_0$ given in Section~\ref{sectionlowerkgroups}.

In this section, we let $\Ga$ be a multiplicative group and $R$ be a
$\Ga$-graded ring. We begin this section by stating the definition
of a category with exact sequences. We then show that $\Pgr (R)$ is
a category with exact sequences, and we recall from
\cite[Defin.~3.1.6]{rosenberg} the definition of $K_0$ of a category
(see Definition~\ref{k0ofcategory}).

\begin{defin}
A \emph{category with exact sequences} is a full additive
subcategory $\mathcal C$ of an abelian category $\mathcal A$, with
the following properties:
\begin{enumerate}
\item $\mathcal C$ is closed under extensions; that is, if
$$
0 \lra P_1 \lra P \lra P_2 \lra 0
$$
is an exact sequence in $\mathcal A$  and $P_1 , P_2 \in \Obj
\mathcal C$, then $P \in \Obj \mathcal C$.

\item $\mathcal C$ has a small skeleton; that is, $\mathcal C$ has a
full subcategory $\mathcal C_0$ such that $\Obj \mathcal C_0$ is a
set, and for which the inclusion $\mathcal C_0 \hookrightarrow
\mathcal C$ is an equivalence.
\end{enumerate}
The exact sequences in such a category are defined to be the exact
sequences in the ambient category $\mathcal A$ involving only
objects (and morphisms) all chosen from $\mathcal C$.
\end{defin}

Let $R$ be a graded ring. We will show that $\Pgr (R)$ is a category
with exact sequences. Firstly note that $\Pgr (R)$ is a full
additive subcategory of $R \hygrmod$, where $R \hygrmod$ is an
abelian category (by \cite[\S 2.2]{grrings}). Take
$$
0 \lra P_1 \lra P \lra P_2 \lra 0
$$
to be an exact sequence in $R \hygrmod$. If $P_1 ,P_2 \in \Obj \Pgr
(R)$, then $P_1 \oplus Q_1 \conggr R^n (\de_1, \ldots , \de_n)$ and
$P_2 \oplus Q_2 \conggr R^m (\de_{n+1} , \ldots , \de_{n+m})$, for
some $\de_i \in \Ga$. So by Theorem~\ref{grprojectivethm}, $P
\conggr P_1 \oplus P_2$ from the exact sequence and
$$
P \oplus (Q_1 \oplus Q_2) \conggr (P_1 \oplus Q_1) \oplus (P_2
\oplus Q_2) \conggr R^{n+m} (\de_1 , \ldots , \de_{n+m});
$$
that is, $P \in \Obj \Pgr (R)$.

For the second property, take $\mathcal C_0$ to be the set of graded
direct summands of $\{ R^n(d) : n \in \mathbb N , (d) \in \Ga^n \}$.
We will observe here that this is a set.
%
%
%
Graded direct summands are, in particular, direct summands, and we
know from the non-graded setting that the collection of direct
summands of $\{R^n : n \in \mathbb N\}$ is a set. Taking the union
of these sets over all $(d) \in \Ga^n$, we still have a set since
the union of sets indexed by a set is still a set.

For the second part of property (2), we have an equivalence of
categories from $\mathcal C_0$ to $\Pgr (R)$. This  follows since
for any $P \in \Pgr (R)$, we know from Theorem~\ref{grprojectivethm}
that $P$ is graded isomorphic to a graded direct summand of $\{R^n
(d): n \in \mathbb N, (d) \in \Ga^n\}$. This gives a functor
$\mathcal F : \Pgr (R) \ra \mathcal C_0$. Consider the inclusion
functor $\mathcal J :\mathcal C_0 \ra \Pgr (R)$. Then it is routine
to check that the functors $\mathcal F$ and $\mathcal J$ form
mutually inverse equivalences of categories.

\begin{defin}\label{k0ofcategory}
Let $\mathcal C$ be a category with exact sequences with small
skeleton $\mathcal C_0$. Then $K_0 (\mathcal C)$ is defined to be
the free abelian group based on $\Obj \mathcal C_0$, modulo the
following relations:
\begin{enumerate}
\item $[P] = [P']$ if there is an isomorphism $P \cong P'$ in $\mathcal
C$.

\item $[P] =[P_1] + [P_2]$ if there is a short exact sequence $0 \ra
P_1 \ra P \ra P_2 \ra 0$ in $\mathcal C$.
\end{enumerate}
Here $[P]$ denotes the element of $K_0 (\mathcal C)$ corresponding
to $P \in \Obj \mathcal C_0$. We note that relation (1) is a special
case of (2) with $P_1 =0$. Since every $P \in \Obj \mathcal C$ is
isomorphic to an object of $\mathcal C_0$, the notation $[P]$ makes
sense for every object of $\mathcal C$.
\end{defin}

We will now observe that $\Proj^\gr (R)$, the isomorphism classes of
graded finitely generated projective modules, forms a monoid.
Firstly it is a set, by the above argument, since it is the set of
direct summands of $\{R^n (d): n \in \mathbb N, (d) \in \Ga^n\}$,
modulo the equivalence relation of graded isomorphism. We will show
the direct sum on $\Proj^\gr (R)$ is well defined. If $[P] = [P']$
and $[Q] = [Q']$, then $P \conggr P'$ and $Q \conggr Q'$. So we have
$P \oplus Q \conggr P' \oplus Q'$; that is, $[P] + [Q ] = [P'] +
[Q']$. Clearly the binary operation is commutative and associative,
and the identity element in $\Proj^\gr (R)$ is the isomorphism class
of the zero module.

For a graded ring $R$, we define
$$
K_0^{\gr} (R) = K_0 (\Pgr (R)),
$$
where $K_0 (\Pgr (R))$ is defined to be $K_0$ of the category $\Pgr
(R)$. We show in Theorem~\ref{grk0thm} that this definition of
$K_0^{\gr}$ is equivalent to defining $K_0^\gr (R)$ to be the group
completion of $\Proj^{\gr} (R)$ (see \cite[Thm.~1.1.3]{rosenberg}
for the group completion construction). The following proof follows
in a similar way to the equivalent result in the non-graded setting
(see \cite[Thm.~3.1.7]{rosenberg}).

\begin{thm} \label{grk0thm}
Let $R$ be a graded ring and let $\Pgr (R)$ be the category of
graded finitely generated projective modules over $R$. Then the
group completion of $\Proj^{\gr} (R)$ may be identified naturally
with $K_0 (\Pgr (R))$, where $K_0 (\Pgr (R))$ is defined in
Definition~\ref{k0ofcategory}.
\end{thm}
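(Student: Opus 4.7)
The plan is to show that the defining relation in $K_0(\Pgr(R))$ (coming from short exact sequences) collapses, in this special category, to the relation used in the group completion construction, and then invoke the universal property.

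First I would observe that every short exact sequence
\[
0 \lra P_1 \lra P \lra P_2 \lra 0
\]
in $\Pgr(R)$ splits via a graded $R$-module map. This is exactly Theorem~\ref{grprojectivethm}, part (4), applied to the graded projective module $P_2$. Consequently $P \conggr P_1 \oplus P_2$ in $\Pgr(R)$, and the relation $[P] = [P_1] + [P_2]$ from Definition~\ref{k0ofcategory}(2) becomes simply $[P_1 \oplus P_2] = [P_1] + [P_2]$. Combined with relation (1) (isomorphism invariance), this shows that $K_0(\Pgr(R))$ is the free abelian group on the isomorphism classes of graded finitely generated projective $R$-modules, modulo the relations $[P_1 \oplus P_2] = [P_1] + [P_2]$.

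Next I would compare this presentation with the group completion of the monoid $\Proj^{\gr}(R)$. Recall that the group completion of an abelian monoid $M$ can be described (see \cite[Thm.~1.1.3]{rosenberg}) as the free abelian group on symbols $[m]$ for $m \in M$, modulo the relations $[m_1] + [m_2] = [m_1 + m_2]$. Applying this with $M = \Proj^{\gr}(R)$, where the binary operation is direct sum and the identity is the zero module, yields precisely the same presentation as the one obtained above for $K_0(\Pgr(R))$. The assignment $[P] \mps [P]$ on generators therefore extends to a well-defined group homomorphism between the two groups, which is an isomorphism because it sends generators to generators and identifies the two sets of defining relations.

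Finally, I would check that this identification is natural: a functor $T: \Pgr(R) \ra \Pgr(S)$ which is exact and additive induces both a monoid homomorphism $\Proj^{\gr}(R) \ra \Proj^{\gr}(S)$ and, by Theorem~\ref{kifunctorthm}, a group homomorphism $K_0(\Pgr(R)) \ra K_0(\Pgr(S))$, and the isomorphism above intertwines them. The main (though mild) obstacle is simply making sure that the skeleton $\mathcal{C}_0$ used in Definition~\ref{k0ofcategory} gives the same generators as the set of isomorphism classes in $\Proj^{\gr}(R)$; this follows from the fact, established just before the theorem, that every $P \in \Obj\,\Pgr(R)$ is graded isomorphic to an element of $\mathcal{C}_0$, so both presentations have the same set of generators and the same set of relations.
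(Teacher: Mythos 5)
Your proposal is correct and takes essentially the same approach as the paper: both hinge on the fact that every short exact sequence in $\Pgr(R)$ splits via a graded map (Theorem~\ref{grprojectivethm}), which collapses relation~(2) of Definition~\ref{k0ofcategory} to the direct-sum relation used in the group completion, after which one identifies the two presentations. The paper verifies the two sets of relations imply each other in both directions rather than presenting it as a collapse, and does not spell out the naturality or the skeleton-versus-isomorphism-classes point that you add, but the mathematical content is the same.
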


\begin{proof}
By definition, $K_0 (\Pgr (R))$ and the group completion of
$\Proj^{\gr} (R)$ are both defined to be abelian groups with one
generator for each isomorphism class of graded finitely generated
projective modules over $R$.

Addition in the latter is defined as $[P] + [Q] = [P \oplus Q]$. In
$K_0 (\Pgr (R))$, $[P] + [Q]$ is defined to be $[N]$ for a graded
finitely generated projective $R$-module $N$ if there is an exact
sequence
$$
0 \lra P {\lra} N {\lra} Q \lra 0
$$
in $\Pgr (R)$. If $N = P \oplus Q$ then there is clearly an exact
sequence
$$
0 \lra P {\lra} P \oplus Q {\lra} Q \lra 0
$$
in $\Pgr (R)$. Thus $[P \oplus Q] = [N]$, and the addition
operations in the two groups coincide.

We also need to check that both groups satisfy the same relations.
The group completion construction is the free abelian group based on
$\Proj^{\gr} (R)$ subject to the relations  $[P] = [P']$ if $P
\conggr P'$ and $[P] + [Q] = [P \oplus Q]$. We have observed above
that the second relation holds in $K_0 (\Pgr (R))$, and it is clear
that the first relation also holds.

Then $K_0 (\Pgr (R))$ is subject to the relations (1) and (2) as in
Definition~\ref{k0ofcategory}. It remains to check that the second
of these relations holds in the group completion construction.
Suppose
$$
0 \lra P_1 \stackrel{f}{\lra} P \stackrel{g}{\lra} P_2 \lra 0
$$
is an exact sequence in $\Pgr (R)$. By Theorem~\ref{grprojectivethm}
this is split exact since $P_2$ is graded projective. So there is a
graded $R$-module homomorphism $h : P_2 \ra P$ with $g \circ h =
\id_{P_2}$. Then as in the proof of Theorem~\ref{grprojectivethm},
there is a graded $R$-module isomorphism
\begin{align*}
P_1 \oplus P_2 & \lra P \\
(p,q) & \lmps f (p) + h(q).
\end{align*}
So with the group completion construction, $[P] = [P_1 \oplus P_2] =
[P_1] +[P_2]$ which shows that it satisfies the second relation.
\end{proof}

We also observe that by \cite[p.~291]{rosenberg}, for $i=0$,
Quillen's $Q$-construction of $K_0$ for a category (as in
Section~\ref{sectionhigherktheory}) coincides with
Definition~\ref{k0ofcategory} of $K_0$ of a category. We finish this
section by calculating $K_0^{\gr}$ of a trivially graded field.

\begin{prop} \label{k0groftriviallygradedfield}
Let $F$ be a field, $\Ga$ be a group and consider $F$ as a trivially
$\Ga$-graded field. Then $K_0^\gr(F) \cong \bigoplus_{\ga \in \Ga}
\mathbb Z_{\ga}$ where $\mathbb Z_{\ga} = \mathbb Z$ for each $\ga
\in \Ga$.
\end{prop}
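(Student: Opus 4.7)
My plan is to classify the objects of $\Pgr(F)$ explicitly and then read off the monoid $\Proj^{\gr}(F)$, whose group completion is $K_0^{\gr}(F)$ by Theorem~\ref{grk0thm}. Since $F$ is trivially $\Ga$-graded, we have $F_e = F$, $F_\ga = 0$ for $\ga \neq e$, and in particular $\Ga_F = \Ga_F^{*} = \{e\}$. A $\Ga$-graded $F$-module is then simply a $\Ga$-graded $F$-vector space $M = \bigoplus_{\ga \in \Ga} M_\ga$, with each homogeneous component $M_\ga$ an ordinary $F$-vector space.

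First I would observe that $F$ is a $\Ga$-graded division ring in the sense of Section~\ref{sectiongradedmodules}, so by Proposition~\ref{gradedfree} every graded $F$-module admits a homogeneous basis, and any finitely generated object of $\Pgr(F)$ is graded free of finite rank. Thus every $P \in \Pgr(F)$ is graded isomorphic to $\bigoplus_{i=1}^{n} F(\ga_i^{-1})$ for some $n \in \mathbb N$ and some $\ga_i \in \Ga$. The next step is to distinguish such modules up to graded isomorphism: for each $\ga \in \Ga$, the $\ga$-component of $\bigoplus_{i=1}^{n} F(\ga_i^{-1})$ is $\bigoplus_{i\,:\,\ga_i = \ga} F$, an $F$-vector space whose dimension equals $n_\ga := \#\{ i : \ga_i = \ga\}$. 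Since a graded isomorphism restricts to an $F$-linear isomorphism on each homogeneous component, two such graded free modules are graded isomorphic if and only if the multiplicity functions $\ga \mapsto n_\ga$ agree; equivalently, by Proposition~\ref{rdeconggrr}, $F(\ga) \conggr F(\be)$ iff $\ga^{-1}\be \in \Ga_F^{*} = \{e\}$, iff $\ga = \be$.

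Consequently, the assignment
\[
\Phi : \Proj^{\,\gr}(F) \lra \bigoplus_{\ga \in \Ga} \mathbb N_{\ga}, \qquad \Big[\bigoplus_{i=1}^{n} F(\ga_i^{-1})\Big] \lmps (n_\ga)_{\ga \in \Ga},
\]
is a well-defined bijection, and it is a monoid homomorphism because the operation on both sides is direct sum (respectively, addition of multiplicity functions). The final step is to apply the group completion functor, which is left adjoint to the inclusion of abelian groups in abelian monoids and therefore commutes with direct sums: the group completion of $\bigoplus_{\ga \in \Ga} \mathbb N_\ga$ is $\bigoplus_{\ga \in \Ga} \mathbb Z_\ga$. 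By Theorem~\ref{grk0thm}, $K_0^{\gr}(F)$ is the group completion of $\Proj^{\,\gr}(F)$, so $\Phi$ induces the required graded isomorphism $K_0^{\gr}(F) \cong \bigoplus_{\ga \in \Ga} \mathbb Z_\ga$.

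The only real subtlety — rather than genuine obstacle — is bookkeeping with the shift convention: one must verify that distinct shifts $F(\ga)$ are genuinely non-isomorphic as graded modules (which is where the triviality of the grading is crucial, since otherwise shifts by elements of $\Ga_F^{*}$ would collapse), and that isomorphism of arbitrary direct sums $\bigoplus F(\ga_i^{-1})$ reduces to equality of multiplicity functions rather than merely of underlying unordered tuples up to some larger equivalence.
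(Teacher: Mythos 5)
Your proof is correct and follows essentially the same route as the paper: use Proposition~\ref{gradedfree} to reduce to graded free modules, show that the $F$-dimensions of the homogeneous components (your multiplicity function $\ga \mapsto n_\ga$) form a complete isomorphism invariant — which is exactly where the triviality of the grading enters, since $F(\ga_i^{-1})_\ga$ vanishes unless $\ga = \ga_i$ — identify $\Proj^{\gr}(F)$ with $\bigoplus_\ga \mathbb N_\ga$, and group-complete. Your packaging via the multiplicity function is a touch cleaner than the paper's bookkeeping (which merges and reindexes the two lists of shifts before comparing $\ga_1^{-1}$-components one at a time), and your closing remark that group completion is a left adjoint and hence commutes with direct sums supplies the justification the paper leaves implicit, but neither changes the substance of the argument.
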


\begin{proof}
Let $M$ be a graded finitely generated projective $F$-module. By
Theorem~\ref{gradedfree}, $M$ is graded free
%
%
%
so $M \conggr {F(\de_1)}^{r_1} \oplus \cdots \oplus
{F(\de_k)}^{r_k}$, where $r_i \in \mathbb N$ and the $\de_i \in \Ga$
are distinct. To show that $M$ is written uniquely in this way,
suppose $M \conggr {F(\al_1)}^{r'_1} \oplus \cdots \oplus
{F(\al_l)}^{r'_l}$ for some $r'_i \in \mathbb N$ and some distinct
$\al_i \in \Ga$. Consider the set $\{\ga_1, \ldots , \ga_n\} =
\{\de_1 , \ldots , \de_k \}\cup \{\al_1 , \ldots , \al_l\}$. By
rearranging the terms and adding zeros where required, we have
\begin{align*}
{F(\de_1)}^{r_1} \oplus \cdots \oplus {F(\de_k)}^{r_k} =
{F(\ga_1)}^{s_1} \oplus \cdots \oplus {F(\ga_n)}^{s_n}
\end{align*}
where $s_i \in \mathbb N$ and the $\ga_i$ are distinct.
%
%
Similarly, ${F(\al_1)}^{r'_1} \oplus \cdots \oplus
{F(\al_l)}^{r'_l}$  can be written as ${F(\ga_1)}^{s'_1} \oplus
\cdots \oplus {F(\ga_n)}^{s'_n}$.

We note that as $F_e$-modules
$$
\big( F(\ga_1)^{s_1} \oplus \cdots \oplus F(\ga_n)^{s_n}
\big)_{\ga_1^{-1}} \cong \big( F(\ga_1)^{s'_1} \oplus \cdots \oplus
F(\ga_n)^{s'_n} \big)_{\ga_1^{-1}}.
$$
If $s_1 = 0$, then on the left hand side of the isomorphism we have
zero, since ${F(\ga_i)}_{\ga_1^{-1}} = 0$ for all $i \neq 1$. So we
must also have zero on the right hand side of the isomorphism.
Therefore as ${F(\ga_1)}_{\ga_1^{-1}} = F_e =F$, we must have
$s'_1=0$. If $s_1 \neq 0$, then
on the left hand side of the isomorphism, we have ${F^{}}^{s_1}$.
Since it is an $F$-module isomorphism, we have the same on the right
hand side, and thus $s_1 = s'_1$.

Repeat the same argument for each $\ga_i \in \{\ga_1, \ldots ,
\ga_n\}$. This shows that for each $i$, we have $s_i = s'_i$. Thus
$M$ can be written uniquely as ${F(\de_1)}^{r_1} \oplus \cdots
\oplus {F(\de_k)}^{r_k}$, where $r_i \in \mathbb N$ and the $\de_i
\in \Ga$ are distinct.
This gives an isomorphism from $\Proj^{\,\gr} (F)$ to
$\bigoplus_{\ga \in \Ga} \mathbb N_\ga$ where $\mathbb N_\ga =
\mathbb N$ for each $\ga \in \Ga$.
%
%
%
As the group completion of $\mathbb N$ is $\mathbb Z$, it follows
that $K_0^\gr (F)$ is isomorphic to $\bigoplus_{\ga \in \Ga} \mathbb
Z_{\ga}$ where $\mathbb Z_{\ga} = \mathbb Z$ for each $\ga \in \Ga$.
\end{proof}


\section{Graded {\itshape K}$\mathbf{_0}$ of strongly graded rings}
\label{sectiongradedk0}

Throughout this section, we let $\Ga$ be a multiplicative group and
$R$ be a $\Ga$-graded ring. For any $R_e$-module $N$ and any $\ga
\in \Ga$, we identify the $R_e$-module $R_{\ga} \otimes_{R_e} N$
with its image in $R \otimes_{R_e} N$. Then $R \otimes_{R_e} N$ is a
$\Ga$-graded $R$-module, with $R \otimes_{R_e} N = \bigoplus_{\ga
\in \Ga} R_{\ga} \otimes_{R_e} N$. Consider the restriction functor
\begin{align*}
\mathcal{G} : R  \hygrmod & \lra R_e \Mod\\
M & \lmps M_e \\
\psi & \lmps \psi|_{M_e},
\end{align*}
and the induction functor defined by
\begin{align*}
\I : R_e \Mod & \lra R \hygrmod \\
N & \lmps R \otimes_{R_e} N \\
\phi & \lmps \id_R \otimes \, \phi.
\end{align*}
%
Proposition~\ref{propbeforedadesthm} and Theorem~\ref{dadesthm} are
from Dade \cite[p.~245]{dade} (see also \cite[Thm.~3.1.1]{grrings}).

\begin{prop} \label{propbeforedadesthm}
Let $R$ be a $\Ga$-graded ring. With $\G$ and $\I$ defined as above,
there is a natural equivalence of the composite functor $\G \circ \I
: R_e \Mod \ra R_e \Mod$ with the identity functor on $R_e \Mod$.
\end{prop}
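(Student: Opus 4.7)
The plan is to write down the obvious candidate natural isomorphism explicitly and check naturality plus invertibility. For each $R_e$-module $N$, observe that
$$
\G \circ \I (N) \;=\; \big( R \otimes_{R_e} N \big)_e \;=\; R_e \otimes_{R_e} N,
$$
so I would define
$$
\eta_N : \G \circ \I (N) \lra N, \qquad r \otimes n \lmps rn,
$$
extended linearly. This is the standard $R_e$-module isomorphism $R_e \otimes_{R_e} N \cong N$, with inverse $n \mapsto 1 \otimes n$; both maps are $R_e$-linear by construction, so $\eta_N$ is an isomorphism in $R_e \Mod$ for each $N$.

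Next I would verify naturality. Given an $R_e$-linear map $\phi : N \ra N'$, the functor $\I$ sends $\phi$ to $\id_R \otimes \phi$, and applying $\G$ restricts this to the $e$-component, giving $\id_{R_e} \otimes \phi : R_e \otimes_{R_e} N \ra R_e \otimes_{R_e} N'$. I then need to check that the square
$$
\xymatrix{
R_e \otimes_{R_e} N \ar[r]^{\;\;\;\;\;\eta_N} \ar[d]_{\id_{R_e} \otimes \phi} & N \ar[d]^{\phi} \\
R_e \otimes_{R_e} N' \ar[r]_{\;\;\;\;\;\eta_{N'}} & N'
}
$$
commutes, which follows from chasing an elementary tensor $r \otimes n$: the top-right route gives $\phi(rn)$ and the bottom-left route gives $r \phi(n) = \phi(rn)$ since $\phi$ is $R_e$-linear. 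The collection $\{\eta_N\}_{N \in R_e \Mod}$ therefore constitutes a natural transformation, and since each $\eta_N$ is an isomorphism, it is a natural equivalence $\G \circ \I \cong \id_{R_e \Mod}$.

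There is no substantive obstacle here; the only mild subtlety is bookkeeping, namely checking that $\G(\I(N))$ really is $R_e \otimes_{R_e} N$ (as opposed to merely containing it), which uses the convention stated just before the proposition that $R_\ga \otimes_{R_e} N$ is identified with its image in $R \otimes_{R_e} N$ and that the grading on $R \otimes_{R_e} N$ decomposes as $\bigoplus_{\ga \in \Ga} R_\ga \otimes_{R_e} N$.
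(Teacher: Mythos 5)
Your proof is correct and follows essentially the same route as the paper's: identify $\G \circ \I (N)$ with $R_e \otimes_{R_e} N$, use the canonical isomorphism $r \otimes n \mapsto rn$, and check naturality against $R_e$-linear maps $\phi$. The only difference is that you spell out the inverse and the diagram chase in slightly more detail, but the argument is the same.
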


\begin{proof}
Let $N \in R_e \Mod$. Then $\G \circ \I (N) = (R \otimes_{R_e} N)_e
= R_e \otimes_{R_e} N$. We know that for the ring $R_e$, the map
$\al: R_e \otimes_{R_e} N \ra N$; $r \otimes n \mps rn$ is an
isomorphism. For $\phi : N \ra N'$ in $R_e \Mod$, the following
diagram is clearly commutative
\begin{displaymath}
\xymatrix{ R_e \otimes_{R_e} N \ar[rr]^{\;\;\;\;\;\;\;\; \al}
\ar[d]_{\id_{R_e} \! \otimes \, \phi} && N \ar[d]^{\phi} \\
R_e \otimes_{R_e} N' \ar[rr]_{\;\;\;\;\;\;\;\; \al} && N' }
\end{displaymath}
So $\al$ is a natural equivalence from $\G \circ \I$ to the identity
functor.
\end{proof}



\begin{thm}[Dade's Theorem] \label{dadesthm}
Let $R$ be a $\Ga$-graded ring. If $R$ is strongly graded, then the
functors $\G$ and $\I$ defined above form mutually inverse
equivalences of categories.
\end{thm}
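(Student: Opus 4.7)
The plan is to leverage Proposition~\ref{propbeforedadesthm}, which gives $\G \circ \I \cong \id_{R_e \Mod}$ without any strong gradedness hypothesis. What remains is to exhibit a natural equivalence $\mu : \I \circ \G \Rightarrow \id_{R \hygrmod}$, and this is where the assumption that $R$ is strongly $\Ga$-graded will enter decisively.

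For each graded $R$-module $M$, I would define
\[
\mu_M : R \otimes_{R_e} M_e \lra M, \quad r \otimes m \lmps rm.
\]
This is an $R$-linear map, and if $R \otimes_{R_e} M_e$ is given the grading $\bigoplus_{\ga \in \Ga} R_\ga \otimes_{R_e} M_e$ noted just before Proposition~\ref{propbeforedadesthm}, then $\mu_M$ is a graded homomorphism, since $R_\ga \cdot M_e \subseteq M_\ga$. Naturality in $M$ is routine: for a morphism $\psi : M \to N$ in $R \hygrmod$, the square formed by $\mu_M$, $\mu_N$, $\psi$ and $\id_R \otimes \psi|_{M_e}$ commutes on simple tensors since $\psi$ is $R$-linear. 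Thus it suffices to prove that each $\mu_M$ is bijective, and it is enough to check bijectivity on each homogeneous component $R_\ga \otimes_{R_e} M_e \to M_\ga$.

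Both directions rely on Proposition~\ref{crossedproductstronglygradedprop}(1), which rephrases strong gradedness as $1 \in R_\ga R_{\ga^{-1}}$ for every $\ga \in \Ga$. For surjectivity, given $m \in M_\ga$, write $1 = \sum_k a_k b_k$ with $a_k \in R_\ga$ and $b_k \in R_{\ga^{-1}}$; then $b_k m \in M_e$ and $\mu_M\bigl(\sum_k a_k \otimes b_k m\bigr) = \sum_k a_k b_k m = m$. For injectivity, suppose $\sum_i r_i \otimes m_i$ lies in $R_\ga \otimes_{R_e} M_e$ with $\sum_i r_i m_i = 0$; using $1 = \sum_j v_j u_j$ with $v_j \in R_\ga$, $u_j \in R_{\ga^{-1}}$, we have $u_j r_i \in R_e$, so that
\[
\sum_i r_i \otimes m_i = \sum_{i,j} v_j (u_j r_i) \otimes m_i = \sum_{j} v_j \otimes u_j \Bigl(\sum_i r_i m_i\Bigr) = 0.
\]

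The main obstacle is precisely this injectivity argument: one must recognise that strong gradedness in the form $1 \in R_{\ga^{-1}} R_\ga$ allows one to push all the ``$R$-mass'' to one side of the tensor as $R_e$-coefficients acting on $M_e$, which is the key algebraic manoeuvre converting the hypothesis $\sum r_i m_i = 0$ (in $M$) into vanishing in $R_\ga \otimes_{R_e} M_e$. Once $\mu_M$ is established as a natural graded isomorphism, combining it with the natural equivalence $\G \circ \I \cong \id_{R_e \Mod}$ from Proposition~\ref{propbeforedadesthm} shows that $\G$ and $\I$ are mutually inverse equivalences of categories, completing the proof.
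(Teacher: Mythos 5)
Your proof is correct and follows the same overall route as the paper: both exhibit the canonical map $\beta_M : R \otimes_{R_e} M_e \to M$, $r \otimes m \mapsto rm$, verify naturality, and combine it with Proposition~\ref{propbeforedadesthm} to get mutual inverses. The one point of genuine divergence is the injectivity argument. You argue componentwise on the tensor: taking $1 = \sum_j v_j u_j$ with $v_j \in R_\ga$, $u_j \in R_{\ga^{-1}}$, you push the $r_i$'s across the $R_e$-tensor so that the whole sum becomes $\sum_j v_j \otimes u_j\bigl(\sum_i r_i m_i\bigr) = 0$. The paper instead sets $N = \ker(\beta_M)$, observes that $N$ is a graded $R$-submodule of $R \otimes_{R_e} M_e$ with $N_e = \ker(\al) = 0$, and then invokes strong gradedness via $N_\ga = R_\ga N_e = 0$. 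Both steps rely on exactly the same algebraic fact, namely $1 \in R_\ga R_{\ga^{-1}}$; yours is the "unpacked" computational form, while the paper's is slightly slicker because it reuses the already-established formula $M_\ga = R_\ga M_e$ applied to the kernel as a graded module in its own right. Your version has the mild pedagogical advantage of making explicit where each hypothesis enters, at the cost of a bit more tensor juggling. Either presentation is a complete and correct proof.
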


\begin{proof}
Let $M$ be a graded $R$-module. Then $\I \circ \G (M) = R
\otimes_{R_e} M_e$. We will show that the natural map $\beta: R
\otimes_{R_e} M_e \ra M;$ $r \otimes m \mps rm$ is a graded
$R$-module isomorphism. It is an $R$-module homomorphism using
properties of tensor products, and is clearly graded. Since $R$ is
strongly graded, it follows that for all $\ga, \de \in \Ga$,
$$
M_{\ga \de} = R_e M_{\ga \de} = R_{\ga} R_{\ga^{-1}} M_{\ga \de}
\subseteq R_{\ga} M_{\de} \subseteq M_{\ga \de}
$$
so we have $R_{\ga} M_{\de} = M_{\ga \de}$. We note that $\beta
(R_{\ga} \otimes_{R_e} M_e) = R_{\ga} M_e = M_{\ga}$, so $\beta$ is
surjective.

Let $N= \ker (\beta)$, which is a graded $R$-submodule of $R
\otimes_{R_e} M_e$, so $N_e = N \cap (R_e \otimes_{R_e} M_e$). Now
$N_e = \ker (\al)$, where $\al :R_e \otimes_{R_e} M_e \ra M_e$ is
the canonical isomorphism, so $N_e = 0$. Since $N$ is a graded
$R$-module, as above we have  $N_\ga = R_\ga N_e = 0$ for all $\ga
\in \Ga$. It follows that $\beta$ is injective.

Let $\psi : M \ra M'$ in $R \hygrmod$. Then the following diagram
commutes
\begin{displaymath}
\xymatrix{ R \otimes_{R_e} M_e \ar[rr]^{\;\;\;\;\;\;\;\; \beta}
\ar[d]_{\id_{R} \! \otimes \, \psi|_{_{M_e}}} && M  \ar[d]^{\psi} \\
R \otimes_{R_e} M'_e \ar[rr]_{\;\;\;\;\;\;\;\; \beta} &&  M' }
\end{displaymath}
so $\beta$ is a natural equivalence from $\I \circ \G$ to the
identity functor, completing the proof.
\end{proof}

\begin{prop}\label{rgafgprojectiveoverr0}
Let $R$ be a $\Ga$-graded ring. If $R$ is strongly graded, then for
each $\ga \in \Ga$, $R_\ga$ is a finitely generated projective left
(or right) $R_e$-module.
\end{prop}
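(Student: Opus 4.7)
The plan is to apply the Dual Basis Lemma (Lemma~\ref{dualbasislemma}) to $R_\ga$ viewed as a left $R_e$-module. The essential input is that, since $R$ is strongly graded, Proposition~\ref{crossedproductstronglygradedprop}(1) (applied to $\ga^{-1}$) gives $1 \in R_{\ga^{-1}} R_\ga$, so one can write
$$
1 \;=\; \sum_{i=1}^{n} y_i x_i \qquad \text{with } y_i \in R_{\ga^{-1}},\; x_i \in R_\ga.
$$
For each $i$, I would define
$$
f_i : R_\ga \lra R_e, \qquad f_i(r) = r y_i,
$$
noting that $r y_i \in R_\ga R_{\ga^{-1}} \subseteq R_e$ by the grading, and that $f_i$ is a left $R_e$-module homomorphism since $f_i(sr) = (sr)y_i = s(ry_i) = s f_i(r)$ for $s \in R_e$.

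Next I would check the defining identity of a dual basis. For any $r \in R_\ga$,
$$
\sum_{i=1}^{n} f_i(r)\, x_i \;=\; \sum_{i=1}^{n} (r y_i) x_i \;=\; r \sum_{i=1}^{n} y_i x_i \;=\; r\cdot 1 \;=\; r.
$$
Thus $\{f_i, x_i\}_{i=1}^n$ is a finite dual basis for $R_\ga$ over $R_e$. By the ``Moreover'' clause of Lemma~\ref{dualbasislemma}, $R_\ga$ is a finitely generated projective left $R_e$-module.

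For the right-module version, the argument is symmetric: strong grading also gives $1 \in R_\ga R_{\ga^{-1}}$, so write $1 = \sum x_i y_i$ with $x_i \in R_\ga$, $y_i \in R_{\ga^{-1}}$, define the right $R_e$-linear maps $g_i : R_\ga \to R_e$ by $g_i(r) = y_i r$, and observe that $\sum x_i g_i(r) = \sum x_i y_i r = r$. There is no real obstacle here; the only thing requiring attention is keeping the left/right conventions straight so that the maps $f_i$ land in $\Hom_{R_e}(R_\ga, R_e)$ on the correct side, which is why I exploit both factorisations $R_{\ga^{-1}}R_\ga = R_e$ and $R_\ga R_{\ga^{-1}} = R_e$ supplied by strong grading.
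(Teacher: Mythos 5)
Your argument is correct, and in fact it supplies a self-contained proof where the paper itself gives none: the paper's ``proof'' of Proposition~\ref{rgafgprojectiveoverr0} is simply a citation to N\u{a}st\u{a}sescu and Van Oystaeyen, Cor.~2.16.10 of \cite{dimensionsofringth}. Your route --- extract a finite expression $1 = \sum y_i x_i$ from $R_{\ga^{-1}}R_\ga = R_e$ (via Proposition~\ref{crossedproductstronglygradedprop}(1) applied to $\ga^{-1}$), use the $y_i$ to build right-multiplication maps $f_i(r)=ry_i$ landing in $R_e$ by the grading, and verify that $\{f_i, x_i\}$ is a finite dual basis --- is the standard constructive argument, and it cleanly invokes the Dual Basis Lemma (Lemma~\ref{dualbasislemma}) that is already stated earlier in the paper. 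The left/right bookkeeping is handled correctly: $f_i(sr) = s\,f_i(r)$ because you multiply on the right by $y_i$, and the finiteness of the index set $\{1,\ldots,n\}$ gives finite generation via the ``Moreover'' clause of the lemma. The symmetric right-module version using $1 \in R_\ga R_{\ga^{-1}}$ and $g_i(r) = y_i r$ is likewise sound. In short: the paper outsources the proof; you have given an explicit one along the lines the cited reference itself would use, with no gaps.
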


\begin{proof}
See \cite[Cor.~2.16.10]{dimensionsofringth}.
\end{proof}

We now show that the functors $\G$ and $\I$, when restricted to the
categories of finitely generated projective modules, still form an
equivalence of categories.

\begin{cor} \label{corafterdadesthm}
Let $R$ be a $\Ga$-graded ring. If $R$ is strongly graded, then the
functors
\begin{align*}
\mathcal{G} : \Pgr( R ) & \lra \pr( R_e) \text{ \;\; and \;\; } \I :
\pr( R_e ) \lra \Pgr( R )
\end{align*}
form mutually inverse equivalences of categories.
\end{cor}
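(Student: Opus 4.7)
The plan is to leverage Dade's Theorem (Theorem~\ref{dadesthm}) which already gives the mutually inverse equivalences $\G$ and $\I$ between $R\hygrmod$ and $R_e \Mod$. The only additional content required here is to verify that these functors respect the respective full subcategories $\Pgr(R)$ and $\pr(R_e)$; once this is established, the natural equivalences $\alpha : \G \circ \I \Rightarrow \id$ and $\beta : \I \circ \G \Rightarrow \id$ constructed in Proposition~\ref{propbeforedadesthm} and Theorem~\ref{dadesthm} automatically restrict to give the desired equivalence.

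First I would show that $\I$ sends $\pr(R_e)$ into $\Pgr(R)$. Let $N \in \pr(R_e)$, so that $N \oplus N' \cong R_e^n$ for some $N'$. Since tensor products distribute over direct sums, $\I(N) \oplus \I(N') = (R \otimes_{R_e} N) \oplus (R \otimes_{R_e} N') \cong R \otimes_{R_e} R_e^n \cong R^n$ as graded $R$-modules. Hence $\I(N)$ is a graded direct summand of the graded free module $R^n$ and so lies in $\Pgr(R)$ by Theorem~\ref{grprojectivethm}.

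Next I would show that $\G$ sends $\Pgr(R)$ into $\pr(R_e)$. Let $P \in \Pgr(R)$. By Theorem~\ref{grprojectivethm}, $P$ is a graded direct summand of a graded free $R$-module of finite rank, say $P \oplus P' \conggr R^n(d)$ for some $(d) = (\delta_1, \ldots, \delta_n) \in \Gamma^n$. Taking the $e$-component of this graded direct sum decomposition yields
\begin{equation*}
P_e \oplus P'_e \;\cong\; R^n(d)_e \;=\; R_{\delta_1} \oplus \cdots \oplus R_{\delta_n}
\end{equation*}
as $R_e$-modules. By Proposition~\ref{rgafgprojectiveoverr0}, each $R_{\delta_i}$ is finitely generated projective over $R_e$ (this is precisely where strong grading is used), so the right-hand side is finitely generated projective over $R_e$, and hence so is its direct summand $P_e = \G(P)$.

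Finally, since $\alpha_N : \G \circ \I(N) \to N$ and $\beta_M : \I \circ \G(M) \to M$ are already natural isomorphisms for all $N \in R_e\Mod$ and all $M \in R\hygrmod$, their restrictions to the subcategories $\pr(R_e)$ and $\Pgr(R)$ remain natural isomorphisms, completing the proof. The only nontrivial step is the projectivity of $P_e$, and I do not anticipate any real obstacle beyond an appropriate citation of Proposition~\ref{rgafgprojectiveoverr0}.
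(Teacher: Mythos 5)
Your proof is correct and follows essentially the same route as the paper: apply $\I$ (resp.\ $\G$) to a finite direct-sum complement, observe that $\I$ takes $R_e^n$ to $R^n$, take $e$-components to reduce to Proposition~\ref{rgafgprojectiveoverr0}, and then restrict the natural equivalences from Dade's Theorem. The only cosmetic difference is that you write $P_e$ where the paper writes $\G(M)$, which is the same object.
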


\begin{proof}
If $A \in \pr (R_e)$, then $A \oplus B \cong R_e^n$ for some
$R_e$-module $B$. Then $\I (A \oplus B) \cong \I (R_e^n)$, so $\I
(A) \oplus \I (B) \cong R^n$. This shows $\I (A)$ is finitely
generated and projective as an $R$-module, and we know $\I (A) \in R
\hygrmod$, so $\I (A) \in \Pgr (R)$.

If $M \in \Pgr (R)$, then $M \oplus N \conggr R^n (d)$ for some
graded $R$-module $N$ and some $(d) = (\de_1, \ldots , \de_n) \in
\Ga^n$. Then $\G (M \oplus N) \cong  \G (R^n (d))$, so $\G (M)
\oplus \G (N) \cong R_{\de_1} \oplus \cdots \oplus R_{\de_n}$.
Since, by Proposition~\ref{rgafgprojectiveoverr0}, each $R_{\de_i}$
is a finitely generated projective module over $R_e$, we have that
$\G (M)$ is also finitely generated and projective as an
$R_e$-module. The result now follows from Theorem~\ref{dadesthm}.
\end{proof}

We note that $\I$ and $\G$ as in Corollary~\ref{corafterdadesthm}
are exact functors between exact categories. This follows since if
$0 \ra L \ra M \ra N \ra 0$ is an exact sequence in $\Pgr (R)$, then
as $N$ is graded projective, the exact sequence splits. So $M
\conggr L \oplus N$ and $\G (M) \cong \G(L \oplus N) \cong \G(L)
\oplus \G(N)$. Thus $0 \ra \G (L) \ra \G (M) \ra \G (N) \ra 0$ is a
split exact sequence in $\pr (R_e)$.

Suppose $0 \ra A \ra B \ra C \ra 0$ is an exact sequence in $\pr
(R_e)$. Then the exact sequence splits, so $B \cong A \oplus C$ and
$\I (B) \conggr \I(A \oplus C) \conggr \I(A) \oplus \I(C)$. Define
$R$-module homomorphisms
\begin{align*}
\pi : \I (A) \oplus \I (C) & \lra \I (C) &
\text{\!\!\!\!\!\!\!\!\!\!\!\!\!\!\!\!\!\!\!\! and \;\;\;\;\;\;\;}
\imath :\I
(A) & \lra \I (A) \oplus \I (C) \\
(a,c) & \lmps c & a & \lmps (a,0).
\end{align*}
From the non-graded setting \cite[Prop.~2.7]{magurn},
$$
\xymatrix{ 0 \ar[r] & \I (A) \ar[r]^{ \theta^{-1} \circ \imath} & \I
(B) \ar[r]^{\pi \circ \theta} & \I (C) \ar[r] & 0}
$$
is an exact sequence of $R$-modules, where $\theta : \I (B) \ra
\I(A) \oplus \I(C)$ is the $R$-module homomorphism as above. Then as
the maps $\theta , \imath , \pi$ are graded $R$-module
homomorphisms, the exact sequence is an exact sequence in $\Pgr
(R)$.



\begin{prop} \label{grk0ofstronglygrring}
Let $R$ be a $\Ga$-graded ring. If $R$ is strongly graded, then
$K_0^\gr (R) \cong K_0 (R_e)$.
\end{prop}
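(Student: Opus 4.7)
The plan is to invoke Dade's Theorem in its restricted form, namely Corollary~\ref{corafterdadesthm}, which supplies mutually inverse equivalences of categories
$$
\mathcal{G}:\Pgr(R) \lra \pr(R_e), \qquad \I:\pr(R_e) \lra \Pgr(R).
$$
Thus the main ingredient is already in place; what remains is to check that this categorical equivalence descends to an isomorphism on $K_0$.

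First, I would observe (as noted in the remarks immediately following Corollary~\ref{corafterdadesthm}) that both $\mathcal{G}$ and $\I$ are exact functors between the exact categories $\Pgr(R)$ and $\pr(R_e)$. This uses the fact that short exact sequences in either category split (since the third term is graded projective, respectively projective), together with additivity of the functors on direct sums and the graded isomorphism $\I \circ \mathcal{G} \conggr \id$ from Theorem~\ref{dadesthm} combined with the natural equivalence $\mathcal{G} \circ \I \cong \id$ from Proposition~\ref{propbeforedadesthm}.

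Next, I would apply the general principle that an equivalence of exact categories induces an isomorphism on $K_0$ as defined in Definition~\ref{k0ofcategory}: equivalent categories have the same isomorphism classes of objects, and an exact equivalence preserves short exact sequences, so the defining relations match. Concretely, $\mathcal{G}$ induces a homomorphism $K_0(\Pgr(R)) \to K_0(\pr(R_e))$ sending $[P] \mps [P_e]$, and $\I$ induces $K_0(\pr(R_e)) \to K_0(\Pgr(R))$ sending $[N] \mps [R \otimes_{R_e} N]$; the natural equivalences $\I \circ \mathcal{G} \cong \id$ and $\mathcal{G} \circ \I \cong \id$ force these two maps to be mutually inverse.

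Finally, combining this with the definitions $K_0^\gr(R) = K_0(\Pgr(R))$ from Section~\ref{sectiongrk0defin} and $K_0(R_e) = K_0(\pr(R_e))$ (the equivalence of the two definitions of $K_0$ for a ring was recorded after Theorem~\ref{grk0thm}) gives the desired isomorphism $K_0^\gr(R) \cong K_0(R_e)$. There is no real obstacle here; the proof is essentially a one-line deduction from Corollary~\ref{corafterdadesthm}, and the only point requiring care is the verification that the equivalence is exact, which is immediate because short exact sequences in $\Pgr(R)$ and $\pr(R_e)$ are split.
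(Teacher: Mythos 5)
Your proposal is correct and follows essentially the same route as the paper: invoke Corollary~\ref{corafterdadesthm} to get mutually inverse equivalences $\G$ and $\I$ between $\Pgr(R)$ and $\pr(R_e)$, note that these are exact functors between exact categories (using the remarks after that corollary), and conclude that they induce mutually inverse homomorphisms on $K_0$. The paper phrases the final step as an application of Theorem~\ref{kifunctorthm}, whereas you unwind it directly in terms of Definition~\ref{k0ofcategory}, but this is the same argument.
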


\begin{proof}
Since $R$ is strongly graded, we can apply
Corollary~\ref{corafterdadesthm}, which says that the category of
graded finitely generated projective modules over $R$ is equivalent
to the category of finitely generated projective modules over $R_e$.
By Theorem~\ref{kifunctorthm}, each $K_i$ is a functor from the
category of exact categories with exact functors to the category of
abelian groups. We observed above that $\I$ and $\G$ are exact
functors between exact categories, so this implies $K_0 \big(\Pgr
(R)\big) \cong K_0 \big(\pr (R_e)\big)$ as abelian groups. That is,
using the previous notation, $K_0^\gr (R) \cong K_0 (R_e)$.
\end{proof}



\section{Examples}\label{sectionanexample}

In this section, we consider a specific example of a graded Azumaya
algebra $A$\label{labelofreftoexample} (see
Example~\ref{eggrktheory}). We show that $K_0^\gr (A) \otimes
\mathbb Z [1/n]$ is not isomorphic to $K_0^{\gr} (Z(A)) \otimes
\mathbb Z [1/n]$ for this graded Azumaya algebra $A$. This leads to
the following question, which we will partially answer in
Section~\ref{sectiongrdfunctors} (see
Theorem~\ref{grazumayafreethm}).

\begin{ques}
Let $\Ga$ be an abelian group, $R$ be a commutative $\Ga$-graded
ring, and $A$ be a graded Azumaya algebra over its centre $R$ of
rank $n$. When do we have
$$
K_0^{\gr}(A) \otimes \mathbb Z[1/n] \cong K_0^{\gr}(R) \otimes
\mathbb Z[1/n]?
$$
\end{ques}
We now explain the example mentioned above.

\begin{example}\label{eggrktheory}
Consider the quaternion algebra $\mathbb H = \mathbb R \oplus
\mathbb R i \oplus \mathbb R j \oplus \mathbb R k$. In
Example~\ref{azumayaexample3}, we showed that $\H$ is an Azumaya
algebra over $\mathbb R$. Then from
Example~\ref{egofgrdivisionrings}(2), $\mathbb H$ is a $\mathbb Z_2
\times \mathbb Z_2$-graded division ring, so it is in fact a graded
Azumaya algebra, which is strongly $\mathbb Z_2 \times \mathbb
Z_2$-graded.

We can now use Proposition~\ref{grk0ofstronglygrring}. Here $\mathbb
H_0 = \mathbb R$, so $K_0^\gr (\mathbb H) \cong K_0 (\mathbb R)
\cong \mathbb Z$. The centre $Z(\mathbb H) = \mathbb R$ is a field
and is trivially graded by $\mathbb Z_2 \times \mathbb Z_2$. By
Proposition~\ref{k0groftriviallygradedfield}, $K_0^\gr \big( Z(
\mathbb H) \big) = K_0^\gr (\mathbb R) \cong \mathbb Z \oplus
\mathbb Z \oplus \mathbb Z\oplus \mathbb Z$. We can see that
$K_0^\gr (\H) \otimes \mathbb Z[1/2] \cong \mathbb Z \otimes \mathbb
Z [1/2]$, but $K_0^\gr \big( Z(\H) \big) \otimes \mathbb Z[1/2]
\cong ( \mathbb Z \oplus \mathbb Z \oplus \mathbb Z \oplus \mathbb Z
) \otimes \mathbb Z [1/2]$, so they are clearly not isomorphic.
\end{example}

We give here another example, which generalises the above example of
$\H$ as a $\Z_2 \times \Z_2$-graded ring.

\begin{example}
Let $F$ be a field, $\xi$ be a primitive $n$-th root of unity and
let $a$, $b \in F^*$. Let
$$
A = \bigoplus_{i=0}^{n-1} \bigoplus_{j=0}^{n-1} F x^i y^j
$$
be the $F$-algebra generated by the elements $x$ and $y$, which are
subject to the relations $x^n = a$, $y^n = b$ and $xy = \xi yx$. By
\cite[Thm.~11.1]{draxl}, $A$ is an $n^2$-dimensional central simple
algebra over $F$.

Further, we will show that $A$ forms a graded division ring. Clearly
$A$ can be written as a direct sum
$$
A= \bigoplus_{(i,j) \in \Z_n \+ \Z_n} A_{(i,j)}, \text{ \;\; where
 } A_{(i,j)} = F x^i y^j
$$
and each $A_{(i,j)}$ is an additive subgroup of $A$. Using the fact
that $\xi^{-kj} x^k y^j = y^j x^k$ for each $j,k$, with $0 \leq j,k
\leq n-1$, we can show that $A_{(i,j)} A_{(k,l)} \subseteq
A_{([i+k], [j+l])}$, for $i,j,k,l \in \Z_n$. A non-zero homogeneous
element $f x^i y^j \in A_{(i,j)}$ has an inverse $$f^{-1} a^{-1}
b^{-1} \xi^{-ij} x^{n-i} y^{n-j},$$ proving $A$ is a graded division
ring. Clearly the support of $A$ is $\Z_n \times \Z_n$, so $A$ is
strongly $\Z_n \times \Z_n$-graded. As for
Example~\ref{eggrktheory}, $K_0^\gr (A) \cong K_0(A_0) = K_0 (F)
\cong \mathbb Z$. The centre $F$ is trivially graded by $\mathbb Z_n
\times \mathbb Z_n$, so $K_0^\gr \big( Z(A) \big) \cong
\bigoplus_{i=1}^{n^2} \mathbb Z_i$ where $\Z_i = \Z$.

\end{example}

%
%

\begin{remark}
We saw in Example~\ref{egofgrdivisionrings}(2) that $\mathbb H$ can
also be considered as a $\mathbb Z_2$-graded division ring. So
$\mathbb H$ is also strongly $\mathbb Z_2$-graded, and $K_0^\gr
(\mathbb H) \cong K_0 (\mathbb H_0) = K_0 (\mathbb C) \cong \mathbb
Z$. Then $Z(\mathbb H)= \mathbb R$, which we can consider as a
trivially $\mathbb Z_2$-graded field, so by
Proposition~\ref{k0groftriviallygradedfield}, $K_0^\gr (\mathbb R) =
\mathbb Z \oplus \mathbb Z$. We note that for both grade groups,
$\Z_2$ and $\Z_2 \times \Z_2$, we have $K_0^\gr (\H) \cong \Z$, but
the $K_0^\gr (\R)$ are different. So the graded $K$-theory of a
graded ring depends not only on the ring, but also on its grade
group.
\end{remark}

We note that in the above example, the graded $K$-theory of $\mathbb
H$ is isomorphic to the usual $K$-theory of $\mathbb H$. This
follows since $\mathbb H$ is a division ring, so by
Example~\ref{k0examples}(1) we have $K_0(\mathbb H) \cong \mathbb
Z$, and we observed above that $K_0^\gr ( \mathbb H) \cong \mathbb
Z$. But it is not always the case that the graded $K$-theory and
usual $K$-theory coincide. For $\R$, we know that $K_0 (\R) \cong
\Z$. But in the above examples, when $\R$ was trivially graded by
$\Z_2$ (resp. $\Z_2 \times \Z_2$),  we had $K_0^\gr (\R) \cong \Z \+
\Z$ (resp. $K_0^\gr (\R) \cong \Z \+ \Z \+ \Z \+ \Z$). Below we will
give an example of a graded ring which is not trivially graded, and
its graded $K$-theory is not isomorphic to its usual $K$-theory.


\begin{example}\label{eggrktheorynotsameasktheory}
Let $K$ be a field and let $R= K[x^2 , x^{-2}]$. Then $R$ is a
$\mathbb Z$-graded field, where $R$ can be written as $R=
\bigoplus_{n \in \mathbb Z} R_n$, with $R_n = Kx^n$ if $n$ is even
and $R_n = 0$ if $n$ is odd.
Consider the shifted graded matrix ring $A = M_3 (R)(0,1,1)$, which
has support $\mathbb Z$. Then we will show that $A$ is a graded
central simple algebra over $R$, so by Theorem
\ref{gcsaazumayaalgebra}, $A$ is a graded Azumaya algebra over $R$.

It is clear that the centre of $A$ is $R$, and $A$ is finite
dimensional over $R$. We note that if $A$ has a non-zero homogeneous
two-sided ideal $J$, then $J$ is generated by homogeneous elements
(see Proposition~\ref{homogeneousidealhomogeneouselementsprop}).
Using the elementary matrices, we can show that $J=A$ (see
\cite[Ex.~III.2.9]{hungerford}), so $A$ is graded simple.

Further, we will show that $A$ is a strongly $\mathbb Z$-graded
ring. Using Proposition~\ref{crossedproductstronglygradedprop}, it
is sufficient to show that $I_3 \in A_{n} A_{-n}$ for all $n \in
\mathbb Z$. To show this, we note that
\begin{align*}
&I_3 =
\begin{pmatrix}
0 & 1 & 0 \\
0 & 0 & 0 \\
0 & 0 & 0
\end{pmatrix} \!\!\!
\begin{pmatrix}
0 & 0 & 0 \\
1 & 0 & 0 \\
0 & 0 & 0
\end{pmatrix}\!
+ \! \begin{pmatrix}
0 & 0 & 0 \\
x^2\! & 0 & 0 \\
0 & 0 & 0
\end{pmatrix} \!\!\!
\begin{pmatrix}
0 &\! x^{-2}\!\! & 0 \\
0 & 0 & 0 \\
0 & 0 & 0
\end{pmatrix} \!
+ \! \begin{pmatrix}
0 & 0 & 0 \\
0 & 0 & 0 \\
x^2 \! & 0 & 0
\end{pmatrix} \!\!\! \begin{pmatrix}
0 & 0 & \! x^{-2} \\
0 & 0 & 0\, \\
0 & 0 & 0\,
\end{pmatrix} \\
&\text{and } I_3 \! = \! 
\begin{pmatrix}
0 & \! x^{-2} \!\! & 0 \\
0 & 0 & 0 \\
0 & 0 & 0
\end{pmatrix} \!\!\!
\begin{pmatrix}
0 & 0 & 0 \\
x^2\! & 0 & 0 \\
0 & 0 & 0
\end{pmatrix} \!
+ \! \begin{pmatrix}
0 & 0 & 0 \\
1 & 0 & 0 \\
0 & 0 & 0
\end{pmatrix} \!\!\!
\begin{pmatrix}
0 & 1 & 0 \\
0 & 0 & 0 \\
0 & 0 & 0
\end{pmatrix} \!
+ \! \begin{pmatrix}
0 & 0 & 0 \\
0 & 0 & 0 \\
1 & 0 & 0
\end{pmatrix} \!\!\! \begin{pmatrix}
0 & 0 & 1 \\
0 & 0 & 0 \\
0 & 0 & 0
\end{pmatrix}
\end{align*}
So $I_3 \in A_1 A_{-1}$ and $I_3 \in A_{-1} A_1$, and the required
result follows by induction, showing $A$ is strongly graded.

As in the previous examples, using
Proposition~\ref{grk0ofstronglygrring} we have $K_0^\gr (A) \cong
K_0 (A_0)$. Here $R_0 = K$, so there is a ring isomorphism
$$
A_0 = \begin{pmatrix} R_0 & 0 & 0 \\
0 & R_0 & R_0  \\
0 & R_0 & R_0
\end{pmatrix}
\cong K \times M_2(K).
$$
Then
$$
K_0^\gr (A) \cong K_0 (A_0) \cong K_0 (K) \oplus K_0
\big(M_2(K)\big) \cong \mathbb Z \oplus \mathbb Z,
$$
since $K_0$ respects Cartesian products and Morita equivalence. Note
that for the usual $K$-theory of $A$, $K_0 \big( M_3 (R)(0,1,1)
\big) \cong K_0 (R) = K_0 \big( K[x^2 , x^{-2}] \big)$. Using the
Fundamental Theorem of Algebraic $K$-theory
\cite[Thm.~3.3.3]{rosenberg} (see also \cite[p.~484]{magurn}), $K_0
\big( K[x , x^{-1}] \big) \cong K_0 (K) \cong \mathbb Z$, and it
follows that $K_0 (A) \cong \mathbb Z$, since $K[x^2 , x^{-2}] \cong
K[x , x^{-1}]$ as rings. So the $K$-theory of $A$ is isomorphic to
one copy of $\mathbb Z$, which is not the same as the graded
$K$-theory of $A$.
\end{example}




\section{Graded $\mathcal D$-functors} \label{sectiongrdfunctors}

Throughout this section, we will assume that $\Ga$ is an abelian
group, $R$ is a fixed commutative $\Ga$-graded ring and all graded
rings, graded modules and graded algebras are also $\Ga$-graded. As
mentioned in Remark~\ref{remarkongrmatrixrings}, in this section we
will define multiplication in $\End_R (A)$ to be $g \cdot f = g
\circ f$. Let $\mathcal Ab$ be the category of abelian groups and
let $\text{Az}_\gr(R)$ denote the category of graded Azumaya
algebras graded free over $R$ with graded $R$-algebra homomorphisms.
We recall from Section~\ref{sectiongradedmodules} that
$$
\Ga^*_{M_{k}(R)} = \big \{ (d) \in \Ga^k : \GL_{k}(R)[d] \neq
\emptyset \big \},
$$
where, for $(d)=(\de_1, \ldots ,\de_k) \in \Ga^k$, $\GL_{k} (R)[d]$
consists of invertible $k \times k$ matrices with the $ij$-entry in
$R_{-\de_i}$ (see page~\pageref{pagerefmatrixwithshifting}).

\begin{defin} \label{gradeddfunctor}
An abstract functor $\mathcal F : \text{Az}_\gr(R) \ra \mathcal Ab$
is defined to be a \emph{graded $\mathcal D$-functor}\index{graded
d@graded $\mathcal D$-functor} if it satisfies the three properties
below:
\begin{description}
\item[(1)]  $\mathcal F(R)$ is the trivial group.

\item[(2)] For any graded Azumaya algebra $A$ graded free over $R$
and for any $(d) = (\de_1 , \ldots , \de_k) \in \Ga^*_{M_k (R)}$,
there is a homomorphism
$$
\rho: \mathcal F \big(M_k (A) (d)\big) \lra \mathcal F (A)
$$
such that the composition $\mathcal F (A) \ra \mathcal F \big(M_k
(A) (d)\big) \ra \mathcal F (A)$ is $\eta_{k}$, where
$\eta_k(x)=x^k$.

\item[(3)] With $\rho$ as in property (2), then $\ker(\rho)$ is $k$-torsion.
\end{description}
\end{defin}
Note that these properties are well-defined since both $R$ and
$M_k(A)(d)$ are graded Azumaya algebras  graded free over $R$. The
proof of the theorem below follows in a similar way to that of
Theorem~\ref{torsionthm}.


\begin{thm} \label{grtorsionthm}
Let $A$ be a graded Azumaya algebra which is graded free over its
centre $R$ of dimension $n$, such that $A$ has a homogeneous basis
with degrees $(\de_1, \ldots , \de_n)$ in $\Ga^*_{M_n(R)}$. Then
$\mathcal F (A)$ is $n^2$-torsion, where $\mathcal F$ is a graded
$\mathcal D$-functor.
\end{thm}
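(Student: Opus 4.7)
The strategy is a direct graded analogue of the proof of Theorem~\ref{torsionthm}, with the role of the hypothesis $(d)=(\de_1,\ldots,\de_n)\in\Ga^*_{M_n(R)}$ being to make everything compatible with the graded structure. First I would apply the $\mathcal D$-functor axioms to $R$ itself with the trivial shift $(e,\ldots,e)\in\Ga^*_{M_n(R)}$ (which certainly lies in $\Ga^*_{M_n(R)}$ since $I_n\in\GL_n(R)[(e,\ldots,e)]$): property (1) gives $\mathcal F(R)=1$, and property (2) then forces the map $\mathcal F(R)\to\mathcal F(M_n(R))$ to have image killed by $\eta_n$, so property (3) yields that $\mathcal F(M_n(R))$ is $n$-torsion.

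The next step is to upgrade this to the claim that $\mathcal F(A\otimes_R A^{\op})$ is $n$-torsion. Since $A$ is a graded Azumaya algebra, the canonical map $\psi_A:A\otimes_R A^{\op}\to\End_R(A)$ is a graded $R$-algebra isomorphism by Definition~\ref{grazumayadefin}. Using the homogeneous basis of $A$ with degrees $(d)$ and the graded analogue of Proposition~\ref{endgrisomatrixring} (with the multiplication convention fixed in Remark~\ref{remarkongrmatrixrings}), we have $\End_R(A)\conggr M_n(R)(d)$ as graded $R$-algebras. The crucial use of the hypothesis $(d)\in\Ga^*_{M_n(R)}$ is now: by Proposition~\ref{rndconggrrna}, $R^n(d)\conggr R^n$ as graded $R$-modules, hence $M_n(R)(d)\conggr M_n(R)$ as graded $R$-algebras (apply $\End_R$ to both sides). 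Combining, $\mathcal F(A\otimes_R A^{\op})\cong\mathcal F(M_n(R))$, which is $n$-torsion.

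To finish, I would mimic the diagram in the proof of Theorem~\ref{torsionthm}. The graded $R$-algebra maps
\[
A\lra A\otimes_R A^{\op}\lra A\otimes_R\End_R(A^{\op})\lra A\otimes_R M_n(R)(d)\conggr M_n(A)(d)
\]
induce homomorphisms $\mathcal F(A)\stackrel{i}{\to}\mathcal F(A\otimes_R A^{\op})\stackrel{r}{\to}\mathcal F(M_n(A)(d))$. Because $(d)\in\Ga^*_{M_n(R)}$, property (2) of the graded $\mathcal D$-functor applies to give $\rho:\mathcal F(M_n(A)(d))\to\mathcal F(A)$ whose composition with the natural inclusion $\mathcal F(A)\to\mathcal F(M_n(A)(d))$ (and hence with $r\circ i$, since this inclusion factors through $r\circ i$) is $\eta_n$. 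Then for $a\in\mathcal F(A)$,
\[
a^{n^2}=(\eta_n(a))^n=\rho\bigl(r(i(a))^n\bigr)=\rho\circ r\bigl(i(a)^n\bigr)=\rho\circ r(1)=1,
\]
where the penultimate equality uses that $\mathcal F(A\otimes_R A^{\op})$ is $n$-torsion.

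The only real subtlety, and the step I would expect to require the most care, is the second paragraph: tracking the grading conventions so that the chain $A\otimes_R A^{\op}\conggr\End_R(A)\conggr M_n(R)(d)\conggr M_n(R)$ is genuinely a chain of graded $R$-algebra isomorphisms (not just graded module isomorphisms), and that the shift convention used in defining $M_k(A)(d)$ in property (2) of Definition~\ref{gradeddfunctor} matches the shift that appears naturally from $\End_R(A)$. Once the conventions of Remark~\ref{remarkongrmatrixrings} are applied consistently, the rest is bookkeeping identical to the non-graded case.
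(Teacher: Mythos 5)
Your argument is essentially the paper's proof, with the same structure: show $\mathcal F(A\otimes_R A^{\op})$ is $n$-torsion by identifying it with $\mathcal F$ of a graded matrix algebra over $R$ and applying axioms (1)--(3), then chase the triangle $\mathcal F(A)\to\mathcal F(A\otimes_R A^{\op})\to\mathcal F(M_n(A)(d))\stackrel{\rho}{\to}\mathcal F(A)$ whose outer composition is $\eta_n$. Two small remarks. First, the detour through the trivial shift $(e,\ldots,e)$ is unnecessary: the paper applies axiom (2) to $R$ directly with the given $(d)\in\Ga^*_{M_n(R)}$, obtaining $\rho:\mathcal F(M_n(R)(d))\to\mathcal F(R)=1$, whose kernel is all of $\mathcal F(M_n(R)(d))$ and hence $n$-torsion by (3), so no intermediate comparison $M_n(R)(d)\conggr M_n(R)$ is required. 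Second, that comparison as you justify it has a sign subtlety: $R^n(d)$ has standard basis of degrees $\de_i^{-1}$, so applying $\End_R$ to $R^n(d)\conggr R^n$ under the conventions of Remark~\ref{remarkongrmatrixrings} yields $M_n(R)(-d)\conggr M_n(R)$, not $M_n(R)(d)\conggr M_n(R)$; you would then need the (true, but unproved here) observation that $(d)\in\Ga^*_{M_n(R)}$ if and only if $(-d)\in\Ga^*_{M_n(R)}$, e.g.\ by taking the transpose of the inverse matrix. The direct application of axiom (2) with $(d)$ avoids this entirely.
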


\begin{proof}
Let $\{a_1 , \ldots , a_n\}$ be a homogeneous basis for $A$ over
$R$, and let $(d)= (\deg (a_1) , \ldots ,\deg( a_n)) \in \Ga^*_{M_n
(R)}$. Since $R$ is a graded Azumaya algebra over itself, by (2) in
the definition of a graded $\mathcal D$-functor, there is a
homomorphism $\rho:\mathcal F \big( M_n(R)(d) \big) \ra \mathcal
F(R)$. But $\mathcal F(R)$ is trivial by property (1) and therefore
the kernel of $\rho$ is $\mathcal F \big( M_n(R)(d) \big)$ which is,
by (3), $n$-torsion. Further, the graded $R$-algebra isomorphism $A
\otimes_R A^{\text{op}} \conggr \End_R(A)$ from the definition of a
graded Azumaya algebra, combined with the graded isomorphism
$\End_R(A) \conggr M_n(R)(d)$, induces an isomorphism $\mathcal F(A
\otimes_R A^{\op}) \cong \mathcal F(M_n(R)(d))$. So $\mathcal F(A
\otimes_R A^{\op})$ is also $n$-torsion.

In the category $\text{Az}_\gr(R)$, the two graded $R$-algebra
homomorphisms $i:A \ra A \otimes_R A^{\text{op}}$ and $r: A^{\op}
\ra \End_R (A^{\op}) \ra M_n(R)(d)$ induce group homomorphisms $i :
\mathcal F(A) \ra \mathcal F(A \otimes_R A^{\op})$ and $r : \mathcal
F(A \otimes_R A^{\op}) \ra \mathcal F (A \otimes_R M_n(R)(d))$,
where $\mathcal F(A \otimes_R M_n(R)(d)) \cong \mathcal F
(M_n(A)(d))$. Consider the following diagram
\begin{equation*}
\begin{split}
\xymatrix{
\mathcal F(A) \; \ar[d]_i \ar@/^/[ddrr]^{\eta_n}&&  \\
\mathcal F(A \otimes_R A^{\text{op}}) \; \ar[d]_r &&  \\
\mathcal F(M_n(A)(d))\ar[rr]^-{\rho} && \mathcal F(A)}
\end{split}
\end{equation*}
which is commutative by property~(2). It follows that $\mathcal
F(A)$ is $n^2$-torsion.
\end{proof}

For a graded ring $A$, let $\mathcal P \mathrm{gr}(A)$ be the
category of graded finitely generated projective $A$-modules. Then
as for $K_0^\gr$ in Section~\ref{sectiongrk0defin}, we define
$$
K_i^{\gr} (A) = K_i (\Pgr (A))
$$
for $i \geq 0$, where $K_i (\Pgr (A))$ is the Quillen $K$-group of
the exact category $\Pgr (A)$ (see
Section~\ref{sectionhigherktheory}).

Let $A$ be a graded ring with graded centre $R$. Then the graded
ring homomorphism $R \rightarrow A$ induces an exact functor $A
\otimes_R - :\mathcal P \mathrm{gr}(R) \ra \Pgr (A)$, which in turn
induces a group homomorphism $K_i^{\gr} (R) \ra K_i^{\gr} (A)$. Then
we have an exact sequence
\begin{equation}\label{grexactseq}
1 \lra \ZK[i]^{\gr}(A) \lra K_i^{\gr}(R) \lra K_i^{\gr}(A) \lra
\CK[i]^{\gr}(A) \lra 1
\end{equation}
where $\ZK[i]^{\gr}(A)$ and $\CK[i]^{\gr}(A)$ are the kernel and
cokernel of the map $K_i^{\gr}(R) \rightarrow K_i^{\gr}(A)$
respectively. We will show that $\CK[i]^\gr$ can be regarded as the
following functor
\begin{align*}
\CK[i]^{\gr}: \text{Az}_{\gr}(R) &\lra  \mathcal Ab \\
A &\longmapsto \CK[i]^{\gr}(A).
\end{align*}

For a graded Azumaya algebra $A$ graded free over $R$, clearly
$\CK[i]^{\gr} (A) = \coker \big(K_i^{\gr} (R) \lra K_i^{\gr}
(A)\big)$ is an abelian group. Consider graded Azumaya algebras $A ,
A'$ graded free over $R$ and a graded $R$-algebra homomorphism $f :
A \ra A'$. Then there is an induced exact functor $A' \otimes_A - :
\Pgr (A) \ra \Pgr (A')$, which induces a group homomorphism $f_* :
K_i^{\gr}  (A) \ra K_i^{\gr}  (A')$. We have an exact functor
$$
A' \otimes_A (A \otimes_R -): \Pgr(R) \lra \Pgr (A) \lra \Pgr (A').
$$
As the map $f$ restricted to $R$ is the identity map, the induced
functor $\Pgr (R) \ra \Pgr (R)$ is also the identity. So it induces
the identity map $K_i^{\gr} (R) \ra K_i^\gr (R)$. We have an exact
functor
$$
A' \otimes_R - : \Pgr (R) \lra \Pgr (R) \lra \Pgr (A').
$$
Since these two functors from $\Pgr (R)$ to $\Pgr (A')$ are
isomorphic, they induce the same map on the level of the $K$-groups
by Theorem~\ref{kifunctorthm}.


Since $K_i$ is a functor from the category of exact categories to
the abelian groups, the following diagram is commutative
\begin{equation*}
\begin{split}
\xymatrix{ K_i^\gr(R)  \ar[r] \ar[d]_{\id} & K_i^\gr(A)  \ar[r]
\ar[d]^{f_*} & \CK[i]^\gr(A) \ar[d]^{\CK[i]^\gr(f)} \\
K_i^\gr (R)  \ar[r] & K_i^\gr (A')  \ar[r]  & \CK[i]^\gr (A') }
\end{split}
\end{equation*}
Then it is routine to check that $\CK[i]^\gr$ forms the required
functor. Similarly $\ZK[i]^{\gr}$ can be regarded as the following
functor
\begin{align*}
\ZK[i]^{\gr}: \text{Az}_{\gr}(R) &\lra  \mathcal Ab \\
A &\longmapsto \ZK[i]^{\gr}(A).
\end{align*}

\begin{prop} \label{grckidfunctorprop}
With $\CK[i]^{\gr}$ defined as above, $\CK[i]^{\gr}$ is a graded
$\mathcal D$-functor.
\end{prop}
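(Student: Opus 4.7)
The plan is to adapt the proof of Proposition~\ref{ckidfunctorprop} to the graded setting, using the graded Morita equivalence of Proposition~\ref{grmorita} in place of ordinary Morita equivalence, and exploiting the hypothesis $(d) \in \Ga^*_{M_k(R)}$ to control the shifts. Property~(1) is immediate: the identity $R \to R$ induces the identity on $K_i^\gr(R)$, whose cokernel is trivial, so $\CK[i]^\gr(R)$ is the trivial group.

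For property~(2), fix $(d)=(\de_1,\ldots,\de_k) \in \Ga^*_{M_k(R)}$ and consider the diagonal embedding $A \hookrightarrow M_k(A)(d)$, $a \mapsto aI_k$. Since $\Ga$ is abelian, the $(i,i)$-entry of $M_k(A)(d)_\la$ is $A_{\de_i \la \de_i^{-1}} = A_\la$, so this is a graded $R$-algebra homomorphism, inducing the exact functor $\phi_\gr : \Pgr(A) \to \Pgr(M_k(A)(d))$, $X \mapsto M_k(A)(d) \otimes_A X$. Proposition~\ref{grmorita} applied with $R$ replaced by $A$ provides the graded Morita equivalence $\psi_\gr : \Pgr(M_k(A)(d)) \to \Pgr(A)$, $P \mapsto A^k(-d) \otimes_{M_k(A)(d)} P$, which by Theorem~\ref{kimoritaequivalentthm} induces an isomorphism $(\psi_\gr)_* : K_i^\gr(M_k(A)(d)) \conggr K_i^\gr(A)$.

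The crux is to show $\psi_\gr \circ \phi_\gr(X) \conggr X^k$ as graded left $A$-modules. Computing, $\psi_\gr \circ \phi_\gr(X) \conggr A^k(-d) \otimes_A X$. By Proposition~\ref{rndconggrrna} the hypothesis $(d) \in \Ga^*_{M_k(R)}$ supplies an invertible matrix in $\GL_k(R)[d]$ witnessing $R^k(d) \conggr R^k$; transposing its inverse (here the commutativity of $R$ is used) yields an invertible matrix in $\GL_k(R)[-d]$, hence $R^k(-d) \conggr R^k$ as graded $R$-modules. Tensoring with $A$ over $R$ gives $A^k(-d) \conggr A^k$ as graded left $A$-modules; moreover, since the realising matrix has entries in $R \subseteq Z(A)$, its action is simultaneously left- and right-$A$-linear, so the isomorphism is one of graded $(A,A)$-bimodules. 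Consequently $\psi_\gr \circ \phi_\gr(X) \conggr A^k \otimes_A X \conggr X^k$, and by the remarks following Theorem~\ref{kirespectsdirectsums} the functor $\psi_\gr \circ \phi_\gr$ induces multiplication by $k$ on $K_i^\gr(A)$.

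With the key computation in hand, I would assemble the three-row commutative diagram exactly as in the proof of Proposition~\ref{ckidfunctorprop}, with rows the exact sequence \eqref{grexactseq} for $A$, $M_k(A)(d)$, and $A$, and vertical maps $(\id, \phi_\gr, \text{induced})$ between the first two rows and $(\eta_k, (\psi_\gr)_*, \rho)$ between the last two. The same bimodule argument, applied to $P \in \Pgr(R)$, shows $(\psi_\gr)_*$ carries $\Im\bigl(K_i^\gr(R) \to K_i^\gr(M_k(A)(d))\bigr)$ into $\Im\bigl(K_i^\gr(R) \to K_i^\gr(A)\bigr)$, so $\rho$ descends to cokernels; property~(2) then follows because the middle-column composition is $k \cdot \id$. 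Property~(3) is a verbatim diagram chase from Proposition~\ref{ckidfunctorprop}: given $x \in \ker(\rho)$, lift to $a \in K_i^\gr(M_k(A)(d))$, pull $(\psi_\gr)_*(a)$ back along the bottom row to $b \in K_i^\gr(R)$, and use that $(\psi_\gr)_*$ is an isomorphism together with $(\psi_\gr)_* \circ \beta = k \cdot \alpha$ to conclude $a^k$ lies in the image of $K_i^\gr(R)$, whence $x^k = 1$. The main obstacle is the bimodule identification $A^k(-d) \conggr A^k$: one must check that the shift operation on graded modules interacts correctly with both the left and right $A$-actions, with the centrality of the matrix entries being the precise ingredient that upgrades an $R$-linear isomorphism to a bimodule one; everything else is a mechanical transcription of the non-graded argument.
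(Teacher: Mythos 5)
Your proposal is correct and follows essentially the same route as the paper: property (1) is observed directly, property (2) uses the graded Morita equivalence of Proposition~\ref{grmorita} together with the hypothesis $(d)\in\Ga^*_{M_k(R)}$ to show $\psi\circ\phi$ is multiplication by $k$, and property (3) is the same diagram chase. The only cosmetic difference is that the paper applies the invertible matrix $r\in\GL_k(R)[d]$ directly to define a graded $A$-module isomorphism $X^k\conggr X^k(-d)$ (using centrality of $R$ in $A$ for $A$-linearity), whereas you establish the bimodule isomorphism $A^k(-d)\conggr A^k$ first and then tensor with $X$; both rest on the same matrix and the same centrality observation.
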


\begin{proof}
Property (1) is clear, since $R$ is commutative so $K_i^{\gr} (Z(R))
\ra K_i^{\gr}(R)$ is the identity map. For property~(2), let $A$ be
a graded Azumaya algebra graded free over $R$ and let $(d) = (\de_1
, \ldots , \de_k) \in \Ga^*_{M_k (R)}$. Then there are functors:
\begin{align}\label{grfor}
\phi : \Pgr(A) & \longrightarrow \Pgr (M_k(A)(d)) \\
 X  & \longmapsto M_k(A)(d) \otimes_A X \notag
\end{align}
and
\begin{align}\label{grfor1}
\psi : \Pgr (M_k(A)(d))   & \longrightarrow \Pgr (A) \\
 Y  & \longmapsto A^k(-d) \otimes_{M_k(A)(d)} Y. \notag
\end{align}
The functor $\phi$ induces a homomorphism from $K_i^{\gr}(A)$ to
$K_i^{\gr}(M_k(A)(d))$. By the graded version of the Morita Theorems
\label{moritapageref} (see Proposition~\ref{grmorita}), the functor
$\psi$ establishes a natural equivalence of categories, so it
induces an isomorphism from $K_i^{\gr}(M_k(A)(d))$ to
$K_i^{\gr}(A)$.

For $X \in \Pgr(A)$, $\psi \circ \phi (X) \conggr X^k(-d)$. We will
use a similar argument to that of Proposition~\ref{rndconggrrna} to
show that $X^k(-d) \conggr X^k$. Since $(d) \in \Ga^*_{M_k (R)}$,
there is $r = (r_{ij}) \in \GL_{k}(R)[d]$. Then there is a graded
$A$-module homomorphism
\begin{align*}
\mal_r : \;\; X^k & \lra  X^k (-d) \\
(x_1 , \ldots , x_k ) & \lmps r (x_1 , \ldots , x_k ).
\end{align*}
Since $r$ is invertible there is an inverse matrix $t \in
\GL_{k}(R)$, and there is an $A$-module homomorphism $\mal_t : X^k
(-d) \ra X^k$ which is an inverse of $\mal_r$. So $\mal_r$ is a
graded $A$-module isomorphism. By the remarks after
Theorem~\ref{kirespectsdirectsums}, $K_i$ are functors which respect
direct sums, so this induces a multiplication by $k$ on the level of
the $K$-groups.

The exact functors \eqref{grfor} and \eqref{grfor1} induce the
following commutative diagram:
\begin{equation*}
\begin{split}
\xymatrix{ K_i^{\gr}(R)  \ar[r] \ar[d]^{=} & K_i^{\gr}(A)  \ar[r]
\ar[d]^\phi
& \CK[i]^{\gr}(A) \ar[r] \ar[d]&1  \\
K_i^{\gr}(R)  \ar[r] \ar[d]^{\eta_k} & K_i^{\gr}(M_k(A)(d))  \ar[r]
\ar[d]^\psi_\cong & \CK[i]^{\gr}(M_k(A)(d)) \ar[r] \ar[d]^\rho& 1 \\
K_i^{\gr}(R)  \ar[r]  & K_i^{\gr}(A)  \ar[r] & \CK[i]^{\gr}(A)
\ar[r]& 1 }
\end{split}
\end{equation*}
where composition of the columns are $\eta_k$, proving property (2).
A diagram chase verifies that property (3) also holds.
\end{proof}

A similar proof shows that $\ZK[i]^\gr$ is also a graded $\mathcal
D$-functor. We now show that for a graded Azumaya algebra which is
graded free over its centre, its graded $K$-theory is essentially
the same as the graded $K$-theory of its centre.

\begin{thm}\label{grazumayafreethm}
Let $A$ be a graded Azumaya algebra which is graded free over its
centre $R$ of dimension $n$, such that $A$ has a homogeneous basis
with degrees $(\de_1, \ldots , \de_n)$ in $\Ga^*_{M_n(R)}$. Then for
any $i \geq 0$,
$$
K_i^{\gr}(A) \otimes \mathbb Z[1/n] \cong K_i^{\gr}(R) \otimes
\mathbb Z[1/n].
$$
\end{thm}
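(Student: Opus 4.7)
The plan is to mirror precisely the non-graded argument used in Theorem~\ref{azumayafreethm}, but now in the graded setting, using the machinery built up in Section~\ref{sectiongrdfunctors}. Starting point: the exact sequence \eqref{grexactseq} associated with the inclusion $R \hookrightarrow A$, which for each $i \geq 0$ reads
$$
1 \lra \ZK[i]^{\gr}(A) \lra K_i^{\gr}(R) \lra K_i^{\gr}(A) \lra \CK[i]^{\gr}(A) \lra 1.
$$
The strategy is to show that the outer terms are bounded torsion of exponent dividing a power of $n$, so that tensoring with $\mathbb{Z}[1/n]$ kills them.

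First, I would invoke Proposition~\ref{grckidfunctorprop}, which says that $\CK[i]^{\gr}$ is a graded $\mathcal D$-functor on $\mathrm{Az}_{\gr}(R)$. An identical diagram chase (just replace cokernels by kernels throughout the proof of Proposition~\ref{grckidfunctorprop}) shows that $\ZK[i]^{\gr}$ is likewise a graded $\mathcal D$-functor; indeed, the proposition already references this analogue and the argument is formal from the commutative ladder produced by the Morita-type functors $\phi$ and $\psi$ of \eqref{grfor}--\eqref{grfor1}.

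Next, I would apply Theorem~\ref{grtorsionthm} to both graded $\mathcal D$-functors evaluated at $A$. The hypothesis of the theorem I am trying to prove is precisely what is required: $A$ is graded free over $R$ of dimension $n$ with a homogeneous basis whose degree tuple $(\de_1,\ldots,\de_n)$ lies in $\Ga^*_{M_n(R)}$. This guarantees $\End_R(A) \conggr M_n(R)(d)$ via Proposition~\ref{endgrisomatrixring}, allowing the key step of the proof of Theorem~\ref{grtorsionthm} to go through. Consequently both $\CK[i]^{\gr}(A)$ and $\ZK[i]^{\gr}(A)$ are $n^2$-torsion abelian groups.

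Finally, tensor the exact sequence \eqref{grexactseq} with $\mathbb{Z}[1/n]$. Since $\mathbb{Z}[1/n]$ is flat over $\mathbb{Z}$, the sequence remains exact, and because the outer terms are annihilated by $n^2$ they vanish after tensoring with $\mathbb{Z}[1/n]$. The middle map therefore becomes an isomorphism
$$
K_i^{\gr}(R) \otimes \mathbb{Z}[1/n] \;\stackrel{\cong}{\lra}\; K_i^{\gr}(A) \otimes \mathbb{Z}[1/n],
$$
which is the desired conclusion. I do not anticipate a serious obstacle: the real work has already been done in setting up graded $\mathcal D$-functors, verifying graded Morita equivalence (Proposition~\ref{grmorita}), and proving Theorem~\ref{grtorsionthm}; the present theorem is essentially an application of this machinery. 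The only subtle point worth highlighting in the write-up is verifying that the hypothesis $(\de_1,\ldots,\de_n) \in \Ga^*_{M_n(R)}$ is exactly what licenses the use of Theorem~\ref{grtorsionthm} (since that theorem demands the degree tuple of a homogeneous basis of $A$ lie in $\Ga^*_{M_n(R)}$).
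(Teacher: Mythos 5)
Your proposal is correct and follows exactly the same route as the paper: establish that $\CK[i]^{\gr}$ and $\ZK[i]^{\gr}$ are graded $\mathcal D$-functors via Proposition~\ref{grckidfunctorprop}, apply Theorem~\ref{grtorsionthm} to see that $\CK[i]^{\gr}(A)$ and $\ZK[i]^{\gr}(A)$ are $n^2$-torsion, then tensor the exact sequence~\eqref{grexactseq} with $\mathbb{Z}[1/n]$. Your remark singling out the role of the hypothesis $(\de_1,\ldots,\de_n)\in\Ga^*_{M_n(R)}$ in licensing Theorem~\ref{grtorsionthm} is a useful clarification but does not change the argument.
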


\begin{proof}
Proposition~\ref{grckidfunctorprop} shows that $\CK[i]^{\gr}$ (and
in the same manner $\ZK[i]^{\gr}$) is a graded $\mathcal D$-functor,
and thus by Theorem~\ref{grtorsionthm} $\CK[i]^{\gr}(A)$ and
$\ZK[i]^{\gr}(A)$ are $n^2$-torsion abelian groups. Tensoring the
exact sequence (\ref{grexactseq}) by $\mathbb Z[1/n]$, since
$\CK[i]^{\gr}(A) \otimes \mathbb Z[1/n]$ and $\ZK[i]^{\gr}(A)
\otimes \mathbb Z[1/n]$ vanish, the result follows.
\end{proof}

It remains as a question when this result holds for a graded Azumaya
algebra of constant rank.

\begin{ques}
Let $\Ga$ be an abelian group, $R$ be a commutative $\Ga$-graded
ring, and $A$ be a graded Azumaya algebra over its centre $R$ of
rank $n$. When is it true that for any $i \geq 0$,
$$
K_i^{\gr}(A) \otimes \mathbb Z[1/n] \cong K_i^{\gr}(R) \otimes
\mathbb Z[1/n]?
$$
\end{ques}

\begin{remark}
Using Example~\ref{eggrktheory}, we remark here that the graded
Azumaya algebra $\mathbb H$ does not satisfy the conditions of
Theorem~\ref{grazumayafreethm}. Suppose $\mathbb H$ does satisfy
these conditions; that is, suppose there is a homogeneous basis
$\{a_1, \ldots, a_4\}$ for $\mathbb  H$ over $\mathbb R$, such that
the elements of the basis have degrees $(\de_1 , \ldots , \de_4)$ in
$\Ga^*_{M_4( R)}$. So there exists a matrix $r \in \GL_4 (\R)[d]$.
Since $\Supp (\R) = 0$, then as each row of $r$ must contain a
non-zero element, this implies $\de_i =0$ for each $i$. But this
would imply that the support of $\mathbb H$ is also $0$, which
clearly is a contradiction. So such a homogeneous basis for $\mathbb
H$ does not exist.
\end{remark}

\begin{cor} \label{grazumayafreecor}
Let $A$ be an Azumaya algebra free over its centre $R$ of dimension
$n$. Then for any $i \geq 0$,
$$
K_i(A) \otimes \mathbb Z[1/n] \cong K_i(R) \otimes \mathbb Z[1/n].
$$
\end{cor}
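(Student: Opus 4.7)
The plan is to derive Corollary~\ref{grazumayafreecor} as a special case of Theorem~\ref{grazumayafreethm} by equipping everything with the trivial grading. Let $\Ga$ be the trivial group $\{e\}$ and consider $R$ and $A$ as $\Ga$-graded rings with all of $R$ and $A$ concentrated in degree $e$. Under this trivial grading a $\Ga$-graded $R$-module is simply an $R$-module (and likewise for $A$), so the categories $\Pgr(R)$ and $\pr(R)$ coincide, as do $\Pgr(A)$ and $\pr(A)$. Consequently $K_i^{\gr}(R) = K_i(R)$ and $K_i^{\gr}(A) = K_i(A)$ for every $i \geq 0$.

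Next I would verify that the hypotheses of Theorem~\ref{grazumayafreethm} hold in this trivially graded situation. Since $A$ is an Azumaya algebra over $R$, it is faithfully projective over $R$ and the natural map $\psi_A : A \otimes_R A^{\op} \to \End_R(A)$ is an isomorphism; both properties are automatically graded once every element is placed in degree $e$, so $A$ is a graded Azumaya algebra in the sense of Definition~\ref{grazumayadefin}. Because $A$ is free over $R$ of rank $n$ by hypothesis, any $R$-basis $\{a_1, \dots, a_n\}$ is a homogeneous basis with $(\de_1, \dots, \de_n) = (e, \dots, e)$.

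Finally I would observe that $(e, \dots, e) \in \Ga^*_{M_n(R)}$: the shifted matrix set $M_n(R)[(e, \dots, e)]$ is just $M_n(R)$, which contains the identity $I_n \in \GL_n(R)$, so $\GL_n(R)[(e,\dots,e)] \neq \emptyset$. All hypotheses of Theorem~\ref{grazumayafreethm} are therefore satisfied, yielding
$$K_i^{\gr}(A) \otimes \mathbb{Z}[1/n] \cong K_i^{\gr}(R) \otimes \mathbb{Z}[1/n],$$
which, via the identifications above, is precisely the desired isomorphism $K_i(A) \otimes \mathbb{Z}[1/n] \cong K_i(R) \otimes \mathbb{Z}[1/n]$.

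There is no genuine obstacle here, since the corollary is really a sanity check confirming that the graded machinery of Chapter~\ref{chgradedktheoryofazumayaalgebras} specialises correctly to recover Theorem~\ref{azumayafreethm}. The only mild subtlety is the verification that trivial grading places $A$ inside $\text{Az}_\gr(R)$ and that the trivial shift tuple lies in $\Ga^*_{M_n(R)}$, both of which are immediate.
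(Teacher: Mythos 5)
Your proposal is correct and follows exactly the route the paper takes: the paper's proof is the one-line observation that the corollary follows from Theorem~\ref{grazumayafreethm} by taking $\Ga$ trivial, and you have simply spelled out the routine verifications (that $K_i^{\gr}$ collapses to $K_i$, that $A$ lies in $\text{Az}_\gr(R)$, and that $(e,\dots,e) \in \Ga^*_{M_n(R)}$) which the paper leaves implicit.
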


\begin{proof}
By taking $\Ga$ to be the trivial group, this follows immediately
from Theorem~\ref{grazumayafreethm}.
\end{proof}



\chapter{Additive Commutators}\label{chapteradditivecommutators}


In 1905 Wedderburn proved that a finite division ring is a field.
This is an example of a commutativity theorem; that is, it is a
theorem which states certain conditions under which a given ring is
commutative. Since then, Wedderburn's result has motivated many,
more general commutativity theorems (see \cite[\S 13]{lam1}). Both
additive and multiplicative commutators play an important role in
these theorems. In this chapter we consider additive commutators in
the setting of graded division algebras.

We begin Section \ref{sectionadditivecommutators} with two results
involving the support of a graded division ring. We then show that
some commutativity theorems involving additive commutators hold in
the graded setting. We give a counter-example to show that one such
commutativity theorem for multiplicative commutators does not hold.



In Section~\ref{sectiongradedsplitting}, we show that in the setting
of graded division algebras, the reduced trace exists and it is a
graded map. In Section~\ref{sectionsomenongradedresults}, we recall
some results from the non-graded setting which will be used in the
final section. We end the chapter by considering, in
Section~\ref{sectionquotientdivisionrings}, the quotient division
ring $QD$ of a graded division ring $D$. We show how the submodule
generated by the additive commutators in $QD$ relates to that of $D$
(see Corollary~\ref{d/d,dcongqd/qd,qd}).


\section{Homogeneous additive commutators} \label{sectionadditivecommutators}

Throughout this chapter, let $\Ga$ be an abelian group unless
otherwise stated. We recall from Section~\ref{sectiongradedrings}
that the support of a graded ring $R = \bigoplus_{ \ga \in \Ga}
R_{\ga}$ is defined to be the set
$$
\Supp (R) = \: \Ga_{R} = \big \{ \ga \in \Ga : R_{\ga} \neq \{0 \}
\big \}.
$$
It follows that a graded ring $R$ is zero if and only if $\Supp (R)
= \emptyset$.

Let $D$ be a $\Ga$-graded division ring with centre $F$. Since $\Ga$
is an abelian group, the centre of $D$ is a graded subring of $D$
(see Section~\ref{sectiongradedrings}). A \emph{homogeneous additive
commutator}\index{homogeneous additive commutator}\index{additive
commutator} of $D$ is defined to be an element of the form $ab - ba$
where $a,b \in D^{h}$. Throughout this chapter, we will use the
notation $[a,b] = ab-ba$ and $[D, D]$ is the graded submodule of $D$
generated by all homogeneous additive commutators of $D$.

We note that in this chapter we will consider the group $\Ga$ to be
an abelian group, unless stated otherwise. This is the natural
setting to consider homogeneous additive commutators. If $\Ga$ is
not abelian, then a given homogeneous additive commutator may not be
a homogeneous element in $D$.

A \emph{graded division algebra}\index{graded division algebra} $D$
is defined to be a graded division ring with centre $F$ such that
$[D:F] < \infty$. Note that since $F$ is a graded field, $D$ has a
finite homogeneous basis over $F$. A graded division algebra $D$
over its centre $F$ is said to be
\emph{unramified}\index{unramified} if $\Ga_D = \Ga_F$ and
\emph{totally ramified}\index{totally ramified} if $D_0 = F_0$. The
following lemma considers the support of $[D, D]$; the proof of part
(2) is due to Hazrat.



\begin{lemma}
Let $D = \bigoplus_{\ga \in \Ga} D_{\ga}$ be a graded division
algebra over its centre $F$.
\begin{enumerate}
\item If $D$ is totally ramified, then $\emptyset
\neq \Supp ([D,D]) \subsetneqq \Ga_D$.

\item If $D$ is not totally ramified, then $\Supp (D) = \Supp
([D,D])$.
\end{enumerate}
\end{lemma}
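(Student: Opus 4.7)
The plan is to handle the two parts separately, both by analysing the homogeneous decomposition of $[D,D]$ and exploiting that a nonzero homogeneous element of a graded division ring is invertible.

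For part (1), I would first show that $0 \notin \Supp([D,D])$ by proving that every homogeneous generator of $[D,D]_0$ vanishes. A homogeneous commutator $[a,b]$ of total degree $0$ must have $a \in D_\alpha$ and $b \in D_{-\alpha}$, so $ab, ba \in D_0 = F_0 \subseteq Z(D)$ under the total ramification hypothesis. Centrality of $ab$ together with invertibility of $b$ yields $b(ab) = (ab)b$, and cancelling $b$ on the right forces $ba = ab$, so $[a,b] = 0$. Hence $[D,D]_0 = 0$ and $\Supp([D,D]) \subsetneq \Ga_D$. Nonemptiness will follow from the (implicit) assumption that $D$ is noncommutative: any pair of noncommuting homogeneous elements supplies a nonzero homogeneous commutator.

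For part (2), the inclusion $\Supp([D,D]) \subseteq \Ga_D$ is trivial, and I would prove the reverse by a centralizer-plus-Leibniz argument. Write $C := C_D(D_0)$; a componentwise check shows $C$ is a graded subring of $D$, so $\Gamma' := \{\gamma \in \Ga_D : D_\gamma \subseteq C\}$ is a well-defined subset of $\Ga_D$. If $\Gamma'$ were all of $\Ga_D$ then $D = C$, forcing $D_0 \subseteq Z(D) = F$ and hence $D_0 = F_0$, contradicting non-total-ramification. Thus $\Gamma' \subsetneq \Ga_D$, and I fix some $\alpha \in \Ga_D \setminus \Gamma'$ together with witnesses $a \in D_\alpha$ and $x \in D_0$ satisfying $[a,x] \neq 0$.

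The concluding step, which is the main obstacle, promotes this single noncommuting pair to a nonzero commutator in every prescribed homogeneous component. For arbitrary $\gamma \in \Ga_D$ I would set $\beta := \gamma - \alpha$, choose any nonzero $b \in D_\beta$, and invoke the Leibniz identity
\[
[a,\, xb] \;=\; [a,x]\, b \;+\; x\,[a,b].
\]
If $[a,b] \neq 0$ we are done immediately; otherwise $[a, xb] = [a,x]\,b$, which is a product of two nonzero homogeneous elements in a graded division ring and hence nonzero. Since $xb$ is homogeneous of degree $\beta$, the element $[a, xb]$ is a homogeneous commutator of degree $\gamma$, giving $\gamma \in \Supp([D,D])$ in either case.
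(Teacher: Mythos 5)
Your proposal is correct. In part~(1) you and the paper are making essentially the same argument: both proofs hinge on the observation that for $a\in D_\alpha$, $b\in D_{-\alpha}$ nonzero, the products $ab$, $ba$ lie in $D_0=F_0\subseteq Z(D)$, and the invertibility of $b$ then forces $ab=ba$. You phrase it as a direct cancellation showing $[D,D]_0=0$; the paper assumes a nonzero degree-$0$ commutator exists and derives the contradiction $yx\in D_0\setminus Z(D)$. These are two wordings of one argument, and you correctly identify that nonemptiness rests on the tacit hypothesis $D\neq F$ (the paper just says ``clearly'').

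Part~(2), however, is a genuinely different route. The paper argues degree-by-degree: for \emph{each} $\gamma\in\Ga_D$ it assumes $D_\gamma\subseteq Z(D)$ and runs a degree-shuffling chain
$x(dy)=(dy)x=d(yx)=d(xy)=(dx)y=y(dx)=(yd)x=x(yd)$
(using that both $x$ and $dx$ lie in $D_\gamma\subseteq Z(D)$) to conclude $D_0\subseteq Z(D)$, contradicting the non-total-ramification; it then exhibits the explicit commutator $x-y^{-1}xy=-[y^{-1},xy]$ of degree $\gamma$. You instead locate one global witness: from the failure of $D=C_D(D_0)$ you extract $a\in D_\alpha$, $x\in D_0$ with $[a,x]\neq 0$, and then transport it to an arbitrary $\gamma\in\Ga_D$ via the Leibniz identity $[a,xb]=[a,x]b+x[a,b]$ with $b\in D_{\gamma-\alpha}$ nonzero (valid since $\Ga_D$ is a group), splitting on whether $[a,b]$ vanishes. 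The transport works precisely because $x\in D_0$, so multiplying $b$ by $x$ does not shift the degree. Both arguments are valid; the paper's is self-contained at each degree and yields a concrete commutator, while yours isolates the witness-finding step from the degree-filling step and replaces the ad hoc product rearrangement with a single clean identity. That separation is, I think, the more transparent way to see why non-total-ramification fills the whole support.
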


\begin{proof}
(1): Clearly $\emptyset \neq \Supp ([D,D]) \subseteq \Ga_D$. Since
$D_0 = F_0 = Z(D) \cap D_0$ we have $D_0 \subseteq Z(D)$. Suppose $0
\in \Supp([D,D])$. Then there is an element $\sum_i (x_i y_i - y_i
x_i) \in [D,D]$, with $\deg(x_i) + \deg(y_i) = 0$ for all $i$. If
$x_i y_i - y_i x_i = 0$ for all $i$, then clearly the sum is also
zero. Thus there are non-zero homogeneous elements $x \in D_{\ga}$,
$y \in D_{\de}$ with $0 \neq xy -yx \in D_0$ and $\ga + \de =0$.

Then $(xy-yx)y^{-1} \neq 0$, as $y^{-1} \in D_{-\de} \mi 0$ and
$xy-yx \in D_{0} \mi 0$, so their product is a non-zero homogeneous
element of degree $-\de$. Since
$$
(xy-yx)y^{-1} \;\; = \;\:\, xy y^{-1} - y x y^{-1} \;\; = \;\:\,
y^{-1} y x - y x y^{-1},
$$
we have $y^{-1} (yx) \neq (yx) y^{-1}$; that is $yx \notin Z(D)$.
Since $yx \in D_0$, this contradicts the fact that $D_0 = F_0$, so
$0 \notin \Supp([D,D])$.

\vspace{3pt}

(2): It is clear that $\Supp ([D,D]) \subseteq \Ga_D$. For the
reverse containment, for $\ga \in \Ga_D$ we will show that there is
an $x \in D_{\ga}$ which does not commute with some $y \in D_{\de}$
for some $\de \in \Ga_D$. Suppose not, then $D_{\ga} \subseteq
Z(D)$, so $D_{\ga} = F_{\ga}$. Let $x \in D_{\ga}$, $d \in D_0$, $y
\in D_{\de}$ be arbitrary non-zero elements. Then
\begin{align*}
x(d y) \;\, = \;\, (d y) x  \;\, =\,\;d( y x)\;\, =\;\, d(x y) \;\,
= \;\, (d x) y \;\, = \;\, y(d x)  \;\, =\,\;(y d) x\;\, =\;\,x (y
d).
\end{align*}
So for all $d \in D_0$, $y \in D_{\de}$ we have $x(d y) =x(y d)$.
Since $x$ is a non-zero homogeneous element, it is invertible. This
implies $d y = y d$, so $D_0 = F_0$ contradicting the fact that $D$
is not totally ramified. Then there is an $x \in D_{\ga}$ which does
not commute with $y \in D_{\de}$, so $x y y^{-1} - y^{-1} x y \neq
0$ proving $\ga \in \Supp ([D,D])$.
\end{proof}

\begin{example}
%
Let $\mathbb{H}$ be the real quaternion algebra. We saw in
Example~\ref{egofgrdivisionrings}(2) that $\mathbb H$ forms a $\Z_2
\times \Z_2$-graded division ring, where its centre is $\R$. Then
$\Supp (\mathbb{H}) = \Z_2 \times \Z_2$ and $\Supp (\R) = (0,0)$. We
can show that $\Supp ([\mathbb{H},\mathbb{H}]) = \{(1,0) ,(0,1),
(1,1)\}$.
\end{example}

Let $D$ be a graded division ring and let $F$ be a graded subfield
of $D$ which is contained in the centre of $D$. We know that $F_0 =
F \cap D_0$ is a field and $D_0$ is a division ring. The group of
invertible homogeneous elements of $D$, denoted by $D^{h*}$, is
equal to $D^h \mi 0$. Considering $D$ as a graded $F$-module, since
$F$ is a graded field,  there is a uniquely defined dimension
$[D:F]$ by  Theorem~\ref{gradedfree}. Note that $\Ga_F \subseteq
\Ga_D$, so $\Ga_F$ is a normal subgroup of $\Ga_D$.

The proposition below has been proven by Hwang, Wadsworth
\cite[Prop.~2.2]{hwalg} for  two graded fields $R \subseteq S$ with
a torsion-free abelian grade group.

\begin{prop}
Let $D$ be a graded division ring  and let $F$ be a graded subfield
of $D$ which is contained in the centre of $D$. Then
$$[D:F] = [D_0 : F_0] | \Ga_D : \Ga_F|.$$
\end{prop}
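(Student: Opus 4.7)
The plan is to exhibit an explicit homogeneous $F$-basis of $D$ whose cardinality is the product $[D_0:F_0]\cdot|\Ga_D:\Ga_F|$. First I would choose an $F_0$-basis $\{x_i\}_{i\in I}$ for $D_0$, noting that $D_0$ is a division ring containing the field $F_0 = F\cap D_0$ in its centre, so this is a genuine basis. Next I would select a transversal $\{\ga_j\}_{j\in J}$ for the cosets of $\Ga_F$ in $\Ga_D$ and, for each $j$, pick an invertible homogeneous element $y_j\in D_{\ga_j}^{h*}$ (possible since $\ga_j\in\Ga_D$). The claim is that $B=\{x_i y_j\}_{(i,j)\in I\times J}$ is a homogeneous $F$-basis of $D$, and since each $x_i y_j$ has degree $\ga_j$, this will give the cardinality identity.

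For spanning, it suffices to show every homogeneous $d\in D_\ga$ lies in $\mathrm{span}_F(B)$. Writing $\ga=\ga_j+\mu$ with $\mu\in\Ga_F$ and picking any $f\in F_\mu^{h*}$, the element $dy_j^{-1}f^{-1}$ lies in $D_0$ and so expands as $\sum_i c_i x_i$ with $c_i\in F_0$. Since $F$ is central in $D$, each $f$ commutes with $x_i$, giving $d=\sum_i (c_i f)\,x_i y_j$ with $c_i f\in F_\mu\subseteq F$, as required.

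For linear independence, suppose $\sum_{i,j}a_{ij}x_i y_j=0$ with $a_{ij}\in F$. Decomposing each $a_{ij}=\sum_\mu a_{ij}^{(\mu)}$ into homogeneous components, the summand $a_{ij}^{(\mu)}x_i y_j$ has degree $\mu+\ga_j$, whose coset modulo $\Ga_F$ is exactly the coset of $\ga_j$. Because the $\ga_j$ lie in distinct cosets, equating homogeneous components of a fixed degree forces, for each $j$ separately, a relation $\bigl(\sum_{i,\mu}a_{ij}^{(\mu)}x_i\bigr)y_j=0$; multiplying by $y_j^{-1}$ and then isolating by degree $\mu\in\Ga_F$ yields $\sum_i a_{ij}^{(\mu)}x_i=0$ in $D_\mu$. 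Picking an invertible $f\in F_\mu^{h*}$ (or taking $f=1$ when $\mu=0$) and multiplying by $f^{-1}$, which is central, converts this into an $F_0$-relation among the $x_i$ in $D_0$; by $F_0$-linear independence, $a_{ij}^{(\mu)}=0$ for every $\mu$, hence every $a_{ij}=0$.

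The main obstacle will be the bookkeeping in the independence argument: one has to handle the fact that the coefficients $a_{ij}\in F$ are themselves not homogeneous, which is why two separate decompositions (first by $j$ via the distinct-coset argument, then by $\mu$ via the graded field structure of $F$) are needed before invoking $F_0$-linear independence of $\{x_i\}$. Once this is done, the cardinality identity $|B|=|I|\cdot|J|=[D_0:F_0]\cdot|\Ga_D:\Ga_F|$ gives the desired formula, valid whether the dimensions involved are finite or infinite cardinals.
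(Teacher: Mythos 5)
Your proof is correct and takes essentially the same approach as the paper: choose an $F_0$-basis of $D_0$ and a transversal of $\Gamma_F$ in $\Gamma_D$ with corresponding invertible homogeneous $y_j$, then show $\{x_i y_j\}$ is an $F$-basis via the coset-separation trick. Your write-up of the linear-independence step is somewhat more explicit about the double decomposition (by coset of $\Gamma_F$ and then by degree within $F$), whereas the paper phrases spanning via the group homomorphism $D^{h*}\to\Gamma_D/\Gamma_F$ and its kernel $D_0F^{h*}$, but these are surface differences of the same argument.
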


\begin{proof}
Let $\{ x_i \}_{i \in I}$ be a basis for $D_0$ over $F_0$. Consider
the cosets of $\Ga_D$ over $\Ga_F$ and take a transversal $\{ \de_j
\}_{j \in J}$ for these cosets, where $\de_j \in \Ga_D$. Take $\{y_j
\}_{j \in J} \subseteq D^{h*}$ such that $\deg (y_j ) = \de_j$ for
each $j$. We will show that $\{ x_i y_j \}$ is a basis for $D$ over
$F$.

Consider the map
\begin{align*}
\psi : D^{h*} & \lra \Ga_D / \Ga_F\\
d \; &\lmps \deg(d) + \Ga_F.
\end{align*}
This is a group homomorphism with kernel $D_0 F^{h*}$, since for any
$d \in \ker(\psi)$ there is some $f \in F^{h*}$ with $d f^{-1} \in
D_0$. Let $d \in D$ be arbitrary. Then $d = \sum_{\ga \in \Ga}
d_{\ga}$ where $d_{\ga} \in D_{\ga}$ and $\psi (d_\ga) = \ga + \Ga_F
= \de_j + \Ga_F$ for some $\de_j$ in the transversal of $\Ga_D$ over
$\Ga_F$. Then there is some $y_j$ with $\deg(y_j) = \de_j$ and
$d_\ga y_j^{-1} \in \ker (\psi)$. So $d_{\ga} y_j^{-1} = \sum_k a_k
g_k$ for $g_k \in F^{h*}$ and $a_k =\sum_i r_i^{(k)} x_i$ with
$r_i^{(k)} \in F_0$. It follows that $d$ can be written as an
$F$-linear combination of the elements of $\{x_i y_j \}$.

To show linear independence, suppose $\sum_{i=1}^n r_i x_i y_i = 0$
for $r_i \in F$. Since the homogeneous components of $D$ are
disjoint, we can take a homogeneous component of this sum, say
$\sum_{k=1}^m r_k x_k y_k$ where $\deg( r_k x_k y_k) = \al$. Then
$\deg(r_k) + \deg(y_k) = \al$ for all $k$, so all of the $y_k$ are
the same. This implies that $\sum_k r_k x_k =0$, where all of the
$r_k$ have the same degree. If $r_k=0$ for all $k$ then we are done.
Otherwise, for some $r_l \neq 0$, we have $\sum_k (r_l^{-1} r_k )
x_k = 0$. Since $\{x_i\}$ forms a basis for $D_0$ over $F_0$, this
implies $r_k = 0$ for all $k$.
\end{proof}

We include below a number of results involving homogeneous additive
commutators. The proofs follow in exactly the same way as the proofs
of the equivalent non-graded results (see \cite[\S 13]{lam}).

\begin{lemma}\label{gradditivecommutatorscentral}
Let $D$ be a graded division ring. If all of the homogeneous
additive commutators of $D$ are central, then $D$ is a graded field.
\end{lemma}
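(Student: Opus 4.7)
The plan is to argue by contradiction, adapting the classical proof for division rings to the graded setting. Suppose $D$ is not commutative, so that some pair of elements fails to commute. Writing these elements as sums of their homogeneous components and using the bilinearity of $[-,-]$, together with the standing assumption that $\Ga$ is abelian (so $[x,y]$ of two homogeneous elements is again homogeneous, of degree $\deg(x)+\deg(y)$), we conclude that there must exist $x,y\in D^{h}$ with $c := [x,y]\neq 0$. The goal is to derive a contradiction from this.

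By hypothesis $c\in Z(D)$, and since $c$ is a nonzero homogeneous element of a graded division ring it is invertible; moreover, as $\Ga$ is abelian, $Z(D)$ is a graded subring, so $c^{-1}\in Z(D)$ as well. The key computation is the standard Leibniz-type identity for the commutator bracket,
$$
[x,\,xy] \;=\; x[x,y] + [x,x]\,y \;=\; xc,
$$
which holds in any associative ring. Since $x$ and $xy$ are both homogeneous, $[x,xy]$ is a homogeneous additive commutator, so by hypothesis $xc\in Z(D)$. Multiplying by $c^{-1}\in Z(D)$ yields $x\in Z(D)$, whence $[x,y]=0$. This contradicts $c\neq 0$, so $D$ must be commutative, i.e.\ a graded field.

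I do not expect any serious obstacle here: the only step that is not a verbatim copy of the non-graded argument (see, e.g., \cite[\S 13]{lam}) is the passage between homogeneous and general elements at the very start, and that step is immediate once one uses the abelianness of $\Ga$ together with the fact that a nonzero homogeneous element of $D$ is invertible with homogeneous inverse (Proposition~\ref{basicsofgradedrings}(4)). An alternative, mentioned in passing in the sketch that precedes the lemma, would be to pass to the quotient division ring $QD$ (available when $D$ is Noetherian, see Chapter~\ref{chapteradditivecommutators}): any homogeneous $y\in D^{h}$ commuting with all homogeneous commutators of $D$ commutes, by bilinearity and the identification $QD=D\otimes_{F}QF$, with all commutators of $QD$, and hence lies in $Z(QD)=QF$; combined with $D^{h}\cap QF\subseteq F^{h}$ this forces $D^{h}\subseteq F$. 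However, the direct argument above is self-contained and avoids any hypothesis ensuring the existence of $QD$.
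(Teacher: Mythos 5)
Your proof is correct and uses the same key computation as the paper's, namely the identity $[x,xy]=x[x,y]$ applied to a pair $x,y\in D^h$ with $[x,y]\neq 0$, followed by a cancellation using the invertibility of the nonzero homogeneous element $[x,y]$. The only cosmetic difference is that you conclude $x$ is central by multiplying the central element $x[x,y]$ by $[x,y]^{-1}$, whereas the paper concludes $(yx-xy)[x,y]=0$ and cancels, so the two arguments are essentially identical.
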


\begin{proof}
Let $y \in D^h$ be an arbitrary homogeneous element of $D$. Then by
assumption, $y$ commutes with all homogeneous additive commutators,
and it follows that $y$ commutes with all (non-homogeneous) additive
commutators of $D$. We will show $y \in Z(D)$.

Suppose $y \notin Z(D)$. Then there exists $x \in D^h$ such that
$[x,y] \neq 0$. We have $[x,xy] = x[x,y]$ with $[x,xy]$ and $[x,y]$
non-zero. Since $y$ commutes with $[x,xy]$ and $[x,y]$, it follows
that $(yx-xy)[x,y]=0$. As we assumed $[x,y] \neq 0$, it is a
non-zero homogeneous element of $D$, so is invertible. So $yx-xy
=0$; that is, $[x,y]=0$ contradicting our choice of $x$. It follows
that $D$ is a commutative graded division ring, as required.
\end{proof}

We include an alternative proof to
Lemma~\ref{gradditivecommutatorscentral} in
Section~\ref{sectionquotientdivisionrings}. This alternative proof
uses the relation between a graded division ring and its quotient
division ring combined with the non-graded result.

\begin{thm}
Let $D$ be a graded division ring with centre $F$. Then the smallest
graded division subring over $F$ generated by homogeneous additive
commutators is $D$.
\end{thm}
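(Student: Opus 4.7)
The plan is to argue by contradiction: assume $K$ is the smallest graded division subring of $D$ containing $F$ and all homogeneous additive commutators, and suppose $K \subsetneq D$. I would then aim to show that every homogeneous component $D_\gamma$ must lie entirely inside $K$ or entirely inside $C_D(K)$, and derive a contradiction via the previous lemma.

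The key step, and the one I want to establish first, is the following \emph{absorption trick}: if $x \in D^h \setminus K$, then $[x,k] = 0$ for every $k \in K^h$. Indeed, both $[x,k]$ and $[x,xk]$ are homogeneous additive commutators of $D$, hence lie in $K$; but a direct calculation gives $[x,xk] = x[x,k]$. If $[x,k] \neq 0$, it is a nonzero homogeneous element of the graded division ring $K$, so invertible in $K$, and then $x = \bigl(x[x,k]\bigr)\,[x,k]^{-1} \in K$, contradicting $x \notin K$. Thus $D^h \setminus K \subseteq C_D(K)^h$, so $D^h = K^h \cup C_D(K)^h$.

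Next, for each $\gamma \in \Gamma_D$ the homogeneous component $D_\gamma$ is the union of the two additive subgroups $K \cap D_\gamma$ and $C_D(K) \cap D_\gamma$; since an abelian group cannot be the union of two proper subgroups, either $D_\gamma \subseteq K$ or $D_\gamma \subseteq C_D(K)$. Put $A = \{\gamma : D_\gamma \subseteq C_D(K)\}$ and $B = \{\gamma : D_\gamma \subseteq K\}$, so that $A \cup B = \Gamma_D$. The main obstacle, which I expect to be where the argument really uses the graded division structure, is to rule out having indices $\gamma_1 \in A\setminus B$ and $\gamma_2 \in B\setminus A$ simultaneously: picking $d_1 \in D_{\gamma_1}\setminus K$ and $d_2 \in D_{\gamma_2}$ with $[d_2,k]\neq 0$ for some $k \in K^h$, one computes $(d_1d_2)k - k(d_1d_2) = d_1[d_2,k] \neq 0$, so $d_1 d_2 \notin C_D(K)$, forcing $\gamma_1+\gamma_2 \in B$, i.e.\ $d_1d_2 \in K$; but then $d_1 = (d_1d_2)d_2^{-1} \in K$ (using $d_2 \in K$ and hence $d_2^{-1}\in K$), contradicting the choice of $d_1$.

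Hence either $A \subseteq B$ or $B \subseteq A$. In the first case $B = \Gamma_D$, so $D = K$, already contradicting our assumption. In the second case $A = \Gamma_D$, so $D = C_D(K)$, which forces $K \subseteq Z(D) = F$ and therefore $K = F$; but then $[D,D] \subseteq K = F$, and Lemma~\ref{gradditivecommutatorscentral} yields $D = F = K$, again contradicting $K \subsetneq D$. In either case we reach a contradiction, so $K = D$.
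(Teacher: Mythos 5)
Your proof is correct, but it takes a considerably longer road than necessary, and comparing it with the paper's argument reveals where the extra work comes from. The core identity you use, $[x,xk]=x[x,k]$, is exactly the engine of the paper's proof, but you apply it only with $k\in K^h$, yielding the conclusion ``$x\notin K$ implies $x$ centralises $K$''. The paper instead applies it with $y$ an \emph{arbitrary} homogeneous element of $D$ not commuting with $x$ (such a $y$ exists whenever $x\in D^h\setminus F^h$, since otherwise $x$ would be central): the commutators $[x,xy]$ and $[x,y]$ are automatically in $E$ because $E$ contains \emph{all} homogeneous additive commutators, not just those involving elements of $E$, so $x=[x,xy]\,[x,y]^{-1}\in E$ at once, and $D^h\subseteq E^h$ follows immediately. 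By restricting $k$ to $K^h$ you obtain the strictly weaker statement $D^h\setminus K\subseteq C_D(K)^h$, which then forces you through the component-by-component dichotomy argument, the ruling-out of mixed indices $\gamma_1\in A\setminus B$, $\gamma_2\in B\setminus A$, and a final appeal to Lemma~\ref{gradditivecommutatorscentral} to dispose of the case $K=F$. All of those steps check out (the crucial computation $(d_1d_2)k-k(d_1d_2)=d_1[d_2,k]$ does use $d_1\in C_D(K)$, and the ``union of two subgroups'' fact is applied correctly), so your argument is a valid alternative; but it is worth seeing that lifting the restriction $k\in K^h$ collapses the whole machinery to the paper's four-line direct proof.
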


\begin{proof}
Let $E = \bigoplus_{\ga \in \Ga} E_{\ga}$ be the smallest graded
$F$-division subring containing all homogeneous additive
commutators. Then clearly we have $F \subseteq E \subseteq D$. To
show $D=E$, it is sufficient to show that $D^{h} \subseteq E^{h}$.
Let $x \in D^{h} \mi F^{h}$. Then there exists $y \in D^{h}$ such
that $xy \neq yx$. We have $[x, xy]=x[x,y]$ with $[x, xy], [x,y] \in
E^{h}$, since they are non-zero homogeneous additive commutators.
Then $x = [x, xy]\cdot [x,y]^{-1} \in E^{h}$, as required.
\end{proof}

\begin{prop}\label{additivecbhthm}
Let $K \subseteq D$ be graded division rings, with $[D,K] \subseteq
K$. If $\chr K \neq 2$, then $K \subseteq Z(D)$.

\end{prop}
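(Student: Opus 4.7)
The plan is to adapt Herstein's classical non-graded argument (as in \cite[Prop.~13.17]{lam}) to the graded setting, relying on the hypothesis that $\Ga$ is abelian to keep all the relevant commutators homogeneous. Since $\Ga$ is abelian, $Z(D)$ is a graded subring of $D$, so showing $K \subseteq Z(D)$ reduces to showing that $[k,d]=0$ for every $k\in K^h$ and every $d\in D^h$. Suppose, for contradiction, that there exist $k\in K^h$ and $d\in D^h$ with $c:=[k,d] = kd-dk \neq 0$. Because $k$ and $d$ are homogeneous and $\Ga$ is abelian, $c$ is homogeneous of degree $\deg k+\deg d$; by the Lie-ideal hypothesis $[D,K]\subseteq K$ we have $c\in K\cap D^h=K^h$, and being a non-zero homogeneous element of the graded division ring $K$, $c$ is invertible in $K$.

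The core computation is now the standard identity $[k,d^2]=cd+dc$. The left-hand side lies in $K$ (applying $[D,K]\subseteq K$ with $d^2\in D$), and $[d,c]=dc-cd$ also lies in $K$ (applying it with $c\in K$). Adding these two relations gives $2dc=(cd+dc)+(dc-cd)\in K$; since $\chr K\neq 2$, this yields $dc\in K$, and multiplying on the right by $c^{-1}\in K^h$ produces $d=(dc)c^{-1}\in K$. Thus every $d\in D^h$ that fails to commute with $k$ must already lie in $K$. To propagate this to all of $D^h$, take an arbitrary non-zero $d'\in D^h$: if $[k,d']\neq 0$ the previous step places $d'\in K$ directly; otherwise $[k,d']=0$ and the homogeneous product $dd'\in D^h$ satisfies
\[
[k,dd']=[k,d]\,d'+d\,[k,d']=cd',
\]
which is the product of two non-zero homogeneous elements of the graded division ring $D$ and is therefore itself non-zero; hence $dd'\in K$ by the previous step, and because $d\in K$ with $d^{-1}\in K$, we conclude $d'=d^{-1}(dd')\in K$.

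Consequently $D^h\subseteq K$, so $D=K$ as graded rings. The final step, as in the non-graded version, is to convert this into the desired contradiction: under the standing reading of the proposition, $K$ is a proper graded subring of $D$, so $D=K$ is already a contradiction; otherwise one must use the Lie-ideal dichotomy for simple rings to force $D$ to be commutative, again contradicting our choice of $k$ with $[k,d]\neq 0$. The main obstacle is not algebraic but bookkeeping: one has to check carefully at each step that homogeneity is preserved (which is why $\Ga$ abelian is needed, both to keep $c=[k,d]$ homogeneous and to ensure $Z(D)$ is a graded subring), and that the characteristic hypothesis $\chr K\neq 2$ is used at precisely the point where $2dc\in K$ is divided through to obtain $dc\in K$.
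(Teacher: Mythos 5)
Your computational core is the same as the paper's. The identity at work is $[k,d^2]+[d,[k,d]]=2d[k,d]$; both summands land in $K$ by the Lie-ideal hypothesis, the homogeneous commutator $c=[k,d]$ is invertible in the graded division ring once it is non-zero, and $\chr K\neq 2$ then lets you solve for $d$. The paper packages this identically (it writes $[a,[a,c]]+[a^2,c]=2a[a,c]$, your $(k,d)$ playing the role of its $(c,a)$), but organizes the argument as two direct steps rather than a global contradiction: first it takes $a\in D^h\mi K^h$ and $c\in K^h$ and shows $[a,c]=0$ (since $[a,c]\neq 0$ would force $a\in K$); then it takes $b,c\in K^h$, inserts an $a\in D^h\mi K^h$, and deduces $[b,c]=a^{-1}[ab,c]=0$ from the first step. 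Your version instead runs by contradiction from a single non-commuting pair, concludes $d\in K$, and then uses the Leibniz rule $[k,dd']=[k,d]d'+d[k,d']$ to propagate until $D^h\subseteq K$, i.e.\ $D=K$.

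The genuine problem is the last step. The proposition as stated does not assume $K\subsetneq D$, so $D=K$ is not a contradiction, and the hand-wave towards a ``Lie-ideal dichotomy for simple rings'' does not close the argument: once $D=K$ the hypothesis $[D,K]\subseteq K$ is vacuous and gives no information about commutativity, and indeed any non-commutative graded division ring $D$ of characteristic $\neq 2$ with $K=D$ is a counterexample to the literal statement. The proposition is tacitly asserting its conclusion under the standing assumption $K\neq D$, and the paper's proof tacitly makes the same assumption (it begins ``Let $a\in D^h\mi K^h$'', which presupposes this set is non-empty, and needs such an $a$ again in the second paragraph). Read that way, your argument is correct -- $D=K$ contradicts $K\subsetneq D$, and you are done -- but you need to say so explicitly rather than gesture at the non-graded dichotomy. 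Note also that the paper's two-step route avoids the propagation step entirely: it never needs to show $D^h\subseteq K$, it only needs that one element of $D^h\mi K^h$ exists, which is a smaller commitment and keeps the proof shorter.
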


\begin{proof}
First note that the condition $[D,K] \subseteq K$ is equivalent to
$[D^{h},K^{h}] \subseteq K^{h}$. Let $a \in D^{h} \mi K^{h}$ and $c
\in K^{h}$. We will show $ac=ca$. We have $[a, [a,c]] + [a^{2},c] =
2a[a,c] \in K$. If $[a,c] \neq 0$, then, since char$K \neq 2$, this
implies $a \in K$, contradicting our choice of $a$. Thus $[a,c] =0$.

Now let $b,c \in K^{h}$. Consider $a \in D^{h} \mi K^{h}$. Then $a,
ab \in D^{h} \mi K^{h}$ and we have shown that $[a,c], [ab,c]=0$.
Then $[b,c]= a^{-1} \cdot [ab,c] =0$. It follows that $K \subseteq
Z(D)$.
\end{proof}

The above results show that, in some aspects, the behaviour of
homogeneous additive commutators in graded division rings is similar
to that of additive commutators in division rings. However, this
analogy seems to fail for multiplicative commutators of graded
division rings. For example, in the setting of division rings the
Cartan-Brauer-Hua theorem is the multiplicative version of
Proposition~\ref{additivecbhthm} and its proof involves
multiplicative commutators. In the non-graded setting, the
Cartan-Brauer-Hua theorem states:
\begin{quote}
Let $D$ and $K$ be division rings, with $K \subseteq D$. Suppose
that $K^{\ast}$ is a normal subgroup of $D^{\ast}$ and $K \neq D$.
Then $K \subseteq Z(D)$.
\end{quote}
This theorem does not hold in the setting of graded division rings,
as is shown by the following counterexample.

\begin{example}
Let $D$ be a division ring and let $R= D[x, x^{-1}]$. Then we saw in
Example~\ref{egofgrdivisionrings}(1) that $R$ is a graded division
ring. Let $K = R_0$. Then $K^{\ast}$ is a normal subgroup of
$R^{\ast}$ and $K \neq R$. Choose $ a ,b \in D$ with $ab \neq ba$.
Then $ax^0 \in K$ and $bx^n \in R$, and we have $(ax^0 )(bx^n) =
abx^n \neq bax^n = (bx^n)(ax^0)$. So $ax^0 \notin Z(R)$, proving
that $K \nsubseteq Z(R)$.
\end{example}

Consider a homogeneous multiplicative commutator $x y x^{-1} y^{-1}$
in a graded division ring $D$. Since $x, y \in D^h$ and
$\deg(x^{-1}) = - \deg (x)$, we note that $\deg( x y x^{-1} y^{-1})
= 0$; that is, $x y x^{-1} y^{-1}$ is in the zero-homogeneous
component of $D$. This suggests that there are ``too few''
multiplicative commutators to affect the structure of the division
ring.


\section{Graded splitting fields} \label{sectiongradedsplitting}


For a graded division algebra $D$ over its centre $F$, we will show
that $D$ is split by any graded maximal subfield $L$ of $D$. This
allows us to construct the reduced characteristic polynomial of $D$
over $F$.

\begin{lemma} [Graded Schur's Lemma]
Let $R$ be a graded ring and let $M$ be a graded $R$-module. If $M$
is graded simple, then $\END_R(M)$ is a graded division ring.
\end{lemma}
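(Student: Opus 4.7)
The plan is to mimic the classical proof of Schur's Lemma, with additional care taken to show that kernels, images and inverses respect the grading. Let $f \in \END_R(M)$ be a non-zero homogeneous element, say of degree $\de$, so that $f \in \Hom_R(M,M)_{\de}$ and $f(M_\ga) \subseteq M_{\ga\de}$ for every $\ga$ in the grade set. I need to produce a two-sided inverse $f^{-1}$ which is itself homogeneous, of degree $\de^{-1}$.

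First I would check that $\ker(f)$ and $\im(f)$ are graded $R$-submodules of $M$. For the kernel, if $m = \sum m_\ga \in \ker(f)$ with $m_\ga \in M_\ga$, then $\sum f(m_\ga) = 0$, and since each $f(m_\ga)$ lies in the distinct component $M_{\ga\de}$ (using that the group $\Ga$ acts freely on the grade set, so the map $\ga \mapsto \ga\de$ is injective), uniqueness of the decomposition forces $f(m_\ga) = 0$ for every $\ga$; hence $\ker(f) = \bigoplus_\ga (\ker(f) \cap M_\ga)$. For the image, any $f(m) = \sum f(m_\ga)$ is already a sum of homogeneous pieces that lie in $\im(f)$, which shows $\im(f) = \bigoplus_\ga (\im(f) \cap M_{\ga\de})$.

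Since $M$ is graded simple, the graded submodules $\ker(f)$ and $\im(f)$ are each either $0$ or $M$. Because $f \neq 0$ we must have $\ker(f) \neq M$ and $\im(f) \neq 0$, hence $\ker(f) = 0$ and $\im(f) = M$. Thus $f$ is a bijection, and its set-theoretic inverse $f^{-1}$ is automatically an $R$-module homomorphism $M \to M$.

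It remains to verify that $f^{-1}$ is homogeneous of degree $\de^{-1}$. Given $n \in M_\la$, write $n = f(m)$ for a unique $m \in M$ and decompose $m = \sum m_\ga$. Then $n = \sum f(m_\ga)$ with $f(m_\ga) \in M_{\ga\de}$; since $n$ is homogeneous of degree $\la$ and the assignment $\ga \mapsto \ga\de$ is injective, all summands with $\ga \neq \la\de^{-1}$ must vanish, and $n = f(m_{\la\de^{-1}})$. Hence $f^{-1}(n) = m_{\la\de^{-1}} \in M_{\la\de^{-1}}$, which shows $f^{-1} \in \Hom_R(M,M)_{\de^{-1}}$. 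Thus every non-zero homogeneous element of $\END_R(M)$ is invertible with homogeneous inverse (of the appropriate degree for the multiplication convention $g \cdot f = f \circ g$), so $\END_R(M)$ is a graded division ring. The main substantive point, and the only place where the graded hypothesis is genuinely used beyond the classical argument, is the free-action observation that makes the decompositions of $\ker(f)$, $\im(f)$ and $f^{-1}(n)$ unambiguous; everything else is formal.
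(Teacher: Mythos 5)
Your proof is correct and takes essentially the same approach as the paper's, which likewise notes that $\ker(f)$ and $\im(f)$ are graded submodules, invokes graded simplicity to force $\ker(f)=0$ and $\im(f)=M$, and concludes that $f$ is a graded isomorphism and hence invertible; you simply supply the verifications (gradedness of kernel and image, homogeneity of $f^{-1}$) that the paper leaves implicit. One small quibble: the injectivity of $\ga\mapsto\ga\de$ is automatic because right translation in the group $\Delta$ (the group of shift degrees) is injective, not because $\Ga$ acts freely on the grade set, though the conclusion is of course unaffected.
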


\begin{proof}
We know from page~\pageref{endisagrring} that $\END_R (M)$ is a
graded ring. If $f$ is a nonzero homogeneous endomorphism, then
$\ker(f) \neq M$ and $\im(f) \neq 0$. Since $\ker(f)$ and $\im(f)$
are both graded submodules of $M$, which is graded simple, it
follows that $\ker(f) = 0$ and $\im(f) = M$. Thus $f$ is a graded
isomorphism, and hence is invertible.
\end{proof}

In the following theorem, we rewrite Rieffel's proof of Wedderburn's
Theorem in the graded setting (see \cite[Prop.~1.3(a)]{hwcor}). See
\cite[Thm.~2.10.10]{grrings} for a more general version of the
theorem.

\begin{thm} \label{wedderburnthm}
Let $A$ be a graded central simple algebra over a graded field $R$.
Then there is a graded division algebra $E$ over $R$ such that $A
\conggr M_n(E)(d)$ for some $(d) = (\de_1, \ldots, \de_n) \in
\Ga^n$.
\end{thm}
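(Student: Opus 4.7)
The plan is to follow Rieffel's elegant proof of the classical Wedderburn theorem, adapted carefully to the graded setting, and to rely on the graded Schur lemma and Proposition~\ref{endgrisomatrixring} (together with its right-module analogue from Remark~\ref{remarkongrmatrixrings}) to get the matrix description.

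First, I would choose $L$ to be a minimal graded left ideal of $A$. Such an $L$ exists because $A$ has finite dimension over the graded field $R$ (by Proposition~\ref{gradedfree}, $A$ is graded free over $R$), and any strictly descending chain of graded left ideals strictly decreases $R$-dimension. Then $L$ is a graded simple left $A$-module, and by the graded Schur's lemma $E := \END_A(L)^{\op}$ is a graded division ring. Since $R \subseteq Z(A)$, every $A$-linear endomorphism of $L$ is automatically $R$-linear, so $E$ is a graded $R$-algebra, and we may view $L$ as a graded right $E$-module via $\ell \cdot f := f(\ell)$.

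Next, since $E$ is a graded division ring and $L$ is finitely generated over $E$ (as $\dim_R L < \infty$), Proposition~\ref{gradedfree} tells us $L$ is graded free over $E$ with a finite homogeneous basis, say of degrees $(d) = (\de_1,\ldots,\de_n) \in \Ga^n$. Then, by the right-module version of Proposition~\ref{endgrisomatrixring} discussed in Remark~\ref{remarkongrmatrixrings}, there is a graded ring isomorphism $\END_E(L) \conggr M_n(E)(d)$.

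Now I would consider the natural graded $R$-algebra homomorphism
$$
\lambda : A \lra \END_E(L), \qquad \lambda(a)(\ell) = a\ell,
$$
which is well defined because left multiplication by $a$ commutes with the right action of $E$ by construction. Injectivity follows from graded simplicity of $A$: $\ker(\lambda)$ is a homogeneous two-sided ideal not containing $1_A$, hence zero. For surjectivity, I would carry out Rieffel's classical argument in the graded setting. First, for each homogeneous $x \in L$, right multiplication $r_x : L \to L$ lies in $E$, so for any homogeneous $\phi \in \END_E(L)$ and $\ell \in L^h$ one has $\phi\bigl(\lambda(\ell)(x)\bigr) = \phi(\ell x) = \phi(r_x(\ell)) = r_x(\phi(\ell)) = \phi(\ell)\,x = \lambda(\phi(\ell))(x)$, which shows $\phi \circ \lambda(\ell) = \lambda(\phi(\ell))$; hence $\lambda(L)$ is a graded left ideal of $\END_E(L)$. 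Since $LA$ is a nonzero graded two-sided ideal of $A$, graded simplicity forces $LA = A$, so $\lambda(L)\lambda(A) = \lambda(A)$. Combining these,
$$
\END_E(L) \;=\; \END_E(L)\cdot\lambda(1) \;\subseteq\; \END_E(L)\cdot\lambda(A) \;=\; \END_E(L)\cdot\lambda(L)\cdot\lambda(A) \;=\; \lambda(L)\cdot\lambda(A) \;=\; \lambda(A),
$$
so $\lambda$ is surjective. Composing with the isomorphism $\END_E(L) \conggr M_n(E)(d)$ gives the desired graded isomorphism $A \conggr M_n(E)(d)$, and $E$ is a graded division algebra over $R$ since $[E:R] \leq [A:R] < \infty$ and $R$ is contained in the centre of $E$.

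The main obstacle I anticipate is bookkeeping with the grade shifts: one must verify that the isomorphism $\lambda$ is not merely a graded isomorphism of rings but that, once transported through $\END_E(L) \conggr M_n(E)(d)$, the shift tuple $(d)$ on the matrix side really does come from the degrees of a chosen homogeneous $E$-basis of $L$, and to check that Rieffel's chain of equalities preserves degrees at each step (in particular that the identification $\phi \circ \lambda(\ell) = \lambda(\phi(\ell))$ is homogeneous of the correct degree). The Schur-lemma and graded-free steps are immediate from earlier results in the chapter; everything else is routine once the shifts are handled with care.
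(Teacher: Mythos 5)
Your proposal is correct and follows essentially the same strategy as the paper's proof, namely Rieffel's argument for Wedderburn's theorem transported to the graded setting via the graded Schur lemma, graded freeness over a graded division ring, and graded simplicity to force surjectivity of the natural map into the endomorphism ring. The only difference is notational: you take a minimal graded \emph{left} ideal and set $E = \END_A(L)^{\op}$ so that $L$ becomes a right $E$-module, whereas the paper takes a minimal graded \emph{right} ideal and uses $E = \End_A(L)$ directly with $L$ a left $E$-module; these are mirror images and yield the matrix ring with a shift tuple $(d)$ that may be the componentwise inverse of the paper's, which is harmless since the theorem only claims the existence of some $(d)$.
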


\begin{proof}
Take a minimal  nonzero homogeneous right ideal $L$ of $A$ (which
exists since $[A:R] < \infty$). Then $L$ is a graded simple right
$A$-module, as it has no nonzero proper graded right $A$-submodules.
Let $E= \End_A(L)$, where $\End_A (L) = \END_A (L)$ since $L$ is
finitely generated as a graded $A$-module. Then $E$ is a graded
division ring, by the graded Schur's Lemma, and $[E:R] \leq
[\End_R(L):R] < \infty$. Then $L$ is a graded free left $E$-module,
with a homogeneous base, say $b_1, \ldots, b_n$, of $L$ over $E$.
Then by Theorem~\ref{endgrisomatrixring}, $\End_E(L) \conggr
M_n(E)(d)$, where $(d) = (\deg(b_1), \ldots, \deg(b_n))$. Consider
the map
\begin{displaymath}
\begin{array}{rcrcl}
i:A & \lra & \End_E(L) && \nono \\
a & \longmapsto & i(a) : L &\ra& L \nono \\
&&x &\mapsto & xa \nono
\end{array}
\end{displaymath}
We will show that $i$ is a graded $R$-algebra isomorphism. Firstly
note that $i(a)$ is an element of $\End_E(L)$, since for all $e \in
E$, $x \in L$ we have
$$
(i(a))(ex) = (ex)a = (e(x))a = e(xa) =e \big((i(a))(x)\big).
$$
It can be easily shown that $i$ is a graded $R$-algebra
homomorphism. Since $A$ is graded simple, it follows that  $i$ is
injective. Note that $i(A)$ contains the identity element of
$\End_E(L)$. Then to prove surjectivity, it suffices to show that
$i(A)$ is a homogeneous right ideal of $\End_E (L)$.

For $y \in L$, let $\mal_y : L \ra L; x \mapsto yx$ so that $\mal_y
\in \End_A(L)$. Then for $f \in \End_E(L)^h$, $x \in L$ we have
$$
f(yx) = f(\mal_y (x) ) = \mal_y (f(x)) = y(f(x)).
$$
It follows that $( i(x) \cdot f)(y) = (f \circ i(x))(y) =
\big(i(f(x))\big)(y)$, which implies that $i(L)$ is a right ideal of
$\End_E(L)$. Also, $i(L) = \bigoplus_{\ga \in \Ga} \big(i(L) \cap
(\End_E(L))_{\ga}\big)$ giving that the ideal is homogeneous. The
two-sided homogeneous ideal of $A$ generated by $L$ is $ALA = AL$,
and since $A$ is graded simple, $AL = A$. Thus $i(A) = i(AL) =
i(A)i(L)$ is a homogeneous right ideal of $\End_E(L)$, as required.
\end{proof}

Let $D$ be a graded division algebra over its centre $F$ and let $L$
be any graded subfield of $D$ containing $F$. We define a grading on
$L[x]$ as follows. Let $\theta \in \Ga_D$ and let
\begin{align*}
L[x]^{\theta}  = \bigoplus_{\ga \in \Ga} L[x]_{\ga}, \;\;
\mathrm{where} \;\; L[x]_{\ga}  = \left\{ \sum a_{i} x^{i} : a_{i}
\in L^{h}, \deg (a_{i}) +i \theta = \ga \right\} .
\end{align*}
Then $L[x]^{\theta}$ forms a graded ring, and $x \in L[x]^{\theta}$
is homogeneous of degree $\theta$. Let $Z_D (L) = \{d \in D : dl =ld
\textrm{ for all } l \in L\}$ denote the centraliser of $L$ in $D$.
Then $Z_D(L)$ is a graded subring of $D$  and it is a graded
$L$-algebra. For any $c \in Z_D (L)^h$ of degree $\theta$, let
\begin{gather*}
L[ \hspace{.2 ex} c \hspace{.2 ex} ] = \big\{ f(c) : f (x) \in
L[x]^{\theta} \big\}.
\end{gather*}
Then $L[ \hspace{.1 ex} c \hspace{.1 ex} ]$ forms a graded ring with
$L \subseteq L[ \hspace{.1 ex} c \hspace{.1 ex} ]$, and we note that
$L[ \hspace{.1 ex} c \hspace{.1 ex} ]$ is commutative since $c \in
Z_D (L)^h$.
The map $L[ \hspace{.1 ex} x \hspace{.1 ex} ]^{\theta} \ra L[
\hspace{.1 ex} c \hspace{.1 ex} ];$ $f(x) \mapsto f(c)$ is a graded
ring homomorphism.

Further we will show that $L[ \hspace{.1 ex} c \hspace{.1 ex} ]$ is
in fact  a graded field. Let $a \in L[ \hspace{.1 ex} c \hspace{.1
ex} ]_{\ga}$ be a non-zero element, and consider the $\ga$-shifted
$L$-module $L[\hspace{.1 ex} c \hspace{.1 ex}](\ga)$. The map
$\mal_a : L[ \hspace{.1 ex} c \hspace{.1 ex}] \ra L[ \hspace{.1 ex}
c \hspace{.1 ex}] (\ga)$; $l \mps al$ is a graded $L$-module
homomorphism, which is injective since $a$ is invertible in $D$.
Then $\dim_L (\im (\mal_a)) = \dim _L (L[ \hspace{.1 ex} c
\hspace{.1 ex} ])$. We will show that $\dim _L (L[ \hspace{.1 ex} c
\hspace{.1 ex} ]) < \infty$.

Since $c \in D$ and $[D:F] < \infty$, we have that $c$ is algebraic
over $F$, and thus is algebraic over $L$. So it has a minimal
polynomial $h(x) = l_0 + l_1 x + \cdots + l_k x^k$, and the set $\{
1 , c, c^2 , \cdots , c^{k-1} \}$ generates $L[c]$ over $L$. If they
are not all linearly independent, we can write $1$ as an $L$-linear
combination of the others and remove it from the set, leaving a set
which still generates $L[c]$ over $L$. Repeating this process will
give a linearly independent spanning set for $L[c]$ over $L$.

So $\dim _L (L[ \hspace{.1 ex} c \hspace{.1 ex} ]) < \infty$, and
$\mal_a$ is surjective by dimension count. Then there is a graded
$L$-module homomorphism $\psi$ which is the inverse of $\mal_a$, and
$\psi (1_L)$ is the inverse of $a$. Since $L[ \hspace{.1 ex} c
\hspace{.1 ex} ]$ is also commutative, it is a graded field.

The following result is in \cite[p.~40]{draxl} in the non-graded
setting.

\begin{thm} \label{zdlthm}
Let $D$ be a graded division algebra over a graded field $F$, and
let $L$ be a graded subfield of $D$. Then $L$ is a graded maximal
subfield if and only if $Z_D(L)=L$.
\end{thm}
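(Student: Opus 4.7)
The plan is to handle each direction separately, relying on the construction of $L[\hspace{.1ex} c \hspace{.1ex}]$ developed in the paragraphs immediately preceding the theorem statement, where it was shown that for any homogeneous element $c \in Z_D(L)$ the ring $L[\hspace{.1ex} c \hspace{.1ex}]$ is a graded field containing $L$.

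For the forward direction, I would assume that $L$ is a graded maximal subfield and show $Z_D(L) = L$. The inclusion $L \subseteq Z_D(L)$ is immediate since $L$ is commutative. For the reverse inclusion, I would exploit the fact (noted just before the theorem) that $Z_D(L)$ is a graded subring of $D$. Thus any $z \in Z_D(L)$ decomposes uniquely as a sum of homogeneous components lying in $Z_D(L)$, and it suffices to show that each homogeneous $c \in Z_D(L)$ already lies in $L$. For such a homogeneous $c$, the ring $L[\hspace{.1ex} c \hspace{.1ex}]$ is a graded field with $L \subseteq L[\hspace{.1ex} c \hspace{.1ex}] \subseteq D$. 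The maximality of $L$ then forces $L[\hspace{.1ex} c \hspace{.1ex}] = L$, whence $c \in L$.

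For the reverse direction, I would assume $Z_D(L) = L$ and let $K$ be any graded subfield of $D$ with $L \subseteq K \subseteq D$. Since $K$ is a graded field, it is commutative, so every element of $K$ commutes with every element of $L$. Hence $K \subseteq Z_D(L) = L$, which gives $K = L$ and proves the graded maximality of $L$.

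The only non-obvious point is the graded-subring structure of $Z_D(L)$, which legitimises reducing to homogeneous $c$ in the forward implication; but this has already been observed in the discussion leading up to the theorem, so no real obstacle remains. The rest amounts to invoking the previously established fact that $L[\hspace{.1ex} c \hspace{.1ex}]$ is a graded field whenever $c$ is a homogeneous element of $Z_D(L)$.
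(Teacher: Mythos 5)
Your proof is correct and follows essentially the same route as the paper: one direction uses that a larger graded subfield commutes with $L$ and hence sits inside $Z_D(L)$, and the other uses that for homogeneous $c \in Z_D(L)$ the ring $L[\hspace{.1ex}c\hspace{.1ex}]$ is a graded field, forcing $L[\hspace{.1ex}c\hspace{.1ex}]=L$ by maximality. Your extra remark about $Z_D(L)$ being a graded subring, which justifies the reduction to homogeneous $c$, makes the argument slightly more explicit than the paper's but is the same idea.
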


\begin{proof}
If $Z_D (L) = L$, then for any graded subfield $L'$ of $D$
containing $L$ we have $L' \subseteq Z_D (L)$. So $L$ is a graded
maximal subfield. Conversely, assume $L$ is graded maximal. Then for
any homogeneous $c \in Z_D(L)$, $L[\hspace{.1 ex} c \hspace{.1 ex}]$
forms a graded field. By the maximality of $L$, we must have $L[
\hspace{.1 ex} c \hspace{.1 ex}] = L$ and so $c \in L$. Then it
follows that $Z_D(L) = L$.
\end{proof}

\begin{cor} \label{grsplitcor}
Let $D$ be a graded division algebra with centre $F$ and let $L$ be
a graded maximal subfield of $D$. Then
$$
D \otimes_F L \conggr M_n(L)(d)
$$
for some $n \in \mathbb N$ and some $d= (\de_1, \ldots, \de_n) \in
\Ga^n$.
\end{cor}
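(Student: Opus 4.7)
The plan is to identify the graded division algebra that arises from applying the graded Wedderburn theorem to $D\otimes_F L$ with $L$ itself, by realising $L$ as a centraliser inside an endomorphism ring.

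First I would verify that $D\otimes_F L$ is a graded central simple algebra over $L$. Since $L$ is a graded field it is in particular graded simple as a graded ring, and its centre is itself, so applying Propositions~\ref{tensorgradedsimple} and~\ref{tensorgradedcentral} with $A=D$ and $B=L$ yields that $D\otimes_F L$ is graded simple with centre graded isomorphic to $L$. Its dimension over $L$ equals $[D:F]$.

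Next I would endow $D$ with the structure of a graded left $(D\otimes_F L)$-module by $(a\otimes\ell)\cdot x := ax\ell$ for $a\in D$, $\ell\in L$, $x\in D$; well-definedness is immediate from $L\subseteq Z(D)$. Any graded $(D\otimes_F L)$-submodule $N$ of $D$ is in particular a graded left ideal of the graded division ring $D$, so $N=0$ or $N=D$; thus $D$ is graded simple as a module over $D\otimes_F L$. Moreover $D$ is cyclic (generated by $1_D$), hence finitely generated, so $\END_{D\otimes_F L}(D)=\End_{D\otimes_F L}(D)$ by the remarks following page~\pageref{endisagrring}. Writing $H$ for this graded endomorphism ring, the graded Schur Lemma implies that $H$ is a graded division ring.

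At this point I would invoke Rieffel's argument, as carried out in the proof of Theorem~\ref{wedderburnthm}: choosing a homogeneous $H$-basis $\{b_1,\ldots,b_n\}$ of $D$ (which exists by Proposition~\ref{gradedfree}) yields graded isomorphisms
\[
D\otimes_F L \;\conggr\; \End_H(D) \;\conggr\; M_n(H)(d),
\]
where $(d)=(\deg b_1,\ldots,\deg b_n)\in\Ga^n$, with the first isomorphism given by the natural map $a\otimes\ell\mapsto (x\mapsto ax\ell)$ (injective since $D\otimes_F L$ is graded simple, surjective by a dimension count using Theorem~\ref{grdimensionprop}).

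The main step, and I expect the only nontrivial one, is identifying $H$ with $L$. For $z\in Z_D(L)^h$ let $\mathcal R_z:D\to D$ be right multiplication by $z$. Right multiplication by any element of $D$ automatically commutes with left multiplication by $D$, and $\mathcal R_z$ commutes with the right action of $L$ precisely because $z$ centralises $L$; hence $\mathcal R_z\in H$. This defines a graded $L$-algebra homomorphism $\psi:Z_D(L)\to H$. Conversely, for any homogeneous $f\in H$ and $x\in D$ we have $f(x)=f\bigl((x\otimes 1)\cdot 1\bigr)=xf(1)$, so $f=\mathcal R_{f(1)}$; the relations $f(\ell)=\ell f(1)=f(1)\ell$ for $\ell\in L$ force $f(1)\in Z_D(L)$. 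Thus $\psi$ is a graded isomorphism. Since $L$ is a graded maximal subfield, Theorem~\ref{zdlthm} gives $Z_D(L)=L$, so $H\conggr L$ and therefore $D\otimes_F L\conggr M_n(L)(d)$, as required.
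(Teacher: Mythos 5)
Your proof is correct and follows the same Rieffel double-centraliser argument as the paper. The only cosmetic difference is that you realise $D$ as a \emph{left} $(D\otimes_F L)$-module, whereas the paper reverses the tensor factors and uses the \emph{right} $(L\otimes_F D)$-module structure $x(l\otimes d)=lxd$, which lets Theorem~\ref{wedderburnthm} (stated for right modules) be cited verbatim rather than redone for left modules. One inaccuracy to fix: you assert that well-definedness of $(a\otimes\ell)\cdot x = ax\ell$ ``is immediate from $L\subseteq Z(D)$,'' but a graded maximal subfield of a noncommutative graded division algebra strictly contains $Z(D)=F$, so $L\not\subseteq Z(D)$. Well-definedness of the action over the tensor over $F$ follows from $F=Z(D)$ (so $(af)\cdot x\cdot\ell = a\cdot x\cdot (f\ell)$ for $f\in F$), and compatibility with the ring multiplication of $D\otimes_F L$ uses only that $L$ is commutative; no centrality of $L$ in $D$ is involved. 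With that corrected, the remainder --- including the identification $H\conggr Z_D(L)=L$ via right multiplications, which mirrors the paper's use of left multiplications --- is sound.
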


\begin{proof}
As graded $F$-modules, we have $D \otimes_F L \conggr L \otimes_F
D$. Since $D$ is a graded division algebra over the graded field
$F$, and $L$ is graded simple,
Theorems~\ref{tensorgradedsimple},\ref{tensorgradedcentral} give
that $L \otimes_F D$ is graded central simple over $Z(L) = L$. We
have that $D$ is a graded simple right module over $L \otimes_F D$,
with the right action $x(l \otimes d) = lxd$ for $d, x \in D$, $l
\in L$. Then by Theorem \ref{wedderburnthm}, $E:= \End_{L \otimes_F
D}(D)$ is a graded division algebra over $L$, such that $L \otimes_F
D \conggr M_n(E)(d)$ for some $d= (\de_1, \ldots, \de_n) \in \Ga^n$.
From Theorem \ref{zdlthm}, $Z_D(L) = L$, so it remains to show
$Z_D(L) \conggr E$. Define
\begin{displaymath}
\begin{array}{rcr}
\psi : Z_D(L) & \ra & \End_{L \otimes_F D}(D) \;\;\;\;\; \nono \\
d & \mapsto & \psi(d) :D \; \ra \; D \; \nono \\
&& x \; \mapsto \; d x \nono
\end{array}
\end{displaymath}
It can be easily shown that $\psi$ is a graded $L$-algebra
homomorphism, which is injective since $L$ is graded simple as a
graded ring. Let $f \in {\big(\End_{L \otimes_F D}(D)\big)}_{\ga}$
be a homogeneous map. Then $f(1) \in D_{\ga}$ and since $f(1) \ell =
\ell f(1)$ for all $\ell \in L^h$, we have $f(1) \in Z_D(L)$. For $x
\in D$,
$$
f(x) = f(1 \cdot (1\otimes x)) = f(1) (1 \otimes x)= f(1) x=
\big(\psi(f(1))\big)(x).
$$
So $f = \psi(f(1))$, proving that $\psi$ is surjective. It follows
that $M_n(E)(d) \conggr M_n(L)(d)$, completing the proof.
\end{proof}

Suppose $D$ is a graded division algebra over a graded field $F$. We
will show that there exists a graded maximal subfield of $D$. For
any $a \in D^h \mi F^h$, we have shown above that $F[a]$ is a graded
field, with $F \subseteq F[a] \subseteq D$. Consider
$$
X = \{ L : L \text{ is a graded subfield of } D \text{ with } F
\subsetneqq L \}.
$$
This is a non-empty set since $F[a] \in X$ and it is partially
ordered with inclusion. Every chain $L_1 \subseteq L_2 \subseteq
\ldots$ in $X$ has an upper bound $\bigcup L_i \in X$. By Zorn's
Lemma, $X$ has a maximal element, so there is a graded maximal
subfield of $D$.

Then Corollary~\ref{grsplitcor} shows that a graded maximal subfield
$L$ of $D$ splits $D$; that is, $j: D \otimes L \conggr M_n (L)
(d)$. As in the non-graded setting, for an element $d \in D$ we
define the reduced characteristic polynomial as
\begin{align*}
\chr_{D/F} (d,x) & = \det \big(x I_n - j (d \otimes 1) \big) \\
& =  x^{n} - \Trd_{D}(d) x^{n-1} + \cdots + (-1)^{n} \Nrd_{D}(d),
\end{align*}
where $\Trd_{D}(d) = \mathrm{trace} \big(j(d \otimes 1)\big)$ is the
reduced trace of $d$ and $\Nrd_{D} (d) = \det \big(j(d \otimes
1)\big)$ is the reduced norm.

Since $L$ is a graded module over a graded field $F$, by Proposition
\ref{gradedfree}, it is graded free and therefore free over $F$. It
follows that $L$ is faithfully flat over $F$, where $F$ is a ring
and $D$ and $L$ are $F$-algebras, so we can apply
\cite[Lemma~III.1.2.1]{knus}. This shows that the reduced
characteristic polynomial lies in $F[x]$ and it is independent of
the choice of $j$ and $L$. We note that the reduced norm and reduced
trace satisfy the following properties.

\begin{cor} \label{nrdandtrdproperties}
Reduced norm and trace satisfy the following rules:
\begin{align*}
\Nrd_D (ab)&= \Nrd_D (a) \Nrd_D (b) & \Trd_D (a+b)& =  \Trd_D (a)+ \Trd_D (b)\\
\Nrd_D( r a) &= r^{n} \Nrd_D(a) & \Trd_D( r a)&= r \Trd_D(a)\\
&& \Trd_D (ab)&= \Trd_D(ba)
\end{align*}
for all $a,b \in D$ and $r \in F$.
%
%
\end{cor}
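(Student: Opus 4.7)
The plan is to transfer each identity from the corresponding well-known identity for matrices via the graded $F$-algebra isomorphism $j : D \otimes_F L \conggr M_n(L)(d)$ supplied by Corollary~\ref{grsplitcor}. The key observation is that the assignment $a \mapsto j(a \otimes 1)$ is the composition of the $F$-algebra homomorphism $D \to D \otimes_F L$, $a \mapsto a \otimes 1$, with the $F$-algebra isomorphism $j$, and hence is itself an $F$-algebra homomorphism $D \to M_n(L)(d)$. Once this is in place, all five identities reduce to standard facts about $\det$ and $\mathrm{trace}$ of $n \times n$ matrices over the commutative ring $L$.

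First I would record the homomorphism property: for $a,b \in D$ and $r \in F$, one has $j((a+b) \otimes 1) = j(a \otimes 1) + j(b \otimes 1)$, $j((ab) \otimes 1) = j(a \otimes 1)\,j(b \otimes 1)$, and $j((ra) \otimes 1) = r\,j(a \otimes 1)$, this last because $r \otimes 1 = 1 \otimes r$ (as $r \in F$) and $j$ is $L$-linear on $D \otimes_F L$. Next I would apply the multiplicativity of the determinant over the commutative ring $L$ to conclude $\Nrd_D(ab) = \det(j(a\otimes 1)\,j(b \otimes 1)) = \det(j(a\otimes 1))\det(j(b \otimes 1)) = \Nrd_D(a)\Nrd_D(b)$, and the identity $\det(r M) = r^n \det(M)$ to conclude $\Nrd_D(ra) = r^n \Nrd_D(a)$.

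For the three trace identities I would apply, respectively, additivity of $\mathrm{trace}$, $F$-linearity of $\mathrm{trace}$, and the matrix identity $\mathrm{trace}(MN) = \mathrm{trace}(NM)$, to the images under $j$. This gives $\Trd_D(a+b) = \Trd_D(a) + \Trd_D(b)$, $\Trd_D(ra) = r\,\Trd_D(a)$, and $\Trd_D(ab) = \Trd_D(ba)$. One needs to remark that although $M_n(L)(d)$ carries a shifted grading, the underlying ring structure is that of $M_n(L)$, so the usual matrix identities for $\det$ and $\mathrm{trace}$ apply without modification; the outputs $\Nrd_D(a)$ and $\Trd_D(a)$ a priori lie in $L$, but by the discussion following Corollary~\ref{grsplitcor} (using \cite[Lemma~III.1.2.1]{knus}) they in fact lie in $F$.

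There is no real obstacle in this proof: it is entirely formal once the $F$-algebra homomorphism $a \mapsto j(a \otimes 1)$ is isolated. The only point requiring mild care is the verification that $j$ is $F$-linear in the strong sense needed for the identity $\Nrd_D(ra) = r^n \Nrd_D(a)$, which amounts to the elementary fact that scalars from $F$ pass across the tensor product and can be pulled out of a matrix under the determinant to the $n$-th power.
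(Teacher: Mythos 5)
Your proof is correct and takes essentially the same approach as the paper, which simply notes that the identities ``follow immediately from the properties of determinant and trace in a matrix.'' You have filled in the routine verification that $a \mapsto j(a \otimes 1)$ is an $F$-algebra homomorphism and that the image of the reduced norm and trace lands in $F$, which the paper leaves implicit.
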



\begin{proof}
This follows immediately from the properties of determinant and
trace in a matrix.
\end{proof}

It follows from the above corollary that $\Trd_D : D \ra F$ is an
$F$-module homomorphism. Since all three maps $D  \ra D \otimes_F L
\ra  M_n (L)(d) \stackrel{\mathrm{trace}}{\lra} F$ are graded maps,
we have that $\Trd_D$ is a graded $F$-module homomorphism.

Note that a graded division algebra $D$ with centre $F$ is an
Azumaya algebra by Theorem~\ref{gcsaazumayaalgebra}. Since the
dimension of an Azumaya algebra is a square, it follows that $[D:F]$
is also a square number.

\begin{prop} \label{trdsurjective}
Let $D$ be a graded division algebra over its centre $F$. Then
$\Trd_{D} : D \ra F$ is surjective.

\end{prop}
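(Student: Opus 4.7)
The plan is to argue that the image of $\Trd_D$ is a graded ideal of the graded field $F$, which forces it to be either $0$ or $F$, and then to rule out the zero case by base change to a graded maximal subfield.

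First, I would observe that since $\Trd_D : D \to F$ is a graded $F$-module homomorphism (as noted after Corollary~\ref{nrdandtrdproperties}), its image $I = \Trd_D(D)$ is a graded $F$-submodule of $F$; as $F$ is commutative, this is in fact a graded ideal of $F$. Since $F$ is a graded field, any non-zero graded ideal of $F$ must contain a non-zero homogeneous element, which is invertible, and so any non-zero graded ideal of $F$ is all of $F$. Thus $I \in \{0,F\}$, and it remains only to show $I \neq 0$.

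For this, I would choose a graded maximal subfield $L$ of $D$ (which exists by the Zorn's Lemma argument on page~\pageref{grsplitcor}) and apply Corollary~\ref{grsplitcor} to obtain a graded isomorphism $j : D \otimes_F L \conggr M_n(L)(d)$ for some $(d) \in \Ga^n$. Extending $\Trd_D$ by $L$-linearity gives a map $\Trd_D \otimes \id_L : D \otimes_F L \to F \otimes_F L = L$. By the very definition of $\Trd_D$ via $j$, one checks on elements of the form $a \otimes 1$ (and hence, by $L$-linearity, everywhere) that $\Trd_D \otimes \id_L = \mathrm{trace} \circ j$, where $\mathrm{trace}$ is the ordinary matrix trace on $M_n(L)(d)$. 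The ordinary matrix trace is visibly surjective onto $L$ (e.g.\ $\mathrm{trace}(E_{11}) = 1$), so $\Trd_D \otimes \id_L$ is surjective and in particular non-zero; hence $\Trd_D$ itself is non-zero and $I \neq 0$. Combining with the first paragraph, $I = F$.

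The only subtle step is the identification of $\Trd_D \otimes \id_L$ with the matrix trace composed with $j$, but this is essentially immediate from how $\Trd_D$ was defined; no significant obstacle is expected. (If one preferred, one could instead argue that $L$ is graded free, and hence faithfully flat, over $F$, so that $I \otimes_F L = L$ forces $(F/I)\otimes_F L = 0$ and thus $I = F$ directly; but the graded-field argument above is shorter and avoids invoking faithful flatness.)
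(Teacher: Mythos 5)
Your proof is correct and follows essentially the same route as the paper: both arguments use the graded-field structure of $F$ to reduce the claim to showing $\Trd_D$ is not identically zero, and both rule this out by base-changing to a graded maximal subfield $L$ via Corollary~\ref{grsplitcor}, where $\Trd_D$ becomes the ordinary (visibly nonzero) matrix trace on $M_n(L)(d)$. The only cosmetic difference is that the paper derives the contradiction by pushing a homogeneous $F$-basis of $D$ through the splitting isomorphism, whereas you observe directly that $\Trd_D \otimes \id_L = \mathrm{trace}\circ j \neq 0$ forces $\Trd_D \neq 0$.
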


\begin{proof}

Suppose $\Trd_D$ is not surjective. Since $\im (\Trd_D )$ is a
graded module over the graded field $F$ with $\dim_F (\im (\Trd_D))
\leq \dim_F (F) = 1$, then $\dim (\im (\Trd_D)) = 0$ and so $\Trd_D$
is the zero map. Let $\{x_1 , \ldots , x_{n^2}\}$ be a homogeneous
basis for $D$ over $F$ and let $L$ be a graded maximal subfield of
$D$. Then by Corollary \ref{grsplitcor}, $f : D \otimes_F L \conggr
M_n(L)(d)$ for some $(d) \in \Ga^n$ and it is known that $\{x_1
\otimes 1 , \ldots , x_{n^2} \otimes 1 \}$ forms a homogeneous basis
for $D \otimes_F L$ over $L$. Since $f$ is a graded isomorphism, $\{
f(x_i \otimes 1) : 1 \leq i \leq n^2\}$ forms a homogeneous basis of
$M_n(L)(d)$ over $L$. By definition $\Trd_{D} (d_i) = \mathrm{tr} (f
(d_i \otimes 1))$, which equals zero since $\Trd_D$ is the zero map.
That is, the trace is the zero function on $M_n(L)(d)$, which is
clearly a contradiction.
\end{proof}



\section{Some results in the non-graded setting} \label{sectionsomenongradedresults}

We recall here some results from the non-graded setting, which will
be used in the next section.

\begin{lemma} \label{commutatorlemma}
Let $R$ be a division ring. If all of the additive commutators of
$R$ are central, then $R$ is a field.
\end{lemma}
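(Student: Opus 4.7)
The plan is to mirror the graded argument of Lemma~\ref{gradditivecommutatorscentral} almost verbatim, since that proof never genuinely used the grading: the only facts it exploited were that a nonzero element of the ring is cancellable and that certain commutators are central by hypothesis. Both facts are available in any division ring, so one should expect the transcription to go through cleanly.

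Concretely, I would fix an arbitrary $y \in R$ and aim to show $y \in Z(R)$, so that $R$ is commutative. Suppose to the contrary that $y \notin Z(R)$. Then there exists $x \in R$ with $c := [x,y] = xy-yx \neq 0$. By the hypothesis, $c$ lies in $Z(R)$; in particular $y$ commutes with $c$. Similarly the additive commutator $[x, xy] = x(xy) - (xy)x = x[x,y] = xc$ is central, so $y$ commutes with $xc$ as well. Writing out $y(xc) = (xc)y$ and using centrality of $c$ on the right-hand side to move $c$ past $y$, one obtains $yxc = xyc$, i.e.\ $(yx - xy)c = 0$, which is to say $-c^2 = 0$ (or equivalently $[x,y]\cdot[x,y] = 0$).

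The punchline is the step that I would expect to be the entire substance of the argument: since $R$ is a division ring it has no zero divisors, so $c \neq 0$ forces $c = 0$, contradicting our choice of $x$. Hence no such $x$ exists, $y \in Z(R)$, and since $y$ was arbitrary we conclude $R = Z(R)$, i.e.\ $R$ is a field.

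There is no real obstacle here; the only thing one has to be careful about is that when cancelling to conclude $yx - xy = 0$ from $(yx-xy)c = 0$, one uses \emph{either} that $c$ is a unit (true in a division ring) \emph{or} the absence of zero divisors. Both routes are available, and in each case the inference is immediate. As in the graded proof, the hypothesis is used in two places -- to place $[x,y]$ in the centre, and to place $[x,xy]$ in the centre -- and it is the interaction of these two centrality statements with the multiplicative relation $[x,xy] = x[x,y]$ that forces $[x,y]$ to square to zero.
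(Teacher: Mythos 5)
Your proof is correct. The paper itself does not supply an argument here; it simply refers the reader to \cite[Cor.~13.5]{lam}. Your proposal instead transcribes the proof of Lemma~\ref{gradditivecommutatorscentral} to the ungraded setting, which works without incident: the computation $y(xc)=(xc)y$ together with centrality of $c=[x,y]$ yields $(yx-xy)c=-c^{2}=0$, and absence of zero divisors (equivalently, invertibility of the nonzero $c$) forces $c=0$, a contradiction. Both uses of the hypothesis (centrality of $[x,y]$ and of $[x,xy]$) are genuinely needed, and you correctly observe that the cancellation step is the whole content. One small remark for economy: once you know $[x,y]$ is central and nonzero, hence a unit, and $[x,xy]=x[x,y]$ is also central, you can conclude directly that $x=[x,xy]\,[x,y]^{-1}$ is central, which immediately contradicts $[x,y]\neq 0$; this avoids forming $c^{2}$ and is a slightly more streamlined version of the same idea, but your route is equally valid.
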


\begin{proof}
See \cite[Cor.~13.5]{lam}.
\end{proof}

\begin{thm} \label{trda=na+danongrversion}
Let $D$ be a division algebra over its centre $F$ of index $n$. Then
for $a \in D$, $\Trd_{D} (a) = na +d_a$ where $d_a \in [D,D]$.
\end{thm}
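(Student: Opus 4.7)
The plan is to mimic the proof of Lemma~\ref{divisionalgebraofindexnlemma} (the multiplicative version of Wedderburn's Factorisation), but track the coefficient of $x^{n-1}$ in the reduced characteristic polynomial instead of the constant term. Concretely, I would begin by letting $f_a(x) \in F[x]$ be the minimal polynomial of $a$ over $F$, of degree $m$. By Wedderburn's Factorisation Theorem, we may write
\[
f_a(x) \;=\; (x - d_1 a d_1^{-1})(x - d_2 a d_2^{-1}) \cdots (x - d_m a d_m^{-1})
\]
for suitable $d_i \in D$, and the reduced characteristic polynomial is given by $f_a(x)^{n/m}$. Expanding out the coefficient of $x^{n-1}$ and comparing with the known expression
\[
f_a(x)^{n/m} \;=\; x^n - \Trd_D(a)\,x^{n-1} + \cdots + (-1)^n\Nrd_D(a)
\]
yields $\Trd_D(a) = (n/m)\sum_{i=1}^{m} d_i a d_i^{-1}$.

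Next, I would rewrite each conjugate as $d_i a d_i^{-1} = a + (d_i a d_i^{-1} - a)$ and observe that the difference $d_i a d_i^{-1} - a$ is itself an additive commutator, since a direct calculation gives $d_i a d_i^{-1} - a = [\,d_i,\, a d_i^{-1}\,]$. Summing over $i$ produces
\[
\Trd_D(a) \;=\; \frac{n}{m} \sum_{i=1}^{m} \bigl( a + [\,d_i,\, a d_i^{-1}\,] \bigr) \;=\; n a + \frac{n}{m}\sum_{i=1}^{m} [\,d_i,\, a d_i^{-1}\,],
\]
and the second summand lies in $[D,D]$ because $[D,D]$ is closed under $F$-scalar multiplication and finite sums. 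Setting $d_a$ equal to this second summand gives the desired expression $\Trd_D(a) = na + d_a$ with $d_a \in [D,D]$.

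There is no real obstacle here; the result is essentially a ``linearisation'' of Lemma~\ref{divisionalgebraofindexnlemma}, replacing the product $d_1 a d_1^{-1} \cdots d_m a d_m^{-1}$ (which compared to $a^n$ via multiplicative commutators) with the sum $\sum d_i a d_i^{-1}$ (which compares to $ma$ via additive commutators). The only small point to verify carefully is the identity $bab^{-1} - a = [b, ab^{-1}]$, and the fact that $m \mid n$ so that $n/m$ is an integer (guaranteed because $m$ divides the degree $n$ of the reduced characteristic polynomial, as noted in \cite[p.~124, Ex.~1]{rei}).
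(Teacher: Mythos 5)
Your proposal is correct and follows essentially the same route as the paper: Wedderburn's Factorisation Theorem, comparison of the $x^{n-1}$ coefficient in $f_a(x)^{n/m}$, and the identity $d_i a d_i^{-1} - a = [\,d_i, a d_i^{-1}\,]$ to express the difference from $na$ as an element of $[D,D]$. The paper writes this last step as $d_i a d_i^{-1} - a d_i^{-1} d_i$, which is exactly your commutator in disguise, so there is no substantive difference.
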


\begin{proof}
Let $a \in D$ with minimal polynomial $f (x) \in F[x]$ of degree
$m$. Then by \cite[p.~124, Ex.~9.1]{rei}, we have
$$
f(x)^{n/m} = x^n - \Trd_{D} (a) x^{n-1} + \cdots + (-1)^n \Nrd_{D}
(a).
$$
where the right hand side of this equality is the reduced
characteristic polynomial of $a$.  Wedderburn's Factorisation
Theorem \cite[Thm.~16.9]{lam1} says $f(x) = (x- d_1 a d_1^{-1})
\cdots (x- d_m a d_m^{-1})$ for $d_1 , \ldots , d_m \in D$.
Combining these, we have
\begin{align*}
\Trd_{D} (a) &= \frac{n}{m} \big( d_1 a d_1^{-1} + \cdots + d_m a
d_m^{-1}\big)\\
& = \frac{n}{m} \big( ma +( d_1 a d_1^{-1} - a d_1^{-1} d_1)  +
\cdots + ( d_m a d_m^{-1}-  a d_m^{-1} d_m) \big)\\
&= na + d_a \;\;\;\; \text{ where } d_a \in [D,D],
\end{align*} as required.
\end{proof}

Let $R$ be a commutative Noetherian ring. The dimension of the
maximal ideal space of $R$ is defined to be the supremum on the
lengths of properly descending chains of irreducible closed sets.

\begin{thm}
Let $R$ be a commutative Noetherian ring and let $A$ be an Azumaya
algebra over $R$. Then every element of $A$ of reduced trace zero is
a sum of at most $2d +2$ additive commutators, where $d$ is the
dimension of the maximal ideal space of $R$.
\end{thm}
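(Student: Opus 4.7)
The plan is to reduce the problem in stages: first from Azumaya algebras to local/residue data, then from central simple algebras to division algebras where Theorem~\ref{trda=na+danongrversion} applies, and finally to stitch the local information together using a Bass-style stability argument whose sharpness is controlled by the dimension $d$ of $\Max(R)$. First I would invoke Theorem~\ref{azumayadefinthm}(5) to conclude that $A/\m A$ is a central simple $R/\m$-algebra for each $\m \in \Max(R)$, and by Wedderburn's theorem $A/\m A \cong M_{n_\m}(D_\m)$ for some division algebra $D_\m$ over $R/\m$. The formation of the reduced trace is compatible with localization and with reduction mod $\m$, so $\Trd(a) = 0$ implies $\Trd_{A/\m A}(\bar a) = 0$ for every $\m$.

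At the residue level, Theorem~\ref{trda=na+danongrversion} (which one first extends trivially from division algebras to $M_k$ of a division algebra by means of the standard identity $\Trd_{M_k(D)} = k \cdot \Trd_D \circ \mathrm{tr}$, together with the observation that $M_k(D)/[M_k(D),M_k(D)]$ is generated as an abelian group by the class of $E_{11}$) yields that every trace-zero element of $A/\m A$ lies in $[A/\m A, A/\m A]$ and, more sharply, is a sum of at most two additive commutators — this sharper form is essentially Shoda's theorem in the matrix case, combined with Wedderburn factorization. Lifting this back to a neighborhood of $\m$ is then handled by Nakayama's lemma: if $b \in A$ can be written modulo $\m$ as a sum of $c$ commutators, one can adjust $b$ by an element of $\m A$ (itself a sum of $c$ commutators up to a bounded overhead) to produce a genuine commutator decomposition on a Zariski open neighborhood $U_\m$ of $\m$.

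The patching step is where the dimension $d$ enters. One covers $\Max(R)$ by Zariski opens $U_1, \ldots, U_N$ on each of which $a$ admits a bounded commutator decomposition; then a stratification argument (descending through the closed subsets appearing in a Noether normalization / dimension filtration of $\Spec R$) together with a partition-of-unity / Chinese remainder style combination reduces the total commutator count to $2d+2$. The factor $2$ reflects the need to handle the diagonal and off-diagonal contributions separately under Wedderburn factorization, while the $+2$ accounts for the base case (dimension zero, where the residue field computation gives two commutators directly). I would carry out this induction on $d$, the base case $d=0$ being precisely Corollary~\ref{semilocal} plus the sharp residue-field statement, and the inductive step removing a maximal-dimensional closed stratum from $\Spec R$ while absorbing the defect into at most two further commutators.

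The principal obstacle is the patching/induction in the third paragraph: one must arrange the local commutator decompositions so that the ``glue'' between adjacent strata itself is expressible as a small, dimension-controlled number of commutators, rather than blowing up in the cover size $N$. This is the classical difficulty that Vaserstein-type stability arguments are designed to overcome, and the correct way to execute it is to work with the trace-zero submodule $A^{(0)} = \ker(\Trd)$ as a finitely generated projective $R$-module and use its rank together with Bass's cancellation results for projective modules over Noetherian rings of dimension $d$, which is what produces the precise bound $2d+2$ rather than some larger polynomial in $d$.
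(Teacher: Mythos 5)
The paper itself does not prove this statement; it is quoted directly from Rosset's thesis (\cite[Thm.~5.3.1]{rossetthesis}) with no argument supplied. Your proposal is therefore an independent attempt, and its central step --- the one you yourself flag as ``the principal obstacle'' --- is never actually carried out. You correctly reduce to the residue fields, invoke Wedderburn's theorem, and cite the fact that trace-zero elements of a central simple algebra over a field are sums of a bounded number of additive commutators. But the transition from those fibrewise bounds to the global bound $2d+2$ is asserted, not proved. Saying that a ``stratification argument'' plus ``a partition-of-unity / Chinese remainder style combination'' plus Bass cancellation for the projective module $\ker(\Trd)$ will control the commutator count is not an argument, because the set of elements of $A$ expressible as a sum of $k$ additive commutators is not an $R$-submodule of $A$: it is not closed under $R$-scalar multiplication, its members have no canonical decomposition, and nothing in Bass's cancellation theory or the Forster--Swan generation bounds translates by itself into a statement about commutator length. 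The entire content of the theorem is to convert the geometric invariant $d$ into a commutator count, and your outline leaves exactly that conversion unexplained.

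The lifting step in your second paragraph exhibits the same gap in miniature. If $\bar a$ is a sum of $c$ commutators in $A/\m A$ and you lift those commutators to $A$, then indeed $a - \sum_{i=1}^c [x_i,y_i]$ lies in $\m A$ and has reduced trace zero; but to say this remainder is ``itself a sum of $c$ commutators up to a bounded overhead'' is to assume the very statement you are trying to prove, applied to an element of $\m A$. Nakayama's lemma controls module-theoretic generation, not the (non-linear, non-additive) commutator-length function. There are also minor inaccuracies in the residue-level reduction --- $\Trd_{M_k(D)}$ is not $k\cdot\Trd_D\circ\mathrm{tr}$, Shoda's theorem gives a single commutator for trace-zero matrices over a field rather than two, and $M_k(D)/[M_k(D),M_k(D)]$ is a module over the centre $F$ rather than an abelian group generated by the class of $E_{11}$ --- but these are repairable and beside the main point, which is that the dimension-dependent bound is not actually derived.
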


\begin{proof}
See \cite[Thm.~5.3.1]{rossetthesis}.
\end{proof}

\begin{cor}\label{kertrd=dd}
Let $D$ be a graded division algebra over its centre $F$, which is
Noetherian as a ring. Then $\ker (\Trd_{D}) = [D,D]$.
\end{cor}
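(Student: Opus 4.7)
The plan is to prove the two inclusions $[D,D] \subseteq \ker(\Trd_D)$ and $\ker(\Trd_D) \subseteq [D,D]$ separately. The forward inclusion is immediate: for any homogeneous $a,b \in D^h$, Corollary~\ref{nrdandtrdproperties} gives $\Trd_D(ab) = \Trd_D(ba)$, so $\Trd_D([a,b]) = 0$. Since $\Trd_D$ is an $F$-module homomorphism and $[D,D]$ is the $F$-submodule of $D$ generated by homogeneous additive commutators, this shows $[D,D] \subseteq \ker(\Trd_D)$.

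The reverse inclusion is where I would invoke the non-graded machinery of the previous section. By Theorem~\ref{gcsaazumayaalgebra}, $D$ is an Azumaya algebra over its centre $F$, and $F$ is Noetherian by hypothesis. Thus the theorem of Rosset quoted just before the corollary applies: any element of $D$ of reduced trace zero can be written as a finite sum of additive commutators $\sum_i [a_i, b_i]$ with $a_i, b_i \in D$ (not necessarily homogeneous).

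The key remaining point is that even though $[D,D]$ is defined to be generated by \emph{homogeneous} commutators, it automatically contains every additive commutator of $D$. Indeed, for any $a = \sum_\alpha a_\alpha$ and $b = \sum_\beta b_\beta$ in $D$ with $a_\alpha, b_\beta \in D^h$, bilinearity of the bracket gives
$$
[a,b] = \sum_{\alpha, \beta} [a_\alpha, b_\beta],
$$
which is a (finite) sum of homogeneous additive commutators and hence lies in $[D,D]$. Combining this with Rosset's theorem shows that any $x \in \ker(\Trd_D)$ lies in $[D,D]$, completing the proof.

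I do not expect any real obstacle here: the heavy lifting is done by the quoted theorem from \cite{rossetthesis}, and the only graded-theoretic ingredient is the elementary observation above that expanding an arbitrary commutator into homogeneous pieces keeps it inside $[D,D]$. (One could optionally phrase the argument more intrinsically: because $\Trd_D$ is a graded map and $F = \bigoplus_\gamma F_\gamma$ decomposes, $\Trd_D(x) = 0$ forces $\Trd_D(x_\gamma) = 0$ for each homogeneous component $x_\gamma$, and one can then apply Rosset's theorem componentwise; but this is not strictly necessary for the statement as given.)
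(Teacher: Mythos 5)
Your argument is essentially the one in the paper: the forward inclusion follows from $\Trd_D(ab)=\Trd_D(ba)$ (Corollary~\ref{nrdandtrdproperties}), and the reverse from Rosset's theorem after invoking Theorem~\ref{gcsaazumayaalgebra} to see that $D$ is an Azumaya algebra over the Noetherian ring $F$. The one detail you add -- that any additive commutator $[a,b]$ decomposes by bilinearity into a finite sum of homogeneous commutators, so the not-necessarily-homogeneous commutators appearing in Rosset's theorem already lie in the graded submodule $[D,D]$ -- is left implicit in the paper but is exactly the right point to make explicit.
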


\begin{proof}
For any $xy- yx \in [D,D]$, we have $\Trd_D (xy-yx) = 0$ by
Corollary~\ref{nrdandtrdproperties}. The reverse containment follows
immediately from the above theorem, since by
Theorem~\ref{gcsaazumayaalgebra}, $D$ is an Azumaya algebra over
$F$.
\end{proof}

\begin{remark}\label{remarkonkertrd}
Let $D$ be a graded division algebra over its centre $F$, which is
Noetherian as a ring. Since $\ker (\Trd_D ) = [D,D]$ by
Corollary~\ref{kertrd=dd} and $\Trd_D$ is surjective by
Proposition~\ref{trdsurjective}, the First Isomorphism Theorem says
that $D/ [D,D] \conggr F$ as graded $F$-modules. So $\dim_F (D/
[D,D]) = \dim_F F= 1$. By Proposition~\ref{grdimensionprop}, $\dim_F
([D,D]) + 1 = \dim_F (D) < \infty$.
\end{remark}

We recall here the definitions of a totally ordered group and a
torsion-free group. Let $(\Ga, +)$ be a group. A partial order is a
binary relation $\leq$ on $\Ga$ which is reflexive, antisymmetric
and transitive. The order relation is translation invariant if for
all $a,b,c \in \Ga$, $a \leq b$ implies $a + c \leq b + c$ and $c +
a \leq c + b$. A partially ordered group is a group $\Ga$ equipped
with a partial order $\leq$ which is translation invariant. If $\Ga$
has a partial order, then two distinct elements $a, b \in \Ga$ are
said to be comparable if $a \leq b$ or $b \leq a$. If $\Ga$ is a
partially ordered group in which every two elements of $\Ga$ are
comparable, then $\Ga$ is called a \emph{totally ordered
group}\index{totally ordered group}\index{group!totally ordered}.

For a group $\Ga$, an element $a$ of $\Ga$ is called a torsion
element there is a positive integer $n$ such that $a^n = e$. If the
only torsion element is the identity element, then the group $\Ga$
is called \emph{torsion-free}\index{torsion-free
group}\index{group!torsion-free}. By \cite{levi}, an abelian group
can be equipped with a total order if and only if it is
torsion-free.



\section{Quotient division rings}
\label{sectionquotientdivisionrings}

Throughout this section, $\Ga$ is a torsion-free abelian group and
all graded objects are $\Ga$-graded. In this setting, graded
division rings have no zero divisors,  and similarly for graded
fields. This follows since we can choose a total order for $\Ga$. So
for a graded division ring $D =\bigoplus_{\ga \in \Ga} D_{\ga}$ and
two non-zero elements $a, b \in D$, we can write
$$
a = a_\ga + \textrm{ terms of higher degree \;\; and \;\; } b =
b_\de + \textrm{ terms of higher degree}.
$$
Then $ab = a_\ga b_\de +$ terms of higher degree, so we have that
$ab$ is non-zero. Thus the group of units of $D$ is $D^* = D^h \mi
0$. Similarly, a graded field $F$ is an integral domain with group
of units $F^* = F^h \mi 0$. This allows us to construct $QF = (F \mi
0)^{-1} F$, the \emph{quotient field}\index{quotient field} of $F$,
which is clearly a field and an $F$-module. For a graded division
algebra $D$ with centre $F$, we define the \emph{quotient division
ring}\index{quotient division ring} of $D$ to be $QD = QF \otimes_F
D$. We observe some properties of $QD$ in the proposition below,
including in part (5) that it is a division ring.

\begin{prop}
Let $D$ be a graded division algebra with centre $F$, and let $QF$
and $QD$ be as defined above. Then the following properties hold:
\begin{enumerate}
\item $QD$ is an algebra over $QF$;

\item $D \ra QD$; $d \mps 1 \otimes d$ is injective;

\item $QD$ has no zero divisors;

\item $[QD :QF] = [D :F]$;

\item $QD$ is a division ring;

\item $QD \cong (F \mi 0)^{-1} D$; $(f_1/ f_2) \otimes d \mps (f_1
d)/f_2$;

\item The elements of $QD$ can be written as $d/f$, $d \in D, f \in F$.
\end{enumerate}
\end{prop}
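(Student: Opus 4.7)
The plan is to exploit the running assumption that $\Ga$ is torsion-free abelian, which, as already noted in the preamble to this section, ensures that the graded field $F$ is an integral domain and the graded division ring $D$ has no zero divisors (so $F \mi 0$ is a central multiplicative subset of $D$). Consequently the classical quotient field $QF = (F\mi 0)^{-1}F$ is an honest field, and by Proposition \ref{gradedfree} applied to $D$ as a graded module over the graded field $F$, we may fix a homogeneous $F$-basis $\{d_1, \ldots, d_n\}$ of $D$. All seven assertions will be extracted from this basis together with the classical fact that localisation of $F$-modules is exact.

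First I would handle (1) by defining multiplication on $QD = QF \otimes_F D$ via $(q_1\otimes a)(q_2\otimes b) = q_1 q_2 \otimes ab$, extended bilinearly; well-definedness comes from the commutativity of $QF$ together with the centrality of $F$ in $D$, and the map $QF \to QD$, $q \mapsto q\otimes 1$, visibly lands in the centre and makes $QD$ a $QF$-algebra. Next, (4) and (2) come together: since $QF$ is flat over $F$, tensoring the decomposition $D = \bigoplus_{i=1}^n F d_i$ yields $QD = \bigoplus_{i=1}^n QF\cdot(1\otimes d_i)$, so $\{1\otimes d_i\}$ is a $QF$-basis and $[QD:QF] = n = [D:F]$; moreover the map $d\mapsto 1\otimes d$ decomposes under this basis into $n$ copies of the inclusion $F \hookrightarrow QF$, which is injective, giving (2).

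For (6) and (7), I would construct a map
\[
\Phi : QF \otimes_F D \lra (F\mi 0)^{-1}D, \qquad (f_1/f_2)\otimes d \lmps (f_1 d)/f_2 ,
\]
well-defined thanks to $F$ being central in $D$ (so the Ore conditions are trivially satisfied and the right-fraction ring $(F\mi 0)^{-1}D$ exists). Clearing denominators in any element $\sum (f_i/g_i)\otimes d_i$ of $QD$ by the common multiple $g = g_1\cdots g_m \in F\mi 0$ shows that every element of $QD$ has the form $(1/g)\otimes d$, and $\Phi$ sends this to $d/g$; the inverse map $d/g \mapsto (1/g)\otimes d$ is well-defined and $F$-bilinear. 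This gives the isomorphism (6) and yields (7) immediately.

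Finally, (3) and (5) follow by a standard dimension argument. For (3), translating via (6), if $(d_1/f_1)(d_2/f_2) = 0$, then $d_1 d_2 = 0$ in $D$, and since $D$ has no zero divisors one of $d_1, d_2$ vanishes; hence $QD$ has no zero divisors. For (5), observe that for any non-zero $x\in QD$ left multiplication $L_x: QD\to QD$ is $QF$-linear and, by (3), injective; as $QD$ is finite-dimensional over $QF$ by (4), $L_x$ is surjective, so $x$ admits a right inverse, and symmetrically a left inverse, which must coincide. The main obstacle I anticipate is purely formal: verifying that the centrality of $F$ genuinely makes $F\mi 0$ a two-sided Ore set in the (non-commutative) ring $D$, so that $(F\mi 0)^{-1}D$ in (6) is unambiguously defined and the map $\Phi$ above is well-posed on both sides.
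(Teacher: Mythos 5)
Your proposal is correct and follows essentially the same strategy as the paper: exploit the torsion-free hypothesis to get that $D$ (and $F$) have no zero divisors, clear denominators to write every element of $QD$ as $(1/f)\otimes d$, and use flatness/freeness of $D$ over $F$ plus a dimension count. A few of the individual items are handled by slightly different, but equivalent, elementary arguments. For (2) and (4) you work explicitly with a homogeneous $F$-basis of $D$ and observe the map decomposes into copies of $F\hookrightarrow QF$, whereas the paper first proves $F\to QF$ injective directly and then cites flatness of $D$ over $F$; both amount to the same thing. For (5) you use the ``injective $QF$-linear endomorphism of a finite-dimensional space is surjective'' argument, while the paper proves the auxiliary lemma ``a domain finite-dimensional over a field is a division ring'' by isolating an invertible factor from a polynomial dependence $r_0 + r_1 a + \cdots + r_n a^n = 0$; both are textbook-standard. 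For (6) you construct the isomorphism $\Phi$ by hand and check it is well-defined, whereas the paper cites Magurn for the localisation isomorphism. One small remark on your (3): when you say ``if $(d_1/f_1)(d_2/f_2)=0$ then $d_1 d_2 = 0$'', the intermediate step (suppressed in your write-up) is that the vanishing in the localisation means $s\,d_1 d_2 = 0$ for some $s\in F\mi 0$, and then one uses that $s$, being a nonzero homogeneous element, is a unit in $D$; worth making explicit, though it is certainly available from the running hypotheses. Finally, the ``main obstacle'' you flag --- that $F\mi 0$ be a two-sided Ore set in $D$ --- is not an obstacle at all, precisely because $F\subseteq Z(D)$ makes it a central multiplicative set, and you correctly note this; the paper sidesteps the issue entirely by defining $QD$ as $QF\otimes_F D$ rather than an Ore localisation and only later identifying it with $(F\mi 0)^{-1}D$.
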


\begin{proof} (1): This follows easily.

\vspace*{3pt}

(2): To show $D \ra QD$ is injective, we first show that $F \ra QF$
is injective. Consider $F \ra QF$; $f \mps f/1$. If $f/1 =0$, there
is $s \in F \mi 0$ with $sf =0$. Since $F$ is an integral domain, it
follows that $f=0$ and the map is injective. As $D$ is graded free
over $F$, and thus flat over $F$, the required result follows.

\vspace*{3pt}

(3): Let $x \in  QF \otimes D$, say $x = \sum (f_i / f'_i) \otimes
d_i$. Then
\begin{align*}
x  & = (f_1 / f'_1) \otimes d_1 + \cdots + (f_k / f'_k) \otimes d_k
\\
&= (1/ f) \otimes f_1 f'_2 \cdots f'_k d_1 + \cdots + (1/ f) \otimes
f_k f'_1 \cdots f'_{k-1} d_k \\
&= (1/f) \otimes d,
\end{align*}
where $f =f'_1 f'_2 \cdots f'_k$ and $d \in D$. So for any arbitrary
element $x$ of $QF \otimes D$, we can write $x = (1/f) \otimes d$,
for $f \in F, d \in D$. Now let $x = (1/f) \otimes d$ and $y  =
(1/f') \otimes d'$ be arbitrary elements of $QF \otimes D$ with $xy
=0$. Then $1/ (f f') \otimes d d' =0$, so
$$
(f f' \otimes 1)\left( \frac{1}{f f'} \otimes d d' \right) =0.
$$
Thus $1 \otimes d d'=0$ and since $D \ra QF \otimes D$ is injective,
we have $dd'=0$. As $D$ has no zero divisors, it follows that $d=0$
or $d'=0$; that is, $x=0$ or $y=0$ as required.

\vspace*{3pt}

(4): Since $D$ is free over $F$ and $QD = QF \otimes_F D$, we have
$$
[D \otimes_F QF : F \otimes_F QF] =[D: F];
$$
that is, $[QD :QF] = [D :F]$.

\vspace*{3pt}

(5): We will show that any domain which is finite dimensional over a
field is a division ring. Since $QF$ is a field, using (3) and (4),
it follows that $QD$ is a division ring.

Suppose $A$ is a domain, $F$ is a field, $Z(A)=F$ and $[A:F] =n <
\infty$. Let $a \in A \mi 0$ and consider $1 , a, a^2, \ldots ,
a^n$. Then there are $r_i \in F$, not all zero, with $r_0 + r_1 a +
\cdots + r_n a^n = 0$. If $r_j$ is the first non-zero element of
$\{r_0 , \ldots , r_n\}$, then $r_j +r_{j+1} a + \cdots + r_n
a^{n-j} = 0$. As $a \neq 0$ and $F$ is a field, we have $ a(r_{j+1}
+ \cdots + r_n a^{n-j-1}) ( -r_j)^{-1}= 1$, so $a$ is invertible.

\vspace*{3pt}

(6): Follows immediately from \cite[Prop.~6.55]{magurn}. Note that
they are isomorphic as $QF$-algebras.

\vspace*{3pt}

(7): We know from part~(3) that if $x \in QD$, then we can write $x
= (1/f) \otimes d$. From the isomorphism in~(6), $QF \otimes D \cong
(F \mi 0)^{-1} D$; $(1/f) \otimes d \mps d/f$. So we can consider
the elements of $QD$ as $d/f$ for $d \in D, f \in F \mi 0$.
\end{proof}

Note that we have $QF \otimes_F D \cong D \otimes_F QF$ as
$QF$-algebras, so we will use the terms interchangeably. We include
here an alternative proof of
Lemma~\ref{gradditivecommutatorscentral}, due to Hazrat, which
follows from the non-graded result by using the quotient division
ring.

\begin{proof}[Alternative proof of Lemma~\ref{gradditivecommutatorscentral}.]
%
Let $y \in D^h$ be an element which commutes with homogeneous
additive commutators of $D$. Then it follows that $y$ commutes with
all (non-homogeneous) commutators of $D$. Consider $[x_1 , x_2]$
where $x_1 , x_2 \in QD$, with $x_1 = d_1 / f_1 $ and $x_2 = d_2 /
f_2$ for $d_1 , d_2 \in D, f_1 , f_2 \in F$. Then $y[x_1 , x_2 ] =
y([d_1 , d_2 ] / f_1 f_2) = y[d_1 , d_2 ] / f_1 f_2 = [d_1 , d_2 ]y
/ f_1 f_2 = ([d_1 , d_2 ] / f_1 f_2)y = [x_1 , x_2 ]y$. So $y$
commutes with all commutators of $QD$, a division ring. Using Lemma
\ref{commutatorlemma}, $y \in QF$.

We will show that $D^h \cap QF \subseteq F^h$. If $x \in D^h \cap
QF$, then $x = d/1 = f'/f$ for $d \in D^h$, $f,f' \in F$ with $f
\neq 0$. So there exists $t \in F \mi 0$ with $tfd = tf'$, so that
$d=f^{-1} f'$. Thus $x = (f^{-1} f') /1 \in F$ with $\deg (d) =
\deg( f^{-1} f')$; that is, $x \in F^h$. This proves that $y \in
F^h$. It follows that all elements of $D$ are commutative,
completing the proof.
\end{proof}

\begin{prop}\label{redcharpolygrversion}
Let $D$ be a graded division algebra over its centre $F$. Then for
$a \in D^h$, the reduced characteristic polynomial of $a$ with
respect to $D$ over $F$ coincides with the reduced characteristic
polynomial of $a \otimes 1$ with respect to $QD$ over $QF$.
\end{prop}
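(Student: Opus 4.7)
The plan is to exploit the functoriality of the splitting construction used to define the reduced characteristic polynomial. Fix a graded maximal subfield $L$ of $D$. By Corollary~\ref{grsplitcor}, there is a graded $L$-algebra isomorphism
$$
j: D \otimes_F L \conggr M_n(L)(d)
$$
for some $(d) \in \Ga^n$, and by definition $\chr_{D/F}(a,x) = \det\big(xI_n - j(a \otimes 1)\big)$. The first step is to check that $QL$ is a maximal subfield of the division ring $QD$. Since $L$ is a graded field containing $F$, the quotient field $QL$ naturally sits inside $QD = D \otimes_F QF$ via the map $\ell/f \mps \ell \otimes (1/f)$; its maximality as a subfield of $QD$ follows from the equality $[QD:QL] = [D:L] = [QD:QF]/[QL:QF]$, using that $[QD:QF] = [D:F]$ and dimension counts pass through the quotient.

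Next I would tensor the graded splitting with $QF$ over $F$. Using the canonical identifications
$$
QD \otimes_{QF} QL \; \cong\; D \otimes_F QL \; \cong\; (D \otimes_F L) \otimes_L QL,
$$
the graded isomorphism $j$ induces a ring isomorphism
$$
j': QD \otimes_{QF} QL \;\stackrel{\cong}{\lra}\; M_n(L)(d) \otimes_L QL \;\cong\; M_n(QL),
$$
where the shift $(d)$ disappears because $QL$ is ungraded as a ring, so the $L$-algebra $M_n(L)(d) \otimes_L QL$ is just the ordinary matrix ring $M_n(QL)$. Thus $j'$ is a splitting of $QD$ by $QL$ in the classical sense, and hence computes $\chr_{QD/QF}$.

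The final step is a commutative diagram chase. By construction, the square
\begin{displaymath}
\xymatrix{
D \otimes_F L \ar[r]^{\;\;\;\;\; j} \ar[d] & M_n(L)(d) \ar[d] \\
QD \otimes_{QF} QL \ar[r]_{\;\;\;\;\; j'} & M_n(QL)
}
\end{displaymath}
commutes, where the vertical maps are the canonical inclusions (which are injective by flatness, as shown in the preceding propositions). Therefore $j'(a \otimes 1 \otimes 1)$ is literally the image of $j(a \otimes 1)$ under the entrywise inclusion $M_n(L) \hookrightarrow M_n(QL)$. Since the determinant is given by the same polynomial formula in either ring, we conclude
$$
\chr_{QD/QF}(a \otimes 1, x) = \det\big(xI_n - j'(a \otimes 1 \otimes 1)\big) = \det\big(xI_n - j(a \otimes 1)\big) = \chr_{D/F}(a,x).
$$

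The main obstacle, and the only point that needs genuine care, is the claim that $QL$ is a \emph{maximal} subfield of $QD$: one must verify that the construction of $QL$ inside $QD$ really saturates the centraliser, which is why I would lean on the dimension count $[QD:QF] = [D:F]$ (which itself is not entirely immediate and was established by flatness of $D$ over $F$). Once that is in place, the rest of the argument is essentially the naturality of the characteristic polynomial under base change.
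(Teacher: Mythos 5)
Your proof is correct and ends up in the same place as the paper's, but it travels in the opposite direction and carries one unnecessary step. The paper starts from a (classical, ungraded) splitting field $L$ of the division algebra $QD$ over $QF$ and then observes, by the base-change chain $D \otimes_F L \cong (D \otimes_F QF) \otimes_{QF} L = QD \otimes_{QF} L \cong M_n(L)$, that this same $L$ also splits $D$ over $F$; since the reduced characteristic polynomial is independent of the faithfully flat splitting (the Knus lemma invoked when $\chr_{D/F}$ was defined), the two polynomials coincide immediately. You instead start from the graded maximal subfield $L$ of $D$ used to define $\chr_{D/F}$, pass to $QL$, and push the graded splitting $j$ forward to a splitting $j'$ of $QD$ by $QL$ via a commutative diagram. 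Both arguments hinge on exactly the same two facts — a common splitting extension exists, and the reduced characteristic polynomial does not depend on the choice — so the difference is one of packaging rather than substance.

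The one place where your proposal misjudges the logical weight is the claim that the ``main obstacle'' is showing $QL$ is a \emph{maximal} subfield of $QD$. Maximality is not needed anywhere in your argument: you already construct the explicit isomorphism $j': QD \otimes_{QF} QL \cong M_n(QL)$ by base change of $j$ along $L \to QL$, and that isomorphism is all the definition of $\chr_{QD/QF}$ requires (together with independence of the choice of splitting field, which you are implicitly using). The dimension count $[QD:QL] = [D:L]$ is correct but does no work in the proof; you could delete that paragraph and lose nothing. The paper's route avoids this digression entirely because it never mentions maximal subfields on the $QD$ side.
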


\begin{proof}
Let $L$ be a splitting field of $QD$ over $QF$, which exists as $QD$
is a finite dimensional division algebra over $QF$. So $i : QD
\otimes_{QF} L \cong M_n (L)$, and therefore $L$ is a splitting
field of $D$ over $F$, since
$$
j : D \otimes_F L \cong D \otimes_{QF} QF \otimes_F L \cong M_n (L).
$$
Then for $a \in D$,
\begin{align*}
\chr_{QD/QF} (a \otimes 1) & = \det \big( x I_n - i( (a \otimes
1_{QF}) \otimes 1_L)\big) \\
& = \det \big( x I_n - j( a \otimes 1_L)\big) \\
& = \chr_{D/F} (a).
\end{align*}
So in particular, we have $\Trd_{QD} (a \otimes 1) = \Trd_{D} (a)$
and $\Nrd_{QD} (a \otimes 1) = \Nrd_{D} (a)$.
\end{proof}

\begin{prop}\label{dd=qdqdcapd}
Let $D$ be a graded division algebra over its centre $F$, which is
Noetherian as a ring.  Then $[D,D] = [QD, QD] \cap D$.
\end{prop}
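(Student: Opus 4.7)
The plan is to deduce the equality from the characterisations of $[D,D]$ and $[QD,QD]$ as kernels of their respective reduced traces, linked by the compatibility statement in Proposition~\ref{redcharpolygrversion}.

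First I would dispose of the easy inclusion. Under the embedding $d \mapsto 1 \otimes d$ of $D$ into $QD$, any homogeneous commutator $[a,b]$ in $D$ is also a commutator in $QD$, so $[D,D] \subseteq [QD,QD]$, and since $[D,D] \subseteq D$ we get $[D,D] \subseteq [QD,QD] \cap D$.

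For the reverse inclusion, suppose $x \in [QD,QD] \cap D$. The key observation is that reduced trace vanishes on additive commutators (Corollary~\ref{nrdandtrdproperties}, together with $\Trd_{QD}(ab)=\Trd_{QD}(ba)$), so $\Trd_{QD}(x \otimes 1)=0$. Now Proposition~\ref{redcharpolygrversion} is stated for homogeneous $a \in D$, but its proof uses only the determinant identity $\chr_{QD/QF}(a \otimes 1)=\chr_{D/F}(a)$ obtained from a common splitting field, and the resulting equality $\Trd_{QD}(a \otimes 1)=\Trd_D(a)$ extends by $F$-linearity to all $a \in D$. Hence $\Trd_D(x)=\Trd_{QD}(x \otimes 1)=0$.

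To finish, I would invoke Corollary~\ref{kertrd=dd}: since $F$ is Noetherian, $D$ is an Azumaya algebra over $F$ (Theorem~\ref{gcsaazumayaalgebra}) whose centre is Noetherian, so $\ker(\Trd_D)=[D,D]$. Thus $x \in [D,D]$, completing the inclusion. The only subtle point, and the one I would check carefully, is that Proposition~\ref{redcharpolygrversion} really does apply to arbitrary (non-homogeneous) $x \in D$; this is immediate from the $F$-linearity of $\Trd_D$ and $\Trd_{QD}$, but it is the step where the proof could trip up if one reads the proposition too narrowly. No serious obstacle beyond this is anticipated.
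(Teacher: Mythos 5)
Your proof is correct, but it takes a different route from the paper's. The paper argues by dimension count: by Remark~\ref{remarkonkertrd}, $[D,D]$ has codimension $1$ in $D$ as a graded $F$-module, so since $[D,D] \subseteq [QD,QD] \cap D \subsetneqq D$ (the strictness because, e.g., $1 \notin [QD,QD]$), the middle term must equal $[D,D]$. You instead show directly that $[QD,QD] \cap D \subseteq \ker(\Trd_D) = [D,D]$ by combining the vanishing of $\Trd_{QD}$ on commutators, the compatibility $\Trd_D = \Trd_{QD}\circ(1 \otimes -)$, and Corollary~\ref{kertrd=dd}. Your worry about Proposition~\ref{redcharpolygrversion} being stated only for homogeneous $a$ is well taken and is disposed of exactly as you say, by $F$-linearity of both traces (and indeed the proof in the paper nowhere uses homogeneity). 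Your approach is arguably tighter: the paper's codimension argument implicitly requires that the only proper $F$-submodule of $D$ containing $[D,D]$ is $[D,D]$ itself, and since $F$ is a graded field rather than a field, $D/[D,D] \conggr F$ can have nonzero proper $F$-submodules; your trace calculation sidesteps this by pinning the image of $[QD,QD]\cap D$ in $D/[D,D]\conggr F$ down to zero rather than merely to a proper ideal. Both proofs ultimately rest on Corollary~\ref{kertrd=dd}; yours just uses it at the level of elements rather than dimensions.
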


\begin{proof}
We observed in Remark~\ref{remarkonkertrd} that $\dim_F ([D,D]) + 1
= \dim_F (D) < \infty$. For any $\sum_i x_i y_i -y_i x_i \in [D,D]$,
as $D \ra QD$ is injective,
$$
1 \otimes \sum_i x_i y_i -y_i x_i = \sum_i (1 \otimes x_i)(1 \otimes
y_i) - (1 \otimes y_i)(1 \otimes x_i) \in [QD,QD] \cap D.
$$
So we have $[D,D] \subseteq [QD,QD] \cap D \subseteq D$. Here $D
\nsubseteq [QD, QD]$, so $[QD,QD] \cap D \neq D$. Thus $[D,D] =
[QD,QD] \cap D$.
\end{proof}

\begin{cor} \label{trda=na+dagradedversion}
Let $D$ be a graded division algebra with centre $F$ of index $n$,
where $F$ is Noetherian as a ring. Then for each $a \in D$,
$\Trd_{D} (a) = na +d_a$ for some $d_a \in [D,D]$.

\end{cor}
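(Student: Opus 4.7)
The plan is to bootstrap the non-graded result (Theorem~\ref{trda=na+danongrversion}) up to the graded setting by passing to the quotient division ring $QD = QF \otimes_F D$ and then using the compatibility results already established in this chapter. The key ingredients will be Proposition~\ref{redcharpolygrversion} (which says the reduced trace on $D$ agrees with the reduced trace on $QD$ via the embedding $a \mapsto a \otimes 1$) and Proposition~\ref{dd=qdqdcapd} (which says $[D,D] = [QD,QD] \cap D$). The Noetherian hypothesis on $F$ enters exactly through the latter, whose proof invoked Corollary~\ref{kertrd=dd}.

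First I would fix $a \in D$ and observe that $QD$ is an (honest, non-graded) division algebra over its centre $QF$. Since $[QD:QF] = [D:F] = n^2$ by part~(4) of the proposition characterising $QD$, the index of $QD$ over $QF$ equals $n$. Applying the non-graded Theorem~\ref{trda=na+danongrversion} to the element $a \otimes 1 \in QD$ produces
\[
\Trd_{QD}(a \otimes 1) = n(a \otimes 1) + e
\]
for some $e \in [QD, QD]$.

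Next I would use Proposition~\ref{redcharpolygrversion} to rewrite the left-hand side: $\Trd_{QD}(a \otimes 1) = \Trd_D(a)$, viewed inside $QD$ via the injection $D \hookrightarrow QD$. Rearranging gives $\Trd_D(a) - na = e$, where the left-hand side lies in $D$ (identified with its image in $QD$) while the right-hand side lies in $[QD,QD]$. Thus $\Trd_D(a) - na \in [QD,QD] \cap D$, and Proposition~\ref{dd=qdqdcapd} identifies this intersection with $[D,D]$. Setting $d_a := \Trd_D(a) - na \in [D,D]$ yields the desired formula.

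No genuine obstacle is expected: the entire argument is a two-line reduction, and every input has been established in the preceding sections. The only thing worth being careful about is the identification of $D$ with its image in $QD$ (so that the equation $\Trd_D(a) - na = e$ genuinely takes place inside $QD$, making both the intersection $[QD,QD] \cap D$ and the application of Proposition~\ref{dd=qdqdcapd} legitimate); this is harmless since the map $D \to QD$ is injective by part~(2) of the structural proposition for $QD$.
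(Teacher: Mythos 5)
Your proposal is correct and follows exactly the same route as the paper's proof: pass to $QD$, apply the non-graded Theorem~\ref{trda=na+danongrversion} to $a \otimes 1$, identify $\Trd_{QD}(a\otimes 1)$ with $\Trd_D(a)$ via Proposition~\ref{redcharpolygrversion}, and conclude via $[QD,QD] \cap D = [D,D]$ from Proposition~\ref{dd=qdqdcapd}. Every step you describe matches the paper's argument.
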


\begin{proof}
Let $a \in D$, where $\Trd_{QD} (a \otimes 1) = \Trd_{D} (a)$ by
Proposition~\ref{redcharpolygrversion}. Since $QD$ is a division
ring, by Theorem~\ref{trda=na+danongrversion}, $\Trd_{D} (a) = n (a
\otimes 1) +c$ where $c \in [QD,QD]$. We know $\Trd_{D} (a) \in F$.
Since $D \ra QD$ is injective, we can consider $n (a \otimes 1)$ as
an element of $D$. So $c  = \Trd_{D} (a) - na \in [QD, QD] \cap D =
[D,D]$ by Proposition~\ref{dd=qdqdcapd}, as required.
\end{proof}

The proof of the above corollary shows that using the quotient
division ring and the result in the non-graded setting, the graded
result follows immediately. We note that this proof only holds for
division algebras with a torsion-free grade group.

\begin{cor}\label{d/d,dcongqd/qd,qd}
Let $D$ be a graded division algebra over its centre $F$, which is
Noetherian as a ring. Then
$$
\frac{D}{[D,D]} \otimes_F QF \cong \frac{QD}{[QD,QD]}.
$$
\end{cor}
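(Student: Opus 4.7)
The plan is to exploit the flatness of the localisation $QF$ over $F$ together with the fact that $QD = D\otimes_F QF$, and to identify the ``image'' of $[D,D]\otimes_F QF$ inside $QD$ with $[QD,QD]$.

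First, I would recall that $QF = (F\setminus 0)^{-1}F$ is a localisation of $F$, hence flat as an $F$-module. Applying $-\otimes_F QF$ to the short exact sequence of $F$-modules
\begin{equation*}
0 \lra [D,D] \lra D \lra D/[D,D] \lra 0
\end{equation*}
yields, by flatness, an exact sequence
\begin{equation*}
0 \lra [D,D]\otimes_F QF \lra D\otimes_F QF \lra \big(D/[D,D]\big)\otimes_F QF \lra 0.
\end{equation*}
Since $D\otimes_F QF = QD$, this identifies $[D,D]\otimes_F QF$ with its image inside $QD$, namely the $QF$-submodule $[D,D]\cdot QF$ generated by $[D,D]$, and gives a $QF$-module isomorphism $(D/[D,D])\otimes_F QF \cong QD\,/\,[D,D]\cdot QF$.

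The heart of the argument is therefore to show that $[D,D]\cdot QF = [QD,QD]$ as additive subgroups of $QD$. For the inclusion $\supseteq$, any two elements of $QD$ have the form $x = d_1/f_1$ and $y = d_2/f_2$ with $d_i\in D$, $f_i\in F\setminus 0$ (using the description $QD\cong (F\setminus 0)^{-1}D$). Since the $f_i$ are central, a direct calculation gives $[x,y] = [d_1,d_2]/(f_1f_2)$. Expanding $d_1,d_2$ into their homogeneous components shows $[d_1,d_2]\in [D,D]$ (the bracket of two homogeneous elements is a homogeneous additive commutator), so $[x,y]\in [D,D]\cdot QF$. For the inclusion $\subseteq$, a generator $[a,b]\cdot q$ with $a,b\in D^h$ and $q\in QF$ satisfies $[a,b]\cdot q = q[a,b] = [qa,b]$ because $q\in QF\subseteq Z(QD)$, so $[a,b]\cdot q\in [QD,QD]$.

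The main (though mild) obstacle is bookkeeping around the fact that $[D,D]$ is by definition generated by \emph{homogeneous} additive commutators, while $[QD,QD]$ is generated by brackets of arbitrary elements of $QD$; the homogeneous-decomposition step in the paragraph above is what reconciles this difference. Once the equality $[D,D]\cdot QF = [QD,QD]$ is established, substituting it into the exact sequence above produces the desired isomorphism
\begin{equation*}
\frac{D}{[D,D]}\otimes_F QF \,\cong\, \frac{QD}{[QD,QD]}.
\end{equation*}
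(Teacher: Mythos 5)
Your proof is correct, but it takes a genuinely different route from the paper's. The paper identifies both sides of the isomorphism with $QF$: it invokes Remark~\ref{remarkonkertrd}, namely that $\Trd_D$ is a surjective graded $F$-module map with kernel $[D,D]$, so $D/[D,D]\conggr F$ and hence the left-hand side is $\cong QF$; the analogous non-graded computation for $QD$ gives $QD/[QD,QD]\cong QF$ on the right. This uses the reduced trace machinery and, crucially, the equality $\ker(\Trd_D)=[D,D]$ from Corollary~\ref{kertrd=dd}, which is where the Noetherian hypothesis on $F$ enters (via Rosset's theorem). Your argument instead exploits flatness of the localisation $QF$ and proves the module identity $[D,D]\cdot QF = [QD,QD]$ inside $QD$ directly from the description $QD\cong (F\setminus 0)^{-1}D$ and centrality of $QF$ in $QD$. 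This is more elementary, sidesteps the reduced trace entirely, and --- worth noting --- does not use the Noetherian hypothesis anywhere, so it yields the corollary under weaker assumptions than stated. The paper's route is shorter given the preliminaries already in place and additionally pins down both quotients as being $\cong QF$, information your proof does not by itself produce; your route is self-contained and makes the interplay between the graded commutator module and the commutator subspace of the quotient ring explicit, which is arguably the more illuminating statement.
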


\begin{proof}
From the above results we have $D /[D,D] \conggr F$ as graded
$F$-modules. So
$$
\frac{D}{[D,D]} \otimes_F QF \cong QF \cong \frac{QD}{[QD,QD]},
$$
where the second isomorphism comes from the non-graded versions of
the above results.
\end{proof}

\begin{remark}
Let $D$ be a graded division algebra over its centre $F$. By
definition $\SK[1] (D) = D^{(1)} / D'$, where $D^{(1)} = \ker
(\Nrd_D)$ and $D'$ is the commutator subgroup of the multiplicative
group $D^*$.  We remark that in \cite[Thm.~5.7]{hazwadsk1}, they
have shown that $\SK[1] (D) \cong \SK[1] (QD)$.
Corollary~\ref{d/d,dcongqd/qd,qd} is similar to this result, where
in the above corollary we are considering additive commutators
instead of multiplicative ones.
\end{remark}


\clearpage \addcontentsline{toc}{chapter}{Index} \printindex

\end{document}